\theoremstyle{theorem}
\newtheorem{theorem}{Theorem}
\newtheorem{corollary}[theorem]{Corollary}
\newtheorem{prop}[theorem]{Proposition}
\newtheorem{lemma}[theorem]{Lemma}
\theoremstyle{definition}
\newtheorem{definition}[theorem]{Definition}
\newtheorem{fact}[theorem]{Fact}
\newtheorem{remark}[theorem]{Remark}
\def\R{\mathcal{R}}
\def\ZZ{\mathcal{Z}}
\def\O{\mathcal{O}}
\def\M{\mathcal{M}}
\def\X{\mathcal{X}}
\def\C{\mathcal{C}}
\def\D{\mathcal{D}}
\def\A{\mathcal{A}}
\def\I{\mathcal{I}}
\def\W{\mathcal{W}}
\def\N{\mathcal{N}}
\def\T{\mathcal{T}}
\def\U{\mathcal{U}}
\def\B{\mathcal{B}}
\def\S{\Sigma}
\def\Mod{{\rm Mod}}
\def\cd{{\rm cd}}
\def\Ind{{\rm Ind}}
\def\Im{{\rm Im}}
\def\PMod{{\rm PMod}}
\def\Stab{{\rm Stab}}
\def\id{{\rm id}}
\def\Z{\mathbb{Z}}
\def\Q{\mathbb{Q}}
\def\E{\widehat{E}}
\def\G{G'}
\def\BB{\mathcal{B}'}
\newcommand{\rk}{\mathop{\mathrm{rk}}}
\def\H{{\rm H}}
\def\Sp{{\rm Sp}}
\def\SL{{\rm SL}}
\def\Homeo{{\rm Homeo}}
\def\pt{{\rm pt}}
\numberwithin{theorem}{section}
\begin{document}

\title{The Top Homology Group of the Genus 3 Torelli Group}

\title{The Top Homology Group of the Genus 3 Torelli Group}
\thanks{The author is partially supported by the HSE University Basic Research Program, the Simons Foundation and the Theoretical Physics and Mathematics Advancement Foundation ``BASIS''. \smallskip \\
	2010 Mathematics Subject Classification. 20J06 (Primary); 57M07, 20J05 (Secondary)}

\address{National Research University Higher School of Economics, Russian Federation}
\address{Skolkovo Institute of Science and Technology, Skolkovo, Russia}
\email{spiridonovia@ya.ru}
\author{Igor A. Spiridonov}

\maketitle

\begin{abstract}
	The Torelli group of a genus $g$ oriented surface $\Sigma_g$ is the subgroup $\mathcal{I}_g$ of the mapping class group ${\rm Mod}(\Sigma_g)$ consisting of all mapping classes that act trivially on ${\rm H}_1(\Sigma_g, \mathbb{Z})$. The quotient group ${\rm Mod}(\Sigma_g) / \mathcal{I}_g$ is isomorphic to the symplectic group ${\rm Sp}(2g, \mathbb{Z})$. 
	The cohomological dimension of the group $\mathcal{I}_g$ equals to $3g-5$. The main goal of the present paper is to compute the top homology group of the Torelli group in the case $g = 3$ as ${\rm Sp}(6, \mathbb{Z})$-module. We prove an isomorphism
	$${\rm H}_4(\mathcal{I}_3, \mathbb{Z}) \cong {\rm Ind}^{{\rm Sp}(6, \mathbb{Z})}_{S_3 \ltimes {\rm SL}(2, \mathbb{Z})^{\times 3}} \mathcal{Z},$$
	where $\mathcal{Z}$ is the quotient of $\mathbb{Z}^3$ by its diagonal subgroup $\mathbb{Z}$ with the natural action of the permutation group $S_3$ (the action of ${\rm SL}(2, \mathbb{Z})^{\times 3}$ is trivial). We also construct an explicit set of generators and relations for the group ${\rm H}_4(\mathcal{I}_3, \mathbb{Z})$.
\end{abstract}

\tableofcontents

\section{Introduction}

Let $\S_{g}$ be a compact oriented genus $g$ surface. Let $\Mod(\S_{g})$ be the \textit{mapping class group} of $\S_{g}$, defined by $\Mod(\S_{g}) = \pi_{0}(\Homeo^{+}(\S_{g}))$, where $\Homeo^{+}(\S_{g})$ is the group of orientation-preserving homeomorphisms of $\S_{g}$. The group $\Mod(\S_{g})$ acts on $\H_{1}(\S_{g}, \Z)$ and preserves the algebraic intersection form, so we have a representation $\Mod(\S_{g}) \rightarrow \Sp(2g, \Z)$, which is well-known to be surjective. The kernel $\I_g$ of this representation is known as the \textit{Torelli group}. This can be written as the short exact sequence 
\begin{equation*}
1 \rightarrow \I_g \rightarrow \Mod(\S_{g}) \rightarrow \Sp(2g, \Z) \rightarrow 1.
\end{equation*}
The \textit{extended Torelli group} $\widehat{\I}_g$ is the preimage of the center $\{\pm 1\}$ of $\Sp(2g, \Z)$. So we have the extension
\begin{equation*}
1 \rightarrow \I_g \rightarrow \widehat{\I}_g \rightarrow \{\pm 1\} \rightarrow 1.
\end{equation*}

If $g=1$ we have $\Mod(\S_1) \cong \SL(2, \Z)$, so the group $\I_1$ is trivial. McCullough and Miller \cite{McCullough} proved, that the group $\I_2$ is not finitely generated. Mess \cite{Mess} showed, that in fact $\I_2$ is an infinitely generated free group. Johnson \cite{Johnson1} proved, that $\I_g$ is finitely generated if $g \geq 3$. However, it is an open problem if the group $\I_g$ is finitely presented for some $g \geq 3$.

The Torelli groups have one more interpretation. Consider the  \textit{Torelli space} $\T_g$, i.e. the module space of smooth complex curves with fixed symplectic basis in the first homology. It is well known that $\T_g$ is an Eilenberg-MacLane space of type $K(\I_g, 1)$ provided that $g \geq 2$. Hence we obtain $\H^{*}(\I_g, \Z) \cong \H^*(\T_g, \Z)$. As the consequence, we have that $\H^{*}(\I_g, \Z)$ corresponds to the set of characteristic classes of homologically trivial surface bundles i.e. surface bundles, such that the fundamental group of the base acts trivially on the homology of the fiber.

A natural problem is to study (co)homology of the groups $\I_g$ in the case $g \geq 3$. The first homology group $\H_1(\I_g, \Z)$ was described explicitly by Johnson \cite{Johnson3}. No other nonzero homology group $\H_k(\I_g, \Z)$ for $k \geq 2$ has been explicitly computed yet. 

Recall that the \textit{cohomological dimension} $\cd(G)$ of a group $G$ is the supremum over all $n$ so that there exists a $G$-module $M$ with $\H^n(G, M) \neq 0$.
Bestvina, Bux and Margalit \cite{Bestvina} in 2007 constructed the contractible complex of cycles $\B_{g}$, on which the Torelli group $\I_g$ acts cellularly. Using the spectral sequence associated with this action, they showed that the group $\I_g$ has cohomological dimension $3g-5$ and that the top homology group $\H_{3g-5}(\I_g, \Z)$ is not finitely generated. Gaifullin \cite{Gaifullin_T} in 2019 proved that for $2g-3 \leq k \leq 3g-5$ the homology group $\H_{k}(\I_g, \Z)$ contains a free abelian subgroup of an infinite rank. In \cite{Gaifullin_T3}, Gaifullin obtained a partial result towards the conjecture that $\H_2(\I_3, \Z)$ is not finitely generated, see Remark \ref{remarkG}. 

Let us fix a complex structure of $\S_g$ so that it becomes a hyperelliptic smooth complex curve. Denote by $\iota \in \Mod(\S_{g})$ the corresponding \textit{hyperelliptic involution}.
We have $\iota^2 = \id$ and $\iota$ acts on $\H_1(\S_g, \Z)$ as $(-1)$. By definition $\iota$ is an element of $\widehat{\I}_g$, not belonging to $\I_g$. Since $\widehat{\I}_g / \I_g \cong \Z/ 2\Z$, there is the natural action of the group $\Z / 2\Z$ on $\H_*(\I_g, \Z)$. This action coincides with the action of the hyperelliptic involution and does not depend on the choice of (hyperelliptic) complex structure on $\S_g$. If 2 is invertible in the coefficient ring $R$, then we obtain the splitting
\begin{equation*}
\H_*(\I_g, R) = \H^+_*(\I_g, R) \oplus \H^-_*(\I_g, R),
\end{equation*}
where a hyperelliptic involution acts trivially on $\H^+_*(\I_g, R)$ and acts as $(-1)$ on $\H^-_*(\I_g, R)$.

In the case $g=3$ there are some special results on the structure of $\H_*(\I_3, \Z)$. 
Hain \cite{Hain_ab} computed explicitly the groups $\H^+_*(\I_3, \Z[1/2])$ as $\Sp(6, \Z)$-modules.
In particular, he proved that 
\begin{equation} \label{Hain}
\H^+_4(\I_3, \Z[1/2]) \cong \Ind^{\Sp(6, \Z)}_{S_3 \ltimes \SL(2, \Z)^{\times 3}} \ZZ \otimes \Z[1/2],
\end{equation}
where $\ZZ$ is the quotient of $\Z^3$ by its diagonal subgroup $\Z$ with the natural action of the permutation group $S_3$ (the action of $\SL(2, \Z)^{\times 3}$ is trivial).
Hain's approach was to use stratified Morse theory for the image of the period map $\T_3 \to \mathfrak{h}_3$, where $\mathfrak{h}_3$ is the upper Siegel half-space.

In the present paper we study the structure of the whole group $\H_4(\I_3, \Z)$ as $\Sp(6, \Z)$-module.
The main result is as follows.
\begin{theorem} \label{mainth}
	There is an isomorphism of $\Sp(6, \Z)$-modules
	\begin{equation*}
	\H_4(\I_3, \Z) = \H_4(\T_3, \Z) \cong \Ind^{\Sp(6, \Z)}_{S_3 \ltimes \SL(2, \Z)^{\times 3}} \ZZ,
	\end{equation*}
	where $\ZZ$ is the quotient of $\Z^3$ by its diagonal subgroup $\Z$ with the natural action of the permutation group $S_3$ (the action of $\SL(2, \Z)^{\times 3}$ is trivial).
\end{theorem}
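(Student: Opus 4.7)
The plan is to compute $\H_4(\I_3, \Z)$ using the equivariant spectral sequence associated with the action of $\I_3$ (and, via conjugation in $\Mod(\S_3)$, of the quotient $\Sp(6, \Z)$) on the Bestvina--Bux--Margalit complex of cycles $\B_3$. Since $\B_3$ is contractible and of dimension $2g-3 = 3$, one gets the spectral sequence
$$E^1_{p,q} = \bigoplus_{[\sigma] \in \B_3^{(p)} / \I_3} \H_q\bigl(\Stab_{\I_3}(\sigma),\, \Z_\sigma\bigr) \Longrightarrow \H_{p+q}(\I_3, \Z),$$
so $\H_4(\I_3, \Z)$ receives contributions only from positions $(p,q)$ with $p+q = 4$, $0 \le p \le 3$, and consequently $q \ge 1$.

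First I would enumerate the $\I_3$-orbits of cells of $\B_3$ and compute the structure of each $\Stab_{\I_3}(\sigma)$, using the description by Bestvina--Bux--Margalit of these stabilizers as extensions of mapping class groups of cut subsurfaces by abelian twist groups supported on the multicurve. The essential orbits for the top homology are those whose stabilizer has cohomological dimension at least $1$. The expected surviving configurations correspond geometrically to symplectic splittings $\H_1(\S_3, \Z) = L_1 \oplus L_2 \oplus L_3$ of the first homology into three orthogonal symplectic planes, since the subgroup $S_3 \ltimes \SL(2, \Z)^{\times 3}$ appearing in the induced module is exactly the $\Sp(6, \Z)$-stabilizer of such a splitting. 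Each $L_i$ is realised by a one-holed torus subsurface $T_i$ of $\S_3$ bounded by a separating curve $\alpha_i$, and the three separating Dehn twists $T_{\alpha_i}$ generate a rank-$3$ abelian subgroup of the corresponding stabilizer in $\I_3$, whose $\H_1$ is the source of the contributions to $E^1_{3,1}$.

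Next I would analyse the spectral sequence differentials and identify the $E^\infty$-contributions with $p + q = 4$. The claim is that, after all cancellations, the surviving piece assembles into $\Ind^{\Sp(6, \Z)}_{S_3 \ltimes \SL(2, \Z)^{\times 3}} \ZZ$. The quotient by the diagonal $\Z$ in $\ZZ = \Z^3 / \Z$ should arise from a lantern-type relation among the three separating twists that becomes a spectral-sequence boundary coming from a lower-dimensional cell (a sub-multicurve where one of the $\alpha_i$ is replaced by curves inside the corresponding $T_i$). Hain's isomorphism \eqref{Hain} serves as an indispensable cross-check: after tensoring with $\Z[1/2]$ and passing to the $+$-eigenspace of the hyperelliptic involution, the answer must reproduce his, which simultaneously forces $\H_4^-(\I_3, \Z[1/2]) = 0$.

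The main obstacles are twofold. First, the combinatorial bookkeeping in the spectral sequence: one must verify that contributions from positions $(p,q) \ne (3,1)$ do not survive to $E^\infty$, and identify the precise differentials realising the quotient $\Z^3 \twoheadrightarrow \ZZ$; this requires a detailed understanding of the $\I_3$-orbit structure on $\B_3$ and of how cells of different dimensions are glued. Second, the handling of possible $2$-torsion in $\H_4(\I_3, \Z)$, which Hain's $\Z[1/2]$-calculation does not see: since $\Ind^{\Sp(6,\Z)}_{S_3 \ltimes \SL(2,\Z)^{\times 3}} \ZZ$ is torsion-free, an independent argument is needed to exclude $2$-torsion in the top homology. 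Once the $E^\infty$-page is pinned down and integral freeness is established, the explicit generators and relations promised in the introduction will be read off from chosen cell representatives of the relevant orbits and the lantern-type boundaries among them.
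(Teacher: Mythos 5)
Your starting point --- the equivariant spectral sequence for the action of $\I_3$ on the Bestvina--Bux--Margalit complex $\B_3(x)$ --- is exactly the one the paper uses, and you correctly anticipate that a lantern-type relation produces the quotient by the diagonal in $\ZZ$. But the geometric picture you build on top of it is wrong, and the error is not cosmetic. You locate the surviving contribution in $E^1_{3,1}$, generated by configurations of three disjoint separating curves bounding one-holed tori. Separating curves cannot occur as components of multicurves in $\M(x)$ at all: every component of a cell of $\B_3(x)$ must carry a nonzero homology class entering a basic $1$-cycle for $x$. The stabilizers of $3$-cells are in fact either trivial or infinite cyclic (generated by a single bounding-pair map), and the paper proves that $d^1_{3,1}$ and $d^1_{2,2}$ are \emph{injective}, so the columns $p=3$ and $p=2$ contribute nothing in total degree $4$. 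The answer lives entirely in $E^1_{0,4}\oplus E^2_{1,3}$, i.e.\ in $\H_4$ of the stabilizer of a single nonseparating curve and in $\H_3$ of stabilizers of edges. The triples of commuting separating twists $T_{\theta_1},T_{\theta_2},T_{\theta_3}$ you have in mind do appear, but as abelian cycles in $\H_3(\I_N,\Z)$ for $N$ a vertex multicurve of three nonseparating curves, where they serve to compute $\ker d^1_{1,3}$ --- not as direct generators of $\H_4$. The actual generators (the paper's s-classes) are built from the central extension $1\to\langle T_\gamma\rangle\to\I_\gamma\to\I_{2,1}\to 1$ over a separating curve $\gamma$, using Mess's theorem that $\I_2$ is an infinitely generated free group on separating twists.

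Beyond this misidentification, the proposal leaves unaddressed the steps that carry essentially all of the difficulty: the injectivity of $d^1_{2,2}$ (which requires a delicate combinatorial analysis of a subcomplex of $\B(x)/\I$ and a maximality argument on coefficients of $1$-cycles), the surjectivity of the comparison maps from the spectral sequences of curve stabilizers onto $E^1_{0,4}$ and $E^2_{1,3}$ (which uses the complex of relative cycles and auxiliary complexes $\B_\alpha(y)$), and the proof that the only relations among the generators are $s(V_1,V_2,V_3)=s(V_1,V_3,V_2)$ and the cyclic sum relation (which rests on the homomorphisms $\nu_{\gamma,\W}$ and cup-product evaluations). Your concern about $2$-torsion is legitimate, but the resolution is not a separate argument appended at the end: the paper works integrally throughout and never invokes Hain's computation, whereas your plan treats it as an ``indispensable cross-check,'' which would at best control the result after inverting $2$.
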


\begin{corollary}
	The hyperelliptic involution acts trivially on $\H_4(\I_3, \Z)$. In particular, we have
	$\H^-_4(\I_3, \Z[1/2]) = 0.$
\end{corollary}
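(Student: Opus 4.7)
The plan is to deduce the corollary directly from Theorem \ref{mainth} by analysing how the hyperelliptic involution acts on the right-hand side of the isomorphism. Recall that $\iota$ projects to the central element $-I \in \Sp(6, \Z)$, so the $\Z/2\Z$-action on $\H_4(\I_3, \Z)$ coincides with the restriction of the $\Sp(6, \Z)$-action to $-I$.

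First I would observe that $-I$ actually lies inside the subgroup $H = S_3 \ltimes \SL(2, \Z)^{\times 3}$. Concretely, the factor $\SL(2, \Z)^{\times 3}$ is embedded block-diagonally with respect to a symplectic decomposition $\Z^6 = V_1 \oplus V_2 \oplus V_3$ into three hyperbolic planes, and $-I$ corresponds to the triple $(-I_2, -I_2, -I_2)$. Since by assumption $\SL(2, \Z)^{\times 3}$ acts trivially on $\ZZ$, the element $-I$ acts trivially on $\ZZ$. Combining this with the centrality of $-I$ in $\Sp(6, \Z)$, a short manipulation with the defining tensor-product relations in the induced module gives
\begin{equation*}
(-I) \cdot (g \otimes v) = \bigl(g \cdot (-I)\bigr) \otimes v = g \otimes \bigl((-I) \cdot v\bigr) = g \otimes v
\end{equation*}
for every $g \in \Sp(6, \Z)$ and $v \in \ZZ$, which means that $-I$, and hence $\iota$, acts trivially on the entire module. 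This establishes the first assertion.

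The second assertion is then immediate: over $\Z[1/2]$ the order-two automorphism $\iota$ decomposes $\H_4(\I_3, \Z[1/2])$ into $(+1)$- and $(-1)$-eigenspaces, and since $\iota$ now acts as the identity, the $(-1)$-eigenspace $\H^-_4(\I_3, \Z[1/2])$ vanishes. Since all the substantive work is contained in Theorem \ref{mainth}, this corollary presents no real obstacle; the only point to verify carefully is the membership $-I \in H$ and the triviality of the $\SL(2, \Z)^{\times 3}$-action on $\ZZ$, both of which are transparent from the setup.
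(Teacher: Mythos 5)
Your proof is correct and is precisely the intended derivation: the paper states this corollary without proof as an immediate consequence of Theorem \ref{mainth}, relying on exactly the observations you make --- that $\iota$ acts on $\H_4(\I_3, \Z)$ through the central element $-I \in \Sp(6, \Z)$, that $-I$ lies in $S_3 \ltimes \SL(2, \Z)^{\times 3}$ and acts trivially on $\ZZ$, and hence acts trivially on the whole induced module. The deduction of $\H^-_4(\I_3, \Z[1/2]) = 0$ from the triviality of the action is likewise immediate.
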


In Section \ref{S3} we construct an explicit set of generators and relations for the group $\H_4(\I_3, \Z)$. The outline of the present paper is also provided in Section \ref{S3}, after some preliminaries given in Section \ref{S2}.

Let us remark that Theorem \ref{mainth} agrees with Hain's results.
Our main approach is based on the spectral sequence for the action of the Torelli group on the complex of cycles. The proof does not use the Hain's results.

\textbf{Acknowledgements.} The author would like to thank his scientific advisor A. A. Gaifullin for useful discussions and constant attention to this work. The author is a winner of the mathematical August M\"oebius contest of graduate and undergraduate student papers and thanks the jury for the high praise of his work.

\section{Preliminaries} \label{S2}

\subsection{Mapping class group of a surface with punctures and boundary components} Let $\S$ be an oriented surface, possibly with punctures and boundary components. We do not assume that $\S$ is connected. However, we require $\H_*(\S, \Q)$ be a finite dimensional vector space. The mapping class group of $\S$ is defined as $\Mod(\S) = \pi_{0}(\Homeo^{+}(\S, \partial\S))$, where $\Homeo^{+}(\S, \partial \S)$ is the group of orientation-preserving homeomorphisms of $\S$ that act as the identity on $\partial \S$ and preserve the connected components of $\S$. 
By $\PMod(\S) \subseteq \Mod(\S)$ we denote the \textit{pure mapping class group} of $\S$, i.e. the subgroup that acts trivially on punctures. We have the exact sequence
\begin{equation} \label{Pmod}
1 \rightarrow \PMod(\S_{g, n}^b) \rightarrow \Mod(\S_{g, n}^b) \rightarrow S_n \rightarrow 1,
\end{equation}
where by $\S_{g, n}^b$ we denote the connected genus $g$ surface with $n$ punctures and $b$ boundary components.
In the case $\S = \S_{g, 1}$ we can also define the Torelli group $\I(\S_{g, 1}) = \I_{g, 1}$ as the kernel of the action of $\Mod(\S_{g, 1})$ on $\H_1(\S_{g, 1}, \Z)$. 

\subsection{Hochschild-Serre spectral sequence} 
Given a short exact sequence of groups
\begin{equation*}
1 \to Q \to G \to P \to 1
\end{equation*}
there is an associated Hochschild-Serre spectral sequence. The second page is given by
\begin{equation} \label{HS}
E_{p, q}^{2} \cong \H_{p}(P, H_{q}(Q, \Z)) \Rightarrow \H_{p+q}(G, \Z),
\end{equation}
where the coefficients are local: $P$ acts on $Q$ by conjugation. The group
$
\bigoplus_{p+q=n}E_{p, q}^{\infty}
$
is the adjoint graded group for certain filtration in the homology group $\H_{p+q}(G, \Z)$.

The following facts immediately follow from the existence of the Hochschild-Serre spectral sequence.

\begin{fact}\label{HS1}
	Consider a short exact sequence of groups
	\begin{equation*}
	1 \to Q \to G \to P \to 1
	\end{equation*}
	with $\cd(P) = p < \infty$ and $\cd(Q) = q < \infty$. Then all the differentials from and to the group $E^2_{p, q}$ are trivial, so
	\begin{equation*}
	\H_{p+q}(G, \Z) \cong E_{p, q}^\infty = E_{p, q}^2 =  \H_{p}(P, H_{q}(Q, \Z)),
	\end{equation*}
	where the coefficients are local: $P$ acts on $Q$ by conjugation.
\end{fact}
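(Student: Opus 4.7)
The plan is to exploit the fact that finite cohomological dimension automatically bounds the homological dimension, so that the Hochschild--Serre spectral sequence \eqref{HS} is concentrated in a finite rectangle and the corner term $E^2_{p,q}$ is neither the source nor the target of any nonzero differential.

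First I would recall the standard fact that $\cd(P) = p < \infty$ implies that $\Z$ admits a projective resolution of length $p$ over $\Z P$. Such a resolution computes both $\operatorname{Ext}$ and $\operatorname{Tor}$, so in particular $\H_s(P, M) = \operatorname{Tor}^{\Z P}_s(\Z, M) = 0$ for every $P$-module $M$ and every $s > p$; this applies in particular to the local coefficient system $\H_q(Q, \Z)$ appearing in \eqref{HS}. The same argument applied to $Q$ gives $\H_t(Q, \Z) = 0$ for $t > q$. Combining these two vanishing statements, the $E^2$ page of the spectral sequence is supported in the rectangle $0 \leq s \leq p$, $0 \leq t \leq q$.

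Next I would trace the differentials through the corner $E^r_{p,q}$. In the homological convention $d_r$ has bidegree $(-r,\, r-1)$, so for $r \geq 2$ the outgoing differential $d_r : E^r_{p,q} \to E^r_{p-r,\, q+r-1}$ lands in a bidegree whose second index exceeds $q$, while the incoming differential comes from $E^r_{p+r,\, q-r+1}$, whose first index exceeds $p$. Both target and source vanish by the previous step, so $E^2_{p,q} = E^\infty_{p,q}$.

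Finally, the abutment filtration on $\H_{p+q}(G, \Z)$ has associated graded pieces $E^\infty_{s,t}$ with $s+t = p+q$, and the only lattice point satisfying $s+t = p+q$ that lies inside the rectangle is $(p,q)$ itself. Hence the filtration degenerates to the single term $E^\infty_{p,q}$, yielding the isomorphism $\H_{p+q}(G, \Z) \cong \H_p(P, \H_q(Q, \Z))$ with the conjugation-induced local coefficient system. The argument is essentially formal; the only step worth stating carefully is the passage from finite cohomological dimension to the corresponding vanishing of group homology, which is a standard piece of homological algebra that I would cite rather than reprove.
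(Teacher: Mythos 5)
Your argument is correct and is precisely the standard degeneration argument that the paper implicitly relies on (the paper states Fact \ref{HS1} without proof, asserting it follows immediately from the existence of the Hochschild--Serre spectral sequence). The three steps you isolate --- finite $\cd$ forces vanishing of homology above that degree, hence the $E^2$ page lives in a rectangle whose corner $(p,q)$ supports no nonzero differentials, and the diagonal $s+t=p+q$ meets the rectangle only at that corner --- are exactly the intended justification.
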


Recall that for $n$ pairwise commuting elements $h_{1}, \dots, h_{n}$ of the group $G$ one can construct an \textit{abelian cycle} $\A(h_{1}, \dots, h_{n}) \in \H_{n}(G, \Z)$ defined in the following way. Consider the homomorphism $\phi: \Z^{n} \rightarrow G$ that maps the generator of the $i$-th factor to the $h_{i}$. Then $\A(h_{1}, \dots, h_{n}) = \phi_{*}(\mu_{n})$, where $\mu_{n}$ is the standard generator of $\H_{n}(\Z^{n}, \Z)$. Fact \ref{HS1} implies the following results.

\begin{fact}\label{HS2}
	Consider a central extension
	\begin{equation} \label{Cext}
	1 \to \Z \to G \to P \to 1
	\end{equation}
	with $\cd(P) = p < \infty$ and $\Z = \left\langle a \right\rangle $. Then
	\begin{equation} \label{HS3}
	\H_{p}(P, \Z) \cong \H_{p+1}(G, \Z). 
	\end{equation}
	Moreover, for $p$ pairwise commuting elements $h_{1}, \dots, h_{p}$ of the group $P$, the isomorphism (\ref{HS3}) maps the abelian cycle $\A(h_1, \dots, h_p)$ to the abelian cycle $\A(\widetilde{h}_1, \dots, \widetilde{h}_p, a)$, where $\widetilde{h}_i \in G$ denotes any preimage of $h_i$.
\end{fact}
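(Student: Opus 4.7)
My plan is to deduce both claims from Fact \ref{HS1} together with the naturality of the Hochschild--Serre spectral sequence. For (\ref{HS3}) I apply Fact \ref{HS1} with $Q = \Z$ and $q = \cd(\Z) = 1$, which gives
$$
\H_{p+1}(G, \Z) \cong \H_p(P, \H_1(\Z, \Z)).
$$
Since the extension is central, the conjugation action of $G$ on the kernel is trivial, so the induced $P$-action on $\H_1(\Z, \Z) \cong \Z$ is trivial as well, and the right-hand side reduces to $\H_p(P, \Z)$.

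For the second claim I choose preimages $\widetilde{h}_1, \dots, \widetilde{h}_p \in G$ which pairwise commute. Together with the central element $a$ they determine a homomorphism $\widetilde{\phi}\colon \Z^{p+1} \to G$ covering the homomorphism $\phi\colon \Z^p \to P$ that sends the $i$-th standard generator to $h_i$. This gives a commutative diagram of central extensions
$$
\begin{CD}
1 @>>> \Z @>>> \Z^{p+1} @>>> \Z^p @>>> 1\\
@. @| @VV\widetilde{\phi}V @VV\phi V @.\\
1 @>>> \Z @>>> G @>>> P @>>> 1
\end{CD}
$$
and hence, by functoriality, a morphism of Hochschild--Serre spectral sequences. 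Applying the first part to the top row gives $\H_p(\Z^p, \Z) \cong \H_{p+1}(\Z^{p+1}, \Z)$; unpacked via the Künneth identification $\H_{p+1}(\Z^p \times \Z, \Z) \cong \H_p(\Z^p, \Z) \otimes \H_1(\Z, \Z)$, this isomorphism sends $\mu_p$ to $\mu_{p+1}$. Pushing forward by $\phi_*$ and $\widetilde{\phi}_*$ and invoking naturality, the image of $\A(h_1, \dots, h_p) = \phi_*(\mu_p)$ under (\ref{HS3}) becomes $\widetilde{\phi}_*(\mu_{p+1}) = \A(\widetilde{h}_1, \dots, \widetilde{h}_p, a)$, as claimed.

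The main step requiring care is matching the abstract edge isomorphism $E^\infty_{p,1} \cong \H_{p+1}$ provided by the spectral sequence on $\Z^{p+1}$ with the explicit Künneth decomposition; once this is pinned down, the rest is formal naturality. A subsidiary subtlety concerns the existence of pairwise commuting preimages---strictly speaking the ``any preimage'' phrasing should be read modulo this caveat---but in the intended applications the $h_i$ will come from mapping classes with disjoint supports, so commuting preimages are automatic, and changing them by central elements alters $\widetilde{\phi}$ only by a unimodular shear of $\Z^{p+1}$ which preserves $\mu_{p+1}$.
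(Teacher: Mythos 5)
Your argument coincides with the paper's: the isomorphism (\ref{HS3}) is Fact \ref{HS1} applied with $Q=\Z$ (so $q=1$ and the central action is trivial), and the abelian-cycle statement is obtained by functoriality of the Hochschild--Serre spectral sequence along the morphism from $1\to\Z\to\Z^{p+1}\to\Z^p\to 1$ determined by the lifts $\widetilde h_i$ and $a$, the case of $\Z^{p+1}$ being regarded as clear. Your additional remarks --- identifying the edge map via the K\"unneth decomposition, the need for pairwise commuting lifts for $\A(\widetilde h_1,\dots,\widetilde h_p,a)$ to be defined, and independence of the choice of lifts up to a unimodular shear of $\Z^{p+1}$ --- only make explicit points the paper leaves implicit.
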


The last statement in Fact \ref{HS2} comes from the functoriality of Hochschild-Serre spectral sequence applied to the central extension 
\begin{equation} \label{Zext}
1 \to \Z \to \Z^{n+1} \to \Z^n \to 1
\end{equation}
and the morphism from (\ref{Zext}) to (\ref{Cext}) which is identical on $\Z$ and maps the $i^{{\rm th}}$ generator of $\Z^n$ to $h_i$, because in the case (\ref{Zext}) Fact \ref{HS2} is obvious.

Let $G$ be a group. For $h \in G$ we denote by $[h] \in \H_1(G, \Z)$ the corresponding homology class.

\begin{fact}\label{HS4}
	Consider a short exact sequence
	\begin{equation*}
	\begin{CD}
	1 @>>>  &   R_1  @>{\iota}>> &   G @>{p}>> &   R_2 @>>> &  1
	\end{CD}
	\end{equation*}
	where $R_1$ and $R_2$ are free groups. Assume that the action of $R_2$ on $\H_*(R_1, \Z)$ is trivial. Then
	\begin{equation} \label{HS5}	
	\H_{2}(G, \Z) \cong \H_{1}(R_2, \H_1(R_1, \Z)) \cong \H_1(R_2, \Z) \otimes \H_1(R_1, \Z).
	\end{equation}
	Let $f \in R_1$ and $g \in G$ such that $\iota(f)$ commutes with $g$. Then the isomorphism (\ref{HS5}) maps the abelian cycle $\A(g, \iota(f))$ to $[p(g)] \otimes [f] \in \H_1(R_2, \Z) \otimes \H_1(R_1, \Z)$.
\end{fact}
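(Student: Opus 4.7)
The plan is to apply the Hochschild-Serre spectral sequence to the given extension. Since $R_1$ and $R_2$ are free groups, both have cohomological dimension at most $1$; hence $\H_q(R_1, \Z) = 0$ for $q \geq 2$ and $\H_p(R_2, M) = 0$ for $p \geq 2$ for every coefficient module $M$. It follows that $E^2_{p,q} = 0$ unless $p, q \in \{0, 1\}$, all higher differentials vanish for degree reasons, and the only term contributing to total degree $2$ is $E^2_{1,1} = \H_1(R_2, \H_1(R_1, \Z))$. Since $R_2$ acts trivially on $\H_1(R_1, \Z)$ and $\H_0(R_2, \Z) = \Z$ is torsion-free, the universal coefficient theorem identifies this group with $\H_1(R_2, \Z) \otimes \H_1(R_1, \Z)$, which yields the isomorphism~(\ref{HS5}).

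For the abelian cycle assertion, I would reduce to the model case $\Z^2$ by functoriality. Let $\phi: \Z^2 \to G$ be the homomorphism sending the generators $e_1, e_2$ to $g$ and $\iota(f)$, respectively; this is well defined because the images commute. Assume first that $p(g)$ has infinite order in $R_2$. Then $\phi$ fits into a morphism of short exact sequences
\begin{equation*}
\begin{CD}
1 @>>> \langle e_2 \rangle @>>> \Z^2 @>>> \langle p(g) \rangle @>>> 1 \\
@. @V{\alpha}VV @V{\phi}VV @V{\beta}VV @. \\
1 @>>> R_1 @>{\iota}>> G @>{p}>> R_2 @>>> 1,
\end{CD}
\end{equation*}
where $\alpha(e_2) = f$ and $\beta$ is the inclusion of the cyclic subgroup. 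The top row is a central extension, so Fact~\ref{HS2} applies and sends $\A(e_1, e_2) \in \H_2(\Z^2, \Z)$ to $[e_1] \otimes [e_2]$ under the tensor form of its isomorphism. Naturality of the Hochschild-Serre spectral sequence with respect to this morphism then identifies $\A(g, \iota(f)) = \phi_*(\A(e_1, e_2))$ with $\beta_*([e_1]) \otimes \alpha_*([e_2]) = [p(g)] \otimes [f]$ in $\H_1(R_2, \Z) \otimes \H_1(R_1, \Z)$.

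In the remaining case $p(g) = 1$, the element $g$ lies in $\iota(R_1)$, so $\A(g, \iota(f))$ belongs to the image of $\iota_*: \H_2(R_1, \Z) \to \H_2(G, \Z)$, which vanishes because $R_1$ is free. The right-hand side $[p(g)] \otimes [f]$ also vanishes, so the identification holds trivially. The main technical point throughout is the bookkeeping in the functoriality step: once one verifies that the identification of $\H_2(G, \Z)$ with $\H_1(R_2, \Z) \otimes \H_1(R_1, \Z)$ is natural with respect to morphisms of short exact sequences and that the $\Z^2$ model is handled correctly by Fact~\ref{HS2}, the formula $\A(g, \iota(f)) \mapsto [p(g)] \otimes [f]$ is forced.
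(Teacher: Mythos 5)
Your proof is correct and follows exactly the route the paper intends: the paper states Fact \ref{HS4} as an immediate consequence of the Hochschild--Serre spectral sequence (degeneration at $E^2_{1,1}$ because $\cd(R_1)=\cd(R_2)=1$), and it handles the abelian-cycle statement in the analogous Fact \ref{HS2} by the same functoriality-against-a-$\Z$-model device you use here with $\Z^2$. Your treatment of the degenerate case $p(g)=1$ (where both sides vanish since $\H_2(R_1,\Z)=0$) is a correct and welcome piece of care that the paper leaves implicit.
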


\subsection{Complex of cycles} 

A \textit{multicurve} $M$ on $\S_g$ is a finite union of pairwise disjoint and pairwise nonhomotopic simple closed curves $M = \gamma_1 \cup \dots \cup \gamma_n$, such that no $\gamma_i$ is homotopic to a point. A multicurve $M$ is said to be \textit{oriented} if each its component is equipped with an orientation. Throughout the paper we consider simple closed curves and multicurves up to isotopy.

Bestvina, Bux, and Margalit \cite{Bestvina} constructed a contractible $CW$-complex $\B_g$ called \textit{complex of cycles} on which the Torelli group acts without rotations. ``Without rotations'' means that if an element  $h \in \I_g$ stabilizes a cell $\sigma$, then $h$ stabilizes $\sigma$ pointwise. Let us recall the construction of $\B_g$. More details can be found in \cite{Bestvina, Hatcher, Gaifullin_T, Gaifullin_T3}.

Let us denote by $\C$ the set of all isotopy classes of oriented non-separating simple closed curves on $\S_{g}$. Fix any element $0 \neq x \in \H_1(\S_{g}, \Z)$. The construction of $\B_g = \B_g(x)$ depends on the choice of the homology class $x$, however the $CW$-complexes $\B_g(x)$ are pairwise homeomorphic for different $x$. 

A \textit{Basic 1-cycle} for the homology class $x$ is a formal linear combination $\gamma = \sum_{i}^n k_i  \gamma_i$ where $\gamma_i \in \C$ and $0 < k_i \in \mathbb{N}$ satisfying the following properties:

(1) the homology classes $[\gamma_1], \dots, [\gamma_n]$ are linearly independent,

(2) $\sum_i^n k_i [\gamma_i] = x$,

(3) we can choose pairwise disjoint representatives of the isotopy classes $\gamma_1, \dots, \gamma_n$.

The multicurve $\gamma_{1} \cup \dots \cup \gamma_{n}$ is called the \textit{support} of $\gamma$. 

Let us denote by $\M(x)$ the set of oriented multicurves $M = \gamma_{1} \cup \dots \cup \gamma_s$ (for arbitrary $s$) satisfying the following properties:

(i) no nontrivial linear combination of the homology classes $[\gamma_1], \dots, [\gamma_s]$ with nonnegative coefficients equals zero,

(ii) for each $1 \leq i \leq s$ there exists a basic 1-cycle for $x$ supported in $M$ and containing $\gamma_i$.

For each $M \in \M(x)$ let us denote by $P_M \subset \mathbb{R}_{+}^{\C}$ the convex hull of the basic 1-cycles supported in $M$. By construction $P_M$ is a convex polytope. By definition the \textit{complex of cycles} is the regular $CW$-complex given by $\B_g(x) = \cup_{M \in \M(x)} P_M$. 

A \textit{1-cycle} for the homology class $x \in \H_1(\S_g, \Z)$ is a formal linear combination $\gamma = \sum_{i}^n k_i  \gamma_i$ where $\gamma_i \in \C$ and $k_i \in \mathbb{R}_+$ satisfying the properties (2) and (3). The multicurve $\gamma_{1} \cup \dots \cup \gamma_{n}$ is called the \textit{support} of $\gamma$. By definition the set of $1$-cycles for $x$ is precisely the set points of $\B_{g}(x)$.
Therefore, an oriented multicurve $M$ belongs to $\M(x)$ if and only if it is the support of some $1$-cycle $\gamma$ for $x$. Moreover, for each $M \in \M(x)$, the set of vertices of $P_M$ is precisely the set of basic $1$-cycles for $x$ supported in $M$. Bestvina, Bux, and Margalit \cite[Lemma 2.1]{Bestvina} showed that
\begin{equation} \label{dim}
\dim P_M = |M| - \rk  M = |\S_g \setminus M| - 1,
\end{equation}
where $|M|$ is the number of components of $M$, $|\S_g \setminus M|$ is the number of connected components of $|\S_g \setminus M|$, and $\rk M$ is the rank of the subgroup of $\H_{1}(\S_g, \Z)$ spanned by the homology classes of the
components of $M$. Consequently, we have $\dim \B_{g} = 2g-3$.
By $\M_p(x) \subseteq \M(x)$ we denote the set of multicurves corresponding to the cells of dimension $p$.

\begin{theorem} \cite[Theorem E]{Bestvina}\label{contrcyc}
	Let $g \geq 2$ and $0 \neq x \in \H_{1}(\S_g, \Z)$. Then $\B_g(x)$ is contractible.
\end{theorem}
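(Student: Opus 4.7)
The plan is to construct a deformation retraction of $\B_g(x)$ onto a single point, using a Morse-theoretic argument adapted to this polytopal complex, with the elementary moves being surgeries of $1$-cycles.

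First, I would fix an auxiliary oriented simple closed curve $\alpha$ in general position and define a complexity function $c : \B_g(x) \to \mathbb{R}_{\geq 0}$ that is piecewise-linear on the polytopal complex --- for instance,
\begin{equation*}
c\Big(\sum_i k_i \gamma_i\Big) = \sum_i k_i \cdot i(\gamma_i, \alpha),
\end{equation*}
where $i(\cdot, \cdot)$ denotes geometric intersection number. Since $c$ is affine on each polytope $P_M$, it is a PL-Morse function on $\B_g(x)$. Its minimum set $\B_g(x)^{\min}$ consists of $1$-cycles whose support is disjoint from $\alpha$; by choosing $\alpha$ carefully (in particular, transverse to a fixed basic 1-cycle representing $x$), this minimum set can be arranged to sit inside a single polytope $P_{M_0}$ and hence to be convex.

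Second, I would analyze the behavior of $c$ near a non-minimal $1$-cycle $\gamma$ supported on a multicurve $M$. If some component $\gamma_i$ meets $\alpha$, a surgery at an intersection point produces two simple closed curves whose total intersection number with $\alpha$ is strictly smaller than that of $\gamma_i$. The task is to show that these surgered curves, together with the components of $M$, assemble into a multicurve $M' \in \M(x)$, so that $P_M$ and $P_{M'}$ lie in a common cell of $\B_g(x)$ and one may move continuously from $\gamma$ into $P_{M'}$ in a direction along which $c$ strictly decreases. Careful bookkeeping on the homology classes of surgered curves then shows that the descending link of $\gamma$ is nonempty and contractible.

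Third, having established that every descending link is contractible, PL-Morse theory will yield a deformation retraction of $\B_g(x)$ onto $\B_g(x)^{\min}$, which is convex hence contractible.

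The main obstacle is the combinatorial verification underlying the second step: namely, that after surgery the resulting multicurve $M'$ satisfies conditions (i) and (ii) in the definition of $\M(x)$ (in particular, that one does not accidentally create separating curves, zero homology classes, or destroy the positive-linear-independence condition), and that the various surgery moves from a given $\gamma$ assemble into a contractible descending link rather than, say, a disconnected one. This requires a case analysis based on the topological types of surgery (whether the surgery separates $\gamma_i$, merges it with another component, and so on), and it is here that the restriction $g \geq 2$ and the existence of enough room on $\S_g$ to perform these surgeries genuinely enter the argument.
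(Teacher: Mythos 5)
The paper does not prove this statement: it imports it verbatim from Bestvina--Bux--Margalit \cite[Theorem E]{Bestvina}, and the only comment the author ever makes on the argument (in Section \ref{S7}) is that it is a ``surgical proof''. Your sketch points in the same general direction as the actual proof in \cite{Bestvina} --- reduce intersection with a fixed reference object by surgery --- but as written it contains a step that fails. The claim that the minimum set of $c$ can be arranged to lie in a single polytope $P_{M_0}$ is false for every choice of a single simple closed curve $\alpha$. Indeed, for $g \geq 2$ the complement $\S_g \setminus \alpha$ contains a subsurface of genus at least $g-1 \geq 1$, hence a separating curve $\theta$ of $\S_g$ disjoint from $\alpha$; if $\gamma$ is a curve disjoint from $\alpha$ with $[\gamma]=x$ (say $x$ primitive and $\alpha$ chosen as you propose, transverse to a cycle representing $x$ but with some curves of that cycle disjoint from it), then the curves $T_\theta^k(\gamma)$, $k \in \Z$, give infinitely many pairwise non-isotopic vertices of $\B_g(x)$ on which $c$ vanishes. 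So $c^{-1}(\min)$ is an infinite subcomplex, essentially a complex of cycles for a cut-open surface, and its contractibility is the original problem in disguise rather than a convexity observation. Enlarging $\alpha$ to a filling system does not help: then no cycle is disjoint from it and the minimum locus is controlled by a different, non-convex condition.

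The second issue is that the heart of the matter --- that the surgered curves again satisfy conditions (i) and (ii) of the definition of $\M(x)$, that the partial surgeries of a given cycle span cells adjacent to the original one, and that the resulting descending configuration is contractible rather than, say, disconnected --- is exactly what you defer to ``careful bookkeeping,'' and it is where all of Section 5 of \cite{Bestvina} lives. (There are also technical points to address before PL-Morse theory applies at all: $\B_g(x)$ is not locally finite, and your $c$ can be constant on cells of positive dimension.) You have correctly located the difficulty, but the proposal is a plan rather than a proof; for the purposes of this paper the statement should simply be cited, as the author does.
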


\subsection{The spectral sequence for a group action on a CW-complex}

Suppose that a group $G$ acts cellularly without rotations on a contractible $CW$-complex $X$. 
Let $C_*(X, \Z)$ be the cellular chain complex of $X$ and $\R_*$ be a projective resolution for $\Z$ over $\Z G$. Consider the double complex  $B_{p, q} = C_p(X, \Z) \otimes_G \R_q$ with the filtration by columns. The corresponding spectral sequence (see (7.7) in \cite[Section VII.7]{Brown}) has the form
\begin{equation} \label{spec_sec}
E_{p, q}^1 \cong \bigoplus_{\sigma \in \X_p}\H_q(\Stab_G (\sigma), \Z) \Rightarrow \H_{p+q}(G, \Z),
\end{equation}
where $\X_p$ is a set containing one representative from each $G$-orbit of $p$-cells of $X$. The group
$
\bigoplus_{p+q=n}E_{p, q}^{\infty}
$
is the adjoint graded group for a filtration in the homology group $H_{p+q}(G, \Z)$. Note that for an arbitrary $CW$-complex $X$ the spectral sequence (\ref{spec_sec}) converges to the equivariant homology $\H^G_{p+q}(X, \Z)$. So for a contractible $CW$-complex $X$ we have $\H^G_{p+q}(X, \Z) \cong \H_{p+q}(G, \Z)$.

Now let $(E_{*,*}^*, d_{*, *}^*)$ be the spectral sequence (\ref{spec_sec}) for the action on $\I_g$ on $\B_g(x)$ for some primitive element $0 \neq x \in \H_{1}(\S_g, \Z)$.
The fact that $\I_g$ acts on $\B_g(x)$ without rotations follows from the result of Ivanov \cite[Theorem 1.2]{Ivanov}: if an element $h \in \I_g$ stabilises some multicurve $M$ the $h$ stabilises each component of $M$.
We have the spectral sequence 
\begin{equation} \label{spec_sec1}
E_{p, q}^1 \cong \bigoplus_{M \in \M_p(x) / \I_g}\H_q(\Stab_{\I_g} (M), \Z) \Rightarrow \H_{p+q}(\I_g, \Z).
\end{equation}
(For a group $G$ acting on a set $X$ we denote by $X / G$ any set containing one representative from each $G$-orbit in the set $X$.) Let us introduce some notation. Let $P \subseteq \B_g(x)$ be a $p$-cell and $h \in \H_q(\I_g, \Z)$ be a homology class. By $P \otimes h \in E_{p, q}^1$ we denote the element that maps to $h \in \H_q(\Stab_{\I_g} (P), \Z)$ under the isomorphism (\ref{spec_sec1}). This notation is convenient because the term $E_{p, q}^0$ is defined as the tensor product. The differential $d_{p, q}^1$ has the form $$d_{p, q}^1 (P \otimes h) = \partial P \otimes h \in E_{p-1, q}^1,$$
where $h$ in the right hand side denotes the images of $h$ under the mappings induced by the inclusions $\Stab_{\I_g} (P) \hookrightarrow \Stab_{\I_g} (Q)$ for all cells $Q \subseteq \partial P$.

Bestvina, Bux and Margalit proved \cite[Proposition 6.2]{Bestvina} that for each cell $\sigma \in \B_g(x)$ we have
\begin{equation} \label{ineq}
\dim(\sigma) + \cd(\Stab_{\I_g}(\sigma)) \leq 3g-5.
\end{equation}
Formulas (\ref{spec_sec1}) and (\ref{ineq}) immediately imply the following fact.
\begin{corollary} \label{cor}
	Let $E_{*,*}^*$ be the spectral sequence (\ref{spec_sec1}).
	Then $E^1_{p, q} = 0$ for $p+q > 3g-5$.
\end{corollary}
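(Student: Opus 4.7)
The plan is to derive the vanishing directly from the Bestvina--Bux--Margalit bound (\ref{ineq}) applied summand by summand to the explicit description (\ref{spec_sec1}) of the $E^1$-page. Nothing beyond these two inputs is needed.

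First, I would unpack the first page of the spectral sequence: by (\ref{spec_sec1}), for each $p$ we have
\begin{equation*}
E^1_{p, q} \cong \bigoplus_{M \in \M_p(x) / \I_g} \H_q(\Stab_{\I_g}(M), \Z),
\end{equation*}
so the vanishing of $E^1_{p,q}$ reduces to showing that each summand $\H_q(\Stab_{\I_g}(M), \Z)$ is zero under the hypothesis $p + q > 3g - 5$. Here $M$ ranges over orbit representatives of $p$-dimensional cells, so $\dim P_M = p$.

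Next, for such a fixed $M$, the inequality (\ref{ineq}) gives
\begin{equation*}
\cd(\Stab_{\I_g}(M)) \leq 3g - 5 - \dim P_M = 3g - 5 - p.
\end{equation*}
Since for any discrete group $G$ with $\cd(G) < \infty$ we have $\H_q(G, \Z) = \Tor^{\Z G}_q(\Z, \Z) = 0$ for all $q > \cd(G)$ (apply any projective resolution of $\Z$ over $\Z G$ of length $\cd(G)$), it follows that $\H_q(\Stab_{\I_g}(M), \Z) = 0$ whenever $q > 3g - 5 - p$, i.e.\ whenever $p + q > 3g - 5$. Summing over the orbit representatives gives $E^1_{p,q} = 0$ in this range.

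There is no genuine obstacle: the corollary is a straightforward repackaging of the Bestvina--Bux--Margalit bound in the language of the equivariant spectral sequence, and the only ingredient not explicitly established in the excerpt is the standard fact that group homology with arbitrary coefficients vanishes strictly above the cohomological dimension.
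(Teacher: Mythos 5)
Your proof is correct and is essentially the paper's own argument: the paper simply states that the corollary follows immediately from the decomposition (\ref{spec_sec1}) and the inequality (\ref{ineq}), which is precisely the summand-by-summand vanishing you spell out. The one ingredient you make explicit --- that group homology vanishes strictly above the cohomological dimension --- is the standard fact the paper leaves implicit, and your justification of it is fine.
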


\subsection{Stabilisers of multicurves}

We denote by $T_{\gamma}$ the left Dehn twist about a simple closed curve $\gamma$.
Let $M$ be a multicurve on $\S_g$. We denote by $\Stab_{\Mod(\S_{g})}(\overrightarrow{M}) \subseteq \Mod(\S_g)$ the subgroup consisting of all mapping classes that stabilize every component of $M$ and preserve the orientation of every component
of $M$. Then there is the following Birman-Lubotzky–McCarthy exact sequence (see \cite[Lemma 2.1]{Lubotzky})
\begin{equation} \label{LB}
1 \rightarrow G(M) \rightarrow \Stab_{\Mod(\S_{g})}(\overrightarrow{M}) \rightarrow \PMod(\S_{g} \setminus M) \rightarrow 1,
\end{equation}
where $G(M)$ is the group generated by Dehn twists about the components of $M$.

There is the a version of the sequence (\ref{LB}) for the Torelli group, see \cite[Section 6.2]{Bestvina}. Let $M$ be a multicurve on $\S_g$ without separating components. Then we have the exact sequence
\begin{equation} \label{LB2}
1 \rightarrow \Z^{BP(M)} \rightarrow \Stab_{\I_g}(M) \rightarrow \PMod(\S_{g} \setminus M),
\end{equation}
where $BP(M)$ is the number of curves of $M$ minus the
number of distinct homology classes represented by the curves of $M$. The group $\Z^{BP(M)}$ is generated by the twists about bounding pairs contained in $M$. Recall that a twist about bounding pair is a map $T_{\delta'}^{-1}T_{\delta}$, where $\delta$ and $\delta'$ are disjoint curves representing the same homology class. Moreover, we have the following inequality (\cite[Lemma 6.13]{Bestvina}).
\begin{equation} \label{cd}
\cd(\Stab_{\I_g} M) \leq 3g- 3 - P(M) - |M| + BP(M),
\end{equation}
where $P(M)$ is the number of positive genus components of $\S_g \setminus M$.

Also, we will need the following proposition.
\begin{prop} \cite[Theorem 4.6]{Primer} \label{Birman} 
	\textbf{(Birman exact sequence).} Let $\S$ be a surface with $\chi(\S) < 0$. Denote by $\S'$ the surface obtained from $\S$ by removing a point $x$ in the interior of $\S$. Then the following sequence is exact
	\begin{equation}\label{Birman1}
	1 \to \pi_1(\S, x) \to \PMod(\S') \to \PMod(\S) \to 1.
	\end{equation}
\end{prop}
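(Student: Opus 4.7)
The plan is to exhibit (\ref{Birman1}) as the tail of the long exact homotopy sequence of an evaluation fibration. Let $\Homeo^{+}(\S)$ denote the topological group of orientation-preserving self-homeomorphisms of $\S$ that fix $\partial \S$ pointwise and fix each puncture of $\S$, equipped with the compact-open topology, and let $\Homeo^{+}(\S, x) \subseteq \Homeo^{+}(\S)$ be the closed subgroup additionally fixing $x$. The first step is to verify that the evaluation map $\mathrm{ev}: \Homeo^{+}(\S) \to \S \setminus \partial \S$, $f \mapsto f(x)$, is a locally trivial fibre bundle with fibre $\Homeo^{+}(\S, x)$. Local triviality over a small embedded disk $U$ around any interior point $y$ is obtained by choosing a continuous family of compactly supported isotopies of $\S$ carrying $y$ to every point of $U$; this yields $\mathrm{ev}^{-1}(U) \cong U \times \Homeo^{+}(\S, x)$.

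The second ingredient is the Hamstrom--Earle--Eells contractibility theorem: for a compact oriented surface $\S$ with $\chi(\S) < 0$, the identity component of $\Homeo^{+}(\S)$ (with the above meaning) is contractible. Hence $\pi_1(\Homeo^{+}(\S)) = 0$ and $\pi_0(\Homeo^{+}(\S)) = \PMod(\S)$. Since $\chi(\S') = \chi(\S) - 1 < 0$, applying the same theorem with $x$ treated as an additional puncture gives $\pi_0(\Homeo^{+}(\S, x)) = \PMod(\S')$.

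Substituting these identifications into the long exact homotopy sequence of $\mathrm{ev}$, together with the obvious isomorphism $\pi_1(\S \setminus \partial\S, x) \cong \pi_1(\S, x)$, produces
\begin{equation*}
0 = \pi_1(\Homeo^{+}(\S)) \to \pi_1(\S, x) \to \PMod(\S') \to \PMod(\S) \to 0,
\end{equation*}
which is exactly (\ref{Birman1}). The connecting homomorphism $\pi_1(\S, x) \to \PMod(\S')$ admits the explicit description as the \emph{point-pushing map}: to a loop $\gamma$ one associates the mapping class realised by dragging $x$ around $\gamma$ via an ambient isotopy of $\S$.

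The main obstacle is the Hamstrom--Earle--Eells contractibility statement, whose proof is genuinely nontrivial --- one typically deformation-retracts the identity component of $\Homeo^{+}(\S)$ onto the isometry group of a chosen hyperbolic structure, which is trivial in the range $\chi(\S) < 0$. Verifying local triviality of $\mathrm{ev}$ and unpacking the homotopy exact sequence are then standard.
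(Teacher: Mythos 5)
Your proof is correct and coincides with the argument behind the paper's citation: the paper gives no proof of its own but refers to \cite[Theorem 4.6]{Primer}, where the Birman exact sequence is obtained exactly as you describe, from the long exact homotopy sequence of the evaluation fibration $\Homeo^{+}(\S, x) \to \Homeo^{+}(\S) \to \S$ together with the contractibility of the identity component of $\Homeo^{+}(\S)$ when $\chi(\S) < 0$. The identification of the connecting map with the point-pushing map is also the standard one, so there is nothing to add.
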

Here the map $\PMod(\S') \to \PMod(\S)$ is obtained by “forgetting” the puncture $x$, and the image of an element of $\pi_1(\S, x)$ in $\PMod(\S')$ is realized by “pushing” the puncture $x$ around that element of $\pi_1(\S, x)$. There is a version of the Birman exact sequence for the Torelli group \cite[Proposition 6.13]{Primer}:
\begin{equation}\label{BirmanT}
1 \to \pi_1(\S_g, \pt) \to \I_{g, 1} \to \I_g \to 1.
\end{equation}

\section{Strategy of the proof} \label{S3}

From now throughout the paper we fix $g=3$ and put $\S = \S_3$, $\I = \I_3$, $\B(x) = \B_3(x)$. For a group $G$ acting on a set $Y$ we denote $G_y = \Stab_G(y)$ for $y \in Y$. In particular, we denote by $\I_M$ the stabiliser of a (multi)curve $M$ in $\I$. Also we set $\H = \H_1(\S, \Z)$. We say that a subgroup $U \subset \H$ is \textit{symplectic} if the restriction of the intersection form on $U$ has determinant $1$. We say that $(V_1, V_2, V_3)$ is a (ordered) \textit{splitting} of $\H$, if $\H = V_1 \oplus V_2 \oplus V_3$ where $V_1$, $V_2$ and $V_3$ are symplectic subgroups of rank $2$. For $U \subseteq \H$ we denote by $U^{\perp} \subseteq \H$ the orthogonal subgroup with respect to the intersection form.

For a separating curve $\theta$ we denote by $X_\theta$ the once-punctured torus bounded by $\theta$, and denote by $\overline{X}_\theta$ its closure. Also we denote $\H_\theta = \H_1(X_\theta, \Z) \subset \H$.
For a multicurve $M$ we say that a symplectic subgroup $U \subset \H$ is \textit{admissible}, if $\rk U = 2$ and $U = \H_\theta$ for some separating curve $\theta$ disjoint from $M$. Similarly, we say that a splitting $V = (V_1, V_2, V_3)$ of $\H$ is \textit{admissible}, if $V$ can be obtained via two separating curves disjoint from $M$. 

\subsection{The construction of s-classes}

Now let us describe explicitly the set of generators of the group $\H_4(\I_3, \Z)$, which we will call \textit{s-classes}. For a splitting $(V_1, V_2, V_3)$ let us now define the correspondent s-class $s(V_1, V_2, V_3) \in \H_4(\I_3, \Z)$ in the following way.

Consider any two disjoint separating curves $\gamma$ and $\delta$ on $\S_3$ bounding punctured tori $X_\gamma$ and $X_\delta$ respectively (see Fig. \ref{S}), such that $\H_\gamma = \H_1(X_\gamma, \Z) = V_1$ and $\H_\delta = \H_1(X_\delta, \Z) = V_3$ (this implies that $\H_1(Y, \Z) = V_2$, where $Y$ is the third connected component of $\S_3 \setminus \{\gamma, \delta\}$). 

\begin{figure}[h]
	\begin{center}
		\scalebox{0.5}{
			\begin{tikzpicture}
			\draw[red, very thick, dashed] (2,3) to[out = -100, in = 100] (2, -3);
			\draw[red, very thick, dashed] (-2,3) to[out = -100, in = 100] (-2, -3);
			
			\draw[very thick] (-4, 3) to (4, 3);
			\draw[very thick] (-4, -3) to (4, -3);
			\draw[very thick] (-4, 3) arc (90:270:3);
			\draw[very thick] (4, 3) arc (90:-90:3);
			\draw[very thick] (0, 0) circle (1);
			\draw[very thick] (-4, 0) circle (1);
			\draw[very thick] (4, 0) circle (1);

			\draw[red, very thick] (2,3) to[out = -80, in = 80] (2, -3);
			\draw[red, very thick] (-2,3) to[out = -80, in = 80] (-2, -3);
			
			\node[red, scale = 2] at (2.7, 2) {$\delta$};
			\node[red, scale = 2] at (2.7-4, 2) {$\gamma$};
			
			\node[scale = 3] at (3.5, -2) {$X_\delta$};
			\node[scale = 3] at (-3.5, -2) {$X_\gamma$};
			\node[scale = 3] at (0, -2) {$Y$};

			\end{tikzpicture}} \end{center}
	\caption{}
	\label{S}
\end{figure}
 
Consider the group $\I_\gamma =\Stab_{\I} (\gamma)$. The exact sequence (\ref{LB}) implies that we have
\begin{equation}\label{eq3}
1 \to \left\langle T_\gamma \right\rangle  \to \I_\gamma \to \PMod(\S_{1, 1}) \times \PMod(\S_{2, 1}).
\end{equation}
Since the group $\I_{1, 1}$ is trivial, one can easily compute the image of $\I_\gamma$ is precisely $\I_{2, 1}$.
\begin{equation}\label{eq4}
1 \to \left\langle T_\gamma \right\rangle  \to \I_\gamma \to \I_{2, 1} \to 1.
\end{equation}
Also let us consider the exact sequence (\ref{BirmanT}) in the case $g=2$:
\begin{equation}\label{BirmanT2}
1 \to \pi_1(\S_2, \pt) \to \I_{2, 1} \to \I_2 \to 1.
\end{equation}
The groups $\pi_1(\S_g, \pt)$ and $\I_2$ are generated by Dehn twists and bounding pair maps disjoint from $\gamma$, therefore $\left\langle T_\gamma \right\rangle $ belongs to the center of $\I_\gamma$. Consequently, $\I_{2, 1}$ acts trivially on $\H_1(\left\langle T_\gamma \right\rangle , \Z)$. Hence Fact \ref{HS1} applied to the exact sequence (\ref{eq4}) yields an isomorphism 
$$\alpha_{\gamma}: \H_{3}(\I_{2, 1}, \Z) \cong \H_4(\I_\gamma, \Z)$$
which is given by the Gysin map in this case.

The group $\I_{2}$ acts trivially on $\H_2(\pi_1(\S_2, \pt), \Z) = \H_2(\S_2, \Z) \cong \Z$.
Hence Fact \ref{HS1} applied to the exact sequence (\ref{BirmanT2}) yields an isomorphism $$\beta_{\gamma}: \H_{1}(\I_{2}, \Z) \cong \H_3(\I_{2, 1}, \Z).$$ Therefore we have
\begin{equation}\label{iso1}
\alpha_{\gamma} \circ \beta_{\gamma}: \H_{1}(\I_{2}, \Z)  \cong \H_4(\I_\gamma, \Z).
\end{equation}
Denote $$s(V_2, V_3) = \alpha_{\gamma} \circ \beta_{\gamma} ([T_\delta]) \in \H_4(\I_\gamma, \Z),$$
we call these homology classes \textit{initial s-classes}.

Mess \cite{Mess} proved that the group $\I_2$ is freely generated by an infinite number of Dehn twists about separating curves. Moreover, $\I_2$ has one generator for each splitting of the group $\H_1(\S_2, \Z)$ into direct of two symplectic subgroups of rank $2$. Hence the isomorphism (\ref{iso1}) yields $s(V_2, V_3) = s(V_3, V_2)$. Moreover, we obtain the following result.
\begin{prop} \label{pre}
	Let $\gamma$ be a separating curve on $\S$ bounding once-puncture torus with the first homology group $V_1 \subset \H$.
	Then $\H_4(\I_\gamma, \Z)$ is a free abelian group with the basis consisting of the elements $s(V_2, V_3) = s(V_3, V_2)$ for all splittings of $\H$ of the form $(V_1, V_2, V_3)$, considered up to interchanging $V_2$ and $V_3$.
\end{prop}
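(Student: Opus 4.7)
The plan is to transport the claim through the isomorphism
$$\alpha_\gamma \circ \beta_\gamma : \H_1(\I_2, \Z) \xrightarrow{\sim} \H_4(\I_\gamma, \Z)$$
already constructed in this subsection, and then read off the basis from Mess's theorem on $\I_2$.

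First I would make explicit the identification of the genus-two surface $\S_2$ involved: it is the closed surface obtained by capping off the boundary of the complement $\S_{2,1} = \overline{\S \setminus X_\gamma}$. Since $\gamma$ is separating, the Mayer--Vietoris sequence for $\S = X_\gamma \cup \S_{2,1}$ (together with $[\gamma]=0$ in each piece) identifies $\H_1(\S_2, \Z)$ with the symplectic subgroup $V_1^\perp \subset \H$. Under this identification, an unordered symplectic splitting of $\H_1(\S_2, \Z)$ into two rank-two summands is precisely the same data as an unordered pair $\{V_2, V_3\}$ such that $(V_1, V_2, V_3)$ is a splitting of $\H$ in the sense introduced above.

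Next I would invoke Mess's theorem as cited in the introduction: the group $\I_2$ is a free group, freely generated by the Dehn twists $T_\delta$ about separating simple closed curves $\delta \subset \S_2$, with exactly one generator for each unordered splitting of $\H_1(\S_2, \Z)$ into rank-two symplectic summands. Consequently $\H_1(\I_2, \Z)$ is a free abelian group with basis $\{[T_\delta]\}$ parametrized by such splittings. By the very definition of $s(V_2, V_3)$, the isomorphism $\alpha_\gamma \circ \beta_\gamma$ carries $[T_\delta]$ to $s(V_2, V_3)$, where $\{V_2, V_3\}$ is the splitting of $V_1^\perp$ associated to $\delta$; the symmetry $s(V_2, V_3) = s(V_3, V_2)$ then holds tautologically, as the two orderings single out the same curve $\delta$.

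Putting these together I would conclude that $\H_4(\I_\gamma, \Z)$ is free abelian with basis the classes $s(V_2, V_3) = s(V_3, V_2)$ ranging over unordered pairs $\{V_2, V_3\}$ for which $(V_1, V_2, V_3)$ is a splitting of $\H$, which is exactly the statement of the proposition. I do not expect a serious obstacle: the only care required is in bookkeeping the identification $\H_1(\S_2, \Z) \cong V_1^\perp$ and verifying that the separating curve $\delta \subset \S_3$ appearing in the definition of $s(V_2, V_3)$ corresponds, after capping off $X_\gamma$, precisely to the Dehn-twist generator of $\I_2$ furnished by Mess's theorem.
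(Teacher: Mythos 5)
Your proposal is correct and follows essentially the same route as the paper: the paper likewise deduces the proposition directly from Mess's theorem (one free generator $T_\delta$ of $\I_2$ per unordered rank-two symplectic splitting of $\H_1(\S_2,\Z)$) transported through the isomorphism $\alpha_\gamma \circ \beta_\gamma$, with the symmetry $s(V_2,V_3)=s(V_3,V_2)$ read off tautologically. The extra bookkeeping you supply (the identification $\H_1(\S_2,\Z)\cong V_1^{\perp}$ and the matching of splittings) is left implicit in the paper but is exactly the intended content.
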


Now let as define the \textit{s-class} by
$$s(V_1, V_2, V_3) = (\iota_{\gamma})_* (s(V_2, V_3)) \in \H_4(\I_3, \Z),$$
where by $\iota_{\gamma}$ we denote the inclusion $\iota_{\gamma}: \I_\gamma \hookrightarrow \I$.

Note that homology class $s(V_1, V_2, V_3) \in \H_4(\I, \Z)$ does not depend on the choice of $\gamma$ and $\delta$. This follows from the fact that all such pairs of curves are $\I$-equivalent. Proposition \ref{pre} implies the following result.

\begin{prop} \label{relation1}
	For a splitting $(V_1, V_2, V_3)$ of $\H$ we have 
	\begin{equation}\label{rel1}
	s(V_1, V_2, V_3) = s(V_1, V_3, V_2).
	\end{equation}
\end{prop}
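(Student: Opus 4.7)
The plan is to derive the relation (\ref{rel1}) directly from Proposition \ref{pre} and the definition of s-classes. Fix a separating curve $\gamma$ on $\S$ with $\H_\gamma = V_1$; by the definition given just above the proposition, both $s(V_1, V_2, V_3)$ and $s(V_1, V_3, V_2)$ are obtained as the images under $(\iota_\gamma)_* : \H_4(\I_\gamma, \Z) \to \H_4(\I, \Z)$ of the initial s-classes $s(V_2, V_3)$ and $s(V_3, V_2)$, respectively. Consequently it suffices to establish the equality $s(V_2, V_3) = s(V_3, V_2)$ already inside $\H_4(\I_\gamma, \Z)$, and this is exactly the symmetry statement built into Proposition \ref{pre}.

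To be explicit about why that symmetry holds, I would trace through the construction. Under the isomorphism $\alpha_{\gamma} \circ \beta_{\gamma} : \H_1(\I_2, \Z) \xrightarrow{\sim} \H_4(\I_\gamma, \Z)$, the class $s(V_2, V_3)$ is the image of $[T_\delta]$, where $\delta$ is any separating curve on $\S_3$ disjoint from $\gamma$ with $\H_\delta = V_3$; the Birman sequence (\ref{BirmanT2}) sends $\delta$ to a separating curve on $\S_2$ that realises the splitting $\H_1(\S_2, \Z) = V_2 \oplus V_3$. By Mess's theorem, such a separating curve on $\S_2$ is determined up to isotopy by the corresponding unordered splitting, and hence the same Dehn twist class arises whether we view the splitting as $(V_2, V_3)$ or as $(V_3, V_2)$. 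Therefore $[T_\delta] = [T_{\delta'}]$ in $\H_1(\I_2, \Z)$, where $\delta'$ is the separating curve associated to $(V_3, V_2)$, and the two initial s-classes agree.

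Applying the homomorphism $(\iota_\gamma)_*$ to this equality yields (\ref{rel1}). There is no real obstacle: the proposition is essentially a restatement of Proposition \ref{pre}, which itself rests on Mess's free-generator description of $\I_2$ combined with the observation that the unordered splittings $\{V_2, V_3\}$ and $\{V_3, V_2\}$ coincide.
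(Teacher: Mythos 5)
Your argument is correct and is essentially the paper's own proof: the paper deduces (\ref{rel1}) directly from Proposition \ref{pre}, since the initial s-classes $s(V_2, V_3)$ and $s(V_3, V_2)$ already coincide in $\H_4(\I_\gamma, \Z)$ (Mess's generators of $\I_2$ are indexed by \emph{unordered} splittings of $\H_1(\S_2,\Z)$), and applying $(\iota_\gamma)_*$ gives the relation. One small imprecision: a separating curve on $\S_2$ is \emph{not} determined up to isotopy by the splitting it induces — what is true, and what your argument actually needs, is that all such curves are $\I_2$-equivalent, so their Dehn twists are conjugate and define the same class $[T_\delta] \in \H_1(\I_2, \Z)$, namely the Mess generator attached to the unordered splitting $\{V_2, V_3\}$.
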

However, we will prove that the classes $s(V_1, V_2, V_3)$ does depend on which of  subgroups is listed first.

\subsection{The spectral sequences for the complex of cycles}

Now let $(E_{*,*}^*, d_{*, *}^*)$ be the spectral sequence (\ref{spec_sec}) for the action on $\I$ on $\B(x)$ for some primitive element $0 \neq x \in \H$. By Corollary \ref{cor} we have $E^1_{p, q} = 0$ for $p+q > 4$. Since $\dim \B(x) = 3$ it follows that $E^1_{4, 0} = 0$. Hence all nonzero terms of the page $E^1$ are shown on the left in Fig. \ref{E1}.

The following two propositions, which will be proved in Sections \ref{S4} and \ref{S5}, imply that all nonzero terms of the page $E^2$ are shown on the right in Fig. \ref{E1}.
\begin{prop} \label{diff31}
	The differential $d^1_{3, 1}: E^1_{3, 1} \to E^1_{2, 1}$ is injective.
\end{prop}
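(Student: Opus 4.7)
The plan is to analyze $d^1_{3,1}$ cell by cell, using the fact that $3$-cells of $\B(x)$ are pants decompositions of $\S$ with very rigid stabilizers, and then exhibit non-cancelling contributions in the image. First I would classify the $\I$-orbits of $3$-cells: by (\ref{dim}), a $3$-cell $P_M$ has $|\S \setminus M| = 4$, and since each component of $\S \setminus M$ has Euler characteristic at most $-1$ while the total is $-4$, such an $M$ is necessarily a pants decomposition. Up to the $\I$-action these are classified by the combinatorics of how the six curves sit in $\H$: which components are separating, and which non-separating ones pair into bounding pairs. Only pants decompositions supporting a basic $1$-cycle for $x$ are relevant, which further restricts the orbit types.

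For each such $M$, the sequence (\ref{LB2}) (extended to allow separating components) together with the triviality of $\PMod$ of a disjoint union of pairs of pants implies that $\Stab_\I(M)$ is a free abelian group generated by Dehn twists $T_\theta$ about the separating components $\theta \in M$ and by bounding pair maps $T_\gamma T_{\gamma'}^{-1}$ about pairs of homologous non-separating components $\gamma, \gamma' \in M$. Consequently $\H_1(\Stab_\I(M), \Z)$ has an explicit basis indexed by these geometric data. In parallel I would classify the $\I$-orbits of $2$-cells $P_{M'}$, which satisfy $|\S \setminus M'| = 3$, forcing one component of $\S \setminus M'$ to be either a four-holed sphere or a two-holed torus while the other two are pairs of pants; I would compute $\H_1(\Stab_\I(M'), \Z)$ from (\ref{LB2}) and the known first homology of $\PMod(\S_{0,4})$ and $\PMod(\S_{1,2})$.

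With these lists in hand, the differential becomes explicit: $d^1_{3,1}(P_M \otimes \mathfrak{e}) = \sum_{P_{M'} \subset \partial P_M} \pm P_{M'} \otimes \iota_*(\mathfrak{e})$, where the inclusion $\Stab_\I(M) \hookrightarrow \Stab_\I(M')$ sends a basis element $\mathfrak{e}$ (supported on one or two specific curves of $M$) to the analogous generator in $\H_1(\Stab_\I(M'))$ whenever its supporting curves remain in $M'$. To establish injectivity I would either exhibit, for each basis element $P_M \otimes \mathfrak{e}$ of $E^1_{3,1}$, a distinguished boundary face $P_{M'}$ at which the component $P_{M'} \otimes \iota_*(\mathfrak{e})$ is non-zero and is not produced by any other $3$-cell, or construct a filtration of $E^1_{3,1}$ and $E^1_{2,1}$ by a geometric invariant (such as the number of separating curves in $M$, or the topological type of the bounding pair) and show that the associated graded map is injective.

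The main obstacle will be controlling cancellations: two distinct pants decompositions $M$ and $M^*$ can share a common codimension-one refinement $M'$, so their contributions to $P_{M'} \otimes \iota_*(\mathfrak{e})$ could in principle cancel. Overcoming this requires a careful choice of distinguished face, most naturally one in which the supporting curves of $\mathfrak{e}$ together with the combinatorics of $M'$ determine the ambient pants decomposition uniquely. A promising approach is to remove a single curve of $M$ that is \emph{not} part of the bounding pair (or neighborhood of the separating curve) underlying $\mathfrak{e}$, so that $\mathfrak{e}$ clearly survives in $\H_1(\Stab_\I(M'))$, and to verify case by case that the resulting $M'$ admits a unique completion to an element of $\M_3(x)$ containing the supporting curves of $\mathfrak{e}$; this rigidity would force injectivity.
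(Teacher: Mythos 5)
Your overall framework --- classify the $\I$-orbits of $3$-cells, identify their stabilizers as generated by bounding pair maps, write out $d^1_{3,1}$ face by face, and isolate a contribution that no other $3$-cell can cancel --- is the same as the paper's, and the classification step comes out right once you drop the worry about separating components: every component of a multicurve in $\M(x)$ lies in $\C$ and is non-separating by definition, so the only $3$-cells with non-trivial stabilizer are those containing a single bounding pair $\{\gamma_1,\gamma_2\}$, with $\Stab_{\I}(M)=\left\langle T_{\gamma_1}T_{\gamma_2}^{-1}\right\rangle\cong\Z$.

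The genuine gap is in your choice of distinguished face and the rigidity claim you attach to it. If you delete a curve of $M$ that is \emph{not} part of the bounding pair, the resulting $2$-cell $M'$ still contains $\{\gamma_1,\gamma_2\}$, so by (\ref{LB2}) its stabilizer contains $T_{\gamma_1}T_{\gamma_2}^{-1}$ as the generator of the central $\Z$-factor (as in Propositions \ref{H124} and \ref{H125}, $\Stab_\I(M')\cong\Z\times F_\infty$), and \emph{every} $3$-cell of type $\M'_3$ containing $M'$ with this same bounding pair pushes its generator to that one fixed class in $\H_1(\Stab_\I(M'),\Z)$. Moreover the completion of $M'$ to such a $3$-cell is far from unique --- there are infinitely many, obtained by re-inserting any suitable curve in the four-holed-sphere (or two-holed-torus) component of $\S\setminus M'$ --- so the uniqueness you propose to "verify case by case" is false, and nothing rules out cancellation at these faces. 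The paper does the opposite: it deletes one of the two curves of the bounding pair itself, landing on a face of type $\M'_2$ whose stabilizer is an infinitely generated \emph{free} group on the conjugates $T_\delta^k T_{\gamma_1}T_{\gamma_2}^{-1}T_\delta^{-k}$ (Proposition \ref{propG}). There the completions are again non-unique (they are exactly $M'_2\cup T_\delta^k(\gamma_1)$, $k\in\Z$), but they contribute pairwise \emph{distinct} basis vectors of $\H_1(\I_{M'_2},\Z)$; after projecting $E^1_{2,1}$ onto the subgroup $Q$ spanned by these classes, each basis element of $E^1_{3,1}$ maps to a difference of two basis elements of $Q$ and these pairs are pairwise disjoint, which forces injectivity. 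So the distinguished datum must be a pair (face of type $\M'_2$, basis element of its abelianized free stabilizer) rather than a face alone, and your plan needs exactly this switch to go through.
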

\begin{prop} \label{diff22}
	The differential $d^1_{2, 2}: E^1_{2, 2} \to E^1_{1, 2}$ is injective.
\end{prop}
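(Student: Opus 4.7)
The approach is to enumerate the $\I$-orbits of 2-cells of $\B(x)$ for which $\H_2(\Stab_{\I}(M)) \neq 0$, compute that group explicitly via the exact sequence (\ref{LB2}) and Fact \ref{HS2}, and then prove injectivity of $d^1_{2,2}$ by restricting to a carefully chosen codimension-one face of each such cell.

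The first step is to restrict the orbit types. Any multicurve $M \in \M_2(x)$ contributing to $E^1_{2,2}$ must satisfy $\cd(\Stab_{\I}(M)) = 2$ by (\ref{ineq}). Combining this with $|\S \setminus M| = 3$ from (\ref{dim}) and the bound (\ref{cd}) forces $BP(M) \geq |M| + P(M) - 4$. Running through the admissible pairs $(|M|, \rk M) \in \{(3,1), (4,2), (5,3)\}$ leaves a short, explicit list of orbit types; the analogous procedure applied to 1-cells (where $(|M'|, \rk M') \in \{(2,1), (3,2), (4,3)\}$) yields a short list for $E^1_{1,2}$.

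Second, for each such $M$ the BP-twists in $\Z^{BP(M)}$ are supported on $M$ and hence central in $\Stab_{\I}(M)$, since $\I$ acts on $\B(x)$ without rotations. Iterating Fact \ref{HS2} applied to (\ref{LB2}) gives an isomorphism
\[
\H_2(\Stab_{\I}(M)) \;\cong\; \H_{2 - BP(M)}\bigl(\mathrm{image}\,(\Stab_{\I}(M) \to \PMod(\S \setminus M)),\, \Z\bigr),
\]
whose right-hand side is a small, explicit free abelian group, generated by abelian cycles $\A(\tau_1, \tau_2)$ with $\tau_i$ bounding-pair or Dehn twists supported away from (or as components of) $M$. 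For each such $M$ I would then single out a codimension-one face $M' \subsetneq M$ with $BP(M') = BP(M)$, i.e.\ corresponding to removing a curve whose class lies in the span of the classes of the remaining components. This choice preserves the central subgroup used in Fact \ref{HS2}, and the naturality of the corresponding Hochschild--Serre spectral sequences yields injectivity of the restriction map $\H_2(\Stab_{\I}(M)) \to \H_2(\Stab_{\I}(M'))$ on the abelian-cycle generators.

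To upgrade these cell-wise injectivities to injectivity of the full $d^1_{2,2}$, I would order the 2-cell orbits by discrete invariants of $M$ (topological type of $\S \setminus M$, $\rk M$, $BP(M)$, and the $\Sp(6,\Z)$-equivariant class of the subgroup of $\H$ spanned by the components) so that distinct orbits send their preferred 1-faces into distinct 1-cell orbits; this renders the differential block-triangular with injective diagonal blocks. The main obstacle I anticipate is precisely this last combinatorial bookkeeping: ruling out coincidences in which two distinct $\I$-orbits of 2-cells share a preferred 1-face orbit in such a way that their images cancel in $E^1_{1,2}$. Handling this, together with verifying that the classification in the first step is exhaustive, constitutes the technical heart of Section \ref{S5}.
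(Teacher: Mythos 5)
Your overall skeleton (classify the $\I$-orbits of $2$-cells with $\H_2(\Stab_\I(M))\neq 0$, identify $\Stab_\I(M)\cong\Z\times F_\infty$ via (\ref{LB2}), and present $\H_2$ as a free abelian group on abelian cycles $\A(T_{\gamma^+}T_{\gamma^-}^{-1},T_\theta)$) matches Subsection 5.1 of the paper. The gap is in the injectivity argument, and it is exactly the point you flag as ``combinatorial bookkeeping'': it is not bookkeeping, it is the technical heart of the proof, and the mechanism you propose cannot close it. The invariants you list (topological type of $\S\setminus M$, $\rk M$, $BP(M)$, the $\Sp(6,\Z)$-class of the span of the components) are all \emph{constant} on the infinitely many $\I$-orbits of $2$-cells of a fixed combinatorial type ($\M_2^4$ or $\M_2^5$) sharing a fixed admissible subgroup $U$ and a fixed bounding-pair class $y$. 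Those cells assemble into an infinite $2$-complex ($\Gamma_y$ in the paper, Fig.~\ref{geom}) in which \emph{every} edge lies in the boundary of at least two, and typically three, $2$-cells of the family, with the restricted class $\A_U$ nonzero on each such edge. Hence there is no ``free'' preferred face, and no ordering by your invariants makes the matrix block-triangular. The paper's substitute is a genuinely new invariant: $\Psi(\sigma)=\max_v\sum_j k_j$, where $x=\sum_j k_j[\gamma_j]$ at a vertex $v$ of $\sigma$; one takes a horizontal cell in the support with maximal $\Psi$ and derives a contradiction because the neighbours forced into the support by Proposition~\ref{edge} have strictly larger $\Psi$.

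A second, independent gap: your claim that naturality of Hochschild--Serre gives ``injectivity of the restriction map on the abelian-cycle generators'' ignores the faces of types $\N_1^3,\N_1^4$ (an edge obtained by deleting one curve of the bounding pair). There the three $2$-cells adjacent across such an edge restrict $\A_U$ to three classes $\A_U^1,\A_U^2,\A_U^3\in\H_2(\I_{N_1^i},\Z)$ satisfying the relation $\A_U^1+\A_U^2+\A_U^3=0$ (a consequence of the lantern relation, Lemma~\ref{pres}), so cancellations genuinely occur on these faces. Proving that this is the \emph{only} relation, and that the classes for distinct $U$ remain independent (Lemmas~\ref{pres} and~\ref{linearind3}), requires pairing against the cup products of Gaifullin's cohomology classes $\nu_{\gamma,\W}$ --- an input absent from your outline. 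Without both of these ingredients the proposed proof does not go through.
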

Proposition \ref{diff31} is not hard. However, the proof of Proposition \ref{diff22} is quite complicated, and is one of the central results of the whole work.

\begin{figure}[h]
	\begin{minipage}[h]{0.49\linewidth}
		\begin{center}
			\scalebox{1}{
				\begin{tikzpicture}
				\draw[->] (0, 0) to (0, 6);
				\draw[] (1, 0) to (1, 5);
				\draw[] (2, 0) to (2, 4);
				\draw[] (3, 0) to (3, 3);
				\draw[] (4, 0) to (4, 2);
				
				\draw[->] (0, 0) to (5, 0);
				\draw[] (0, 1) to (4, 1);
				\draw[] (0, 2) to (4, 2);
				\draw[] (0, 3) to (3, 3);
				\draw[] (0, 4) to (2, 4);
				\draw[] (0, 5) to (1, 5);
				
				\draw[->] (3.5, 1.5) to (2.5, 1.5);
				\draw[] (3.5, 1.5) arc (-90:90:0.1);
				
				\draw[->] (2.5, 2.5) to (1.5, 2.5);
				\draw[] (2.5, 2.5) arc (-90:90:0.1);
				
				\node[scale = 1][left] at (0, 0.5) {$0$};
				\node[scale = 1][left] at (0, 1.5) {$1$};
				\node[scale = 1][left] at (0, 2.5) {$2$};
				\node[scale = 1][left] at (0, 3.5) {$3$};
				\node[scale = 1][left] at (0, 4.5) {$4$};
				
				\node[scale = 1][below] at (0.5, 0) {$0$};
				\node[scale = 1][below] at (1.5, 0) {$1$};
				\node[scale = 1][below] at (2.5, 0) {$2$};
				\node[scale = 1][below] at (3.5, 0) {$3$};
				
				\node[scale = 1][left] at (0, 5.5) {$q$};
				\node[scale = 1][below] at (4.5, 0) {$p$};
				
				\end{tikzpicture}} \\  $E^1$ \end{center}
	\end{minipage}
	\hfill
	\begin{minipage}[h]{0.49\linewidth}
		\begin{center}
			\scalebox{1}{
				\begin{tikzpicture}
				\draw[->] (0, 0) to (0, 6);
				\draw[] (1, 0) to (1, 5);
				\draw[] (2, 0) to (2, 4);
				\draw[] (3, 0) to (3, 2);
				\draw[] (4, 0) to (4, 1);
				
				\draw[->] (0, 0) to (5, 0);
				\draw[] (0, 1) to (4, 1);
				\draw[] (0, 2) to (3, 2);
				\draw[] (0, 3) to (2, 3);
				\draw[] (0, 4) to (2, 4);
				\draw[] (0, 5) to (1, 5);
				
				\node[scale = 1][left] at (0, 0.5) {$0$};
				\node[scale = 1][left] at (0, 1.5) {$1$};
				\node[scale = 1][left] at (0, 2.5) {$2$};
				\node[scale = 1][left] at (0, 3.5) {$3$};
				\node[scale = 1][left] at (0, 4.5) {$4$};
				
				\node[scale = 1][below] at (0.5, 0) {$0$};
				\node[scale = 1][below] at (1.5, 0) {$1$};
				\node[scale = 1][below] at (2.5, 0) {$2$};
				\node[scale = 1][below] at (3.5, 0) {$3$};

				\node[scale = 1][left] at (0, 5.5) {$q$};
				\node[scale = 1][below] at (4.5, 0) {$p$};
				
				\end{tikzpicture}} \\  $E^2$ \end{center}
	\end{minipage}
	\caption{The pages $E^1$ and $E^2$ and the differentials $d^1_{3, 1}$ and $d^1_{2, 2}$.}
	\label{E1}
\end{figure}
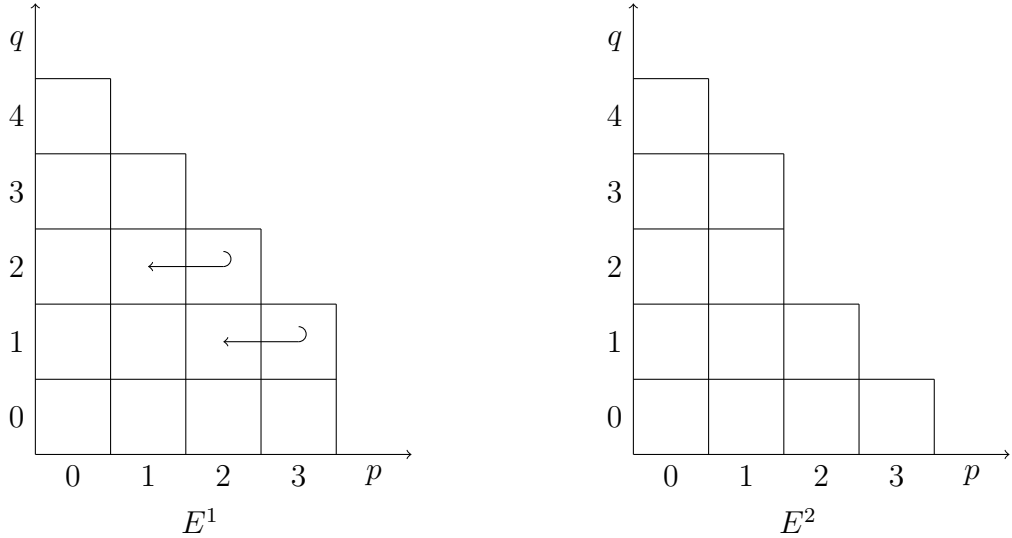

\begin{corollary} \label{corEinf}
	We have the exact sequence
	$$1 \to E^1_{0, 4} \to \H_4(\I, \Z) \to E^2_{1, 3} \to 1.$$
\end{corollary}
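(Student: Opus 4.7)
The plan is purely spectral-sequence bookkeeping: once Propositions \ref{diff31} and \ref{diff22} are in hand, the corollary follows by combining them with the vanishing range and dimension bound that have already been recorded.

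First I would pin down the non-zero entries on the antidiagonal $p+q = 4$ at the $E^2$ page. Corollary \ref{cor} (with $3g-5 = 4$) gives $E^1_{p,q} = 0$ for $p+q > 4$, and $\dim \B(x) = 3$ gives $E^1_{4,0} = 0$, so only four antidiagonal entries are candidates. The injectivity of $d^1_{3,1}$ from Proposition \ref{diff31}, together with $E^1_{4,1} = 0$, forces $E^2_{3,1} = 0$; the injectivity of $d^1_{2,2}$ from Proposition \ref{diff22}, together with $E^1_{3,2} = 0$, forces $E^2_{2,2} = 0$. Thus only $E^2_{0,4}$ and $E^2_{1,3}$ survive on this antidiagonal, and because $E^1_{1,4} = 0$ we moreover have $E^2_{0,4} = E^1_{0,4}$.

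Next I would argue that no higher differential touches the two survivors. For any $r \geq 2$, the differential $d^r$ arriving in $E^r_{0,4}$ or $E^r_{1,3}$ would originate in a subquotient of some $E^1_{p,q}$ with $p+q \geq 5$, which is zero; the outgoing differentials land in the empty region to the left of the vertical axis. Hence $E^\infty_{0,4} = E^1_{0,4}$ and $E^\infty_{1,3} = E^2_{1,3}$, while $E^\infty_{p,4-p} = 0$ for $p \geq 2$.

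Finally, the convergent filtration $F_0 \subseteq F_1 \subseteq F_2 \subseteq F_3 = \H_4(\I, \Z)$ with $F_p / F_{p-1} \cong E^\infty_{p, 4-p}$ collapses: $F_3 = F_2 = F_1$ because the top two graded pieces vanish, while $F_0 = E^1_{0,4}$ and $F_1 / F_0 = E^2_{1,3}$. This gives exactly the desired short exact sequence
$$0 \to E^1_{0,4} \to \H_4(\I, \Z) \to E^2_{1,3} \to 0.$$
There is no real obstacle in this step; the substance of the corollary is entirely concentrated in Propositions \ref{diff31} and \ref{diff22}, and in particular in the hard injectivity statement for $d^1_{2,2}$.
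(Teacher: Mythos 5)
Your argument is correct and follows the same route as the paper: Propositions \ref{diff31} and \ref{diff22} kill $E^2_{3,1}$ and $E^2_{2,2}$, the vanishing of $E^1_{p,q}$ for $p+q>4$ shows no higher differential touches $E^2_{0,4}=E^1_{0,4}$ or $E^2_{1,3}$, and the two-step filtration of $\H_4(\I,\Z)$ yields the short exact sequence. Your write-up simply makes explicit the bookkeeping that the paper compresses into the reference to Fig.~\ref{E1}.
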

\begin{proof}
	All differentials $d^2, d^3, \dots$ from and to the group $E^2_{1, 3}$ are trival, so $E^\infty_{1, 3} = E^2_{1, 3}$. Also we obviously have $E^\infty_{0, 4} = E^1_{0, 4}$. Propositions \ref{diff31} and \ref{diff22} imply that the page $E^2$ has the form shown in Fig. \ref{E1}. Therefore the page $E^\infty$ has the same form. The exact sequence 
	$$1 \to E^\infty_{0, 4} \to \H_4(\I, \Z) \to E^\infty_{1, 3} \to 1$$
	implies the result.
\end{proof}

\begin{remark} \label{remarkG}
	Gaifullin \cite{Gaifullin_T3} studied the same spectral sequence (also in the case $g = 3$) in order to obtain a partial result towards the conjecture that $\H_2(\I_3, \Z)$ is not finitely generated. He proved the the group $E_{0, 2}^3$ is not finitely generated, that is, the group $E_{0, 2}^1$ remains infinitely generated after taking quotients by images of the differentials $d_{1, 2}^1$ and $d_{2, 1}^2$. If one give a proof that it also remains infinitely generated after taking quotient by the image of $d_{0, 3}^3$, this would imply that $\I_3$ is not finitely presented.
\end{remark}

\subsection{The morphism of the spectral sequences}

For each separating curve $\gamma$ on $\S$ denote by $(E^{(\gamma) *}_{*, *}, d_{*, *}^{(\gamma) *})$ the spectral sequence (\ref{spec_sec}) for the action of $\I_{\gamma}$ on $\B(x)$.
Denote by $j^{(\gamma) *}_{*, *}: E^{(\gamma) *}_{*, *} \to E^*_{*, *}$ the morphism of the spectral sequences induced by the inclusion $\iota_{\gamma}: \I_{\gamma} \hookrightarrow \I$. 

Consider the morphism
\begin{equation} \label{morspec}
J_{*, *}^*: \bigoplus_{\gamma} E^{(\gamma) *}_{*, *} \to E^*_{*, *},
\end{equation}
where $J_{*, *}^*$ is induces by $j^{(\gamma) *}_{*, *}$, and denote $\E^*_{*, *} = \bigoplus_{\gamma} E^{(\gamma) *}_{*, *}$, where the sums are over all separating curves $\gamma$ on $\S$. The following two propositions, which will be proved in Sections \ref{S6} and \ref{S7}, describe some properties of the morphism $J_{*, *}^*: \E^*_{*, *} \to E^*_{*, *}$.

\begin{prop} \label{surj1}
	The map $J^2_{1, 3}: \E^2_{1, 3} \to E^2_{1, 3}$ is surjective.
\end{prop}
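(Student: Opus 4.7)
The plan is to analyze the $E^1$-page of the spectral sequence edge-by-edge and show that every element of $E^2_{1,3}=\ker(d^1_{1,3})$ lifts through $J^1_{1,3}$. The first step is to classify the $\I$-orbits of edges $M\in\M_1(x)$ for which $\H_3(\Stab_\I(M),\Z)\neq 0$. The edge relation $|M|=\rk M+1$, which follows from $\dim P_M = |\S\setminus M|-1 = 1$, together with the bound (\ref{cd}), forces $P(M)+|M|-BP(M)\leq 3$, and a direct case analysis yields a short list of topological types. The principal case is a bounding pair $M=\{\delta_1,\delta_2\}$ whose complement splits as $L\sqcup R=\Sigma_{1,2}\sqcup\Sigma_{1,2}$; a few degenerate multicurves with three or four components must also be handled.

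For each such edge type I would apply the Hochschild--Serre spectral sequence to the exact sequence (\ref{LB2}) and, for the $\Sigma_{1,2}$-factors, iteratively to Birman-type sequences (\ref{Birman1}) in order to identify $\H_3(\Stab_\I M,\Z)$ as spanned by abelian cycles of the form $\A(T_{\delta_1}T_{\delta_2}^{-1},f_L,f_R)$, where $f_L$ and $f_R$ are Torelli elements supported in $L$ and $R$, respectively. Crucially, $f_L$ may always be taken to be a Dehn twist $T_\gamma$ about an essential separating curve $\gamma\subset L$ that bounds a $\Sigma_{1,1}$ inside $L$. Such a $\gamma$ is automatically a separating curve in $\S$ bounding a $\Sigma_{1,1}$, because the $\Sigma_{1,1}$ it bounds inside $L$ is the same subsurface inside $\S$. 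Hence the three elements $T_{\delta_1}T_{\delta_2}^{-1}$, $T_\gamma$, and $f_R$ pairwise commute and all lie in $\I_\gamma$, so the abelian cycle $\A(T_{\delta_1}T_{\delta_2}^{-1},T_\gamma,f_R)$ is the image of a class in $\H_3(\Stab_{\I_\gamma}(M),\Z)\subseteq E^{(\gamma)1}_{1,3}$.

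Summing over the generating abelian cycles and over the relevant choices of $\gamma$, and handling the degenerate edge types by locating analogous separating curves inside a positive-genus component of $\S\setminus M$, one obtains that every element of $E^1_{1,3}$ lies in the image of $J^1_{1,3}$. Naturality of $d^1$ under $\iota_\gamma\colon\I_\gamma\hookrightarrow\I$ ensures that the lift of a $d^1$-cycle can itself be chosen to be a $d^1$-cycle, which yields the surjectivity of $J^2_{1,3}$. The main obstacle is the detailed Hochschild--Serre computation of $\H_3(\Stab_\I M,\Z)$ in each edge type and the verification that every generating abelian cycle indeed admits an expression in which one factor is a Dehn twist about a separating curve of $\S$; this is least obvious in the degenerate cases where the complementary components of $\S\setminus M$ have lower genus and the stock of available separating curves in $\S$ disjoint from $M$ is more restricted.
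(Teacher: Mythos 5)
Your reduction to exhibiting lifts of abelian cycles $\A(T_{\delta_1}T_{\delta_2}^{-1},T_\gamma,f_R)$ inside $\H_3(\Stab_{\I_\gamma}(M),\Z)$ does establish surjectivity of $J^1_{1,3}$, and this part agrees with the paper. The genuine gap is the final step: naturality of $d^1$ gives only $J^1_{0,3}\circ \d^{\,1}_{1,3}=d^1_{1,3}\circ J^1_{1,3}$, so for a lift $\tilde z$ of a cycle $z\in\ker d^1_{1,3}$ you can conclude only that $d^1_{1,3}(\tilde z)$ lies in $\ker J^1_{0,3}$, not that it vanishes; since $J^1_{0,3}$ is a map from a huge direct sum over all separating curves, it is far from injective, and "the lift of a $d^1$-cycle can itself be chosen to be a $d^1$-cycle" is precisely the nontrivial assertion that needs proof, not a consequence of functoriality. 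In fact the difficulty is worse than a bookkeeping issue: for splittings of type (c) with respect to $x$, the individual basis elements $P_{\alpha_1\cup\alpha'_1\cup\alpha_2\cup\alpha_3}\otimes\A_{U_2,U_3}$ of $E^1_{1,3}$ are \emph{not} cycles downstairs --- their image under $d^1_{1,3}$ is the nonzero class $P_{\alpha_1\cup\alpha_2\cup\alpha_3}\otimes\A(T_{\theta_1},T_{\theta_2},T_{\theta_3})$, whose nonvanishing the paper establishes via the complex of relative cycles (Lemma \ref{ind}) --- so only certain differences of such elements lie in $E^2_{1,3}$, and one must lift those differences to cycles upstairs.

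Concretely, the paper fills this gap with three computations your outline omits: (i) an explicit evaluation of $d^1_{1,3}$ on the abelian-cycle generators using the lantern relation (Lemma \ref{dM2}); (ii) a vanishing result (Lemma \ref{zero}) showing that $\A(T_{\theta_1},T_{\theta_2},T_{\theta_3})$ is already zero in $\H_3$ of the relevant vertex stabiliser inside the \emph{smaller} group $\I_{\theta_j}$ when the splitting is of type (a) or (b) --- this requires a genuinely delicate argument (pushing a puncture around a commutator loop via the Birman exact sequence), and it is exactly what guarantees that your proposed lift is a $d^{(\gamma)1}$-cycle in $E^{(\gamma)1}_{1,3}$; and (iii) the nonvanishing result of Lemma \ref{ind} needed to identify $\ker d^1_{1,3}$ in the type (c) case, where the correct lift is a difference of two terms whose differentials cancel. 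Without establishing (i)--(iii), or some substitute for them, the surjectivity of $J^2_{1,3}$ does not follow from the surjectivity of $J^1_{1,3}$.
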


\begin{prop} \label{surj0}
	The map $J^1_{0, 4}: \E^1_{0, 4} \to E^1_{0, 4}$ is surjective.
\end{prop}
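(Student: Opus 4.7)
My plan begins by identifying $E^1_{0,4}$ explicitly and then exhibiting preimages of its generators.

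First I pin down which orbits contribute. For a basic multicurve $M \in \M_0(x)$ one has $|M| = \rk M$ and connected complement, so the three possibilities $|M| = 1, 2, 3$ give respectively $(P(M), BP(M)) = (1, 0), (1, 0), (0, 0)$, and (\ref{cd}) evaluates to $4, 3, 3$. Hence only a single non-separating curve $\gamma_0$ with $[\gamma_0] = x$ can contribute, and by transitivity of $\I$ on such curves, $E^1_{0,4} \cong \H_4(\Stab_\I(\gamma_0), \Z)$. By (\ref{LB2}) the map $\Stab_\I(\gamma_0) \to \PMod(\S_{2,2})$ is injective (since $T_{\gamma_0}\notin\I$), and a direct analysis of the $\H_1(\S_3)$-action identifies the image with the Torelli subgroup $\I_{2,2}$. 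Two applications of Fact \ref{HS1}, first to the Birman sequence $1 \to \pi_1(\S_{2,1}) \to \I_{2,2} \to \I_{2,1} \to 1$ and then to (\ref{BirmanT2}) for $g = 2$, give
\[
E^1_{0,4} \;\cong\; \H_4(\I_{2,2}, \Z) \;\cong\; \H_1(\I_2, \Z) \otimes \H_1(\S_{2,1}, \Z),
\]
with trivial coefficient actions, since $\I_{2,1}$ acts trivially on both $\H_1(\S_{2,1})$ and $\H_2(\pi_1 \S_2)=\Z$. By Mess, $\I_2$ is free on Dehn twists $T_\delta$ for separating $\delta \subset \S_2$, so $E^1_{0,4}$ is free abelian on classes of the form $[T_\delta] \otimes e$.

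For each such generator I want a preimage in $E^{(\tilde \delta)\,1}_{0,4}$ for an appropriate separating $\tilde \delta \subset \S_3$. I lift $\delta \subset \S_2$ to a separating $\tilde \delta \subset \S_3$ in the ``both boundary circles on the same side'' configuration. The naive disjoint choice $\tilde \delta \cap \gamma_0 = \emptyset$ fails: (\ref{cd}) applied to the multicurve $\tilde \delta \cup \gamma_0$ yields $\cd(\Stab_\I(\tilde \delta \cup \gamma_0)) \leq 2$, so this stabilizer carries no $4$-dimensional homology. I therefore choose an $\I$-translate $\gamma_0'$ of $\gamma_0$ meeting $\tilde \delta$ transversally in two points; then the pair $(\tilde \delta, \gamma_0')$ is not a multicurve and (\ref{cd}) no longer applies. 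An adapted Birman-type analysis of $\Stab_{\I_{\tilde \delta}}(\gamma_0') = \I_{\tilde \delta} \cap \Stab_\I(\gamma_0')$ shows this subgroup has cohomological dimension $4$ and carries a class of the desired type.

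The core technical step is to realize $[T_\delta] \otimes e$ by an explicit $4$-cycle in $\Stab_{\I_{\tilde \delta}}(\gamma_0')$. Combining $T_{\tilde \delta}$ with a bounding pair map arising from the point-push of a curve representing $e \in \H_1(\S_{2,1})$ (chosen with a representative disjoint from $\tilde \delta$), and augmenting with a $2$-cycle realizing the fundamental class $\H_2(\pi_1 \S_2) = \Z$, produces a $4$-cycle whose image under $J^1_{0,4}$ matches $[T_\delta] \otimes e$ by naturality of the Hochschild--Serre isomorphisms in the chain $\H_4(\I_{2,2}) \cong \H_3(\I_{2,1}, \H_1(\S_{2,1})) \cong \H_1(\I_2) \otimes \H_1(\S_{2,1})$. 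The main obstacle is the verification of this matching: it requires an explicit geometric model of the configuration $(\tilde \delta, \gamma_0', e)$ and a diagram chase through three stacked Hochschild--Serre spectral sequences, tracking the abelian/non-abelian $4$-cycle and its behavior under restriction to stabilizer subgroups. Summing over all generators $[T_\delta] \otimes e$ then yields surjectivity of $J^1_{0,4}$.
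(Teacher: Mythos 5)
Your identification of the contributing orbit is fine ($E^1_{0,4}\cong \H_4(\I_\alpha,\Z)$ for a single curve $\alpha$ with $[\alpha]=x$; this is also the paper's starting point), but the computation of this group via stacked Birman sequences breaks down at the first stage, and the error propagates through everything that follows. The kernel of the composite $\Stab_{\I}(\alpha)\to\PMod(\S_{2,2})\to\PMod(\S_{2,1})$ is \emph{not} the full point-pushing group $\pi_1(\S_{2,1})$. A point-push of the puncture coming from one side of $\alpha$ around a loop $\omega$ lifts to a handle-drag in $\Stab_{\Mod(\S_3)}(\vec{\alpha})$, and this acts on $\H_1(\S_3,\Z)$ by a nontrivial transvection-type map (it sends a class $b$ dual to $[\alpha]$ to $b+[\omega]+k[\alpha]$). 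So only the point-pushes with $[\omega]=0$ and vanishing extra coefficient lie in the Torelli group; this is an infinitely generated free subgroup of $\pi_1(\S_{2,1})$, not $\pi_1(\S_{2,1})$ itself, and its $\H_1$ is infinitely generated rather than $\Z^4$. Consequently the claimed isomorphism $E^1_{0,4}\cong\H_1(\I_2,\Z)\otimes\H_1(\S_{2,1},\Z)$ is false. You can cross-check this against Proposition \ref{kernel2} and Lemma \ref{gen} of the paper: $E^1_{0,4}$ has a basis indexed by splittings $\H=U_1\oplus U_2\oplus U_3$ with $x\in U_1$, where $U_1$ ranges over \emph{infinitely many} symplectic subgroups containing $x$; your answer would instead give a rank-$4$ module over the splittings of a single genus-$2$ lattice, which is a structurally different (and too small) group. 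This is exactly why the paper does not attempt a direct Birman-sequence computation of $\H_4(\I_\alpha,\Z)$ and instead introduces the auxiliary complex of cycles $\B_\alpha(y)$ with its own spectral sequence $\widetilde{E}$, proving $\H_4(\I_\alpha,\Z)\cong\widetilde{E}^2_{1,3}$.

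The second half of the argument also does not survive. Your reason for rejecting separating curves $\tilde\delta$ disjoint from $\alpha$ is based on applying (\ref{cd}) to $\tilde\delta\cup\alpha$, but that formula is only valid for multicurves without separating components, so the bound $\cd(\Stab_\I(\tilde\delta\cup\alpha))\le 2$ is not legitimate; in fact $\Stab_{\I_{\tilde\delta}}(\alpha)$ does carry $4$-dimensional homology, and the paper's proof of Proposition \ref{surj0} shows precisely that the images of $\H_4(\Stab_{\I_\gamma}(\alpha),\Z)$ over separating $\gamma$ \emph{disjoint} from $\alpha$ already generate $\H_4(\I_\alpha,\Z)$. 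Your detour through a curve $\gamma_0'$ meeting $\tilde\delta$ in two points abandons the only setting in which the tools of the paper (the sequences (\ref{LB}), (\ref{LB2}) and the cell stabilizer computations) apply, and the "core technical step" — producing the $4$-cycle and matching it through three Hochschild--Serre sequences — is explicitly left unverified. As written, the proposal establishes neither a correct description of the target $E^1_{0,4}$ nor the existence of preimages, so the surjectivity claim is not proved.
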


\begin{corollary} \label{gener}
	The set of all s-classes generates the group $\H_4(\I_3, \Z)$.
\end{corollary}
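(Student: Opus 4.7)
The plan is to derive Corollary \ref{gener} from Propositions \ref{pre}, \ref{surj1}, and \ref{surj0} via a spectral-sequence chase based on Corollary \ref{corEinf}. The concrete goal will be to show that the combined map
$$\Phi = \bigoplus_\gamma (\iota_\gamma)_*: \bigoplus_\gamma \H_4(\I_\gamma, \Z) \to \H_4(\I, \Z),$$
where $\gamma$ ranges over all separating curves on $\S$, is surjective. Once this is established, Proposition \ref{pre} identifies each $\H_4(\I_\gamma, \Z)$ with the free abelian group on the initial s-classes $s(V_2, V_3)$, and since $(\iota_\gamma)_* s(V_2, V_3) = s(\H_\gamma, V_2, V_3)$ by the very definition of an s-class, the image of $\Phi$ coincides with the subgroup generated by all s-classes in $\H_4(\I, \Z)$.

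First I would compare the two filtrations. Let $F_\bullet$ and $F^{(\gamma)}_\bullet$ denote the filtrations on $\H_4(\I, \Z)$ and $\H_4(\I_\gamma, \Z)$ associated to the spectral sequences (\ref{spec_sec1}); the inclusions $\iota_\gamma$ preserve them. By Corollary \ref{corEinf} and Figure \ref{E1}, in the target one has $0 \subset F_0 \subset F_1 = \H_4(\I, \Z)$ with $F_0 = E^\infty_{0, 4} = E^1_{0, 4}$ and $F_1/F_0 = E^\infty_{1, 3} = E^2_{1, 3}$. For the source, at both positions $(1, 3)$ and $(0, 4)$ the outgoing differentials on every page have targets with negative $p$-coordinate and hence vanish, so the stable terms $E^{(\gamma) \infty}_{1, 3}$ and $E^{(\gamma) \infty}_{0, 4}$ are successive quotients of $E^{(\gamma) 2}_{1, 3}$ and $E^{(\gamma) 1}_{0, 4}$. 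By the functoriality of the morphism (\ref{morspec}), Propositions \ref{surj1} and \ref{surj0} therefore imply the surjectivity of the induced maps
$$J^\infty_{1, 3}: \bigoplus_\gamma E^{(\gamma) \infty}_{1, 3} \twoheadrightarrow E^\infty_{1, 3}, \qquad J^\infty_{0, 4}: \bigoplus_\gamma E^{(\gamma) \infty}_{0, 4} \twoheadrightarrow E^\infty_{0, 4}.$$

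Then I would run the two-step chase. Given $h \in \H_4(\I, \Z)$, project it to $\bar h \in E^\infty_{1, 3}$, use surjectivity of $J^\infty_{1, 3}$ to choose a preimage in $\bigoplus_\gamma E^{(\gamma) \infty}_{1, 3}$, and lift each component non-canonically to $F^{(\gamma)}_1 \H_4(\I_\gamma, \Z)$. Let $h_1 \in \H_4(\I, \Z)$ be the $\Phi$-image of this lift; commutativity of the filtration-vs-associated-graded diagram forces $h_1$ to project to $\bar h$ in $E^\infty_{1, 3}$, so $h - h_1 \in F_0 \H_4(\I, \Z) = E^\infty_{0, 4}$. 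For the second step, surjectivity of $J^\infty_{0, 4}$ together with the identification $E^{(\gamma) \infty}_{0, 4} = F^{(\gamma)}_0 \H_4(\I_\gamma, \Z)$ produces an actual element $h_0 \in \bigoplus_\gamma \H_4(\I_\gamma, \Z)$ with $\Phi(h_0) = h - h_1$. Combining, $h$ lies in the image of $\Phi$, which finishes the argument.

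I expect the real content of the corollary to be absorbed entirely into Propositions \ref{surj1} and \ref{surj0}; the present chase is essentially formal. The only mildly delicate point is the passage from surjectivity of $J^2_{1,3}$ and $J^1_{0,4}$ to surjectivity of the corresponding $J^\infty$'s, but this is forced by the absence of outgoing differentials at the positions in question and by the fact that $E^2_{1, 3}$ and $E^1_{0, 4}$ are already stable on the target side, so no genuine new difficulty arises here.
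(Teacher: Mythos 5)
Your proof is correct and follows essentially the same route as the paper: identify $\H_4(\I_\gamma,\Z)$ with the span of initial s-classes via Proposition \ref{pre}, feed the surjectivity statements of Propositions \ref{surj1} and \ref{surj0} into the two graded pieces of the filtration from Corollary \ref{corEinf}, and chase. Your explicit justification of the passage from $J^2_{1,3}$ and $J^1_{0,4}$ to the $J^\infty$ maps is a point the paper leaves implicit, but the argument is the same.
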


\begin{proof}
	Let $\gamma$ be a separating curve on $\S_g$.
	Proposition \ref{pre} implies that $\H_4(\I_{\gamma}, \Z)$ is generated by initial s-classes. Therefore the group $\E^1_{0, 4}$ is generated by linear combinations of initial s-classes and the group $\E^2_{1, 3} \subseteq \H_4(\I_{\gamma}, \Z) / \E^1_{0, 4}$ is generated by cosets containing initial s-classes. Hence Propositions \ref{surj0} implies that $E^1_{0, 4}$ is generated by linear combinations of s-classes. Proposition \ref{surj1} implies that $E^2_{1, 3} = \H_4(\I_3, \Z) / E^1_{0, 4}$ is generated by cosets containing s-classes. Corollary \ref{corEinf} concludes the proof.
\end{proof}

Using the morphism (\ref{morspec}) we will also prove the following result in Section \ref{S8}.

\begin{theorem} \label{th2}
	For any splitting $(V_1, V_2, V_3)$ we have
	\begin{equation}\label{rel2}
	s(V_1, V_2, V_3) + s(V_2, V_3, V_1) + s(V_3, V_1, V_2) = 0.
	\end{equation}
	Any linear relation between s-classes follows from (\ref{rel1}) and (\ref{rel2}).
\end{theorem}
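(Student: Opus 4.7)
The proof splits into establishing the cyclic relation (\ref{rel2}) and verifying that (\ref{rel1}) combined with (\ref{rel2}) exhaust all linear relations between s-classes. My strategy relies on the morphism of spectral sequences $J^*_{*,*}$ of (\ref{morspec}).

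For the cyclic relation, I would choose three pairwise disjoint separating curves $\gamma_1, \gamma_2, \gamma_3 \subset \S$ cutting $\S$ into three once-punctured tori $X_1, X_2, X_3$ with $\H_1(X_i, \Z) = V_i$ and a central pair of pants. For each cyclic shift $i$, the class $s(V_i, V_{i+1}, V_{i+2})$ (indices mod $3$) is the pushforward under $\iota_{\gamma_i}$ of an initial class in $\H_4(\I_{\gamma_i}, \Z)$ produced by the chain (\ref{iso1}), and this initial class represents an element of $E^{(\gamma_i) 1}_{0,4}$. The three initial classes together form a triple in $\E^1_{0,4}$, and I plan to show that its image under $J^1_{0,4}$ vanishes in $E^1_{0,4}$: vertex-by-vertex, for each $\I$-orbit of vertex $v$ of $\B(x)$, the three contributions to the summand $\H_4(\Stab_\I(v), \Z)$ must cancel. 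This cancellation should be verified by computing each contribution explicitly via (\ref{iso1}) and comparing them under the natural identifications of the three induced classes in a common stabilizer, tracking signs through the iterated Gysin maps of Fact \ref{HS2}.

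For completeness, once (\ref{rel2}) is established, Corollary \ref{gener} together with (\ref{rel1}) and (\ref{rel2}) yields a surjection
$$\Phi: \bigoplus_{\{V_1, V_2, V_3\}} \ZZ \twoheadrightarrow \H_4(\I, \Z),$$
where the sum ranges over unordered splittings of $\H$. To prove $\Phi$ is injective, I would use the short exact sequence of Corollary \ref{corEinf} to decompose any candidate linear relation among s-classes into components in $E^1_{0,4}$ and in $E^2_{1,3}$. By the surjectivity of $J^1_{0,4}$ and $J^2_{1,3}$ (Propositions \ref{surj0} and \ref{surj1}), each component can be traced back to a relation between images of initial s-classes in some $\H_4(\I_\gamma, \Z)$: by Proposition \ref{pre} the only relation internal to a single $\H_4(\I_\gamma, \Z)$ is (\ref{rel1}), while additional relations arise only from identifications of initial s-classes across different separating curves $\gamma_i$ via the three-curve construction above, and these are exhausted precisely by the $\Sp(6, \Z)$-translates of (\ref{rel2}). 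The main obstacle is the cyclic relation itself: tracking each initial s-class through the multiple Gysin and Hochschild-Serre steps in (\ref{iso1}), then identifying how the three contributions interact under $J^1_{0,4}$ with the correct signs, is the delicate technical heart of the argument.
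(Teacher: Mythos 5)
Your proposal follows the paper's general architecture (split $\H_4(\I,\Z)$ via Corollary \ref{corEinf} into $E^1_{0,4}$ and $E^2_{1,3}$ and use the morphism $J$), but it diverges from the paper at both key steps, and in each case the step you gloss over is where the actual difficulty lives. For the cyclic relation, you propose to realize the three classes in $\E^1_{0,4}$ and cancel them ``vertex-by-vertex'' in $E^1_{0,4}$. Two problems. First, the three s-classes lie in $E^1_{0,4}$ only for a special choice of $x$ (essentially $x\in V_i$ for some $i$); for the $x$ the paper uses (type (c), so that Lemma \ref{alt} applies), their images in $E^2_{1,3}$ are \emph{nonzero}, and it is exactly these images that telescope to zero --- the cancellation happens in $E^2_{1,3}$, not in $E^1_{0,4}$. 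Second, even after arranging the classes to lie in $E^1_{0,4}$, there is essentially only one $\I$-orbit of vertices with $\H_4(\Stab_\I(v),\Z)\neq 0$, so all three contributions land in the \emph{same} summand $\H_4(\I_\alpha,\Z)$, an infinitely generated group; ``comparing them under natural identifications'' there requires a faithful computational model of $\H_4(\I_\alpha,\Z)$, which is precisely what the whole auxiliary-complex machinery of Section \ref{S7} (the complex $\B_\alpha(y)$ and the basis of $\widetilde{E}^2_{1,3}$ in Proposition \ref{kernel2}) is built to provide. The isomorphism (\ref{iso1}) computes $\H_4$ of stabilizers of \emph{separating} curves and gives you no way to detect whether a sum of classes in $\H_4(\I_\alpha,\Z)$ is zero. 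The paper also needs a final step you omit entirely: after showing the cyclic sum lies in $E^1_{0,4}$, one must still rule out that it is a nonzero element there; this is done by Lemma \ref{gen} ($E^1_{0,4}$ is generated by type (a) classes) together with the direct-sum decomposition of Lemma \ref{inds}, using that the given splitting is of type (c).

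For the completeness statement, the argument ``each component can be traced back to a relation between images of initial s-classes, and these are exhausted by (\ref{rel1}) and (\ref{rel2})'' is circular: surjectivity of $J^1_{0,4}$ and $J^2_{1,3}$ controls the image, not the kernel, and determining which linear combinations of pushed-forward classes vanish is the entire content of the claim. The paper proves independence by (i) invoking Proposition \ref{prop_dec} to choose $x$ so that all finitely many splittings involved are of type (c), (ii) using Lemma \ref{alt} to write the image of each $S_\U$ in $E^2_{1,3}$ in terms of the explicit basis of Proposition \ref{kernel}, from which independence of distinct $\U$'s and the rank-two statement inside each $S_\U$ can be read off. Neither the adapted choice of $x$ nor the explicit basis of $E^2_{1,3}$ appears in your outline, and without them the independence claim is unsupported.
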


Now let us deduce the main result from Corollary \ref{gener} and Theorem \ref{th2}.

\begin{proof}[Proof of Theorem \ref{mainth}.] Obviously the action of $\Sp(6, \Z)$ on the s-classes has the form $h \cdot s(V_1, V_2, V_3) = s(h V_1, h V_2, h V_3)$. The stabiliser in $\Sp(6, \Z)$ of an unordered splitting $\H = V_1 \oplus V_2 \oplus V_3$ is isomorphic to $S_3 \ltimes \SL(2, \Z)^{\times 3}$. The $S_3 \ltimes \SL(2, \Z)^{\times 3}$-module, formally generated by the six s-classes corresponding to six permutations of $V_1, V_2, V_3$ in the splitting $(V_1, V_2, V_3)$, satisfying the relations (\ref{rel1}) and (\ref{rel2}), is isomorphic to $\ZZ$. Therefore Corollary \ref{gener} together with the last statement of Theorem \ref{th2} imply that
\begin{equation*}
\H_4(\I_3, \Z) \cong \ZZ \otimes_{S_3 \ltimes \SL(2, \Z)^{\times 3}} \Z[\Sp(6, \Z)] = \Ind^{\Sp(6, \Z)}_{S_3 \ltimes \SL(2, \Z)^{\times 3}} \ZZ.
\end{equation*}
This concludes the proof.
\end{proof}

This paper is organized as follows. In Section \ref{S4} we compute explicitly the group $E^1_{3, 1}$ and give a straightforward proof of Proposition \ref{diff31}. The proof of Proposition \ref{diff22} is more complicated and require more computations, because we need some additional information about the structure of the group $E^1_{1, 2}$. This proof is given in Section \ref{S5}. Section \ref{S6} contains a proof of Proposition \ref{surj1}. The key idea is to use the \textit{complex of relative cycles}, introduced by the author in \cite{Spiridonov}. Then, in Section \ref{S7}, we provide a proof of Proposition \ref{surj0}. It requires us to consider some additional spectral sequences associated with the action of curve stabilisers on some subcomplexes of $\B(x)$. Finally, in Section \ref{S8} we give a proof of Theorem \ref{th2} based on these spectral sequences.

\section{Proof of Proposition \ref{diff31}} \label{S4}

Let us introduce some notation. It will be usually convenient to forget about the orientation of the multicurves from the set $\M(x)$ and consider them up to the action of the (setwise) stabiliser of $\Z[x] \subset \H$ in the whole group $\Mod(\S)$. The corresponding equivalence classes will be called \textit{combinatorial types} of multicurves.

In order to prove Proposition \ref{diff31} we need to compute the group $E^1_{3, 1}$ explicitly. We claim that there are only two combinatorial types of multicurves in $\M_3(x)$, see Fig. \ref{M3}; we denote these classes by $\M'_3$ and $\M''_3$ the corresponding subsets of $\M_3(x)$. Indeed, straightforward computation shows that for $M \in \M_3(x)$ each connected component of the surface $\S \setminus M$ has Euler characteristic equal to $-1$, i.e. it is a sphere with three punctures. Since the components of $M$ are nonseparating, we have two possible cases: either there exist two thrice-punctured spheres with two common boundary components, or not. In the first case $M$ belongs to $\M'_3$, while in the second case we have $M \in \M''_3$.

A multicurve $M''_3 \in \M''_3$ does not contain bounding pairs. Since the mapping class group of a three punctured sphere is trivial, formula (\ref{LB2}) implies that the group 
$\I_{M''_3}$ is also trivial, so we have $\H_1(\I_{M''_3}, \Z) = 0$. Consider a multicurve $M'_3 \in \M'_3$. The exact sequence (\ref{LB2}) implies that
$\I_{M'_3} = \left\langle T_{\gamma_1} T_{\gamma_2}^{-1} \right\rangle \cong \Z$. 
We obtain the following result.

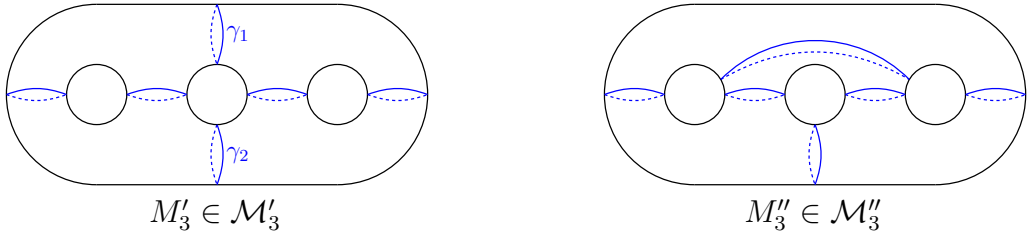
\begin{figure}[h]
	\begin{minipage}[h]{0.49\linewidth}
		\begin{center}
			\scalebox{0.4}{
				\begin{tikzpicture}
				\draw[blue, very thick, dashed] (0,3) to[out = -110, in = 110] (0, 1);
				\draw[blue, very thick, dashed] (0,-1) to[out = -110, in = 110] (0, -3);
				\draw[blue, very thick, dashed] (1,0) to[out = -20, in = 200] (3, 0);
				\draw[blue, very thick, dashed] (5,0) to[out = -20, in = 200] (7, 0);
				\draw[blue, very thick, dashed] (-3,0) to[out = -20, in = 200] (-1, 0);
				\draw[blue, very thick, dashed] (-7,0) to[out = -20, in = 200] (-5, 0);
				%\draw[red, very thick, dashed] (2,3) to[out = -100, in = 100] (2, -3);

				\draw[very thick] (-4, 3) to (4, 3);
				\draw[very thick] (-4, -3) to (4, -3);
				\draw[very thick] (-4, 3) arc (90:270:3);
				\draw[very thick] (4, 3) arc (90:-90:3);
				\draw[very thick] (0, 0) circle (1);
				\draw[very thick] (-4, 0) circle (1);
				\draw[very thick] (4, 0) circle (1);
				
				\draw[blue, very thick] (0,3) to[out = -70, in = 70] (0, 1);
				\draw[blue, very thick] (0,-1) to[out = -70, in = 70] (0, -3);
				\draw[blue, very thick] (1,0) to[out = 20, in = 160] (3, 0);
				\draw[blue, very thick] (5,0) to[out = 20, in = 160] (7, 0);
				\draw[blue, very thick] (-3,0) to[out = 20, in = 160] (-1, 0);
				\draw[blue, very thick] (-7,0) to[out = 20, in = 160] (-5, 0);
				%\draw[red, very thick] (2,3) to[out = -80, in = 80] (2, -3);

				\node[blue, scale = 2] at (0.7, 2) {$\gamma_1$};
				\node[blue, scale = 2] at (0.7, -2) {$\gamma_2$};
				
				%\node[red, scale = 2] at (2.7, 2) {$\gamma$};
				
				\end{tikzpicture}} \\  $M'_3 \in \M'_3$ \end{center}
	\end{minipage}
	\hfill
	\begin{minipage}[h]{0.49\linewidth}
		\begin{center}
			\scalebox{0.4}{
				\begin{tikzpicture}
				%\draw[blue, very thick, dashed] (0,3) to[out = -110, in = 110] (0, 1);
				\draw[blue, very thick, dashed] (0,-1) to[out = -110, in = 110] (0, -3);
				\draw[blue, very thick, dashed] (1,0) to[out = -20, in = 200] (3, 0);
				\draw[blue, very thick, dashed] (5,0) to[out = -20, in = 200] (7, 0);
				\draw[blue, very thick, dashed] (-3,0) to[out = -20, in = 200] (-1, 0);
				\draw[blue, very thick, dashed] (-7,0) to[out = -20, in = 200] (-5, 0);
				%\draw[red, very thick, dashed] (2,3) to[out = -100, in = 100] (2, -3);
				
				\draw[blue, very thick, dashed] (-4 + 0.866, 0.5) to[out = 30, in = 150] (4 - 0.866, 0.5);

				\draw[very thick] (-4, 3) to (4, 3);
				\draw[very thick] (-4, -3) to (4, -3);
				\draw[very thick] (-4, 3) arc (90:270:3);
				\draw[very thick] (4, 3) arc (90:-90:3);
				\draw[very thick] (0, 0) circle (1);
				\draw[very thick] (-4, 0) circle (1);
				\draw[very thick] (4, 0) circle (1);
				
				%\draw[blue, very thick] (0,3) to[out = -70, in = 70] (0, 1);
				\draw[blue, very thick] (0,-1) to[out = -70, in = 70] (0, -3);
				\draw[blue, very thick] (1,0) to[out = 20, in = 160] (3, 0);
				\draw[blue, very thick] (5,0) to[out = 20, in = 160] (7, 0);
				\draw[blue, very thick] (-3,0) to[out = 20, in = 160] (-1, 0);
				\draw[blue, very thick] (-7,0) to[out = 20, in = 160] (-5, 0);
				%\draw[red, very thick] (2,3) to[out = -80, in = 80] (2, -3);
				
				\draw[blue, very thick] (-4 + 0.866, 0.5) to[out = 45, in = 135] (4 - 0.866, 0.5);

				%\node[blue, scale = 2] at (0.7, 2) {$\delta_1$};
				%\node[blue, scale = 2] at (0.7, -2) {$\delta_2$};
				
				%\node[blue, scale = 2] at (6, 0.5) {$\alpha$};
				
				%\node[red, scale = 2] at (2.7, 2) {$\gamma$};
				
				\end{tikzpicture}} \\  $M''_3 \in \M''_3$ \end{center}
	\end{minipage}
	\caption{The combinatorial types $\M'_3$ and $\M''_3$.}
	\label{M3}
\end{figure}

\begin{prop}\label{basis31}
	The elements $P_{M'_3} \otimes [T_{\gamma_1} T_{\gamma_2}^{-1}]$, form a basis of the free abelian group $E^1_{3, 1}$. Here $M'_3$ runs over the set $\M'_3(x) / \I$, and the curves $\gamma_1$, $\gamma_2$ are shown in Fig. \ref{M3}.
\end{prop}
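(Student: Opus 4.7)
The plan is to unpack the direct sum decomposition
$$E^1_{3,1} \cong \bigoplus_{M \in \M_3(x)/\I} \H_1(\Stab_\I(M), \Z)$$
coming from (\ref{spec_sec1}) and to use the classification of $\M_3(x)$ into the two combinatorial types $\M'_3$ and $\M''_3$ that was just established by the Euler-characteristic count on complementary components. Since $\I$ acts on $\B(x)$ without rotations (via Ivanov's theorem), each cell stabiliser agrees with its pointwise stabiliser of the oriented multicurve, so every orbit contributes an independent summand $\H_1(\I_M, \Z)$, and it suffices to compute these for one representative per orbit and per type.

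First I would handle the type $\M''_3$. Every complementary component of $M''_3$ is a thrice-punctured sphere, so $\PMod(\S \setminus M''_3) = 1$. Moreover, no two components of $M''_3$ are homologous, so $BP(M''_3) = 0$ and the exact sequence (\ref{LB2}) forces $\I_{M''_3} = 1$. These orbits contribute zero to $E^1_{3,1}$. Next, for the type $\M'_3$, the same Euler-characteristic reasoning gives $\PMod(\S \setminus M'_3) = 1$, while the two curves $\gamma_1, \gamma_2$ that simultaneously bound the two thrice-punctured spheres form the unique homologous pair in $M'_3$, so $BP(M'_3) = 1$. Then (\ref{LB2}) yields $\I_{M'_3} = \langle T_{\gamma_1} T_{\gamma_2}^{-1} \rangle \cong \Z$, and hence $\H_1(\I_{M'_3}, \Z) \cong \Z$ with generator $[T_{\gamma_1} T_{\gamma_2}^{-1}]$.

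Assembling these contributions, $E^1_{3,1}$ becomes a free abelian group with the basis $\{P_{M'_3} \otimes [T_{\gamma_1} T_{\gamma_2}^{-1}] : M'_3 \in \M'_3(x)/\I\}$, as claimed. The only step that truly demands care is the combinatorial classification of $\M_3(x)$ into exactly the two types $\M'_3$ and $\M''_3$; this is the heart of the verification, but it is a direct cutting-and-pasting argument using (\ref{dim}) and the fact that all components of such an $M$ are nonseparating. Once the classification is in hand, the proof is an immediate application of (\ref{LB2}) on each type.
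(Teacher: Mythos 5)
Your proposal is correct and follows essentially the same route as the paper: classify $\M_3(x)$ into the two combinatorial types via the Euler characteristic count, observe that $\PMod$ of a disjoint union of thrice-punctured spheres is trivial so that (\ref{LB2}) gives $\I_{M''_3}=1$ and $\I_{M'_3}=\left\langle T_{\gamma_1}T_{\gamma_2}^{-1}\right\rangle\cong\Z$, and then read off the basis from the decomposition (\ref{spec_sec1}). No gaps.
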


We also need some information about the group $E^1_{2, 1}$. Consider the combinatorial type $\M'_2(x) \subset \M_2(x)$, the corresponding representative is shown in blue in Fig. \ref{M22}.

\begin{prop}\cite[Proposition 6.11]{Gaifullin_T3} \label{propG}
	Let $M'_2 \in  \M'_2$ be a multicurve, see Fig. \ref{M22}. Then the stabiliser $\I_{M'_2}$ is a free group with an infinite number of generators
	\begin{equation*}
	T_{\delta}^k T_{\gamma_1} T_{\gamma_2}^{-1} T_{\delta}^{-k}, \; \; k \in \Z.
	\end{equation*}
\end{prop}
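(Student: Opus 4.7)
The plan is to determine the structure of $\I_{M'_2}$ by combining the Torelli-group version of the Birman-Lubotzky-McCarthy exact sequence (\ref{LB2}) with a Birman exact sequence analysis of the mapping class group of the complement $\S \setminus M'_2$, and then to identify $\I_{M'_2}$ as the kernel of an abelianisation map from a rank-two free subgroup of $\Mod(\S)$.

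First, from Figure \ref{M22} one reads off that $\{\gamma_1, \gamma_2\}$ is the unique bounding pair among the components of $M'_2$, so $BP(M'_2) = 1$ and (\ref{LB2}) specialises to
\begin{equation*}
1 \to \langle T_{\gamma_1}T_{\gamma_2}^{-1}\rangle \to \I_{M'_2} \xrightarrow{\pi} \PMod(\S \setminus M'_2).
\end{equation*}
The inequality (\ref{cd}) furnishes an upper bound on $\cd(\I_{M'_2})$ compatible with the freeness claim. I would then identify the three components of $\S \setminus M'_2$ (on dimension grounds, expected to include one positive-genus piece, namely a once-punctured torus) and compute the image $\pi(\I_{M'_2})$ by applying the Birman exact sequence (\ref{Birman1}) to that piece: the image turns out to be infinite cyclic, generated by the class of a Dehn twist $T_\delta \in \Mod(\S)$ coming from a point-push along a loop that lifts to $\I$.

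Second, the resulting extension
\begin{equation*}
1 \to \langle T_{\gamma_1}T_{\gamma_2}^{-1}\rangle \to \I_{M'_2} \to \Z \to 1
\end{equation*}
is split by $T_\delta$ at the level of the quotient. The proposition then reduces to showing that $\langle T_\delta, T_{\gamma_1}T_{\gamma_2}^{-1}\rangle \subset \Mod(\S)$ is a free group of rank two. Given this, $\I_{M'_2}$ coincides with the kernel of the map $\langle T_\delta, T_{\gamma_1}T_{\gamma_2}^{-1}\rangle \to \Z$ sending $T_\delta \mapsto 1$ and $T_{\gamma_1}T_{\gamma_2}^{-1} \mapsto 0$, and by the Nielsen-Schreier theorem (or by explicit coset enumeration) this kernel is freely generated by $\{T_\delta^k T_{\gamma_1}T_{\gamma_2}^{-1} T_\delta^{-k} : k \in \Z\}$, which is exactly the asserted basis.

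The main obstacle is the rank-two freeness of $\langle T_\delta, T_{\gamma_1}T_{\gamma_2}^{-1}\rangle$ in $\Mod(\S)$. Since two commuting Dehn twists cannot generate a free group of rank two, this in particular requires that $\delta$ have essential geometric intersection with $\gamma_1 \cup \gamma_2$. The standard route is a ping-pong argument on a suitable geometric complex (curve complex, projective lamination space, or Teichm\"uller space), exhibiting disjoint subsets that are sent strictly inside one another by sufficiently high powers of the two generators; the precise choice of these sets must be tailored to the intersection pattern of $\delta$ with the bounding pair.
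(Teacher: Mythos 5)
First, note that the paper does not actually prove Proposition \ref{propG}: it is quoted from \cite[Proposition 6.11]{Gaifullin_T3}, so there is no internal proof to compare against, and I assess your argument on its own terms. Your very first step misreads the multicurve. In Fig.~\ref{M22} the representative of $\M'_2$ is the \emph{blue} part of the picture: it has five components ($\gamma_2$ and the four curves on the outer handles), while $\gamma_1$ and $\delta$ are auxiliary curves \emph{not} belonging to $M'_2$ (this is confirmed in the proof of Proposition \ref{diff31}, where the $3$-cells containing $P_{M'_2}$ in their boundary are written as $M'_2\cup T_\delta^k(\gamma_1)$). Consequently no two components of $M'_2$ are homologous, $BP(M'_2)=0$, and the sequence (\ref{LB2}) gives an \emph{injection} $\I_{M'_2}\hookrightarrow\PMod(\S\setminus M'_2)$ rather than your extension with kernel $\langle T_{\gamma_1}T_{\gamma_2}^{-1}\rangle$. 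Likewise $\S\setminus M'_2$ has no positive-genus piece: it consists of two thrice-punctured spheres and a four-punctured sphere (had there been a positive-genus component, the bound (\ref{cd}) would force $\cd(\I_{M'_2})=0$, contradicting the statement). The group that matters is therefore $\PMod(\S_{0,4})\cong F_2$, freely generated by $T_{\gamma_1}$ and $T_\delta$ regarded as twists in the four-punctured sphere.

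Second, your intermediate structural claim is self-contradictory: a group fitting into $1\to\Z\to\I_{M'_2}\to\Z\to 1$ is generated by two elements (it is $\Z^2$ or the Klein bottle group), so it cannot be an infinitely generated free group, nor can it equal the kernel of a surjection $F_2\to\Z$ as you assert two sentences later. Moreover $T_\delta$ does not lie in $\I_{M'_2}$: it acts on $\H_1(\S,\Z)$ by a transvection that is not a product of transvections about the classes of the components of $M'_2$, so no correction by twists in $G(M'_2)$ makes it Torelli, and nothing ``splits''. Finally, the one inclusion carrying all the content --- that every element of $\I_{M'_2}$ is a word in $T_\delta$ and $T_{\gamma_1}T_{\gamma_2}^{-1}$ --- is asserted without argument. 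The correct route is: embed $\I_{M'_2}$ into $\PMod(\S_{0,4})=\langle T_{\gamma_1},T_\delta\rangle$ via (\ref{LB2}); show, by computing homology actions modulo the transvections about the components of $M'_2$, that the image is exactly the kernel of the homomorphism $F_2\to\Z$ given by the exponent sum of $T_\delta$; this kernel is freely generated by $T_\delta^kT_{\gamma_1}T_\delta^{-k}$, $k\in\Z$, and these lift to $T_\delta^kT_{\gamma_1}T_{\gamma_2}^{-1}T_\delta^{-k}$ because $T_{\gamma_2}$ maps to the identity and commutes with $T_\delta$. Your closing Nielsen--Schreier step is the right mechanism, but you apply it to a group you have not shown contains $\I_{M'_2}$, and you reach it through premises that are false for this multicurve.
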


\begin{corollary} \label{freeE21}
	The elements $P_{M'_2} \otimes [T_{\delta}^k T_{\gamma_1} T_{\gamma_2}^{-1} T_{\delta}^{-k}]$ generate a free abelian subgroup in $E^1_{2, 1}$. Here $M'_2$ runs over the set $\M'_2(x) / \I$, $k \in \Z$, and the curves $\gamma_1$, $\gamma_2$, $\delta$ are shown in Fig. \ref{M3}.
\end{corollary}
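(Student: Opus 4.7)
The plan is to derive the corollary directly from Proposition \ref{propG} combined with the direct sum decomposition of the $E^1$ page of the spectral sequence \eqref{spec_sec1}.

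First I would unpack the definition of $E^1_{2,1}$. By formula \eqref{spec_sec1} applied to the action of $\I$ on $\B(x)$ in dimension $p=2$, $q=1$, we have
\begin{equation*}
E^1_{2,1} \;\cong\; \bigoplus_{M \in \M_2(x)/\I} \H_1(\Stab_{\I}(P_M), \Z).
\end{equation*}
Since $\I$ acts on $\B(x)$ without rotations, $\Stab_{\I}(P_M) = \I_M$. Splitting the set $\M_2(x)/\I$ according to combinatorial types yields, in particular, a direct summand
\begin{equation*}
\bigoplus_{M'_2 \in \M'_2(x)/\I} \H_1(\I_{M'_2}, \Z) \;\hookrightarrow\; E^1_{2,1},
\end{equation*}
and under this inclusion a homology class $h \in \H_1(\I_{M'_2}, \Z)$ is sent precisely to $P_{M'_2} \otimes h$ by the notational convention introduced after \eqref{spec_sec1}.

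Next I would use Proposition \ref{propG}, which identifies $\I_{M'_2}$ with the free group on the countable set of generators $\{T_\delta^k T_{\gamma_1} T_{\gamma_2}^{-1} T_\delta^{-k}\}_{k \in \Z}$. For any free group $F$ on a set of generators $\{x_i\}$, the abelianization $\H_1(F, \Z) = F^{\rm ab}$ is a free abelian group with basis $\{[x_i]\}$. Applying this to $\I_{M'_2}$, we conclude that $\H_1(\I_{M'_2}, \Z)$ is free abelian with basis $\{[T_\delta^k T_{\gamma_1} T_{\gamma_2}^{-1} T_\delta^{-k}]\}_{k \in \Z}$. Taking the direct sum over all $\I$-orbit representatives $M'_2 \in \M'_2(x)/\I$ and composing with the inclusion of the direct summand above, we obtain a free abelian subgroup of $E^1_{2,1}$ with basis exactly the elements $P_{M'_2} \otimes [T_\delta^k T_{\gamma_1} T_{\gamma_2}^{-1} T_\delta^{-k}]$, which is precisely the claim.

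There is no substantive obstacle here: the argument is purely formal once Proposition \ref{propG} is in hand. The only point to verify carefully is that the various summands of $E^1_{2,1}$ coming from different $\I$-orbits are genuinely independent inside the direct sum, but this is immediate from the definition of $E^1$ as a direct sum indexed by orbit representatives.
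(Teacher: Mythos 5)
Your argument is correct and is exactly the (implicit) justification the paper intends: the paper states Corollary \ref{freeE21} without proof as an immediate consequence of Proposition \ref{propG}, via the direct-sum description of $E^1_{2,1}$ in (\ref{spec_sec1}) and the fact that the abelianization of a free group is free abelian on the images of its free generators. Your write-up even establishes the slightly stronger fact, used later in the proof of Proposition \ref{diff31}, that these elements form a basis of that subgroup.
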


Denote the subgroup constructed in Corollary \ref{freeE21} by $Q \subset E^1_{2, 1}$. Let $p_Q: E^1_{2, 1} \to Q$ be the projection corresponding to the inclusion $\M'_2(x) / \I \hookrightarrow \M_2(x) / \I$.

\begin{figure}[h]
	\begin{minipage}[h]{0.31\linewidth}
		\begin{center}
			\scalebox{0.35}{
				\begin{tikzpicture}
				\draw[red, very thick, dashed] (0,3) to[out = -110, in = 110] (0, 1);
				\draw[blue, very thick, dashed] (0,-1) to[out = -110, in = 110] (0, -3);
				\draw[blue, very thick, dashed] (1,0) to[out = -20, in = 200] (3, 0);
				\draw[blue, very thick, dashed] (5,0) to[out = -20, in = 200] (7, 0);
				\draw[blue, very thick, dashed] (-3,0) to[out = -20, in = 200] (-1, 0);
				\draw[blue, very thick, dashed] (-7,0) to[out = -20, in = 200] (-5, 0);
				
				\draw[green, very thick, dashed] (-4 + 0.866, 0.5) to[out = 30, in = 150] (4 - 0.866, 0.5);

				%\draw[very thick, dashed] (-7.5, 0) to (-9, 0);
				%\draw[very thick, dashed] (7.5, 0) to (9, 0);
				%\draw[->, very thick] (9, 1) to[out = 10, in = -10] (9, -1);
				%\node[scale = 3] at (9.9, 0) {$\iota$};

				\draw[very thick] (-4, 3) to (4, 3);
				\draw[very thick] (-4, -3) to (4, -3);
				\draw[very thick] (-4, 3) arc (90:270:3);
				\draw[very thick] (4, 3) arc (90:-90:3);
				\draw[very thick] (0, 0) circle (1);
				\draw[very thick] (-4, 0) circle (1);
				\draw[very thick] (4, 0) circle (1);
				
				\draw[red, very thick] (0,3) to[out = -70, in = 70] (0, 1);
				\draw[blue, very thick] (0,-1) to[out = -70, in = 70] (0, -3);
				\draw[blue, very thick] (1,0) to[out = 20, in = 160] (3, 0);
				\draw[blue, very thick] (5,0) to[out = 20, in = 160] (7, 0);
				\draw[blue, very thick] (-3,0) to[out = 20, in = 160] (-1, 0);
				\draw[blue, very thick] (-7,0) to[out = 20, in = 160] (-5, 0);
				
				\draw[green, very thick] (-4 + 0.866, 0.5) to[out = 45, in = 135] (4 - 0.866, 0.5);

				\node[red, scale = 2] at (0.7, 2.2) {$\gamma_1$};
				\node[blue, scale = 2] at (0.7, -2) {$\gamma_2$};
				\node[green, scale = 2] at (-2, 1.8) {$\delta$};
				\end{tikzpicture}} \\  $M'_2 \in \M'_2$ \end{center}
	\end{minipage}
	\hfill
	\begin{minipage}[h]{0.31\linewidth}
		\begin{center}
			\scalebox{0.35}{
				\begin{tikzpicture}
				\draw[blue, very thick, dashed] (0,3) to[out = -110, in = 110] (0, 1);
				\draw[blue, very thick, dashed] (0,-1) to[out = -110, in = 110] (0, -3);
				%\draw[green, very thick, dashed] (1,0) to[out = -20, in = 200] (3, 0);
				%\draw[blue, very thick, dashed] (5,0) to[out = -20, in = 200] (7, 0);
				\draw[blue, very thick, dashed] (-3,0) to[out = -20, in = 200] (-1, 0);
				\draw[blue, very thick, dashed] (-7,0) to[out = -20, in = 200] (-5, 0);
				\draw[red, very thick, dashed] (2,3) to[out = -100, in = 100] (2, -3);

				\draw[very thick] (-4, 3) to (4, 3);
				\draw[very thick] (-4, -3) to (4, -3);
				\draw[very thick] (-4, 3) arc (90:270:3);
				\draw[very thick] (4, 3) arc (90:-90:3);
				\draw[very thick] (0, 0) circle (1);
				\draw[very thick] (-4, 0) circle (1);
				\draw[very thick] (4, 0) circle (1);
				
				\draw[blue, very thick] (0,3) to[out = -70, in = 70] (0, 1);
				\draw[blue, very thick] (0,-1) to[out = -70, in = 70] (0, -3);
				%\draw[green, very thick] (1,0) to[out = 20, in = 160] (3, 0);
				%\draw[blue, very thick] (5,0) to[out = 20, in = 160] (7, 0);
				\draw[blue, very thick] (-3,0) to[out = 20, in = 160] (-1, 0);
				\draw[blue, very thick] (-7,0) to[out = 20, in = 160] (-5, 0);
				\draw[red, very thick] (2,3) to[out = -80, in = 80] (2, -3);

				\node[blue, scale = 2] at (0.8, 2) {$\gamma^+$};
				\node[blue, scale = 2] at (0.8, -2) {$\gamma^-$};
				
				\node[red, scale = 2] at (2.7, 2) {$\theta$};
				%\node[green, scale = 2] at (1.4, 0.5) {$\delta$};
				
				\node[blue, scale = 2] at (-2, 0.5) {$\alpha_2$};
				\node[blue, scale = 2] at (-6, 0.5) {$\alpha_1$};

				\end{tikzpicture}} \\  $M_2^4 \in \M^4_2$ \end{center}
	\end{minipage}
	\hfill
	\begin{minipage}[h]{0.31\linewidth}
		\begin{center}
			\scalebox{0.35}{
				\begin{tikzpicture}
				\draw[blue, very thick, dashed] (0,3) to[out = -110, in = 110] (0, 1);
				\draw[blue, very thick, dashed] (0,-1) to[out = -110, in = 110] (0, -3);
				\draw[green, very thick, dashed] (1,0) to[out = -20, in = 200] (3, 0);
				\draw[blue, very thick, dashed] (5,0) to[out = -20, in = 200] (7, 0);
				\draw[blue, very thick, dashed] (-3,0) to[out = -20, in = 200] (-1, 0);
				\draw[blue, very thick, dashed] (-7,0) to[out = -20, in = 200] (-5, 0);
				\draw[red, very thick, dashed] (2,3) to[out = -100, in = 100] (2, -3);

				\draw[very thick] (-4, 3) to (4, 3);
				\draw[very thick] (-4, -3) to (4, -3);
				\draw[very thick] (-4, 3) arc (90:270:3);
				\draw[very thick] (4, 3) arc (90:-90:3);
				\draw[very thick] (0, 0) circle (1);
				\draw[very thick] (-4, 0) circle (1);
				\draw[very thick] (4, 0) circle (1);
				
				\draw[blue, very thick] (0,3) to[out = -70, in = 70] (0, 1);
				\draw[blue, very thick] (0,-1) to[out = -70, in = 70] (0, -3);
				\draw[green, very thick] (1,0) to[out = 20, in = 160] (3, 0);
				\draw[blue, very thick] (5,0) to[out = 20, in = 160] (7, 0);
				\draw[blue, very thick] (-3,0) to[out = 20, in = 160] (-1, 0);
				\draw[blue, very thick] (-7,0) to[out = 20, in = 160] (-5, 0);
				\draw[red, very thick] (2,3) to[out = -80, in = 80] (2, -3);

				\node[blue, scale = 2] at (0.8, 2) {$\gamma^+$};
				\node[blue, scale = 2] at (0.8, -2) {$\gamma^-$};
				
				\node[blue, scale = 2] at (6, 0.5) {$\beta$};
				\node[green, scale = 2] at (1.4, 0.5) {$\delta$};
				
				\node[red, scale = 2] at (2.7, 2) {$\theta$};
				
				\node[blue, scale = 2] at (-2, 0.5) {$\alpha_2$};
				\node[blue, scale = 2] at (-6, 0.5) {$\alpha_1$};
				
				\end{tikzpicture}} \\  $M_2^5 \in \M^{5}_2$ \end{center}
	\end{minipage}
	\caption{The combinatorial types $\M'_2(x)$, $\M^4_2$ and $\M^5_2$.}
	\label{M22}
\end{figure}

\begin{proof}[Proof of Proposition \ref{diff31}.] It suffices to prove that the map $p_Q \circ d^1_{3, 1}: E^1_{3, 1} \to Q$ is injective. Consider a basis element $P_{M'_3} \otimes [T_{\gamma_1} T_{\gamma_2}^{-1}]$ from Proposition \ref{basis31}. We have
\begin{equation}\label{d31}
P_Q \circ d^1_{3, 1} (P_{M'_3} \otimes [T_{\gamma_1} T_{\gamma_2}^{-1}]) = P_{M'_3 \setminus \gamma_2} \otimes [T_{\gamma_1} T_{\gamma_2}^{-1}] - P_{M'_3 \setminus \gamma_1} \otimes [T_{\gamma_1} T_{\gamma_2}^{-1}],
\end{equation}
the sign here depends on the orientation on $P_{M'_3}$. Therefore the image under the mapping $P_Q \circ d^1_{3, 1}$ of each basis element of $E^1_{3, 1}$ is the difference of some two basis elements of $Q$ (see Corollary \ref{freeE21}). Let us show that these pairs of basis elements of $Q$ do not intersect with each other. Indeed, let us fix $M'_2 \in \M'_2$. Then any $M \in \M'_3$ satisfying $M'_2 \subseteq \partial M$ has the form $M'_2 \cup T_{\delta}^k(\gamma_1)$ for some $k \in \Z$. Moreover, all curves $T_{\delta}^k(\gamma_1)$ (see Fig. \ref{M22}) belong to pairwise disjoint $\I_{M'_2}$-orbits. Since $T_{T_{\delta}^k(\gamma_1)} = T^{k}_\delta T_{\gamma_1} T_\delta^{-k}$, this concludes the proof.
\end{proof}

\section{Proof of Proposition \ref{diff22}} \label{S5}

The main goal of this section is to prove Proposition \ref{diff22}. We will first find a basis of the group $E^1_{2, 2}$. The next step is to construct a large family of linearly independent elements in the group $E^1_{1, 2}$. Finally, we will give an explicit formula for the differential $d^1_{2, 2}: E^1_{2, 2} \to E^1_{1, 2}$ and prove that it is injective using the partial description of the group $E^1_{1, 2}$.

\subsection{The term $E^1_{2, 2}$} \label{sub51}

Let us compute the group $E^1_{2, 2}$ explicitly. We claim that there are three combinatorial types of multicurves in this case; namely, these are $\M'_2(x), \M^4_2, \M^5_2 \subset \M_2(x)$, see Fig. \ref{M22}. Indeed, straightforward computation shows that for $M \in \M_2(x)$ the surface $\S \setminus M$ has two connected components of Euler characteristic equal to $-1$ and one connected component of Euler characteristic equal to $-2$.  The components of the first type are spheres with three punctures, while the last one is either a sphere with four punctures or a torus with two punctures. If two thrice-punctured spheres have one common boundary component, we have $M \in \M'_2$. Otherwise, we obtain $M \in \M_2^4$ or $M \in \M_2^5$ if the third connected of $\S \setminus M$ is a thrice-punctured torus or a sphere with four punctures, respectively.

Let $M'_2 \in \M'_2$. Then by Proposition \ref{propG} the group $\I_{M'_2}$ is free, so $\H_2(\I_{M'_2}, \Z) = 0$. 
In order study the combinatorial types $\M_2^4$ and $\M_2^5$ we need to recall the following results from \cite{Bestvina}. Let $\gamma$ and $\beta$ be disjoint nonhomologous nonseparating simple closed curve on a genus $2$ surface $\S_2$. For a separating curve $\theta$ disjoint from $\gamma$, let us denote by $\H_\theta \subset \H_1(\S_2, \Z)$ the homology group of the torus bounded by $\theta$ and not containing $\gamma$. Let $U \subset \H_1(\S_2, \Z)$ be a symplectic subgroup of rank $2$. We say that $U$ is admissible for $\gamma$, if $U = \H_\theta$ for some separating curve $\theta$ disjoint from $\gamma$. Similarly, we say that $U$ is admissible for $\gamma \cup \beta$, if $U = \H_\theta$ for some separating curve $\theta$ disjoint from $\gamma \cup \beta$.

\begin{lemma}\cite[Lemma 7.2]{Bestvina} \label{gen2}
	Let $\gamma$ be a nonseparating simple closed curve on $\S_2$. Then the group $\Stab_{\I_2}(\gamma)$ is freely generated by an infinite number of Dehn twists $T_\theta$ about separating curves $\theta$. Here $\H_\theta$ runs over the set of admissible for $\gamma$ symplectic subgroups in $\H_1(\S_2, \Z)$.
\end{lemma}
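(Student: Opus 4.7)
The plan is to realize $\Stab_{\I_2}(\gamma)$ as the commutator subgroup of a rank-$2$ free group and then identify the generators $T_\theta$ with a standard free basis via the universal abelian cover.

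First, I would apply the Birman-Lubotzky-McCarthy sequence (\ref{LB2}) to $M = \gamma$: since $BP(\gamma) = 0$, this yields an injection $\Stab_{\I_2}(\gamma) \hookrightarrow \PMod(\S_2\setminus\gamma) = \PMod(\S_{1, 2})$, where the punctures $p_1, p_2$ of $\S_{1, 2}$ correspond to the two sides of $\gamma$. The image $\Lambda$ consists of those $f \in \PMod(\S_{1, 2})$ admitting a lift $\widetilde{f} \in \Stab_{\Mod(\S_2)}(\gamma)$ (defined modulo $T_\gamma$) with $\widetilde{f}_* = \id$ on $\H_1(\S_2, \Z)$. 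Applying the Birman sequence (\ref{Birman1}) forgetting $p_2$,
\begin{equation*}
1 \to \pi_1(\S_{1, 1}, p_2) \to \PMod(\S_{1, 2}) \to \PMod(\S_{1, 1}) = \SL(2, \Z) \to 1,
\end{equation*}
and using that $\SL(2, \Z)$ acts faithfully on the summand $\H_1(\S_{1, 1}) \subset \H_1(\S_2, \Z)$, we conclude $\Lambda \subseteq \pi_1(\S_{1, 1}, p_2) \cong F_2$.

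The key computation is that for $\sigma \in \pi_1(\S_{1, 1}, p_2)$ the associated point-pushing map in $\PMod(\S_{1, 2})$ sends a class $b \in \H_1(\S_2, \Z)$ dual to $[\gamma]$ (represented by an arc from $p_1$ to $p_2$, closed up through the glued annulus) to $b + [\sigma]$, where $[\sigma] \in \H_1(\S_{1, 1})$ is the abelianization of $\sigma$. Since $\H_1(\S_{1, 1})$ embeds in $\H_1(\S_2, \Z)$ transversally to $\Z[\gamma]$, no multiple of $T_\gamma$ can absorb a nonzero $[\sigma]$. Hence $\Lambda$ is the kernel of the abelianization $F_2 \to \H_1(\S_{1, 1}) \cong \Z^2$, i.e.\ $\Lambda = [F_2, F_2]$, which is free of infinite rank by Nielsen-Schreier.

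To identify the generators, I would pass to the universal abelian cover $\widetilde{\Sigma}_{1, 1} \cong \mathbb{R}^2 \setminus \Z^2$; its fundamental group $[F_2, F_2]$ is freely generated by small loops around the lattice points, one per lift of $p_1$. The symplectic subgroups $V \subset \H_1(\S_2, \Z)$ admissible for $\gamma$ are exactly the rank-$2$ symplectic subgroups orthogonal to $[\gamma]$; they form a $\Z^2$-torsor via the splittings $[\gamma]^\perp = \Z[\gamma] \oplus V$. A separating curve $\theta$ with $\H_\theta = V$ becomes, after forgetting $p_2$, a peripheral loop around $p_1$ in $\S_{1, 1}$, so $T_\theta$ is the point push by a loop $\sigma_\theta$ around $p_1$: for the central $V_0 = \langle a_2, b_2\rangle$ one has $\sigma_\theta = [a, b]$, and the remaining $V$'s produce conjugates $a^i b^j [a, b] b^{-j} a^{-i}$. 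These are precisely the loops around distinct lifts of $p_1$ in $\mathbb{R}^2 \setminus \Z^2$ and therefore form a free basis of $[F_2, F_2]$.

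The principal obstacle is the final matching step: verifying geometrically that the $\Z^2$-parameterization of admissible $V$'s agrees with the $\Z^2$-parameterization of lifts of $p_1$. Concretely, one needs explicit representatives $\theta_V \subset \S_{1, 2}$ for each admissible $V$ together with an identification of the corresponding element of $\pi_1(\S_{1, 1}, p_2)$; this bookkeeping in the $\Z^2$-cover is the combinatorial heart of the argument.
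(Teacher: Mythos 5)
The paper does not prove this lemma at all: it is quoted verbatim from \cite[Lemma 7.2]{Bestvina}, so there is no internal argument to compare against. Your proposal is a genuine, essentially correct self\nobreakdash-contained proof, and it is sound in all its main steps: the sequence (\ref{LB2}) with $BP(\gamma)=0$ does embed $\Stab_{\I_2}(\gamma)$ into $\PMod(\S_{1,2})$; the Birman sequence and faithfulness of $\SL(2,\Z)$ on $\H_1(\S_{1,1},\Z)$ do force the image into the point-pushing subgroup $\pi_1(\S_{1,1},p_2)\cong F_2$; and the computation of how $\mathrm{Push}(\sigma)$ moves the class dual to $[\gamma]$ identifies the image with $[F_2,F_2]$ (one small omission: to see that every $\sigma$ with $[\sigma]=0$ actually lifts to $\I_2$ you must also check the action on $a_2,b_2$, which transvect by $\langle\,\cdot\,,[\sigma]\rangle[\gamma]$ and hence are likewise fixed when $[\sigma]=0$). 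For the final matching step that you flag as the principal obstacle, you do not need explicit curve representatives: a twist $T_\theta$ with $\theta$ encircling both punctures equals $\mathrm{Push}(\sigma_\theta)$ with $\sigma_\theta$ conjugate to $[a,b]^{\pm1}$, and conjugating by $\mathrm{Push}(\tau)$ simultaneously translates the lift of $p_1$ by $[\tau]\in\Z^2$ and translates the admissible subgroup $\H_\theta$ within its $\Z^2$-torsor by a fixed unimodular image of $[\tau]$ (since $\mathrm{Push}(\tau)_*$ fixes $[\gamma]$ and acts trivially on $[\gamma]^{\perp}/\Z[\gamma]$); this equivariance identifies the two $\Z^2$-parameterizations at once, and replacing basis elements by their inverses to fix orientations is harmless. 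Compared with the source, which obtains the statement in the course of Bestvina--Bux--Margalit's genus~$2$ analysis built on the action on the (one-dimensional, hence tree) complex of cycles and Mess's theorem, your route is more elementary and explicit --- it is essentially the classical point-pushing/abelian-cover picture underlying Mess's theorem --- at the cost of the coordinate bookkeeping in the $\Z^2$-cover, which, as noted, can be replaced by the equivariance argument above.
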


\begin{lemma}\cite[Lemma 7.1]{Bestvina} \label{gen22}
	Let $\gamma$ and $\beta$ be disjoint nonhomologous nonseparating simple closed curves on $\S_2$. Then the group $\Stab_{\I_2}(\gamma \cup \beta)$ is freely generated by an infinite number of Dehn twists $T_\theta$ about separating curves $\theta$. Here $\H_\theta$ runs over the set of admissible for $\gamma \cup \beta$ symplectic subgroups in $\H_1(\S_2, \Z)$.
\end{lemma}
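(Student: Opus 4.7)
The plan is to combine the Birman--Lubotzky--McCarthy sequence (\ref{LB2}) with Mess's theorem on the freeness of $\I_2$ and, in the main step, with Lemma \ref{gen2} applied to $\gamma$ alone.

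First I would identify the topological type of $\S_2\setminus(\gamma\cup\beta)$. Since $\gamma$ and $\beta$ are disjoint and nonhomologous, they do not form a bounding pair, so their union is nonseparating in $\S_2$; together with an Euler characteristic computation and a count of boundary components, one sees that $\S_2\setminus(\gamma\cup\beta)$ is a four-holed sphere $\S_{0,4}$. Because $\gamma$ and $\beta$ are nonhomologous we have $BP(\gamma\cup\beta)=0$, so (\ref{LB2}) yields an injection
\begin{equation*}
\Stab_{\I_2}(\gamma\cup\beta)\hookrightarrow\PMod(\S_{0,4}).
\end{equation*}

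Next I would identify the relevant generators geometrically. The four boundary components of $\S_{0,4}$ are two copies each of $\gamma$ and $\beta$, and an essential simple closed curve $\theta\subset\S_{0,4}$ partitions them into two pairs. When we reglue, $\theta$ is separating in $\S_2$ precisely when the partition is $\{\gamma^+,\gamma^-\}\mid\{\beta^+,\beta^-\}$; the other two partitions give $\theta$'s that are nonseparating in $\S_2$, whose twists are transvections on $\H_1(\S_2,\Z)$ and hence not in $\I_2$. Within the allowed partition there are already infinitely many isotopy classes, obtained from one reference curve by applying Dehn twists of the other two types, and the map $\theta\mapsto\H_\theta$ is a bijection between these isotopy classes and admissible (for $\gamma\cup\beta$) symplectic subgroups, since a separating curve in $\S_2$ is determined by the rank-$2$ symplectic subgroup it splits off. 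Each such $T_\theta$ lies in $\I_2$ and stabilizes $\gamma\cup\beta$, so it does belong to $\Stab_{\I_2}(\gamma\cup\beta)$.

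The final and main step is to show that these $T_\theta$'s freely generate $\Stab_{\I_2}(\gamma\cup\beta)$. Freeness is immediate from Mess's theorem: $\I_2$ is free on its separating Dehn twists, so $\Stab_{\I_2}(\gamma\cup\beta)$ is free by Nielsen--Schreier, and the $T_\theta$'s constructed above are pairwise distinct because their $\H_\theta$'s are. The harder part is to prove that they \emph{generate}. The natural route is to apply Lemma \ref{gen2} to $\gamma$: $\Stab_{\I_2}(\gamma)$ is freely generated by $\{T_{\theta'}\}$ with $\theta'$ separating and disjoint from $\gamma$, and $\Stab_{\I_2}(\gamma\cup\beta)$ is the stabilizer of the isotopy class of $\beta$ inside this free group. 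I would analyze the action of $\Stab_{\I_2}(\gamma)$ on isotopy classes of nonseparating simple closed curves in $\S_2\setminus\gamma$ and use a Bass--Serre type argument to identify this $\beta$-stabilizer with the subgroup generated by those $T_{\theta'}$ that already fix $\beta$, i.e.\ those $\theta'$ that may also be taken disjoint from $\beta$. This last set is precisely the admissible family of the statement. The main obstacle I anticipate is a clean geometric description of how a $T_{\theta'}$ with $\theta'\cap\beta\neq\varnothing$ moves the isotopy class of $\beta$; without such a description one cannot exclude the possibility that a nontrivial word in non-admissible generators preserves $\beta$.
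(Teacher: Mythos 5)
The paper offers no proof of this lemma at all: it is quoted directly from Bestvina--Bux--Margalit \cite[Lemma 7.1]{Bestvina}, so there is no in-paper argument to compare yours against, and your attempt has to stand on its own. It does not, and the gap is exactly where you flag it. Reducing the problem to ``$\Stab_{\I_2}(\gamma\cup\beta)$ is the stabilizer of the isotopy class of $\beta$ inside the free group $\Stab_{\I_2}(\gamma)$'' and then asserting that this stabilizer is generated by those free generators $T_{\theta'}$ which already fix $\beta$ is not a proof but a restatement of the hardest part of the lemma: for an action of a free group $F(S)$ on a set, the stabilizer of a point is in general \emph{not} generated by the elements of $S$ fixing that point, and need not even be finitely generated over them (consider $F(a,b)\to\Z$ with $a,b\mapsto 1$ acting on $\Z$ by translations: the stabilizer of $0$ is the normal closure of $ab^{-1}$, while neither generator fixes $0$). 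So the possibility you worry about --- a nontrivial word in non-admissible generators preserving $\beta$ --- is precisely what must be excluded, and no mechanism for excluding it is proposed. There is also a secondary error: the map $\theta\mapsto\H_\theta$ is \emph{not} a bijection from isotopy classes of separating curves disjoint from $\gamma\cup\beta$ onto admissible subgroups. It is surjective but infinite-to-one: if $\theta'$ is another admissible curve with $\H_{\theta'}\neq\H_\theta$, then $T_{\theta'}(\theta)$ is again separating, disjoint from $\gamma\cup\beta$, has the same splitting $\H_{T_{\theta'}(\theta)}=\H_\theta$ because $T_{\theta'}\in\I_2$, and is not isotopic to $\theta$ since $i(T_{\theta'}(\theta),\theta)=i(\theta,\theta')^2>0$. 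This is exactly why the paper's remark following Proposition \ref{H124} warns that one cannot take an arbitrary collection of twists with the prescribed splittings; your proof would need to make, and justify, a specific choice of one $\theta$ per admissible $U$.

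For the record, the standard argument (essentially the one in \cite{Bestvina}) does not go through Lemma \ref{gen2} at all; it exploits the $\S_{0,4}$ picture you already set up. Since $BP(\gamma\cup\beta)=0$, the sequence (\ref{LB2}) embeds $\Stab_{\I_2}(\gamma\cup\beta)$ into $\PMod(\S_{0,4})$, which is a free group of rank $2$ generated by $T_x,T_y$ for $x,y$ two lantern curves of the two nonseparating partition types, the third twist satisfying $T_xT_yT_z=1$. The image of $\Stab_{\I_2}(\gamma\cup\beta)$ consists of those classes whose action on $\H_1(\S_2,\Z)$ lies in the subgroup generated by the transvections along $[\gamma]$ and $[\beta]$; a short computation in $\Sp(4,\Z)$ identifies this image with the kernel of the surjection $\langle T_x,T_y\rangle\to\Z$ sending $T_x$ and $T_y$ to $+1$ and $-1$. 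Reidemeister--Schreier then produces the explicit free basis $\{T_x^{j}(T_xT_y)T_x^{-j}\}_{j\in\Z}$, and the lantern relation rewrites $T_x^{j}(T_xT_y)T_x^{-j}$ as $T_{T_x^{j}(z)}^{-1}$, a separating twist; finally one checks that $j\mapsto\H_{T_x^{j}(z)}$ runs bijectively over the admissible subgroups. This single computation delivers freeness, generation, and the indexing by admissible splittings simultaneously, which is what your outline is missing.
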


\begin{prop} \label{H124}
	Let $M^4_2 \in \M^4_2$.
	There is an isomorphism $\I_{M^4_2} \cong \Z \times F_{\infty}$. Here $\Z = \left\langle T_{\gamma^+} T_{\gamma^-}^{-1} \right\rangle$ and $F_{\infty}$ is a free group generated by infinite number of Dehn twists $T_{\theta}$. Here $\H_\theta$ runs over the set of admissible symplectic subgroups for $M^4_2$.
\end{prop}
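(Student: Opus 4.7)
The plan is to apply the Torelli version of the Birman--Lubotzky--McCarthy exact sequence (\ref{LB2}) to $M^4_2$ and then identify the image in $\PMod(Y)$, where $Y$ denotes the twice-punctured torus component of $\S \setminus M^4_2$, by iterated use of the Birman exact sequence (\ref{BirmanT}). The key input at the base of the iteration is the triviality of $\I_{1,1}$.

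Writing $P_1, P_2$ for the two pants components of $\S \setminus M^4_2$, one has $\PMod(P_i) = 1$. The curves of $M^4_2$ represent three distinct homology classes in $\H$ (namely $[\gamma^+] = [\gamma^-]$, $[\alpha_1]$, and $[\alpha_2]$), so $BP(M^4_2) = 1$, and (\ref{LB2}) specialises to
\begin{equation*}
1 \to \langle T_{\gamma^+} T_{\gamma^-}^{-1} \rangle \to \I_{M^4_2} \xrightarrow{q} K \to 1,
\end{equation*}
with $K \subseteq \PMod(Y)$. The kernel is central in $\I_{M^4_2}$ because $T_{\gamma^\pm}$ commute with every element stabilising $\gamma^\pm$. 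Each separating curve $\theta$ of $\S$ disjoint from $M^4_2$ is forced to lie inside $Y$ (pants have no essential separating curves) and to cut off a once-holed torus in $\S$; in particular $\H_\theta$ is admissible for $M^4_2$ and $T_\theta \in \I_{M^4_2}$. The eventual splitting will be given by $q(T_\theta) \mapsto T_\theta$, so the problem reduces to proving that $K$ is free on the images $q(T_\theta)$.

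To identify $K$, I extend $\psi \in \PMod(Y)$ by the identity over $P_1, P_2$ to $\tilde \psi \in \Mod(\S)$ and ask when $\tilde \psi$, multiplied by an appropriate product of Dehn twists about the curves of $M^4_2$, lies in $\I$. Using Mayer--Vietoris for $\S = Y \cup (P_1 \cup P_2)$ and tracking the induced action on $\H$, this translates into a concrete condition on the action of $\psi$ on the relative homology $\H_1(Y, \partial Y)$. I then apply (\ref{BirmanT}) iteratively to $Y$: capping off one boundary of $Y$ yields $\PMod(\S_{1,1}^{(1)})$, and forgetting the resulting puncture yields $\SL(2, \Z) = \PMod(\S_{1,1})$. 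Since $\I_{1,1}$ is trivial, the image of $K$ in $\SL(2, \Z)$ is trivial, and peeling back through the two Birman kernels (a free group $\pi_1(\S_{1,1}) = F_2$ and a Dehn-twist $\Z$) identifies $K$ as precisely the free group on the admissible twists $T_\theta$, in direct parallel with Lemma \ref{gen2}.

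The main obstacle is this last identification: the Torelli condition is imposed at the level of $\S$ rather than of $Y$ alone, so one must verify that the additional constraints coming from the gluing of $Y$ to $P_1, P_2$ (manifesting as the passage from $\H_1(Y)$ to $\H_1(Y, \partial Y)$) are compatible with a free structure on the admissible separating twists and produce no extra generators or relations. Once this is established, the central extension splits via $q(T_\theta) \mapsto T_\theta$, giving $\I_{M^4_2} \cong \Z \times F_\infty$ as claimed.
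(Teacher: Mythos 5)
Your reduction agrees with the paper's up to the decisive point: since the two pair-of-pants components of $\S \setminus M^4_2$ have trivial mapping class groups and $BP(M^4_2)=1$, the sequence (\ref{LB2}) gives a central extension $1 \to \left\langle T_{\gamma^+}T_{\gamma^-}^{-1}\right\rangle \to \I_{M^4_2} \to K \to 1$ with $K \subseteq \PMod(\S_{1,2})$, and everything comes down to showing that $K$ is freely generated by the admissible separating twists. The paper closes this step by observing that $K$ and the image of $\Stab_{\I_2}(\gamma)$ under (\ref{LB2}) for a genus-$2$ surface are the \emph{same} subgroup of $\PMod(\S_{1,2})$ --- both consist of the elements acting trivially on $\H_1(\S_{1,2},\Z)$ --- and then quoting Lemma \ref{gen2} (Bestvina--Bux--Margalit, which ultimately rests on Mess's theorem that $\I_2$ is an infinitely generated free group on separating twists).

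You instead propose to analyse $K$ directly by iterating the Birman exact sequence, and this is where the gap lies. The chain $\PMod(\S_{1,2}) \to \PMod(\S_{1,1}) \cong \SL(2,\Z)$ together with the triviality of $\I_{1,1}$ does place $K$ inside the point-pushing kernel $\pi_1(\S_{1,1}) \cong F_2$, and that shows $K$ is free (Nielsen--Schreier). But it does not show that the admissible twists $T_\theta$ generate $K$, nor that they do so freely: an infinite family of elements generating a free subgroup of a free group need not be a basis of that subgroup, and identifying the images of the $T_\theta$ in $\pi_1(\S_{1,1})$ and proving that they form a basis of $K$ is precisely the content of Lemma \ref{gen2} and of Mess's theorem. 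You flag exactly this step as ``the main obstacle'' and leave it unresolved, so the argument is incomplete where it matters most; the rest of the proposition (centrality of the kernel, the splitting via $T_\theta$) is routine once that identification is in hand. The quickest repair is the paper's: characterize $K$ homologically inside $\PMod(\S_{1,2})$, match it with the image of $\Stab_{\I_2}(\gamma)$, and cite Lemma \ref{gen2} rather than attempting to reprove it.
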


\begin{remark}
	We do not claim that one can take \textit{any} collection of Dehn twists $T_{\theta}$ satisfying the above conditions. We do not need an explicit set of generators here. The same can be said about Proposition \ref{H125}.
\end{remark}

\begin{proof}[Proof of Proposition \ref{H124}]
	Since the group $\PMod(\S_{0, 3})$ is trivial, the exact sequence (\ref{LB2}) has the form
	\begin{equation}\label{exacts}
	1 \to \left\langle T_{\gamma^+} T_{\gamma^-}^{-1} \right\rangle  \to \I_{M^4_2} \to \PMod(\S_{1, 2}).
	\end{equation}
	Let $\gamma$ be a nonseparating simple closed curve on a genus $2$ surface $\S_2$. The exact sequence (\ref{LB2}) yields the inclusion $$\Stab_{\I_2}(\gamma) \hookrightarrow \PMod(\S_{1, 2}).$$
	We claim that the image of $\I_{M^4_2}$ in $\PMod(\S_{1, 2})$ in (\ref{exacts}) coincides with the image of $\Stab_{\I_2}(\gamma)$. Indeed, both of these groups can be characterized as the subgroups in $\PMod(\S_{1, 2})$, consisting of all elements  acting trivially on $\H_1(\S_{1, 2}, \Z)$. Therefore Proposition \ref{H124} follows from Lemma \ref{gen2}.
\end{proof}

\begin{prop} \label{H125}
	Let $M^5_2 \in \M^5_2$.
	There is an isomorphism $\I_{M^5_2} \cong \Z \times F_{\infty}$. Here $\Z = \left\langle T_{\gamma^+} T_{\gamma^-}^{-1} \right\rangle$ and $F_{\infty}$ is a free group generated by the infinite number of Dehn twists $T_{\theta}$. Here $\H_\theta$ runs over the set of admissible symplectic subgroups for $M^5_2$.
\end{prop}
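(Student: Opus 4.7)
My plan is to carry out the same argument as in the proof of Proposition \ref{H124}, with the four-punctured sphere $\S_{0,4}$ now playing the role previously played by the twice-punctured torus $\S_{1,2}$, and with Lemma \ref{gen22} taking the place of Lemma \ref{gen2}. Since the two $\S_{0,3}$-components contribute nothing to the pure mapping class group of the complement, $\PMod(\S \setminus M^5_2) \cong \PMod(\S_{0,4})$. Combined with the fact (visible from the combinatorial type in Figure \ref{M22}) that the only bounding pair contained in $M^5_2$ is $\{\gamma^+, \gamma^-\}$, so that $BP(M^5_2) = 1$, the exact sequence \eqref{LB2} becomes
\begin{equation*}
1 \to \langle T_{\gamma^+} T_{\gamma^-}^{-1} \rangle \to \I_{M^5_2} \to \PMod(\S_{0,4}).
\end{equation*}

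Next I would fix a pair of disjoint nonhomologous nonseparating simple closed curves $\gamma, \beta$ on a genus-two surface $\S_2$. Since cutting $\S_2$ along $\gamma \cup \beta$ yields a four-punctured sphere and since $BP(\gamma \cup \beta) = 0$, the exact sequence \eqref{LB2} applied on the $\S_2$ side provides an injection $\Stab_{\I_2}(\gamma \cup \beta) \hookrightarrow \PMod(\S_{0,4})$. I would then argue that the image of $\I_{M^5_2}$ inside $\PMod(\S_{0,4})$ coincides with the image of $\Stab_{\I_2}(\gamma \cup \beta)$. Exactly as in the proof of Proposition \ref{H124}, both images should be characterised as the subgroup of those elements of $\PMod(\S_{0,4})$ whose natural extension to the respective ambient closed surface (by the identity on the other components) acts trivially on first homology. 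Once this identification is established, Lemma \ref{gen22} immediately supplies a free basis of the common image consisting of Dehn twists $T_\theta$ about separating curves $\theta \subset \S_2$ disjoint from $\gamma \cup \beta$, and these are in natural bijection with the admissible symplectic subgroups $\H_\theta$ for $M^5_2$.

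To finish, I observe that $T_{\gamma^+} T_{\gamma^-}^{-1}$ commutes with every element of $\Stab_{\Mod(\S)}(\overrightarrow{\gamma^+ \cup \gamma^-})$, and hence with every element of $\I_{M^5_2}$; the resulting extension is therefore a central extension of the free group $F_\infty$ by $\Z$. Since $\H^2(F_\infty, \Z) = 0$, every such central extension splits, which yields $\I_{M^5_2} \cong \Z \times F_\infty$ with the $\Z$-factor generated by $T_{\gamma^+} T_{\gamma^-}^{-1}$ and the $F_\infty$-factor freely generated by the twists $T_\theta$ as claimed.

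The main obstacle I expect is the identification of the two images inside $\PMod(\S_{0,4})$. In Proposition \ref{H124} the identification was clean because both Torelli conditions could be rephrased as acting trivially on $\H_1(\S_{1,2})$, where the positive-genus component of the complement carried the relevant homological information. Here $\H_1(\S_{0,4})$ is insensitive to the action of $\PMod(\S_{0,4})$, so the argument instead needs to track how a mapping class of $\S_{0,4}$ acts on cycles that connect its four boundary circles through the attached handles, and to verify that this action — and hence the kernel of the Torelli-type condition — depends only on the pairing of the four boundary circles, which is combinatorially the same in both the $\S_3$ and $\S_2$ gluings.
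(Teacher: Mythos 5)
Your proposal follows essentially the same route as the paper: the exact sequence (\ref{LB2}) with trivial $\PMod(\S_{0,3})$ factors, the comparison with $\Stab_{\I_2}(\gamma\cup\beta)$ inside $\PMod(\S_{0,4})$, and Lemma \ref{gen22} to produce the free generators. The one step you flag as an obstacle --- that $\H_1(\S_{0,4},\Z)$ cannot detect the Torelli condition --- is exactly where the paper's proof differs in detail from that of Proposition \ref{H124}: rather than characterising the images intrinsically in $\PMod(\S_{0,4})$, the paper observes that both images are contained in (the image of) $\Stab_{\PMod(\S_{1,2})}(\beta)$ and are cut out there by the condition of acting trivially on $\H_1(\S_{1,2},\Z)$, so the identification is read off on the intermediate twice-punctured torus, where homology is sensitive, just as in Proposition \ref{H124}. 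With that substitution your argument goes through; your splitting argument via $\H^2(F_\infty,\Z)=0$ is also fine, though one can more simply lift the free group by the actual Dehn twists $T_\theta$, which already live in $\I_{M^5_2}$ and commute with $T_{\gamma^+}T_{\gamma^-}^{-1}$.
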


\begin{proof}
Since the group $\PMod(\S_{0, 3})$ is trivial, the exact sequence (\ref{LB2}) has the form
\begin{equation}\label{exacts2}
1 \to \left\langle T_{\gamma^+} T_{\gamma^-}^{-1} \right\rangle  \to \I_{M^5_2} \to \PMod(\S_{0, 4}).
\end{equation}
Let $\gamma$ and $\beta$ be disjoint nonhomologous nonseparating simple closed curves on $\S_2$.  The exact sequence (\ref{LB2}) yields the inclusion $$\Stab_{\I_2}(\gamma \cup \beta) \hookrightarrow \PMod(\S_{0, 2}).$$
We claim that the image of $\I_{M^5_2}$ in $\PMod(\S_{0, 4})$ in (\ref{exacts2}) coincides with the image of $\Stab_{\I_2}(\gamma \cup \beta)$. Indeed, both subgroups are contained in $\Stab_{\PMod_{1, 2}}(\beta)$ and consist of all elements  acting trivially on $\H_1(\S_{1, 2}, \Z)$.
Therefore the results follows from Lemma \ref{gen22}.
\end{proof}

Consider a multicurve $M_2^i \in \M_2^i$ for $i = 4, 5$. Let $U \subset \H$ be an admissible symplectic subgroup for $\M_2^i$. Define the homology class
$$\A_{U} = \A(T_{\gamma^+} T_{\gamma^-}^{-1}, T_{\theta}) \in \H_2(\I_{\M_2^i}, \Z),$$
where $\theta$ is a separating curve disjoint from $M_2^i$ such that $\H_{\theta} = U$. Note that the homology class $\A_U$ does not depend on the choice of $\theta$.

\begin{corollary} \label{H24}
	Consider a multicurve $M_2^i \in \M_2^i$ for $i = 4, 5$.
	The homology classes $\A_{U}$ form a basis of the free abelian group $\H_2(\I_{\M_2^i}, \Z)$. Here $U \subset \H$ runs over all admissible symplectic subgroups for $M^4_2$.
\end{corollary}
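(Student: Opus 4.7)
The plan is to apply the K\"unneth theorem to the explicit product decomposition $\I_{M_2^i} \cong \Z \times F_\infty$ supplied by Propositions \ref{H124} and \ref{H125}, where $\Z = \left\langle T_{\gamma^+}T_{\gamma^-}^{-1}\right\rangle$ and $F_\infty$ is freely generated by a choice of Dehn twists $T_{\theta_U}$ indexed by admissible symplectic subgroups $U$ for $M_2^i$. Since free groups have cohomological dimension one, $\H_2(F_\infty, \Z) = 0$, and $\H_2(\Z, \Z) = 0$ trivially. Hence the K\"unneth formula collapses to
\begin{equation*}
\H_2(\I_{M_2^i}, \Z) \;\cong\; \H_1(\Z, \Z) \otimes_{\Z} \H_1(F_\infty, \Z),
\end{equation*}
and the right-hand side is a free abelian group with basis $[T_{\gamma^+}T_{\gamma^-}^{-1}] \otimes [T_{\theta_U}]$ indexed by admissible symplectic subgroups $U$.

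It then suffices to identify each tensor basis element with the corresponding abelian cycle $\A_U$. This follows from a standard property of the cross product: for pairwise commuting elements $h_1, h_2$ lying in distinct factors of a direct product of groups, the K\"unneth isomorphism sends $[h_1] \otimes [h_2]$ to the abelian cycle $\A(h_1, h_2)$ (this can be verified by applying the functoriality of the K\"unneth map to the canonical morphism $\Z^2 \to G_1 \times G_2$, $(m,n) \mapsto (h_1^m, h_2^n)$, exactly as in Fact \ref{HS2}). Specialising to our situation gives $[T_{\gamma^+}T_{\gamma^-}^{-1}] \otimes [T_{\theta_U}] \mapsto \A(T_{\gamma^+}T_{\gamma^-}^{-1}, T_{\theta_U}) = \A_U$, which establishes the basis claim.

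The main obstacle is the implicit well-definedness assertion in the statement, namely that $\A_U$ does not depend on the choice of separating curve $\theta$ disjoint from $M_2^i$ with $\H_\theta = U$. I would argue this by showing that any two such curves $\theta_1$ and $\theta_2$ lie in a single $\I_{M_2^i}$-orbit: this reduces to a change-of-coordinates argument on the component of $\S \setminus M_2^i$ containing $\theta_1$ and $\theta_2$, using that $\Stab_{\I}(M_2^i)$ projects, via the Birman--Lubotzky--McCarthy sequence (\ref{LB2}), onto a subgroup of the pure mapping class group of the complement that is large enough to interchange two separating curves with equal homology. Given such an $h \in \I_{M_2^i}$ with $h(\theta_1) = \theta_2$, we get $T_{\theta_2} = hT_{\theta_1}h^{-1}$, so the images of $T_{\theta_1}$ and $T_{\theta_2}$ in the abelianisation $\H_1(F_\infty, \Z)$ coincide. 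This is the only nontrivial step; once it is granted, the rest of the argument is formal.
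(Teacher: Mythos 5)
Your proof is correct and coincides with the argument the paper leaves implicit: Corollary \ref{H24} is stated as an immediate consequence of Propositions \ref{H124} and \ref{H125}, and the intended justification is exactly the K\"unneth decomposition $\H_2(\Z\times F_\infty,\Z)\cong\H_1(\Z,\Z)\otimes\H_1(F_\infty,\Z)$ (all other K\"unneth and Tor terms vanishing) together with the identification of $[h_1]\otimes[h_2]$ with the abelian cycle $\A(h_1,h_2)$ that you carry out via functoriality. The independence of $\A_U$ from the choice of $\theta$ is likewise only asserted in the paper, and your conjugation argument (an $h\in\I_{M_2^i}$ with $h(\theta_1)=\theta_2$ acts trivially on $\H_*(\I_{M_2^i},\Z)$ and commutes with the central element $T_{\gamma^+}T_{\gamma^-}^{-1}$, so $\A(T_{\gamma^+}T_{\gamma^-}^{-1},T_{\theta_1})=h_*\A(T_{\gamma^+}T_{\gamma^-}^{-1},T_{\theta_1})=\A(T_{\gamma^+}T_{\gamma^-}^{-1},T_{\theta_2})$) is the standard way to supply it, in the same spirit as the paper's treatment of the analogous non-uniqueness of $\alpha'_j$ before Lemma \ref{pres}.
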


Corollary \ref{H24} immediately implies the following result. 

\begin{corollary} \label{freeE22}
	The elements $P_{M^4_2} \otimes \A_{U^4}$ and $P_{M^5_2} \otimes \A_{U^5}$ form a basis of the free abelian group $E^1_{2, 2}$. Here $M^4_2 \in \M^4_2 / \I$, $M^5_2 \in \M^5_2 / \I$, $U^4$ and $U^5$ run over the sets of admissible symplectic subgroups for $P_{M^4_2}$ and $P_{M^5_2}$ respectively.
\end{corollary}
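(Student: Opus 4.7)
The plan is to combine the structural results already established in Subsection \ref{sub51}. First I would write down the direct sum decomposition coming from the identification (\ref{spec_sec1}):
\begin{equation*}
E^1_{2,2} \cong \bigoplus_{M \in \M_2(x)/\I} \H_2(\I_M, \Z),
\end{equation*}
under which the notation $P_M \otimes h$ corresponds to the class $h$ sitting in the summand indexed by $M$. This reduces the problem to exhibiting a basis of each individual group $\H_2(\I_M, \Z)$ and then taking a disjoint union.

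Next I would appeal to the classification of combinatorial types of $2$-dimensional cells given at the start of Subsection \ref{sub51}: every $\I$-orbit in $\M_2(x)$ falls into exactly one of $\M'_2/\I$, $\M^4_2/\I$, or $\M^5_2/\I$. I would handle each type separately. For $M'_2 \in \M'_2$, Proposition \ref{propG} identifies $\I_{M'_2}$ with an infinitely generated free group, so $\H_2(\I_{M'_2}, \Z) = 0$ and orbits of this type contribute nothing to $E^1_{2,2}$. For the remaining two types, Corollary \ref{H24} provides exactly what is needed: $\H_2(\I_{M^4_2}, \Z)$ and $\H_2(\I_{M^5_2}, \Z)$ are free abelian on the classes $\A_{U^4}$ and $\A_{U^5}$, where $U^i$ ranges over admissible symplectic subgroups. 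Assembling these contributions produces the asserted basis.

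The only substantive input is Corollary \ref{H24} itself, which I view as the real content. That corollary rests on the Künneth formula applied to the splitting $\I_{M^i_2} \cong \Z \times F_{\infty}$ of Propositions \ref{H124} and \ref{H125}: since $F_\infty$ is free we have $\H_2(F_\infty, \Z) = 0$, and since $\H_2(\Z, \Z) = 0$, the Künneth decomposition collapses to the single cross term $\H_1(\Z, \Z) \otimes \H_1(F_\infty, \Z)$. The natural basis $[T_{\gamma^+}T_{\gamma^-}^{-1}] \otimes [T_\theta]$ of this tensor product is carried under the Eilenberg--Zilber map to the abelian cycles $\A(T_{\gamma^+}T_{\gamma^-}^{-1}, T_\theta) = \A_U$ with $U = \H_\theta$, which is the basis described in the statement.

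Thus there is no genuine obstacle in the present corollary; once Propositions \ref{H124} and \ref{H125} and Corollary \ref{H24} are in hand, it is a bookkeeping step. The part of the argument that required care was the identification of $\I_{M^i_2}$ with $\Z \times F_\infty$ in Propositions \ref{H124} and \ref{H125}, namely matching the image of $\I_{M^i_2}$ in the quotient mapping class group with the stabiliser in $\I_2$ of a nonseparating curve (or pair of curves), so that Lemmas \ref{gen2} and \ref{gen22} of Bestvina--Bux--Margalit apply and yield the free factor on Dehn twists indexed by admissible subgroups.
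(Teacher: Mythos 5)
Your proposal is correct and follows essentially the same route as the paper: the paper also obtains this corollary immediately from the orbit-wise decomposition of $E^1_{2,2}$, the vanishing of $\H_2(\I_{M'_2},\Z)$ via Proposition \ref{propG}, and Corollary \ref{H24} (itself a K\"unneth consequence of the $\Z \times F_\infty$ structure in Propositions \ref{H124} and \ref{H125}). Your explicit spelling-out of the K\"unneth collapse and the identification of the cross term with the abelian cycles $\A_U$ is exactly the intended content.
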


\subsection{The term $E^1_{1, 2}$}

In order to prove Proposition \ref{diff22} we also need some information about the group $E^1_{1, 2}$. In the next two subsections we will construct a large family of linearly independent elements in $E^1_{1, 2}$.
Let $\M_1^2$, $\M_1^3$ and $\M_1^4$ be the combinatorial types of multicurves in $\M_1(x)$, provided in blue in Fig. \ref{M122} (note that these are not the only types).

\begin{prop} \label{H13}
	Let $i = 2, 3, 4$ and let $M_1^i \in \M_1^i$. Then we have an isomorphism $\I_{M_1^i} \cong F_\infty \times \Z \times F_\infty$. Here $\Z = \left\langle T_{\gamma^+} T_{\gamma^-}^{-1} \right\rangle$. The first copy of $F_{\infty}$ is a free group generated by an infinite number of Dehn twists $T_{\theta_1}$; the second copy of $F_{\infty}$ is a free group generated by an infinite number of Dehn twists $T_{\theta_2}$. Here $\H_{\theta_1}$ ($\H_{\theta_2}$) runs over all admissible symplectic subgroups of for $M^i_1$ belongs to the left (right) hand side of $\S$ (see Fig \ref{M122}).
\end{prop}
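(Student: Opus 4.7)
The plan is to follow the same strategy used in the proofs of Propositions \ref{H124} and \ref{H125}. Since a multicurve $M_1^i$ is $1$-dimensional, formula (\ref{dim}) shows that $\S \setminus M_1^i$ has exactly two connected components; call them $\S^L$ and $\S^R$ (the ``left'' and ``right'' parts). From the figure the only bounding pair contained in $M_1^i$ is $\{\gamma^+, \gamma^-\}$, so $BP(M_1^i) = 1$ and the exact sequence (\ref{LB2}) takes the form
\begin{equation*}
1 \to \left\langle T_{\gamma^+} T_{\gamma^-}^{-1} \right\rangle \to \I_{M_1^i} \to \PMod(\S^L) \times \PMod(\S^R).
\end{equation*}
The first step is to observe that $T_{\gamma^+} T_{\gamma^-}^{-1}$ is central in $\I_{M_1^i}$, because $\I_{M_1^i}$ is generated by Dehn twists and bounding pair maps disjoint from both $\gamma^+$ and $\gamma^-$.

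The second step is to identify the image $H$ of $\I_{M_1^i}$ in $\PMod(\S^L) \times \PMod(\S^R)$. By definition every element of $H$ acts trivially on $\H = \H_1(\S, \Z)$; using Mayer--Vietoris (or directly, using that the homology classes of curves in $\S^L$ and $\S^R$ inject into $\H$), this is equivalent to acting trivially on $\H_1(\S^L, \Z) \oplus \H_1(\S^R, \Z)$. Thus $H = H^L \times H^R$, where $H^L \subseteq \PMod(\S^L)$ and $H^R \subseteq \PMod(\S^R)$ are the ``Torelli-like'' subgroups consisting of elements acting trivially on the first homology of the corresponding piece.

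The third step, which is the crux of the argument, is to identify $H^L$ and $H^R$ as free groups on the indicated Dehn twists. Each of $\S^L$ and $\S^R$ is a once-boundary genus $1$ surface with some additional punctures. One then argues exactly as in the proofs of Propositions \ref{H124} and \ref{H125}: the exact sequence (\ref{LB2}) applied to a pair of nonseparating curves on $\S_2$ yields an embedding $\Stab_{\I_2}(\beta) \hookrightarrow \PMod(\S_{1, n})$ (or a similar inclusion after further capping) whose image coincides with the subgroup of elements trivial on first homology. Then Lemma \ref{gen2}, possibly combined with the Birman exact sequence (\ref{Birman1}) to account for extra punctures, identifies $H^L$ with the free group on Dehn twists $T_{\theta_1}$ about separating curves $\theta_1 \subset \S^L$ disjoint from $M_1^i$, indexed by admissible symplectic subgroups $\H_{\theta_1} \subset \H$; the analogous statement identifies $H^R$.

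Finally, combining the three steps: the central $\Z$ factor splits off as a direct summand because $T_{\gamma^+} T_{\gamma^-}^{-1}$ is central, and $H = H^L \times H^R$ is already a direct product, giving the claimed decomposition $\I_{M_1^i} \cong F_\infty \times \Z \times F_\infty$. The main obstacle is Step 3, namely the careful verification that the image of $\I_{M_1^i}$ in $\PMod(\S^L)$ and $\PMod(\S^R)$ is exactly the kernel of the action on first homology and that these kernels coincide with the Bestvina--Bux--Margalit free subgroups of Lemma \ref{gen2}; this is where the precise topological type of the pieces $\S^L$ and $\S^R$ for each of $i=2,3,4$ must be handled, possibly case by case, but in each case it reduces to an application of Lemma \ref{gen2} after applying the Birman sequence to forget punctures.
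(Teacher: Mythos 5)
Your proposal is correct and follows essentially the same route as the paper, which proves Proposition \ref{H13} by the single remark that the argument is similar to Propositions \ref{H124} and \ref{H125}; your three steps (centrality of $T_{\gamma^+}T_{\gamma^-}^{-1}$, splitting the image in $\PMod(\S^L)\times\PMod(\S^R)$ as a product of the homology-trivial subgroups of the two sides, and identifying each factor as a Bestvina--Bux--Margalit free group on separating twists) are exactly the content of those two proofs applied to each side separately. The only small correction is that for $i=3,4$ the relevant complementary pieces are four-punctured spheres rather than twice-punctured tori, so those factors are identified via Lemma \ref{gen22} (as in Proposition \ref{H125}) rather than via Lemma \ref{gen2} plus the Birman sequence.
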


\begin{proof}
	The proof is similar to the proofs of Propositions \ref{H124} and \ref{H125}.
\end{proof}

\begin{figure}[h]
	\begin{minipage}[h]{0.31\linewidth}
		\begin{center}
			\scalebox{0.35}{
				\begin{tikzpicture}
				\draw[blue, very thick, dashed] (0,3) to[out = -110, in = 110] (0, 1);
				\draw[blue, very thick, dashed] (0,-1) to[out = -110, in = 110] (0, -3);
				%\draw[blue, very thick, dashed] (1,0) to[out = -20, in = 200] (3, 0);
				%\draw[blue, very thick, dashed] (5,0) to[out = -20, in = 200] (7, 0);
				%\draw[blue, very thick, dashed] (-3,0) to[out = -20, in = 200] (-1, 0);
				%\draw[blue, very thick, dashed] (-7,0) to[out = -20, in = 200] (-5, 0);
				\draw[red, very thick, dashed] (2,3) to[out = -100, in = 100] (2, -3);
				\draw[red, very thick, dashed] (-2,3) to[out = -100, in = 100] (-2, -3);

				\draw[very thick] (-4, 3) to (4, 3);
				\draw[very thick] (-4, -3) to (4, -3);
				\draw[very thick] (-4, 3) arc (90:270:3);
				\draw[very thick] (4, 3) arc (90:-90:3);
				\draw[very thick] (0, 0) circle (1);
				\draw[very thick] (-4, 0) circle (1);
				\draw[very thick] (4, 0) circle (1);
				
				\draw[blue, very thick] (0,3) to[out = -70, in = 70] (0, 1);
				\draw[blue, very thick] (0,-1) to[out = -70, in = 70] (0, -3);
				%\draw[blue, very thick] (1,0) to[out = 20, in = 160] (3, 0);
				%\draw[blue, very thick] (5,0) to[out = 20, in = 160] (7, 0);
				%\draw[blue, very thick] (-3,0) to[out = 20, in = 160] (-1, 0);
				%\draw[blue, very thick] (-7,0) to[out = 20, in = 160] (-5, 0);
				\draw[red, very thick] (2,3) to[out = -80, in = 80] (2, -3);
				\draw[red, very thick] (-2,3) to[out = -80, in = 80] (-2, -3);

				\node[blue, scale = 2] at (0.8, 2) {$\gamma^+$};
				\node[blue, scale = 2] at (0.8, -2) {$\gamma^-$};
				
				\node[red, scale = 2] at (-1.3, 2) {$\theta_1$};
				\node[red, scale = 2] at (2.7, 2) {$\theta_2$};
				
				%\node[blue, scale = 2] at (-2, 0.5) {$\alpha_2$};
				%\node[blue, scale = 2] at (-6, 0.5) {$\alpha_1$};

				\end{tikzpicture}} \\  $M_1^2 \in \M_1^2$ \end{center}
	\end{minipage}
	\hfill
	\begin{minipage}[h]{0.31\linewidth}
		\begin{center}
			\scalebox{0.35}{
				\begin{tikzpicture}
				\draw[blue, very thick, dashed] (0,3) to[out = -110, in = 110] (0, 1);
				\draw[blue, very thick, dashed] (0,-1) to[out = -110, in = 110] (0, -3);
				%\draw[blue, very thick, dashed] (1,0) to[out = -20, in = 200] (3, 0);
				%\draw[blue, very thick, dashed] (5,0) to[out = -20, in = 200] (7, 0);
				%\draw[blue, very thick, dashed] (-3,0) to[out = -20, in = 200] (-1, 0);
				\draw[blue, very thick, dashed] (-7,0) to[out = -20, in = 200] (-5, 0);
				\draw[red, very thick, dashed] (2,3) to[out = -100, in = 100] (2, -3);
				\draw[red, very thick, dashed] (-2,3) to[out = -100, in = 100] (-2, -3);

				\draw[very thick] (-4, 3) to (4, 3);
				\draw[very thick] (-4, -3) to (4, -3);
				\draw[very thick] (-4, 3) arc (90:270:3);
				\draw[very thick] (4, 3) arc (90:-90:3);
				\draw[very thick] (0, 0) circle (1);
				\draw[very thick] (-4, 0) circle (1);
				\draw[very thick] (4, 0) circle (1);
				
				\draw[blue, very thick] (0,3) to[out = -70, in = 70] (0, 1);
				\draw[blue, very thick] (0,-1) to[out = -70, in = 70] (0, -3);
				%\draw[blue, very thick] (1,0) to[out = 20, in = 160] (3, 0);
				%\draw[blue, very thick] (5,0) to[out = 20, in = 160] (7, 0);
				%\draw[blue, very thick] (-3,0) to[out = 20, in = 160] (-1, 0);
				\draw[blue, very thick] (-7,0) to[out = 20, in = 160] (-5, 0);
				\draw[red, very thick] (2,3) to[out = -80, in = 80] (2, -3);
				\draw[red, very thick] (-2,3) to[out = -80, in = 80] (-2, -3);

				\node[blue, scale = 2] at (0.8, 2) {$\gamma^+$};
				\node[blue, scale = 2] at (0.8, -2) {$\gamma^-$};
				
				%\node[blue, scale = 2] at (6, 0.5) {$\beta$};
				
				\node[red, scale = 2] at (-1.3, 2) {$\theta_1$};
				\node[red, scale = 2] at (2.7, 2) {$\theta_2$};
				
				%\node[blue, scale = 2] at (-2, 0.5) {$\alpha_2$};
				\node[blue, scale = 2] at (-6, 0.5) {$\alpha$};
				
				\end{tikzpicture}} \\  $M_1^3 \in \M_1^3$ \end{center}
	\end{minipage}
	\hfill
	\begin{minipage}[h]{0.31\linewidth}
		\begin{center}
			\scalebox{0.35}{
				\begin{tikzpicture}
				\draw[blue, very thick, dashed] (0,3) to[out = -110, in = 110] (0, 1);
				\draw[blue, very thick, dashed] (0,-1) to[out = -110, in = 110] (0, -3);
				%\draw[blue, very thick, dashed] (1,0) to[out = -20, in = 200] (3, 0);
				\draw[blue, very thick, dashed] (5,0) to[out = -20, in = 200] (7, 0);
				%\draw[blue, very thick, dashed] (-3,0) to[out = -20, in = 200] (-1, 0);
				\draw[blue, very thick, dashed] (-7,0) to[out = -20, in = 200] (-5, 0);
				\draw[red, very thick, dashed] (2,3) to[out = -100, in = 100] (2, -3);
				\draw[red, very thick, dashed] (-2,3) to[out = -100, in = 100] (-2, -3);

				\draw[very thick] (-4, 3) to (4, 3);
				\draw[very thick] (-4, -3) to (4, -3);
				\draw[very thick] (-4, 3) arc (90:270:3);
				\draw[very thick] (4, 3) arc (90:-90:3);
				\draw[very thick] (0, 0) circle (1);
				\draw[very thick] (-4, 0) circle (1);
				\draw[very thick] (4, 0) circle (1);
				
				\draw[blue, very thick] (0,3) to[out = -70, in = 70] (0, 1);
				\draw[blue, very thick] (0,-1) to[out = -70, in = 70] (0, -3);
				%\draw[blue, very thick] (1,0) to[out = 20, in = 160] (3, 0);
				\draw[blue, very thick] (5,0) to[out = 20, in = 160] (7, 0);
				%\draw[blue, very thick] (-3,0) to[out = 20, in = 160] (-1, 0);
				\draw[blue, very thick] (-7,0) to[out = 20, in = 160] (-5, 0);
				\draw[red, very thick] (2,3) to[out = -80, in = 80] (2, -3);
				\draw[red, very thick] (-2,3) to[out = -80, in = 80] (-2, -3);

				\node[blue, scale = 2] at (0.8, 2) {$\gamma^+$};
				\node[blue, scale = 2] at (0.8, -2) {$\gamma^-$};
				
				\node[blue, scale = 2] at (6, 0.5) {$\beta$};
				
				\node[red, scale = 2] at (-1.3, 2) {$\theta_1$};
				\node[red, scale = 2] at (2.7, 2) {$\theta_2$};
				
				%\node[blue, scale = 2] at (-2, 0.5) {$\alpha_2$};
				\node[blue, scale = 2] at (-6, 0.5) {$\alpha$};

				\end{tikzpicture}} \\  $M_1^4 \in \M_1^4$ \end{center}
	\end{minipage}
	\caption{The combinatorial types $\M_1^2$, $\M_1^3$ and $\M_1^4$.}
	\label{M122}
\end{figure}

Let $i = 2, 3, 4$ and let $M_1^i \in \M_1^i$. Let $U \subset \H$ be an admissible symplectic subgroup for $M_1^i$. Define the homology class
$$\A_{U}  = \A(T_{\gamma^+} T_{\gamma^-}^{-1}, T_{\theta}) \in \H_2(\I_{M_1^i}, \Z),$$
where $\theta$ is a separating curve disjoint from $M_1^i$ such that $\H_{\theta} = U$.

\begin{corollary} \label{H113}
	 Let $i = 2, 3, 4$ and let $M_1^i \in \M_1^i$. Then the homology classes $\A_{U}$ generate a free abelian subgroup in $\H_2(\I_{M^i_1}, \Z)$. Here $U \subset \H$ runs over the set of all admissible symplectic subgroups for $M^i_1$. 
\end{corollary}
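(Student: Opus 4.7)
The plan is to use the direct product decomposition $\I_{M_1^i} \cong A \times B \times C$ from Proposition \ref{H13}, where $A = F_\infty^{(1)}$ is the left free factor generated by Dehn twists $T_{\theta_1}$, $B = \langle T_{\gamma^+} T_{\gamma^-}^{-1}\rangle \cong \Z$ is the bounding-pair factor, and $C = F_\infty^{(2)}$ is the right free factor generated by Dehn twists $T_{\theta_2}$. First I would apply Künneth; since $\H_2$ vanishes for each factor and all groups involved have torsion-free homology, we obtain
\begin{equation*}
\H_2(\I_{M_1^i}, \Z) \cong \bigl[\H_1(A) \otimes \H_1(B)\bigr] \oplus \bigl[\H_1(A) \otimes \H_1(C)\bigr] \oplus \bigl[\H_1(B) \otimes \H_1(C)\bigr].
\end{equation*}

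Next I would observe that every admissible symplectic subgroup $U$ for $M_1^i$ is of the form $\H_\theta$ for a separating curve $\theta$ that bounds a once-punctured torus lying entirely in either the left or the right subsurface of $\S \setminus M_1^i$ (a dichotomy read off from Fig.~\ref{M122}). By Proposition \ref{H13}, $T_\theta$ is then a free generator of $A$ in the first case and of $C$ in the second. Hence the two commuting elements $T_{\gamma^+} T_{\gamma^-}^{-1}$ and $T_\theta$ entering the definition of $\A_U$ lie in two distinct direct factors of $A \times B \times C$, and the standard Künneth identification of abelian cycles of factor-wise commuting elements sends $\A_U$ either to $[T_\theta] \otimes [T_{\gamma^+} T_{\gamma^-}^{-1}]$ in $\H_1(A) \otimes \H_1(B)$ or to $[T_{\gamma^+} T_{\gamma^-}^{-1}] \otimes [T_\theta]$ in $\H_1(B) \otimes \H_1(C)$. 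In particular, every $\A_U$ projects trivially to the middle Künneth summand.

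The concluding step will be to note that the classes $[T_{\theta_1}]$ (respectively $[T_{\theta_2}]$) corresponding to the distinct free generators of $A$ (respectively $C$) form a $\Z$-basis of $\H_1(A)$ (respectively $\H_1(C)$), while $[T_{\gamma^+} T_{\gamma^-}^{-1}]$ is a $\Z$-basis of $\H_1(B) \cong \Z$. Tensoring these bases exhibits the collection $\{\A_U\}$, split between the first and third Künneth summands, as part of a $\Z$-basis of $\H_2(\I_{M_1^i}, \Z)$; in particular it is linearly independent and spans a free abelian subgroup. There is no real obstacle here: once the product decomposition of Proposition \ref{H13} is in hand, the argument is a routine application of the Künneth formula together with the standard identification of abelian cycles in a direct product, and the real work has already been done in proving Propositions \ref{H124}, \ref{H125}, and \ref{H13}.
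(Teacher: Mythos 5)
Your argument is correct and is exactly the route the paper intends: Corollary \ref{H113} is stated without proof as an immediate consequence of the product decomposition $\I_{M_1^i} \cong F_\infty \times \Z \times F_\infty$ of Proposition \ref{H13}, and your K\"unneth computation (with $\H_2$ of each factor vanishing, the abelian cycles $\A_U$ landing as decomposable basis tensors in the $\H_1(A)\otimes\H_1(B)$ and $\H_1(B)\otimes\H_1(C)$ summands according to the left/right position of $\theta$) just makes that deduction explicit. The only point you gloss over --- that $[T_\theta]\in\H_1$ of the relevant free factor depends only on $U=\H_\theta$ and that the left/right dichotomy for admissible subgroups is genuine --- is likewise taken for granted by the paper, so no gap relative to its own treatment.
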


\begin{corollary} \label{freeE12}
	The elements $P_{M^i_1} \otimes \A_{U^i}$ form a basis of a free abelian subgroup in $E^1_{1, 2}$. Here $i = 2, 3, 4$, $M^i_{1} \in \M^i_{1} / \I$, $U^i \subset \H$ runs over the set of all admissible symplectic subgroups for $M^i_1$. 
\end{corollary}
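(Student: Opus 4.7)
The plan is to deduce this corollary directly from the structure of the page $E^1$ and from Corollary \ref{H113}, with essentially no new input. Recall from (\ref{spec_sec1}) that
\[
E^1_{1, 2} \cong \bigoplus_{M \in \M_1(x)/\I} \H_2(\I_M, \Z),
\]
so it suffices to verify two things: (a) the $\I$-orbits of the three combinatorial types $\M_1^2$, $\M_1^3$, $\M_1^4$ are pairwise disjoint and give distinct summands in this decomposition, and (b) within each fixed summand the proposed classes are linearly independent.

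For (a), I would simply observe that the combinatorial types $\M_1^i$ are distinguished by the diffeomorphism types of the components of $\S\setminus M$ (see Fig.~\ref{M122}), which are obvious $\Mod(\S)$-invariants, hence \emph{a fortiori} $\I$-invariants. Therefore no $\I$-orbit is shared between different $\M_1^i$, and for each $i$ the set $\M_1^i/\I$ injects into $\M_1(x)/\I$. Consequently, the elements $P_{M_1^i}\otimes\A_{U^i}$ lie in pairwise distinct direct summands of $E^1_{1,2}$ whenever they correspond to different $(i,M_1^i)$.

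For (b), I would fix $i\in\{2,3,4\}$ and a representative $M_1^i\in\M_1^i/\I$ and apply Corollary \ref{H113} directly: it asserts that the classes $\A_{U^i}$, as $U^i$ runs over all admissible symplectic subgroups for $M_1^i$, generate a free abelian subgroup of $\H_2(\I_{M_1^i},\Z)$. Pulling this back through the isomorphism (\ref{spec_sec1}), the corresponding elements $P_{M_1^i}\otimes\A_{U^i}$ generate a free abelian subgroup of the single summand $\H_2(\I_{M_1^i},\Z)$.

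Combining (a) and (b), linear independence across different summands follows from the direct sum decomposition, and linear independence within each summand follows from Corollary \ref{H113}; together they give the claimed free abelian subgroup. I do not foresee any real obstacle here: the only subtlety is the disjointness check in (a), and that reduces to inspection of Fig.~\ref{M122}. The content of the corollary is really just a bookkeeping aggregation of Corollary \ref{H113} across the three relevant combinatorial types.
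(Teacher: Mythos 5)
Your proof is correct and follows essentially the same route as the paper, which presents Corollary \ref{freeE12} as an immediate consequence of Corollary \ref{H113} together with the direct sum decomposition of $E^1_{1,2}$ over $\I$-orbits of $1$-cells given by (\ref{spec_sec1}). The only point you spell out that the paper leaves implicit is the disjointness of the orbits arising from the different combinatorial types, and your justification of that is sound.
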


\subsection{Homomorphisms $\nu_{\gamma, \W}$}

Let $\gamma$ be a nonseparating curve on $\S$ and let $\W = (W_1, W_2)$ be an orthogonal splitting of the group $\H_\gamma = [\gamma]^{\perp} / [\gamma]$. Using the results of Mess \cite{Mess}, Gaifullin \cite{Gaifullin_T3} constructed a homomorphism $\nu_{\gamma, \W}: \I_\gamma \to \Z$ with the following properties.

\begin{prop}\cite[Proposition 6.4]{Gaifullin_T3} \label{hom}
	
	(a) Suppose that $\delta$ is a separating curve on $\S$ disjoint from $\gamma$. Then $\nu_{\gamma, \W}(T_\delta) = 1$ if $\delta$ yields the splitting $\W$ for $\H_\gamma$, and $\nu_{\gamma, \W}(T_\delta) = 0$ otherwise.
	
	(b) Suppose the $\{\gamma, \gamma'\}$ is a bounding pair. Then $\nu_{\gamma, \W}(T_\gamma T_{\gamma'}^{-1}) = -1$ if $\{\gamma, \gamma'\}$ yields the splitting $\W$ for $\H_\gamma$, and $\nu_{\gamma, \W}(T_\gamma T_{\gamma'}^{-1}) = 0$ otherwise.
	
	(c) Suppose that $\{\delta, \delta'\}$ is a bounding pair such that $\delta$ and $\delta'$ are disjoint from $\gamma$ and neither $\delta$ nor $\delta'$ is homotopic to $\gamma$. Then $\nu_{\gamma, \W}(T_\delta T_{\delta'}^{-1}) = 0$.
\end{prop}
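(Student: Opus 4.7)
The idea is to factor $\nu_{\gamma, \W}$ through Mess's free basis for $\I_2$. Combining the Birman-Lubotzky-McCarthy sequence (\ref{LB2}) and (two applications of) the Torelli version of the Birman exact sequence (\ref{BirmanT}), I would build a homomorphism $\pi_\gamma \colon \I_\gamma \to \I_2$ and then define $\nu_{\gamma, \W}$ as a coordinate projection on $\I_2^{ab}$ associated with the splitting $\W$.

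\textbf{Construction.} First, since $\gamma$ is nonseparating, we have $BP(\gamma) = 0$, so (\ref{LB2}) gives an injection $\I_\gamma \hookrightarrow \PMod(\S \setminus \gamma) = \PMod(\S_{2, 2})$, whose image consists precisely of those mapping classes acting trivially on $\H_1(\S_{2, 2}, \Z)$. Applying (\ref{BirmanT}) twice, filling in the two punctures coming from $\gamma$ one at a time, produces a homomorphism $\pi_\gamma \colon \I_\gamma \to \I_2$. The cut-and-cap procedure gives a natural identification $\H_1(\S_2, \Z) \cong [\gamma]^{\perp} / [\gamma] = \H_\gamma$, under which $\W$ corresponds to a splitting $\W'$ of $\H_1(\S_2, \Z)$. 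By Mess, $\I_2$ is free with one generator $T_\theta$ for each such splitting, so the abelianization $\I_2^{ab}$ is free abelian on these generators; let $\mu_{\W'} \colon \I_2 \to \Z$ be the projection onto the $\W'$-coordinate. Finally, set $\nu_{\gamma, \W} = \mu_{\W'} \circ \pi_\gamma$.

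\textbf{Verification.} The three properties are verified by tracking the image of each element under $\pi_\gamma$. For (a), a separating $\delta$ disjoint from $\gamma$ survives intact through the cut-and-cap procedure, so $\pi_\gamma(T_\delta) = T_\delta \in \I_2$, and $\delta$ realizes the splitting $\W'$ on $\H_1(\S_2, \Z)$ iff it realized $\W$ on $\H_\gamma$. For (b), after cutting along $\gamma$, the factor $T_\gamma$ becomes a product of boundary twists about the two new punctures and hence vanishes in $\PMod(\S_{2, 2})$, while $\gamma'$ (being homologous to $\gamma$ on $\S$) becomes a separating curve on $\S_2$; therefore $\pi_\gamma(T_\gamma T_{\gamma'}^{-1}) = T_{\gamma'}^{-1}$, which gives $-1$ under $\mu_{\W'}$ exactly when the bounding pair realizes $\W$. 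For (c), the key point is that $\I_2$ contains no bounding pair twists: a bounding pair on $\S_2$ would cut $\S_2$ into two pieces whose genera sum to $1$, forcing one piece to be an annulus and the two curves to be isotopic. Consequently, for any bounding pair $\{\delta, \delta'\}$ on $\S$ disjoint from $\gamma$ with neither component homotopic to $\gamma$, the descent to $\S_2$ must make $\delta$ and $\delta'$ isotopic, so $\pi_\gamma(T_\delta T_{\delta'}^{-1}) = 1$ and $\nu_{\gamma, \W}$ vanishes on it.

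\textbf{Main obstacle.} The conceptual steps above are essentially forced, but the delicate point is (c): one must rule out the \emph{a priori} possibility that $\pi_\gamma(T_\delta T_{\delta'}^{-1})$ is a nontrivial element of $\I_2$ expressed as a product of separating twists whose $\mu_{\W'}$-values accidentally cancel. The argument above shows that the element in fact becomes trivial in $\I_2$ by an Euler-characteristic analysis of the two subsurfaces cobounded by $\{\delta, \delta'\}$ in $\S_3$ and the position of $\gamma$ within them; this case-by-case topological verification is where the real work lies.
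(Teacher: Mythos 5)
This proposition is not proved in the paper at all --- it is imported verbatim from \cite[Proposition 6.4]{Gaifullin_T3} --- so there is no internal proof to compare against; your reconstruction follows exactly the route of the cited source: cut along $\gamma$, cap the two resulting punctures to get $\pi_\gamma\colon \I_\gamma \to \I_2$, identify $\H_1(\S_2,\Z)$ with $\H_\gamma$, and read off the $\W$-coordinate in Mess's free basis. The construction and the verifications of (a) and (b) are sound (in (b) one should note that the two sides of a bounding pair on $\S_3$ both have genus $1$, so $\gamma'$ caps to an \emph{essential} separating curve on $\S_2$; your Euler characteristic count gives this).

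Two degenerate cases are skipped. In (a), if $\gamma$ lies inside the once-punctured torus bounded by $\delta$, then $\delta$ does not ``survive intact'': it caps to an inessential curve and $\pi_\gamma(T_\delta)=1$; the conclusion still holds because such a $\delta$ yields no splitting of $\H_\gamma$, but the case must be mentioned. In (c), your argument via ``no bounding pairs on $\S_2$'' only applies when the images of $\delta,\delta'$ in $\S_2$ are still essential and nonseparating, i.e.\ when $[\delta]\neq\pm[\gamma]$. If $[\delta]=\pm[\gamma]$ the images are separating (or the configuration must be ruled out), and the quoted argument says nothing. This case is in fact vacuous: $\gamma,\delta,\delta'$ would be three pairwise disjoint, pairwise non-isotopic curves whose classes span a rank-one subgroup of $\H$, so the complement has $3-1+1=3$ components with genera summing to $2$ and two boundary circles each, forcing one component to be an annulus and two of the curves to be isotopic --- a contradiction. (Alternatively, two disjoint essential separating curves on $\S_2$ each cutting off genus $1$ are isotopic, so the conclusion holds either way.) With these two observations added, the proof is complete and agrees with the cited argument.
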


Let $\N_1^3$ and $\N_1^4$ be the combinatorial types of multicurves in $\M_1(x)$ shown in blue in Fig. \ref{M12}.

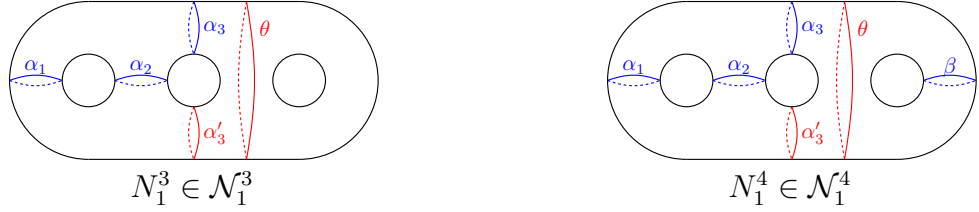
\begin{figure}[h]
	\begin{minipage}[h]{0.49\linewidth}
		\begin{center}
			\scalebox{0.35}{
				\begin{tikzpicture}
				\draw[blue, very thick, dashed] (0,3) to[out = -110, in = 110] (0, 1);
				\draw[red, very thick, dashed] (0,-1) to[out = -110, in = 110] (0, -3);
				%\draw[blue, very thick, dashed] (1,0) to[out = -20, in = 200] (3, 0);
				%\draw[blue, very thick, dashed] (5,0) to[out = -20, in = 200] (7, 0);
				\draw[blue, very thick, dashed] (-3,0) to[out = -20, in = 200] (-1, 0);
				\draw[blue, very thick, dashed] (-7,0) to[out = -20, in = 200] (-5, 0);
				\draw[red, very thick, dashed] (2,3) to[out = -100, in = 100] (2, -3);

				\draw[very thick] (-4, 3) to (4, 3);
				\draw[very thick] (-4, -3) to (4, -3);
				\draw[very thick] (-4, 3) arc (90:270:3);
				\draw[very thick] (4, 3) arc (90:-90:3);
				\draw[very thick] (0, 0) circle (1);
				\draw[very thick] (-4, 0) circle (1);
				\draw[very thick] (4, 0) circle (1);
				
				\draw[blue, very thick] (0,3) to[out = -70, in = 70] (0, 1);
				\draw[red, very thick] (0,-1) to[out = -70, in = 70] (0, -3);
				%\draw[blue, very thick] (1,0) to[out = 20, in = 160] (3, 0);
				%\draw[blue, very thick] (5,0) to[out = 20, in = 160] (7, 0);
				\draw[blue, very thick] (-3,0) to[out = 20, in = 160] (-1, 0);
				\draw[blue, very thick] (-7,0) to[out = 20, in = 160] (-5, 0);
				\draw[red, very thick] (2,3) to[out = -80, in = 80] (2, -3);

				\node[blue, scale = 2] at (0.8, 2) {$\alpha_3$};
				\node[red, scale = 2] at (0.8, -2) {$\alpha'_3$};
				
				\node[red, scale = 2] at (2.7, 2) {$\theta$};
				
				\node[blue, scale = 2] at (-2, 0.5) {$\alpha_2$};
				\node[blue, scale = 2] at (-6, 0.5) {$\alpha_1$};

				\end{tikzpicture}} \\  $N_1^3 \in \N_1^3$ \end{center}
	\end{minipage}
	\hfill
	\begin{minipage}[h]{0.49\linewidth}
		\begin{center}
			\scalebox{0.35}{
				\begin{tikzpicture}
				\draw[blue, very thick, dashed] (0,3) to[out = -110, in = 110] (0, 1);
				\draw[red, very thick, dashed] (0,-1) to[out = -110, in = 110] (0, -3);
				%\draw[blue, very thick, dashed] (1,0) to[out = -20, in = 200] (3, 0);
				\draw[blue, very thick, dashed] (5,0) to[out = -20, in = 200] (7, 0);
				\draw[blue, very thick, dashed] (-3,0) to[out = -20, in = 200] (-1, 0);
				\draw[blue, very thick, dashed] (-7,0) to[out = -20, in = 200] (-5, 0);
				\draw[red, very thick, dashed] (2,3) to[out = -100, in = 100] (2, -3);

				\draw[very thick] (-4, 3) to (4, 3);
				\draw[very thick] (-4, -3) to (4, -3);
				\draw[very thick] (-4, 3) arc (90:270:3);
				\draw[very thick] (4, 3) arc (90:-90:3);
				\draw[very thick] (0, 0) circle (1);
				\draw[very thick] (-4, 0) circle (1);
				\draw[very thick] (4, 0) circle (1);
				
				\draw[blue, very thick] (0,3) to[out = -70, in = 70] (0, 1);
				\draw[red, very thick] (0,-1) to[out = -70, in = 70] (0, -3);
				%\draw[blue, very thick] (1,0) to[out = 20, in = 160] (3, 0);
				\draw[blue, very thick] (5,0) to[out = 20, in = 160] (7, 0);
				\draw[blue, very thick] (-3,0) to[out = 20, in = 160] (-1, 0);
				\draw[blue, very thick] (-7,0) to[out = 20, in = 160] (-5, 0);
				\draw[red, very thick] (2,3) to[out = -80, in = 80] (2, -3);

				\node[blue, scale = 2] at (0.8, 2) {$\alpha_3$};
				\node[red, scale = 2] at (0.8, -2) {$\alpha'_3$};
				
				\node[blue, scale = 2] at (6, 0.5) {$\beta$};
				
				\node[red, scale = 2] at (2.7, 2) {$\theta$};
				
				\node[blue, scale = 2] at (-2, 0.5) {$\alpha_2$};
				\node[blue, scale = 2] at (-6, 0.5) {$\alpha_1$};
				
				\end{tikzpicture}} \\  $N_1^4 \in \N_1^4$ \end{center}
	\end{minipage}
	\caption{The combinatorial types $\N_1^3$ and $\N_1^4$.}
	\label{M12}
\end{figure}

Let us recall about the following useful relation in the mapping class group.

\begin{prop}[\textbf{Lantern relation}] \cite[Proposition 5.1]{Primer} \label{Lant}
	Let the curves $b_1, b_2, b_3, b_4$ bound a sphere with $4$ punctures of the surface $\S_g$ and let the curves $x, y, z$ are as shown in Fig. \ref{Lan}. Then we have the relation
	$$T_x T_y T_z = T_{b_1}T_{b_2}T_{b_3}T_{b_4}.$$
\end{prop}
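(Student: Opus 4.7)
\textbf{Proof plan for Proposition \ref{Lant}.} The plan is to reduce the identity to a local statement about the mapping class group of the $4$-holed sphere $P \subset \S_g$ bounded by $b_1, b_2, b_3, b_4$, and then invoke the fact that the inclusion $P \hookrightarrow \S_g$ induces a homomorphism $\Mod(P) \to \Mod(\S_g)$ sending the Dehn twist about a simple closed curve $c \subset P$ to the Dehn twist about $c$ in $\S_g$. Since $x$, $y$, $z$ and all the $b_i$ live in $P$, it is enough to verify the equality inside $\Mod(P)$.

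Inside $\Mod(P)$ the main tool I would use is the Alexander method. The strategy is to pick a system of three pairwise disjoint properly embedded arcs $\alpha_1, \alpha_2, \alpha_3$ on $P$, each connecting two distinct boundary components, so that the complement of their union in $P$ is a single disk. The arcs can moreover be arranged so that their intersection pattern with $x \cup y \cup z$ is as simple as possible, e.g.\ each $\alpha_i$ crosses exactly one of the three curves transversally in one point. By the Alexander method, a mapping class in $\Mod(P)$ is determined by the isotopy classes, rel $\partial P$, of the images of $\alpha_1, \alpha_2, \alpha_3$. I would then compute the action of the two words $T_x T_y T_z$ and $T_{b_1} T_{b_2} T_{b_3} T_{b_4}$ on each $\alpha_i$ one Dehn twist at a time, isotoping after each step. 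On the right-hand side each arc simply picks up one boundary-parallel coil around each $b_j$ it meets; on the left-hand side the twists along $x$, $y$, $z$ create a bunch of central crossings that, after careful isotopy and cancellation, collapse into exactly the same boundary coils. Matching the pictures for all three arcs gives the desired equality.

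A conceptually shorter alternative I would keep in mind is to pass to the hyperelliptic double cover $\pi \colon P \to D$, where $D$ is a disk with three interior marked points and the deck involution on $P$ has four fixed points. All seven relevant curves are preserved by the involution, and the Dehn twist about each of them lifts from the square of a half-twist on $D$. The lantern identity then becomes a relation among squared half-twists in the braid group $B_3$, which can be checked by a direct computation with the standard generators. The main obstacle in the direct Alexander-method argument is the careful bookkeeping of successive isotopies and the order in which the boundary-parallel coils must be created, combined and cancelled; the double-cover approach removes that difficulty at the cost of setting up, once and for all, the correspondence between Dehn twists on $P$ and squared half-twists on $D$.
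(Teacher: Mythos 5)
Your primary plan — restrict to the $4$-holed sphere $P$, use that the inclusion $P \hookrightarrow \S_g$ sends Dehn twists to Dehn twists, and verify the identity in $\Mod(P)$ by the Alexander method applied to a cut system of arcs — is exactly the standard proof given in the cited reference \cite[Proposition 5.1]{Primer}; the paper itself supplies no proof beyond that citation. The approach is correct (modulo the routine but genuinely fiddly bookkeeping of the arc images, and the minor caveat that in the standard arc system each arc typically meets more than one of $x, y, z$), so no further comparison is needed.
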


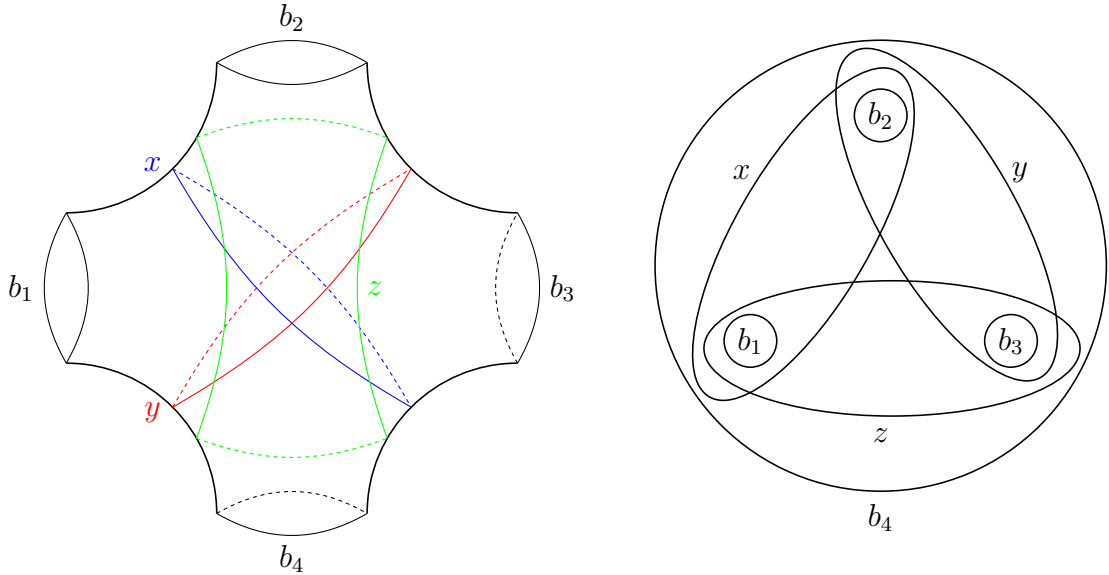
\begin{figure}[h]
	\begin{minipage}[h]{0.49\linewidth}
	\begin{center}
		\scalebox{0.5}{
			\begin{tikzpicture}
			
			\draw[very thick] (6, 2) arc (-90:-180:4);
			\draw[very thick] (-6, 2) arc (-90:0:4);
			\draw[very thick] (6, -2) arc (90:180:4);
			\draw[very thick] (-6, -2) arc (90:0:4);
			
			\draw[thick] (-2,6) to[out = -30, in = -150] (2, 6);
			\draw[thick] (-2,6) to[out = 30, in = 150] (2, 6);
			
			\draw[thick] (-2,-6) to[out = -30, in = -150] (2, -6);
			\draw[thick][dashed] (-2,-6) to[out = 30, in = 150] (2, -6);
			
			\draw[thick] (-6,-2) to[out = 60, in = -60] (-6, 2);
			\draw[thick] (-6,-2) to[out = 120, in = -120] (-6, 2);
			
			\draw[thick] (6,-2) to[out = 60, in = -60] (6, 2);
			\draw[thick][dashed] (6,-2) to[out = 120, in = -120] (6, 2);
			
			\draw[thick][red] (-6 + 2.8284,-6+2.8284) to[out = 30, in = -120] (6 - 2.8284,6-2.8284);
			\draw[thick][red][dashed] (-6 + 2.8284,-6+2.8284) to[out = 60, in = -150] (6 - 2.8284,6-2.8284);
			
			\draw[thick][blue] (-6 + 2.8284,6-2.8284) to[out = -60, in = 150] (6 - 2.8284,-6+2.8284);
			\draw[thick][blue][dashed] (-6 + 2.8284,6-2.8284) to[out = -30, in = 120] (6 - 2.8284,-6+2.8284);
			
			\draw[thick][green] (6 - 3.4641, 4) to[out = -110, in = 110] (6 - 3.4641, -4);
			\draw[thick][green] (-6 + 3.4641, 4) to[out = -70, in = 70] (-6 + 3.4641, -4);
			
			\draw[thick][green][dashed] (-6 + 3.4641, 4) to[out = 20, in = 160] (6 - 3.4641,4);
			
			\draw[thick][green][dashed] (-6 + 3.4641, -4) to[out = -20, in = -160] (6 - 3.4641,-4);

			\node[scale = 2] at (0, 7.2) {$b_2$};	
			\node[scale = 2] at (0, -7.2) {$b_4$};
			\node[scale = 2] at (7.2, 0) {$b_3$};
			\node[scale = 2] at (-7.2, 0) {$b_1$};
			
			\node[scale = 2][green] at (2.2, 0) {$z$};
			
			\node[scale = 2][blue] at (-3.7, 3.3) {$x$};
			\node[scale = 2][red] at (-3.7, -3.3) {$y$};

			\end{tikzpicture}} \end{center}
		\end{minipage}
	\hfill
		\begin{minipage}[h]{0.49\linewidth}
		\begin{center}
			\scalebox{0.5}{
				\begin{tikzpicture}
				
				\draw [very thick, rotate=0] (0.3, -2.2) ellipse (5 and 1.8);
				\draw [very thick, rotate=60] (-0.3, 2.2) ellipse (5 and 1.8);
				\draw [very thick, rotate=-60] (-0.3, 2.2) ellipse (5 and 1.8);
				
				\draw[very thick] (0, 0) circle (6);
				\draw[very thick] (0, 4) circle (0.7);
				\draw[very thick] (3.464, -2) circle (0.7);
				\draw[very thick] (-3.464, -2) circle (0.7);

				\node[scale = 2] at (0, 4) {$b_2$};	
				\node[scale = 2] at (0, -6.7) {$b_4$};
				\node[scale = 2] at (3.464, -2) {$b_3$};
				\node[scale = 2] at (-3.464, -2) {$b_1$};
				
				\node[scale = 2][] at (0, -4.5) {$z$};
				
				\node[scale = 2][] at (-3.7, 2.5) {$x$};
				\node[scale = 2][] at (3.7, 2.5) {$y$};

				\end{tikzpicture}} \end{center}
	\end{minipage}
	\caption{Two views of the Lantern relation (the pictures are also taken from \cite{Primer}).}
	\label{Lan}
\end{figure}

Let $i = 3, 4$ and $N_1^i \in \N_1^i$. Let $U \subset \H$ be an admissible subgroup for $N_1^i$ given by a separating curve $\theta$. Consider the subgroup $\O^i_U \subseteq \H_2(\I_{N_1^i}, \Z)$ generated by the abelian cycles
$\{\A_{U}^j = \A(T_{\alpha_j} T_{\alpha'_j}^{-1}, T_\theta)\}$,
where $j = 1, 2, 3$ and $\alpha'_j$ is a curve disjoint from $N_1^i \cup \theta$ such that $\{\alpha_j, \alpha'_j\}$ is a bounding pair. Note that the choice of $\alpha'_j$ is not unique. We claim that all such possible curves are $\I_{N_1^i}$-equivalent. Indeed, for different choices of $\alpha'_j$ the multicurves $N_1^i \cup \alpha'_j$ are $\PMod(\S_{2, 1})$-equivalent, where $\S_{2, 1}$ is the genus $2$ subsurface on the left of $\theta$. Let $f \in \Stab_{\PMod(\S_{2, 1})}(N_1^i)$ be an element carrying this equivalence. By composing $f$ with a power of $T_{\alpha_j}T_{\alpha'_j}^{-1}$ we can assume that $f$ act trivially on the $\H_1(\S_{1, 2})$, since by construction $f$ is already acts trivially on the classes $[\alpha_1]$, $[\alpha_2]$ and $[\alpha_3]$. Hence we obtain $f \in \I(\S_{2, 1}) \subset \I$. Consequently, $\A_U^j$ does not depend on the choice of $\alpha'_j$.

\begin{lemma} \label{pres}
	Let $i = 3, 4$ and $N_1^i \in \N_1^i$. The abelian group $\O^i_U \cong \Z^2$ has a presentation with three generators 
	\begin{equation}\label{generat}
	\A(T_{\alpha_1} T_{\alpha'_1}^{-1}, T_\theta), \;  \A(T_{\alpha_2} T_{\alpha'_2}^{-1}, T_\theta), \; \A(T_{\alpha_3} T_{\alpha'_3}^{-1}, T_\theta),
	\end{equation}
	and one relation
	\begin{equation} \label{relat}
	\A(T_{\alpha_1} T_{\alpha'_1}^{-1}, T_\theta) + \A(T_{\alpha_2} T_{\alpha'_2}^{-1}, T_\theta) + \A(T_{\alpha_3} T_{\alpha'_3}^{-1}, T_\theta) = 0.
	\end{equation}
\end{lemma}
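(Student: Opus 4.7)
The plan has two stages: first, establish the relation (\ref{relat}) by producing a mapping-class-group identity that forces the abelian cycle of the product of the three bounding-pair maps to vanish; second, show that the rank of $\O^i_U$ is at least $2$ by detecting two of the generators via cup products of Gaifullin's homomorphisms $\nu_{\gamma, \W}$.

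\emph{Stage 1: the relation.} All seven curves $\alpha_1, \alpha'_1, \alpha_2, \alpha'_2, \alpha_3, \alpha'_3, \theta$ are pairwise disjoint, so the corresponding twists and bounding-pair maps pairwise commute. Bilinearity of the abelian cycle construction, $\A(gh, k) = \A(g, k) + \A(h, k)$ for pairwise commuting $g, h, k$, reduces (\ref{relat}) to the vanishing
\begin{equation*}
\A\Bigl(\prod_{j=1}^3 T_{\alpha_j}T_{\alpha'_j}^{-1},\, T_\theta\Bigr) = 0 \quad \text{in } \H_2(\I_{N_1^i}, \Z).
\end{equation*}
The geometric configuration in Figure \ref{M12} is such that $\alpha_1, \alpha_2, \alpha_3, \theta$ jointly bound a $4$-holed sphere $P \subset \Sigma$ whose three diagonal curves are $\I_{N_1^i}$-equivalent to $\alpha'_1, \alpha'_2, \alpha'_3$; here we use the flexibility in the choice of $\alpha'_1, \alpha'_2$ from the paragraph preceding the lemma, together with a suitable realization of $\alpha'_3$. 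The lantern relation (Proposition \ref{Lant}) applied to $P$ reads $T_{\alpha_1}T_{\alpha_2}T_{\alpha_3}T_\theta = T_{\alpha'_1}T_{\alpha'_2}T_{\alpha'_3}$, which by commutativity rearranges to $\prod_{j=1}^3 T_{\alpha_j}T_{\alpha'_j}^{-1} = T_\theta^{-1}$. Therefore the displayed cycle equals $\A(T_\theta^{-1}, T_\theta) = -\A(T_\theta, T_\theta)$, and this last cycle vanishes since an abelian cycle of the form $\A(a, a)$ factors through $\H_2$ of the cyclic subgroup $\langle a \rangle$.

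\emph{Stage 2: rank at least two.} For $j = 1, 2$, let $\W_j^\theta$ be the orthogonal splitting of $\H_{\alpha_j}$ induced by the separating curve $\theta$, and let $\W_j$ be the splitting induced by the bounding pair $\{\alpha_j, \alpha'_j\}$ (the two splittings are distinct). Set
\begin{equation*}
\mu_j := \nu_{\alpha_j, \W_j^\theta}\bigl|_{\I_{N_1^i}} \in \H^1(\I_{N_1^i}, \Z), \qquad \rho_j := \nu_{\alpha_j, \W_j}\bigl|_{\I_{N_1^i}} \in \H^1(\I_{N_1^i}, \Z).
\end{equation*}
Parts (a)--(c) of Proposition \ref{hom} give $\mu_j(T_\theta) = 1$, $\mu_j(T_{\alpha_k}T_{\alpha'_k}^{-1}) = 0$ for all $k$, together with $\rho_j(T_\theta) = 0$ and $\rho_j(T_{\alpha_k}T_{\alpha'_k}^{-1}) = -\delta_{jk}$. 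The cup product $\mu_j \smile \rho_j \in \H^2(\I_{N_1^i}, \Z)$ evaluates on an abelian cycle by the usual formula $(\mu \smile \rho)(\A(g_1, g_2)) = \mu(g_1)\rho(g_2) - \mu(g_2)\rho(g_1)$, yielding $(\mu_j \smile \rho_j)(\A_U^k) = \delta_{jk}$. Hence $\A_U^1$ and $\A_U^2$ are $\Z$-linearly independent in $\H_2(\I_{N_1^i}, \Z) \supseteq \O^i_U$, so $\rk \O^i_U \geq 2$. Combined with Stage 1, this gives $\O^i_U \cong \Z^2$ with the asserted presentation.

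\emph{Principal obstacle.} The delicate step is the geometric realization in Stage 1: one must verify for both $i = 3, 4$ that the four curves $\alpha_1, \alpha_2, \alpha_3, \theta$ bound an embedded $4$-holed sphere $P$ in $\Sigma$ whose diagonals realize the triple $(\alpha'_1, \alpha'_2, \alpha'_3)$ up to $\I_{N_1^i}$-equivalence. The case of $\alpha'_3$, which is fixed as a component of $N_1^i$, is the subtlest, and requires choosing the lantern so that the diagonal encircling $\theta$ and $\alpha_3$ is isotopic to $\alpha'_3$ in $\Sigma \setminus (N_1^i \cup \theta)$; the other two diagonals can then be taken as $\alpha'_1, \alpha'_2$ by the flexibility remark.
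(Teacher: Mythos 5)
Your Stage 1 is essentially the paper's own argument: the lantern relation on the $4$-holed sphere bounded by $\alpha_1,\alpha_2,\alpha_3,\theta$ gives $\prod_j T_{\alpha_j}T_{\alpha'_j}^{-1}=T_\theta^{-1}$, and bilinearity of abelian cycles reduces (\ref{relat}) to $\A(T_\theta^{-1},T_\theta)=0$. (One side remark: $\alpha'_3$ is \emph{not} a component of $N_1^i$; the multicurve $N_1^i$ consists of the blue curves of Fig.~\ref{M12} only, so all three $\alpha'_j$ are auxiliary curves disjoint from $N_1^i\cup\theta$ and the well-definedness discussion preceding the lemma applies to each of them uniformly. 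The ``principal obstacle'' you single out is therefore not where the difficulty lies.)

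Stage 2 contains a genuine gap. You assert that the splitting $\W_j$ of $\H_{\alpha_j}$ induced by the bounding pair $\{\alpha_j,\alpha'_j\}$ is distinct from the splitting $\W_j^\theta$ induced by $\theta$. It is not: $\alpha'_j$ is required to be disjoint from $\theta$, so the once-punctured torus $X_\theta$ lies entirely on one side of the bounding pair; a bounding pair in genus $3$ cuts the surface into two genus-one pieces, so the side containing $X_\theta$ contributes a rank-$2$ orthogonal summand of $\H_{\alpha_j}$ that contains the rank-$2$ unimodular subgroup $U=\H_\theta$ and hence equals it. Thus $\W_j=\W_j^\theta=(U,U^{\perp})$ and $\mu_j=\rho_j$. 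Your evaluations then fail on both counts: by Proposition \ref{hom}(b), $\mu_j(T_{\alpha_j}T_{\alpha'_j}^{-1})=-1$ rather than $0$, and by Proposition \ref{hom}(a), $\rho_j(T_\theta)=1$ rather than $0$; the class $\mu_j\smallsmile\rho_j=\mu_j\smallsmile\mu_j$ evaluates to $0$ on every abelian cycle (the determinant has two equal rows), so it detects nothing and the claimed identity $(\mu_j\smallsmile\rho_j)(\A_U^k)=\delta_{jk}$ is false. The repair is what the paper does: pair against $[\nu_{\alpha_j,\W}]\smallsmile[\nu_{\alpha_k,\W}]$ for $j\neq k$, all taken with the \emph{same} splitting $\W=(U,U^{\perp})$. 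These evaluate to $(1,-1,0)$, $(0,1,-1)$, $(-1,0,1)$ on $(\A_U^1,\A_U^2,\A_U^3)$, with common kernel spanned by $(1,1,1)$, which forces $\lambda_1=\lambda_2=\lambda_3$ and yields the presentation. Your reduction of the statement to ``rank at least $2$ plus the relation'' is logically sound; only the detecting cohomology classes are wrong.
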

\begin{proof}
	First, let us prove the relation (\ref{relat}). We can assume that the curves $\theta, \alpha_1, \alpha'_1, \alpha_2,$ $\alpha'_2,$ $\alpha_3, \alpha'_3$ are shown in Fig. \ref{Lante}. The Lantern relation implies
	$$T_{\alpha'_3} T_{\alpha'_1} T_{\alpha'_2} = T_{\alpha_1} T_{\alpha_2} T_{\alpha_3} T_{\theta}.$$
	We can rewrite this equation as follows.
	$$(T_{\alpha_2} T_{\alpha_2}^{-1})(T_{\alpha_1} T_{\alpha_1}^{-1})(T_{\alpha_3} T_{\alpha_3}^{-1}) = T_{\theta}^{-1}.$$
	Hence
	$$\A(T_{\alpha_1} T_{\alpha'_1}^{-1}, T_\theta) + \A(T_{\alpha_2} T_{\alpha'_2}^{-1}, T_\theta) + \A(T_{\alpha_3} T_{\alpha'_3}^{-1}, T_\theta) =$$ 
	$$= \A((T_{\alpha_2} T_{\alpha_2}^{-1})(T_{\alpha_1} T_{\alpha_1}^{-1})(T_{\alpha_3} T_{\alpha_3}^{-1}), T_{\theta}) = \A(T_{\theta}^{-1}, T_{\theta}) = 0.$$
	
	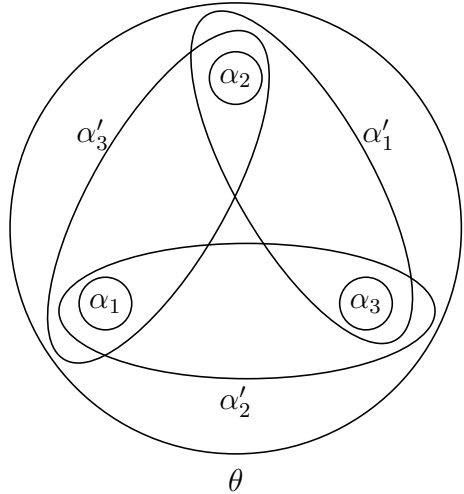
\begin{figure}[h]
			\begin{center}
				\scalebox{0.5}{
					\begin{tikzpicture}

					\draw [very thick, rotate=0] (0.3, -2.2) ellipse (5 and 1.8);
					\draw [very thick, rotate=60] (-0.3, 2.2) ellipse (5 and 1.8);
					\draw [very thick, rotate=-60] (-0.3, 2.2) ellipse (5 and 1.8);
					
					\draw[very thick] (0, 0) circle (6);
					\draw[very thick] (0, 4) circle (0.7);
					\draw[very thick] (3.464, -2) circle (0.7);
					\draw[very thick] (-3.464, -2) circle (0.7);

					\node[scale = 2] at (0, 4) {$\alpha_2$};	
					\node[scale = 2] at (0, -6.7) {$\theta$};
					\node[scale = 2] at (3.464, -2) {$\alpha_3$};
					\node[scale = 2] at (-3.464, -2) {$\alpha_1$};
					
					\node[scale = 2][] at (0, -4.6) {$\alpha'_2$};
					
					\node[scale = 2][] at (-3.8, 2.5) {$\alpha'_3$};
					\node[scale = 2][] at (3.8, 2.5) {$\alpha'_1$};

					\end{tikzpicture}} \end{center}
		\caption{The curves $\theta, \alpha_1, \alpha'_1, \alpha_2, \alpha'_2, \alpha_3, \alpha'_3.$}
		\label{Lante}
	\end{figure}

	Now let show that all relation among the three elements (\ref{generat}) follow from (\ref{relat}). Consider the homomorphisms $\nu_{\alpha_j, \W}$, where $\W = (U, U^{\perp_{H_{\alpha_j}}})$. Their restrictions on $\I_{N_1^i}$ represent cohomology classes $[\nu_{\alpha_j, \W}] \in \H^1(\I_{N_1^i}, \Z)$. Let us compute the value of their products on the abelian cycles $\A(T_{\alpha_j} T_{\alpha'_j}^{-1}, T_\theta)$.
	By Proposition \ref{hom} we have
	$$\left\langle [\nu_{\alpha_1, \W}] \smallsmile [\nu_{\alpha_2, \W}] ,  \A(T_{\alpha_1} T_{\alpha'_1}^{-1}, T_\theta) \right\rangle = -\det \begin{pmatrix}
	\nu_{\alpha_1, \W}(T_{\alpha_1} T_{\alpha'_1}^{-1}) & \nu_{\alpha_1, \W}(T_{\theta}) \\
	\nu_{\alpha_2, \W}(T_{\alpha_1} T_{\alpha'_1}^{-1}) & \nu_{\alpha_2, \W}(T_{\theta}) 
	\end{pmatrix} = 
	$$
	$$
	= -\det
	\begin{pmatrix}
	-1 & 1 \\
	0 & 1 
	\end{pmatrix} = 1 \neq 0.$$
	Similarly,
	$$\left\langle [\nu_{\alpha_1, \W}] \smallsmile [\nu_{\alpha_2, \W}] ,  \A(T_{\alpha_2} T_{\alpha'_2}^{-1}, T_\theta) \right\rangle = -1.$$
	Moreover, we have
	$$\left\langle [\nu_{\alpha_1, \W}] \smallsmile [\nu_{\alpha_2, \W}] ,  \A(T_{\alpha_3} T_{\alpha'_3}^{-1}, T_\theta) \right\rangle = -\det \begin{pmatrix}
	\nu_{\alpha_1, \W}(T_{\alpha_3} T_{\alpha'_3}^{-1}) & \nu_{\alpha_1, \W}(T_{\theta}) \\
	\nu_{\alpha_2, \W}(T_{\alpha_3} T_{\alpha'_3}^{-1}) & \nu_{\alpha_2, \W}(T_{\theta}) 
	\end{pmatrix} = 
	$$
	$$
	= -\det
	\begin{pmatrix}
	0 & 1 \\
	0 & 1 
	\end{pmatrix} = 0.$$
	
	Consider homology class 
	$$h = \sum_{j=1}^3 \lambda_j \A(T_{\alpha_j} T_{\alpha'_j}^{-1}, T_\theta) \in \O^i_U.$$
	If $h = 0$, then $$\left\langle [\nu_{\alpha_1, \W}] \smallsmile [\nu_{\alpha_2, \W}] ,  h \right\rangle = \left\langle [\nu_{\alpha_2, \W}] \smallsmile [\nu_{\alpha_3, \W}] ,  h \right\rangle = \left\langle [\nu_{\alpha_3, \W}] \smallsmile [\nu_{\alpha_1, \W}] ,  h \right\rangle = 0,$$
	that is
	$$\lambda_1 - \lambda_2 = \lambda_2 - \lambda_3 = \lambda_3 - \lambda_1 = 0.$$
	Hence $\lambda_1 = \lambda_2 = \lambda_3$.
	This concludes the proof. 
\end{proof}

\begin{lemma} \label{linearind3} 
	Let $i=3, 4$ and $N_1^i \in \N_1^i$. The inclusions $\O^i_U \hookrightarrow \H_2(\I_{N_1^i}, \Z)$ induce the injective homomorphism
	$$\bigoplus_U \O^i_U \hookrightarrow \H_2(\I_{N_1^i}, \Z),$$
	where the sum is over all admissible subgroup for $N_1^i$.
\end{lemma}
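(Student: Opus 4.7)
The plan is to use cup products of the cohomology classes $[\nu_{\alpha_j, \W_U}] \in \H^1(\I_{N_1^i}, \Z)$ from Proposition \ref{hom} to separate the summands $\O^i_U$. For each admissible $U$ given by a separating curve $\theta$, let $\W_U$ denote the induced orthogonal splitting $(U, U^{\perp_{H_{\alpha_j}}})$ of $\H_{\alpha_j}$, and form $\xi^U_{j, k} = [\nu_{\alpha_j, \W_U}] \smallsmile [\nu_{\alpha_k, \W_U}] \in \H^2(\I_{N_1^i}, \Z)$ for distinct $j, k \in \{1, 2, 3\}$. Given a relation $\sum_U h_U = 0$ in $\H_2(\I_{N_1^i}, \Z)$ with $h_U \in \O^i_U$ and only finitely many nonzero summands, the goal is to evaluate all the $\xi^U_{j, k}$ against this sum and conclude that every $h_U$ vanishes.

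The crucial separation step is that $\xi^U_{j, k}$ pairs trivially with every generator $\A(T_{\alpha_l} T_{\alpha'_l}^{-1}, T_{\theta'})$ of $\O^i_{U'}$ whenever $U' \neq U$. Indeed, $\theta'$ yields the splitting $\W_{U'} \neq \W_U$, so Proposition \ref{hom}(a) gives $\nu_{\alpha_j, \W_U}(T_{\theta'}) = 0 = \nu_{\alpha_k, \W_U}(T_{\theta'})$, and the second column of the $2 \times 2$ determinant computing the pairing vanishes identically, regardless of the entries in the first column. Consequently $\langle \xi^U_{j, k}, \sum_{U''} h_{U''} \rangle = \langle \xi^U_{j, k}, h_U \rangle$, so the problem reduces, for each $U$ separately, to showing that the family of pairings $\langle \xi^U_{j, k}, h_U \rangle$ already determines $h_U$.

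This last reduction is essentially a rerun of the determinant computation from the proof of Lemma \ref{pres}: writing $h_U = \sum_l \lambda_l \A(T_{\alpha_l} T_{\alpha'_l}^{-1}, T_\theta)$, the three pairings compute (up to sign) the differences $\lambda_j - \lambda_k$ for the three two-element subsets $\{j, k\} \subset \{1, 2, 3\}$, forcing $\lambda_1 = \lambda_2 = \lambda_3$, which by the single relation (\ref{relat}) means $h_U = 0$ in $\O^i_U$. The main technical subtlety, and essentially the only obstacle, is confirming that $\W_U$ is a legitimate orthogonal splitting of each $\H_{\alpha_j}$ so that $\nu_{\alpha_j, \W_U}$ is defined; this requires that $[\alpha_j] \notin U$ for every $j$, equivalently that none of the curves $\alpha_j$ lies in the once-punctured torus bounded by $\theta$. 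This is exactly what is ensured by the geometric configurations of $N_1^3$ and $N_1^4$ depicted in Fig. \ref{M12}, where any separating curve $\theta$ disjoint from $N_1^i$ bounds a torus lying in the complement of the multicurve, leaving all three $\alpha_j$ on the opposite side. Once this geometric check is made, the argument above goes through directly and yields the injectivity.
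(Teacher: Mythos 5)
Your overall strategy --- evaluating the cup products $[\nu_{\alpha_j,\W}]\smallsmile[\nu_{\alpha_k,\W}]$ on the relation and reducing to $\lambda_1=\lambda_2=\lambda_3$ within a single $\O^i_U$ --- is the same as the paper's, but the justification of your ``crucial separation step'' rests on a false claim, and that claim is precisely the delicate point of the lemma. You assert that $U'\neq U$ forces $\W_{U'}\neq\W_U$ as splittings of $\H_{\alpha_j}$, so that $\nu_{\alpha_j,\W_U}(T_{\theta'})=0$ and the whole second column of the $2\times 2$ matrix vanishes. This is not true: for a \emph{fixed} $j$, the map sending an admissible subgroup $U\subset\H$ to the induced splitting of $\H_{\alpha_j}=[\alpha_j]^{\perp}/[\alpha_j]$ is not injective. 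Concretely, every admissible subgroup lies in the rank-$4$ image of $\H_1(Y,\Z)$, where $Y$ is the genus-one component of $\S\setminus N_1^i$, and is a symplectic complement there of the radical $\left\langle [\alpha_1],[\alpha_2]\right\rangle$. Writing $[\alpha_1]=a_1$, $[\alpha_2]=a_2$ and completing to a symplectic basis with $a_3,b_3$ spanning one such complement, the subgroups $U=\left\langle a_3,b_3\right\rangle$ and $U'=\left\langle a_3+a_1,b_3\right\rangle$ are both admissible ($U'=T_{c*}(U)$ for a twist about a curve $c\subset Y$ in the class $b_3+a_1$, which preserves $N_1^i$), they are distinct, yet they have the same image in $\H_{\alpha_1}$. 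Hence $\theta'$ yields the splitting $\W_U$ for $\H_{\alpha_1}$ and $\nu_{\alpha_1,\W_U}(T_{\theta'})=1\neq 0$.

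The orthogonality you want is nevertheless true, but for a different reason, which is the one the paper exploits. Since $\nu_{\alpha_k,\W_U}(T_{\alpha_j}T_{\alpha_j'}^{-1})=0$ by Proposition \ref{hom}(c), the determinant computing $\left\langle \xi^U_{j,k},\A(T_{\alpha_j}T_{\alpha_j'}^{-1},T_{\theta'})\right\rangle$ collapses to the product $\nu_{\alpha_j,\W_U}(T_{\alpha_j}T_{\alpha_j'}^{-1})\cdot\nu_{\alpha_k,\W_U}(T_{\theta'})$; its nonvanishing requires the image of $U'$ to agree with that of $U$ simultaneously in $\H_{\alpha_j}$ \emph{and} in $\H_{\alpha_k}$ (agreement modulo $[\alpha_j]$ and modulo $[\alpha_k]$ at once), and only this conjunction pins down $U'=U$. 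In the example above the pairing vanishes because the \emph{second} factor does, not because your second column is zero. So your reduction $\left\langle \xi^U_{j,k},\sum_{U''}h_{U''}\right\rangle=\left\langle \xi^U_{j,k},h_U\right\rangle$ can be salvaged, but as written the proof has a genuine gap at its central step; once repaired it becomes essentially the paper's argument (the paper streamlines further by first using relation (\ref{relat}) to write $h_U=\lambda_{U,2}\A_U^2+\lambda_{U,3}\A_U^3$ and then extracting each coefficient with a single cup product).
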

\begin{proof}
Assume the converse.
By Lemma \ref{pres} we can assume that there is a nontrivial linear dependence
\begin{equation} \label{rel}
\sum_{U \in S} \left( \lambda_{U, 2} \A_U^2 + \lambda_{U, 3} \A_U^3 \right) = 0,
\end{equation}
where $S$ is a finite set and $\lambda_{U, 2}, \lambda_{U, 3} \in \Z$. Let $U' \in S$. Let us compute the value of the cohomology class $[\nu_{\alpha_1, \W'}] \smallsmile [\nu_{\alpha_2, \W'}] \in \H^2(\I_{N_1^i}, \Z)$ on the both sides of (\ref{rel}), where $\W' = (U', U'^{\perp_{H_{\alpha_j}}})$.

Proposition \ref{hom} (c) implies that 
$$\nu_{\alpha_1, \W'}(T_{\alpha_3} T_{\alpha'_3}^{-1}) = 	\nu_{\alpha_2, \W'}(T_{\alpha_3} T_{\alpha'_3}^{-1}) = 0$$
for all possible choices of $\alpha'_3$, so we obtain
$$\left\langle [\nu_{\alpha_1, \W'}] \smallsmile [\nu_{\alpha_2, \W'}], \A_U^3 \right\rangle = 0$$
for all $U \in S$. Consequently, formula (\ref{rel}) implies
\begin{equation} \label{rel3}
\sum_{U \in S} \lambda_{U, 2} \left\langle [\nu_{\alpha_1, \W'}] \smallsmile [\nu_{\alpha_2, \W'}], \A_U^2 \right\rangle = 0.
\end{equation}
Let $U \in S$ is given by a separating curve $\theta$ disjoint from $N_1^i$ and $\A_{U}^2 = \A(T_{\alpha_2} T_{\alpha'_2}^{-1}, T_\theta)$. Then
\begin{equation} \label{eq1}
\left\langle [\nu_{\alpha_1, \W'}] \smallsmile [\nu_{\alpha_2, \W'}], \A_U^2 \right\rangle = -\det \begin{pmatrix}
\nu_{\alpha_1, \W'}(T_{\alpha_2} T_{\alpha'_2}^{-1}) & \nu_{\alpha_1, \W'}(T_{\theta}) \\
\nu_{\alpha_2, \W'}(T_{\alpha_2} T_{\alpha'_2}^{-1}) & \nu_{\alpha_2, \W'}(T_{\theta}) 
\end{pmatrix}.
\end{equation}
Since $\nu_{\alpha_1, \W'}(T_{\alpha_2} T_{\alpha'_2}^{-1}) = 0$, we have that (\ref{eq1}) is nonzero if and only if $\nu_{\alpha_1, \W'}(T_{\theta})$ and $\nu_{\alpha_2, \W'}(T_{\alpha_2} T_{\alpha'_2}^{-1})$ are nonzero. Therefore $U$ is admissible for $N_1^i \cup \alpha'_1$ and $N_1^i \cup \alpha'_2$. This condition defines $U$ uniquely, hence $U = U'$. Therefore (\ref{rel3}) implies $\lambda_{U', 2} = 0$. Similarly $\lambda_{U', 3} = 0$ for any $U' \in S$. This concludes the proof.	
\end{proof}

\subsection{The differential $d^1_{2, 2}$}

By Corollary \ref{freeE12} there is a basis of the free abelian group $E_{2, 2}^1$ consisting of the elements $P_{M^i_2} \otimes \A_{U^i}$, where $i = 4, 5$ and $U^i$ is admissible for $M^i_2 \in \M^i_2 / \I$. 
The differential $d_{2, 2}^{1}: E_{2, 2}^1 \to E_{1, 2}^1$ has the following form.
\begin{equation*}
	d_{2, 2}^1 \left(P_{M_2^i} \otimes \A_{U^i}\right) = \partial P_{M_2^i} \otimes \A_{U^i},
\end{equation*}
where $i = 4, 5$. 
For a fixed $i = 4, 5$ denote by $F^i \subset E_{2, 2}^1$ the subgroup generated by the elements $P_{M^i_2} \otimes \A_{U^i}$.

\begin{lemma} \label{lemind}
	The inclusions of the images $d_{2, 2}^1(F^4) \hookrightarrow E_{1, 2}^1$ and $d_{2, 2}^1(F^5) \hookrightarrow E_{1, 2}^1$ induce an injective map $d_{2, 2}^1(F^4) \oplus d_{2, 2}^1(F^5) \hookrightarrow E_{1, 2}^1$.
\end{lemma}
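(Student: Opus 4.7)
The plan is to show that $d^1_{2,2}(F^4)$ and $d^1_{2,2}(F^5)$ are supported on disjoint combinatorial types of $1$-cells in $\B(x)$, so that they lie in complementary direct summands of the decomposition
\begin{equation*}
E^1_{1,2} = \bigoplus_{M_1 \in \M_1(x)/\I} \H_2(\I_{M_1}, \Z).
\end{equation*}

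The key observation combines the formula $\dim P_M = |\S \setminus M| - 1$ from (\ref{dim}) with the fact that $g(\S)$ equals the sum of the genera of the components of $\S \setminus M$ plus the first Betti number of the dual graph of $M$. This forces the complement $\S \setminus M^4_2$ to consist of two thrice-punctured spheres and one twice-punctured torus (the dual graph has three vertices, four edges, and $b_1 = 2$, giving total component genus equal to $3 - 2 = 1$), while $\S \setminus M^5_2$ consists of two thrice-punctured spheres and one four-punctured sphere (dual graph with $b_1 = 3$, total genus $0$). This matches the description in Subsection \ref{sub51}.

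A codimension-$1$ face of $P_{M^i_2}$ corresponds to deleting a single curve $c \subset M^i_2$ separating two distinct components of $\S \setminus M^i_2$; these two components then merge along the resulting tube. For $\partial P_{M^4_2}$ the possibilities are (four-punctured sphere, twice-punctured torus) or (thrice-punctured sphere, thrice-punctured torus). For $\partial P_{M^5_2}$ the possibilities are (four-punctured sphere, four-punctured sphere) or (thrice-punctured sphere, five-punctured sphere). The two lists are disjoint: every $1$-cell appearing in $\partial P_{M^4_2}$ has exactly one component of positive genus (inherited from the twice-punctured torus in $M^4_2$), while every $1$-cell appearing in $\partial P_{M^5_2}$ is of genus-zero type. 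Since $\I$-orbits refine $\Mod(\S)$-orbits and therefore combinatorial types, the subgroups of $E^1_{1,2}$ spanned by the summands indexed by these two collections of types are complementary. Hence $d^1_{2,2}(F^4) \cap d^1_{2,2}(F^5) = 0$, which is precisely the required injectivity of the induced map $d^1_{2,2}(F^4) \oplus d^1_{2,2}(F^5) \hookrightarrow E^1_{1,2}$.

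The main technical point to verify is that every sub-multicurve $M' = M^i_2 \setminus \{c\}$ obtained in this way really represents a $1$-cell of $\B(x)$, i.e., that $M' \in \M_1(x)$. This amounts to conditions (i) and (ii) from the definition of $\M(x)$: condition (i) is inherited from $M^i_2$ and condition (ii) follows from the fact that any basic $1$-cycle for $x$ supported in $M^i_2$ and not involving $c$ remains a basic $1$-cycle supported in $M'$. So this check is routine and does not present a real obstacle.
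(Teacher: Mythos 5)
Your argument has a genuine gap, and the headline claim on which it rests is false. The combinatorial types of the $1$-cells occurring in $\partial P_{M_2^4}$ and in $\partial P_{M_2^5}$ are \emph{not} disjoint: the type $\M_1^3$ (a bounding pair plus one further nonseparating curve, with complement a twice-punctured torus and a four-punctured sphere) occurs in both. For $M_2^4=\gamma^+\cup\gamma^-\cup\alpha_1\cup\alpha_2$ the edges $P_{\gamma^+\cup\gamma^-\cup\alpha_j}$ are of this type (these are exactly the edges $d_k$ in Fig.~\ref{geom}), and for $M_2^5=\gamma^+\cup\gamma^-\cup\alpha_1\cup\alpha_2\cup\beta$ the edge $P_{\gamma^+\cup\gamma^-\cup\beta}$ is again of this type. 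The source of the error is your assumption that a codimension-$1$ face of $P_{M_2^i}$ is obtained by deleting a single curve. Codimension is measured by $|\S\setminus M|-1$, not by $|M|$, so deleting \emph{two} curves that join the same pair of complementary components (such as $\alpha_1$ and $\alpha_2$ in $M_2^5$, or in $M_2^4$) also drops $|\S\setminus M|$ by exactly one — the two components merge into a single one whose genus is the sum of their genera plus one. This is how the positive-genus edge $P_{\gamma^+\cup\gamma^-\cup\beta}$ arises in $\partial P_{M_2^5}$, contradicting your assertion that every edge there is of genus-zero type; likewise $P_{\gamma^+\cup\gamma^-}$ (type $\M_1^2$) is a codimension-$1$ face of $P_{M_2^4}$ obtained by deleting both $\alpha_j$.

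Because of this overlap, the two images do not lie in complementary summands of the decomposition of $E^1_{1,2}$ by combinatorial type, and one needs a finer separating invariant. The paper's proof uses the basis elements $P_{M_1^3}\otimes\A_U$ and observes that for the $\M_1^3$-terms coming from $\partial P_{M_2^5}$ the admissible subgroup $U$ contains the class $[\alpha]$ of the extra (non-bounding-pair) curve, whereas for those coming from $\partial P_{M_2^4}$ it does not; disjointness of the two resulting sets of basis elements, together with their linear independence (Corollary \ref{freeE12} and Lemma \ref{linearind3}), then gives the claim. Your proof would need to be supplemented by this (or an equivalent) argument to be correct; as written, the conclusion $d^1_{2,2}(F^4)\cap d^1_{2,2}(F^5)=0$ does not follow.
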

\begin{proof}
For $M_2^5 \in \M_2^5$ and for any $Q = P_M \subseteq \partial M_2^5$ we have that $\beta$ is a component of $M$. This follows from the fact that the homology class $[\beta]$ does not belong to the subgroup in $\H$ generated by the classes of the other components of $\M_2^5$. Therefore for any $U^5$ the image $d_{2, 2}^1(P_{M_2^5} \otimes \A_{U^5})$ belongs to the subgroup generated by the basic elements
\begin{equation} \label{imF5}
\{ P_{N_1^4} \otimes U, \;\;  P_{M_1^4} \otimes U, \;\; P_{M_1^3} \otimes U \},
\end{equation}
where in the last case we have $[\alpha] \in U$ (see Fig. \ref{M122}).

For $M_2^4 \in \M_2^4$ and for any $Q = P_M \subseteq \partial M_2^4$ we have that the image $d_{2, 2}^1(P_{M_2^4} \otimes \A_{U^4})$ belongs to the subgroup generated by the basic elements
\begin{equation} \label{imF4}
	\{ P_{N_1^3} \otimes U, \;\;  P_{M_1^2} \otimes U, \;\; P_{M_1^3} \otimes U \},
\end{equation}
where in the last case we have $[\alpha] \notin U$ (see Fig. \ref{M122}). The sets (\ref{imF5}) and (\ref{imF4}) do not intersect and these element are linearly independent by Corollary \ref{freeE12} and Lemma \ref{linearind3}. This concludes the proof.
\end{proof}

Moreover, Corollary \ref{freeE12} and Lemma \ref{linearind3} implies that for different $U$ the elements $P_M \otimes U$ (where $M \in \M_1(x)$) are also linearly independent in $E_{1, 2}^1$. Together with Lemma \ref{lemind}, this implies that in order to prove Proposition \ref{diff22} it suffices to prove the following simplified statement.

\begin{prop} \label{inj22}
Let $i = 4, 5$. Let us fix some symplectic subgroup $U \subset \H$ with $\rk U = 2$ and assume that we have 
\begin{equation} \label{dep}
	d_{2, 2}^{1} \left(\sum_{M \in S} \lambda_M P_{M} \otimes \A_{U}\right) = 0,
\end{equation}
where $M \in \M_2^i / \I$ are such that $U$ is admissible for $M$ and $S$ is a finite set. Then $\lambda_M = 0$ for all $M \in S$.
\end{prop}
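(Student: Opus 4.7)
The plan is to unfold the differential $d_{2,2}^1(P_M\otimes\A_U)=\partial P_M\otimes\A_U$ as a signed sum of basis elements $P_{M''}\otimes\A_U$ of $E_{1,2}^1$ indexed by the codim-$1$ faces $P_{M''}\subset\partial P_M$, and, for each $M\in S$, to isolate a distinguishing face $Q_M$ whose contribution cannot be cancelled by any term coming from $M'\neq M$ in $S$. Combined with the linear independence of basis elements of $E_{1,2}^1$ furnished by Corollary~\ref{freeE12} (and, where needed, Lemma~\ref{linearind3}), projecting the hypothesis~(\ref{dep}) onto $P_{Q_M}\otimes\A_U$ will force $\lambda_M=0$ for every $M\in S$.

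The construction of $Q_M$ is guided by the $\I$-invariant homology data that parametrises the orbits $M\in\M_2^i/\I$ with $U$ admissible. When $i=5$, the multicurve $M$ contains a unique component, call it $\beta_M$, lying in the once-punctured torus $X_\theta$ with $\H_\theta=U$, whose primitive class $[\beta_M]\in U$ is an $\I$-invariant separating orbits. When $i=4$, no component of $M$ lies in $X_\theta$, and $M$ is instead pinned down within its $\I$-orbit by the pair of classes $([\gamma_M^+],[\alpha_{M,1}])\in U^{\perp}\times U^{\perp}$ carried by its two bounding pairs. In both cases I take $Q_M$ to be the codim-$1$ face obtained by removing one component of a bounding pair of $M$ (as opposed to one of the singleton components, whose removal would take $M''$ outside $\M(x)$); the resulting $Q_M$ still carries all the distinguishing homology data, so different $M\in S$ give $\I$-inequivalent $Q_M$. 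By the enumeration in Lemma~\ref{lemind}, $Q_M$ falls in one of the types $\M_1^2,\M_1^3,\M_1^4$ or $\N_1^3,\N_1^4$.

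The main obstacle is the ``Lantern case'' when $Q_M$ lands in $\N_1^3$ or $\N_1^4$, since then the generators of $\O_U^i\subset\H_2(\I_{Q_M},\Z)$ satisfy the three-term relation of Lemma~\ref{pres}, so a naive projection onto the basis element $P_{Q_M}\otimes\A_U$ need not recover $\lambda_M$. I would handle this by pairing the vanishing sum~(\ref{dep}) against the cup products $[\nu_{\alpha_j,\W}]\smallsmile[\nu_{\alpha_k,\W}]\in\H^2(\I_{Q_M},\Z)$ of the $\nu$-homomorphisms from Proposition~\ref{hom}, with $\W$ the splitting of $\H_{\alpha_j}$ induced by the separating curve $\theta$ realising $U$; by the explicit determinant computations performed in the proof of Lemma~\ref{linearind3}, these cup products distinguish the three terms of~(\ref{relat}) and, with a suitable choice of indices $(j,k)$, detect exactly the summand contributed by the unique $M\in S$ whose deleted bounding pair carries the appropriate homology class. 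With this separation in hand, identity~(\ref{dep}) forces $\lambda_M=0$ for each $M\in S$.
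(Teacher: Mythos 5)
Your plan hinges on the claim that each $M\in S$ admits a codimension-one face $Q_M$ whose contribution to $d^1_{2,2}\bigl(\sum_{M\in S}\lambda_M P_M\otimes\A_U\bigr)$ cannot be cancelled by the terms coming from the other cells. This is exactly the point that fails, and it is the whole difficulty of the proposition. In the subcomplex $\Gamma\subset\B(x)/\I$ spanned by the cells $P_M$ with $M\in\M_2^4/\I$ and $U$ admissible, \emph{every} edge lies in the boundary of at least two, and typically three, such $2$-cells (e.g.\ in the notation of Fig.~\ref{geom} the edge $d_0=P_{\gamma^+\cup\gamma^-\cup\alpha}$ bounds $P_{M'}$, $P_{M''}$ and $P_N$, and the edges $c_k^{\pm}$, $e_k^{\pm}$ are shared between $\Gamma_y$ and the subcomplexes $\Gamma_z$ for other $z\in U^{\perp}$). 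Moreover, at an edge of type $\M_1^2,\M_1^3,\M_1^4$ the restrictions of $\A_U$ from the incident $2$-cells to the edge stabiliser are literally the same basis element $\A(T_{\gamma^+}T_{\gamma^-}^{-1},T_\theta)$, so their coefficients genuinely add and can cancel; and at an edge of type $\N_1^3,\N_1^4$ the three incident cells contribute the three abelian cycles $\A_U^1,\A_U^2,\A_U^3$, which by Lemma~\ref{pres} satisfy $\A_U^1+\A_U^2+\A_U^3=0$, so equal coefficients again cancel. Your cup-product pairing against $[\nu_{\alpha_j,\W}]\smallsmile[\nu_{\alpha_k,\W}]$ (which is essentially Lemma~\ref{linearind3}) only recovers the \emph{differences} of the coefficients of the incident cells, never the coefficients themselves. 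In short: no private distinguishing face exists, and the local data at any single edge is insufficient to isolate $\lambda_M$. (A small additional inaccuracy: removing a ``singleton'' component of $M$ need not take you outside $\M(x)$; such removals produce the edges $d_k$.)

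The paper's proof accepts that only differences are visible locally and turns this into a global extremal argument. From the vanishing of the differential it extracts the propagation statement (Proposition~\ref{edge}): every edge of $\partial P_M$ for $M\in S$ must also bound $P_{M'}$ for some other $M'\in S$. It then introduces the complexity $\Psi(\sigma)=\max_v\sum_j k_j$ over the vertices $v$ of $\sigma$, picks a horizontal cell $P_{M'}$ in $S$ of maximal $\Psi$, and follows the forced propagation through the explicitly described complexes $\Gamma_y$ of Fig.~\ref{geom} to produce a cell of $S$ with strictly larger $\Psi$ --- a contradiction. Some argument of this discrete maximum-principle type (or another genuinely global input) is needed; the face-isolation strategy as you have stated it cannot be repaired by a better choice of $Q_M$.
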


\begin{proof}
First let us prove the case $i = 4$. Assume the converse.
For each $M \in S$ and for each edge $e \subset \partial M$ the restriction of the homology class $\A_U$ from $\I_M $ onto $\I_e$ is nonzero. Indeed, by Corollary \ref{H113} and Lemma \ref{pres} this image always can be chosen as a basic element of $\H_1(\I_e, \Z)$. Therefore (\ref{dep}) implies the following useful observation. 
\begin{prop} \label{edge}
	If $e$ is an edge such that $e \subset \partial P_M$ for some $M \in S$, then there exists $M \neq M' \in S$ such that $e \subset \partial  P_{M'}$.
\end{prop} 

Denote by $\Gamma \subset \B(x) / \I$ the subcomplex given by the closure of the union of all cells corresponding to the multicurves $M$ from the sets $\M_2^4$ such that $U$ is admissible for $M$. 

For a homology class $y \in U^{\perp}$ 
choose a bounding pair $\{\gamma^+, \gamma^-\}$ on $\S$ such that $[\gamma^+] = [\gamma^-] = y$ and $U$ is admissible for $\gamma^+ \cup \gamma^-$. Any multicurve $M$ from the set $\M_2^4 / \I$ containing $\gamma^+$ and $\gamma^-$ (see Fig. \ref{M22}) is uniquely determined (up to $\I$-equivalence) by the homology classes $[\alpha_1],[\alpha_2]$. Denote by $\Gamma_{y} \subset \Gamma$ the subcomplex spanned by these cells. 

Let $\alpha_1 = \alpha$ be a nonseparating curve such that $x = m [\alpha] + n y$ with $m, n > 0$. There are three possible situation: $[\alpha_2] = [\alpha] \pm y$ and $[\alpha_2] = y - [\alpha]$. Denote $l = \lfloor \frac{n}{m} \rfloor$. There are five possible types of edges in $\Gamma_y$. We denote them as follows.

$\bullet$ $d_k$ is the image in $\B(x) / \I$ of an edge in $\B(x)$ corresponding to a multicurve containing $\gamma^+$, $\gamma^-$ and a curve of the homology class $[\alpha]+ky$; $k \in \Z$, $k \leq l$. 

$\bullet$ $c^+_k$ is the image in $\B(x) / \I$ of an edge in $\B(x)$ corresponding to a multicurve containing $\gamma^+$ and curves of the homology classes $[\alpha]+ky, [\alpha]+(k+1)y$; $k \in \Z$, $k \leq l$.

$\bullet$ $c^-_k$ is the image in $\B(x) / \I$ of an edge in $\B(x)$ corresponding to a multicurve containing $\gamma^-$ and curves of the homology classes $[\alpha]+ky, [\alpha]+(k+1)y$; $k \in \Z$, $k \leq l$.

$\bullet$ $e^+_k$ is the image in $\B(x) / \I$ of an edge in $\B(x)$ corresponding to a multicurve containing $\gamma^+$ and curves of the homology classes $[\alpha]+ky, -[\alpha]-(l-1)y$; $k \in \Z$, $k \leq l$.

$\bullet$ $e^-_k$ is the image in $\B(x) / \I$ of an edge in $\B(x)$ corresponding to a multicurve containing $\gamma^-$ and curves of the homology classes $[\alpha]+ky, -[\alpha]-(l-1)y$; $k \in \Z$, $k \leq l$.

Note that the edges $c_k^+$ and $c_k^-$ ($e_k^+$ and $e_k^-$) are different. Indeed, in these cases the 3-punctured sphere in on opposite sides of the curve of the homology class $y$ (recall that we consider oriented multicurves).
However, the edges $c_k^+$ and $c_k^-$ ($e_k^+$ and $e_k^-$) have common endpoints (but these edges are not loops). Consequently, each edge $d_k$ is a loop in $\Gamma_y$.

Therefore $\Gamma_y$ is obtained from the complex $\Gamma'_y$ shown in Fig \ref{geom} by gluing together the pairs of endpoints of the edges $d_k$, where $k \in \Z$ and $k \leq l$. We are interested only on the structure of $1$- and $2$-cells of $\Gamma_y$, so it is convenient to work with $\Gamma'_y$.

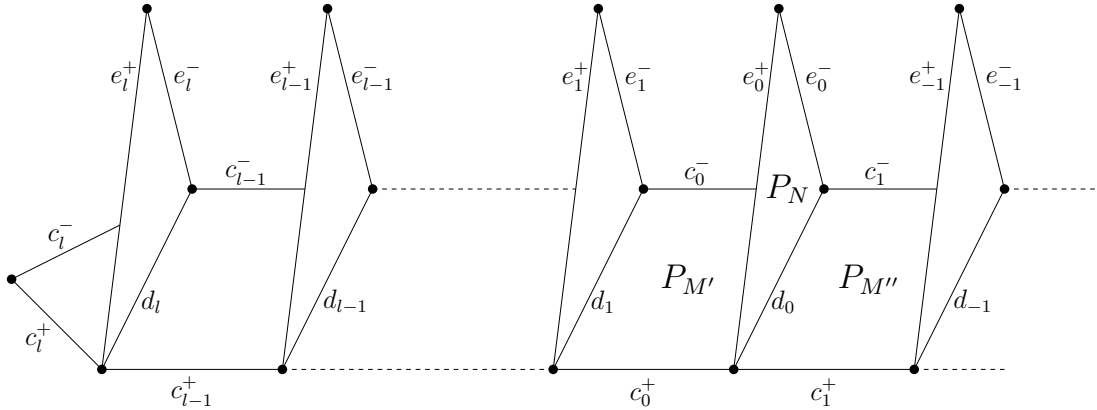
\begin{figure}[h]
	\begin{center} \scalebox{0.6}{
			\begin{tikzpicture}
			
			\draw[] (4, 0) to (8, 0);
			\draw[] (6, 4) to (8.5, 4);
			\draw[] (16, 4) to (18.5, 4);
			\draw[] (20, 4) to (22.5, 4);

			\draw[dashed] (8, 0) to (14, 0);
			\draw[dashed] (10, 4) to (14.5, 4);
			
			\draw[dashed] (22, 0) to (24, 0);
			\draw[dashed] (24, 4) to (26, 4);

			\draw[] (4, 0) to (6, 4);
			\draw[] (8, 0) to (10, 4);
			
			\draw[] (4, 0) to (2, 2);
			\draw[] (4.4, 3.2) to (2, 2);

			\draw[] (4, 0) to (5, 8);
			\draw[] (6, 4) to (5, 8);
			
			\draw[] (8, 0) to (9, 8);
			\draw[] (10, 4) to (9, 8);

			\draw[] (14, 0) to (22, 0);
			
			\draw[] (14, 0) to (16, 4);
			\draw[] (18, 0) to (20, 4);
			\draw[] (22, 0) to (24, 4);
			
			\draw[] (14, 0) to (15, 8);
			\draw[] (16, 4) to (15, 8);
			
			\draw[] (18, 0) to (19, 8);
			\draw[] (20, 4) to (19, 8);
			
			\draw[] (22, 0) to (23, 8);
			\draw[] (24, 4) to (23, 8);

            \node[scale = 1.4] at (2.6, 0.7) {$c^+_{l}$};
            \node[scale = 1.4] at (3.1, 3) {$c^-_{l}$};
			
		    \node[scale = 1.4] at (16, -0.5) {$c^+_0$};
		    \node[scale = 1.4] at (20, -0.5) {$c^+_1$};
			\node[scale = 1.4] at (6, -0.5) {$c^+_{l-1}$};
			
			\node[scale = 1.4] at (17.2, 4.4) {$c^-_0$};
			\node[scale = 1.4] at (21.2, 4.4) {$c^-_1$};
			\node[scale = 1.4] at (7.2, 4.4) {$c^-_{l-1}$};
			
			\node[scale = 1.4] at (5.1, 1.5) {$d_{l}$};
			\node[scale = 1.4] at (9.4, 1.5) {$d_{l-1}$};
			\node[scale = 1.4] at (15.1, 1.5) {$d_{1}$};
			\node[scale = 1.4] at (19.1, 1.5) {$d_{0}$};
			\node[scale = 1.4] at (23.3, 1.5) {$d_{-1}$};
			
			\node[scale = 1.4] at (4.5, 6.5) {$e^+_{l}$};
			\node[scale = 1.4] at (5.9, 6.5) {$e^-_{l}$};
			\node[scale = 1.4] at (8.2, 6.5) {$e^+_{l-1}$};
			\node[scale = 1.4] at (10, 6.5) {$e^-_{l-1}$};
			\node[scale = 1.4] at (14.5, 6.5) {$e^+_{1}$};
			\node[scale = 1.4] at (15.9, 6.5) {$e^-_{1}$};
			\node[scale = 1.4] at (18.5, 6.5) {$e^+_{0}$};
			\node[scale = 1.4] at (19.9, 6.5) {$e^-_{0}$};
			\node[scale = 1.4] at (22.3, 6.5) {$e^+_{-1}$};
			\node[scale = 1.4] at (24, 6.5) {$e^-_{-1}$};
			
			\node[scale = 1.8] at (17, 2) {$P_{M'}$};
			\node[scale = 1.8] at (21, 2) {$P_{M''}$};
			
			\node[scale = 1.8] at (19.2, 4) {$P_{N}$};

			\fill[black]  (2, 2) circle [radius=3pt];
			
			\fill[black]  (4, 0) circle [radius=3pt];
			\fill[black]  (8, 0) circle [radius=3pt];
			\fill[black]  (14, 0) circle [radius=3pt];
			\fill[black]  (18, 0) circle [radius=3pt];
			\fill[black]  (22, 0) circle [radius=3pt];
			
			\fill[black]  (6, 4) circle [radius=3pt];
			\fill[black]  (10, 4) circle [radius=3pt];
			\fill[black]  (16, 4) circle [radius=3pt];
			\fill[black]  (20, 4) circle [radius=3pt];
			\fill[black]  (24, 4) circle [radius=3pt];
			
			\fill[black]  (5, 8) circle [radius=3pt];
			\fill[black]  (9, 8) circle [radius=3pt];
			\fill[black]  (15, 8) circle [radius=3pt];
			\fill[black]  (19, 8) circle [radius=3pt];
			\fill[black]  (23, 8) circle [radius=3pt];

			\end{tikzpicture}} \end{center}
	\caption{The complex $\Gamma'_{y}$.}
	\label{geom}
\end{figure}

Obviously we have $$\Gamma = \bigcup_{y \in U^{\perp}} \Gamma_y,$$
and for $y \neq z$ the intersection $\Gamma_y \cap \Gamma_z$ consists of some boundary edges of $\Gamma_y$ and $\Gamma_z$. 

Let $\sigma$ be a cell of $\B$. Define the number $\Psi(\sigma) \in \mathbb{N}$ as follows. Each vertex $v$ of $\sigma$ corresponds to some multicurve $\gamma_1 \cup \dots \cup \gamma_t$ such that
the homology class $x$ can be uniquely represented as $x = \sum_{j=1}^t k_j [\gamma_j]$. Let us define $\psi(v) = \sum_{j=1}^t k_j$ and 
$$\Psi(\sigma) = \max_{v \in \sigma} \psi(v),$$
where maximum is taken over all vertices $v$ of $\sigma$. 

Each subcomplex $\Gamma_{y}$ has 'horizontal' 2-cells (all rectangles and the leftmost triangle) and 'vertical' 2-cells (all other triangles) as shown in Fig. \ref{geom}. The edges of the horizontal  2-cells are also said to be 'horizontal' (that is, $c_k^+, c_k^-$ and $d_k$ are horizontal and $e_k^+, e_k^-$ are vertical).
Let us remark that if (\ref{dep}) holds, then at least one of the cells $\{P_{M}, \; | \; M \in S\}$ is horizontal. This follows from the fact that the boundary of each vertical cell contains a horizontal edge that is not contained in the boundary of any other vertical 2-cells.

Let $M' \in S$ be a multicurve such that $P_{M'}$ is horizontal and $\Psi(P_{M'}) \geq \Psi(P_M)$ for any horizontal cell $P_M$, where $M \in S$. Without loss of generality we can assume that $M' = M_2^4$ shown in Fig \ref{M22}. Let $\alpha_1 = \alpha$ and $[\alpha_2] = [\alpha] + [\gamma^+]$, where $x = m[\alpha] + n[\gamma^+]$. The cell $P_{M'}$ is shown in Fig. \ref{geom} (the case when $P_{M'}$ triangular horizontal 2-cell is completely similar). We have $\Psi(P_{M'}) = m+n$.

Let $\beta$ be a curve disjoint from $\gamma^+$, $\gamma^-$ and $\alpha$ such that $[\beta] = [\alpha] - [\gamma^+]$.
Consider the multicurve $M'' = \gamma^+ \cup \gamma^- \cup \alpha \cup \beta$. The corresponding cell $P_{M''}$ is the rectangular 2-cell located on right hand side of $P_{M'}$ on Fig \ref{geom}. We have $$\Psi(P_{M''}) = 2m+n>m+n =\Psi(P_{M'}),$$ hence $M'' \notin S$. 
Consider the  multicurve $N = \gamma^+ \cup \gamma^- \cup \alpha \cup \beta$. The cell $P_N$ is the triangular 2-cell located on the top right of $P_{M'}$, see Fig \ref{geom}.
The edge $d_0 = P_{\gamma^+ \cup \gamma^- \cup \alpha}$ belongs to the boundary of exactly three 2-cells: $P_{M'}$, $P_{M''}$ and $P_{N}$. Since $M' \in S$ and $M'' \notin S$, Proposition \ref{edge} implies that $N \in S$.

Consider the edge $e^+_0 = P_{\gamma^+ \cup \alpha \cup \beta}$. Obviously $e^+_0 \subset \partial P_{N}$. By Proposition \ref{edge} there exist $L \in S$, such that $L \neq N$ and $e_0^+ \subset \partial P_L$. Therefore the homology classes of the components of $L$ are either $\{[\alpha], [\alpha], y, [\alpha]-y\}$ or $\{y - [\alpha], y-[\alpha], [\alpha], y\}$. Both of the corresponding cells are horizontal (the first cell is rectangular, the second one is triangular). In the first case we have 
$$\Psi(P_L) = 2m+n > m+n = \Psi(P_{M'}).$$
In the second case we have
$$\Psi(P_L) = m+2n > m+n = \Psi(P_{M'}).$$
Therefore we come to a contradiction in the case $i=4$.

Now consider the case $i=5$. The strategy is the same as for $i=4$. Let us remark that in (\ref{dep}) we can assume that all multicurves $M \in \M_2^5 / \I$ have the same component $\beta$ (see Fig. \ref{M22}) and the expansion of $x$ has the same coefficient $m$ at $[\beta]$. Then the previous argument works with replacing $x$ by $x - m[\beta]$.
\end{proof}

\begin{proof}[Proof of Proposition \ref{diff22}.]
	The result follows from Proposition \ref{inj22}.
\end{proof}

\section{Proof of Proposition \ref{surj1}} \label{S6}

\subsection{The term $E^1_{1, 3}$} 

In order to prove Proposition \ref{surj1} we need to compute the group $E^1_{1, 3}$ explicitly. There are six combinatorial types in $\M_1(x)$ shown in blue in Fig. \ref{M122}, \ref{M12} and $\ref{M'1}$. Namely, these are $\N_1^3$, $\N_1^4$, $\M'_1$, $\M_1^2$, $\M_1^3$ and $\M_1^4$. Indeed, any $M \in \M_1(x)$ divides $\S$ into two parts. If these parts have two common boundary components, the possible combinatorial types are $\M_1^2$, $\M_1^3$ and $\M_1^4$. For three common boundary components we have two options $\N_1^3$ and $\N_1^4$. Finally, if the there are four of them, then the only possible situation is $\M'_1$.
 
Let $N_1^3 \in \N_1^3$, $N_1^4 \in \N_1^4$ and $M'_1 \in \M'_1$.
Formula (\ref{cd}) applied to these multicurves implies  
$$\cd (\I_{N_1^3}) \leq 6 - 1 - 3 + 0 = 2,$$
$$\cd (\I_{N_1^4}) \leq 6 - 0 - 4 + 0 = 2,$$
$$\cd (\I_{M'_1}) \leq 6 - 0 - 4 + 0 = 2.$$
Hence we have $\H_3(\I_{N_1^3}, \Z) = \H_3(\I_{N_1^4}, \Z) = \H_3(\I_{M'_1}, \Z) = 0$.

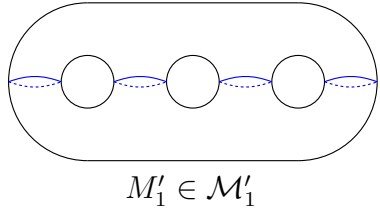
\begin{figure}[h]
	\begin{center}
	\scalebox{0.35}{
		\begin{tikzpicture}
		%\draw[blue, very thick, dashed] (0,3) to[out = -110, in = 110] (0, 1);
		%\draw[blue, very thick, dashed] (0,-1) to[out = -110, in = 110] (0, -3);
		\draw[blue, very thick, dashed] (1,0) to[out = -20, in = 200] (3, 0);
		\draw[blue, very thick, dashed] (5,0) to[out = -20, in = 200] (7, 0);
		\draw[blue, very thick, dashed] (-3,0) to[out = -20, in = 200] (-1, 0);
		\draw[blue, very thick, dashed] (-7,0) to[out = -20, in = 200] (-5, 0);

		\draw[very thick] (-4, 3) to (4, 3);
		\draw[very thick] (-4, -3) to (4, -3);
		\draw[very thick] (-4, 3) arc (90:270:3);
		\draw[very thick] (4, 3) arc (90:-90:3);
		\draw[very thick] (0, 0) circle (1);
		\draw[very thick] (-4, 0) circle (1);
		\draw[very thick] (4, 0) circle (1);
		
		%\draw[blue, very thick] (0,3) to[out = -70, in = 70] (0, 1);
		%\draw[blue, very thick] (0,-1) to[out = -70, in = 70] (0, -3);
		\draw[blue, very thick] (1,0) to[out = 20, in = 160] (3, 0);
		\draw[blue, very thick] (5,0) to[out = 20, in = 160] (7, 0);
		\draw[blue, very thick] (-3,0) to[out = 20, in = 160] (-1, 0);
		\draw[blue, very thick] (-7,0) to[out = 20, in = 160] (-5, 0);

		%\node[blue, scale = 2] at (0.7, 2) {$\delta_1$};
		%\node[blue, scale = 2] at (0.7, -2) {$\delta_2$};

		\end{tikzpicture}} \\  $M'_1 \in \M'_1$ \end{center}
	\caption{The combinatorial type $\M'_1$.}
	\label{M'1}
\end{figure}
Let $i = 1, 2, 3$ and $M_1^i \in \M_1^i$.
Let $U_1, U_2 \subset \H$ be admissible for symplectic subgroup for $M_1^i$ such that $U_1 \perp U_2$.
Define the homology class
$$\A_{U_1, U_2}  = \A(T_{\theta_1}, T_{\gamma^+} T_{\gamma^-}^{-1}, T_{\theta_2}) \in \H_3(\I_{\M_1^i}, \Z),$$
where $\theta_1$ and $\theta_2$ are separating curves disjoint from $\M_1^i$ such that $\H_{\theta_1} = U_1$ and $\H_{\theta_2} = U_2$. In order to distinguish the curves $\gamma^+$ and $\gamma^-$ let us agree that $\theta_1$ is on the right hand side with respect to $\gamma^+$ (recall cells of $\B$ correspond to oriented multicurves). Obviously we have $\A_{U_1, U_2} = -\A_{U_2, U_1}$. Proposition (\ref{H13}) immediately implies the following result.

\begin{prop}
Let $i = 2, 3, 4$ and $M_1^i \in \M_1^i$.
Then the homology classes $\A_{U_1, U_2}$ form a basis of the free abelian group $\H_3(\I_{M_1^i}, \Z)$. Here $U_1, U_2 \subset \H$ run over all unordered pairs orthogonal admissible symplectic subgroups for $M^i_1$.
\end{prop}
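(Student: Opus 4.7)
The plan is to apply the Künneth formula to the product decomposition $\I_{M_1^i} \cong F_\infty \times \Z \times F_\infty$ supplied by Proposition~\ref{H13}, in which the middle factor is generated by $T_{\gamma^+} T_{\gamma^-}^{-1}$ and the two $F_\infty$'s by Dehn twists $T_{\theta_j}$ about separating curves on the two opposite sides of $\gamma^+ \cup \gamma^-$. Since both $F_\infty$ and $\Z$ have cohomological dimension $1$, the Künneth formula collapses in degree $3$ to a single summand,
\[
\H_3(\I_{M_1^i}, \Z) \;\cong\; \H_1(F_\infty, \Z) \otimes \H_1(\Z, \Z) \otimes \H_1(F_\infty, \Z),
\]
a free abelian group whose natural basis consists of tensors $[T_{\theta_1}] \otimes [T_{\gamma^+} T_{\gamma^-}^{-1}] \otimes [T_{\theta_2}]$, indexed by ordered pairs of admissible symplectic subgroups with $\H_{\theta_1}$ on one side of $\S$ and $\H_{\theta_2}$ on the opposite side.

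Next I would identify these tensor basis elements with the abelian cycles $\A_{U_1, U_2} = \A(T_{\theta_1}, T_{\gamma^+} T_{\gamma^-}^{-1}, T_{\theta_2})$. By definition $\A_{U_1, U_2}$ is the image of the fundamental class $\mu_3 \in \H_3(\Z^3, \Z)$ under the homomorphism $\Z^3 \to \I_{M_1^i}$ sending the generators to the three pairwise commuting elements; and $\mu_3$ decomposes under Künneth as the tensor product of the three degree-$1$ generators of $\H_1(\Z)$. Naturality of the Künneth isomorphism therefore matches $\A_{U_1, U_2}$ with the required basis tensor.

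The final step is to match the indexing set. I would prove that two admissible symplectic subgroups $U_1, U_2$ for $M_1^i$ are orthogonal in $\H$ if and only if they arise from separating curves on opposite sides of $\gamma^+ \cup \gamma^-$. The homology of each component of $\S \setminus (\gamma^+ \cup \gamma^-)$ includes into $\H$ as a subgroup lying in a rank-$3$ sublattice whose radical is $\langle [\gamma^+]\rangle$; after quotienting by this common radical the two sides sit in mutually orthogonal rank-$2$ symplectic pieces of $\H$. Hence admissible subgroups on opposite sides automatically pair trivially, while two admissible subgroups on the same side are contained in a common rank-$3$ subgroup and therefore cannot jointly span an orthogonal direct sum of rank $4$. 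This produces the bijection between unordered orthogonal pairs and ordered left-right pairs, and the relation $\A_{U_1, U_2} = -\A_{U_2, U_1}$ is then a direct consequence of the antisymmetry of $\mu_3 \in \H_3(\Z^3, \Z)$ under swapping the first and third generators.

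The only step requiring genuine work is the orthogonality dichotomy above, which depends on the precise structure of $\S \setminus M_1^i$; this slightly differs for $i = 2, 3, 4$ due to the extra nonseparating curves in $M_1^3$ and $M_1^4$. In each case a short Mayer--Vietoris computation shows that the left and right homology images always sit in mutually orthogonal symplectic complements modulo $\langle [\gamma^+]\rangle$, so the argument goes through uniformly.
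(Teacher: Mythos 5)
Your proposal is correct and matches the paper's intent: the paper derives this proposition directly from Proposition \ref{H13} (it states that the decomposition $\I_{M_1^i} \cong F_\infty \times \Z \times F_\infty$ "immediately implies" the result), and the K\"unneth collapse in degree $3$ together with the identification of abelian cycles with basis tensors is exactly the implicit argument. Your additional care with the left/right versus orthogonal-pair indexing and the sign convention $\A_{U_1,U_2} = -\A_{U_2,U_1}$ fills in details the paper leaves unstated, but does not change the approach.
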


\begin{corollary} \label{E13gen}
	The elements 
	$$P_{M^i_1} \otimes \A_{U^i_1, U^i_2}, \; \; i =2, 3, 4$$
	form a basis of the free abelian group $E^1_{1, 3}$. Here $M^i_1 \in \M^i_1 / \I$, and $U^i_1, U^i_2 \subset \H$ run over all unordered pairs orthogonal admissible symplectic subgroups for $M^i_1$.
\end{corollary}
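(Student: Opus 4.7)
The plan is to read off the corollary directly from the spectral-sequence formula
\[
E^1_{1,3} \cong \bigoplus_{M \in \M_1(x) / \I} \H_3(\I_M, \Z)
\]
together with the stabiliser computations already carried out above. Concretely, I would enumerate the contributions from each of the six combinatorial types of multicurves in $\M_1(x)$ and show that exactly three of them survive.

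First I would dispose of the types $\N_1^3$, $\N_1^4$, and $\M_1'$: in each case the inequality (\ref{cd}) applied to a representative multicurve gives $\cd(\I_M) \leq 2$, as is already recorded just before the statement of the corollary, so the corresponding summand $\H_3(\I_M, \Z)$ vanishes.

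For each of the remaining types $\M_1^i$ with $i = 2, 3, 4$, I would invoke the proposition stated immediately before the corollary, which identifies $\H_3(\I_{M_1^i}, \Z)$ as the free abelian group on the abelian cycles $\A_{U_1, U_2}$ indexed by unordered pairs of orthogonal admissible symplectic subgroups for $M_1^i$. That proposition itself follows from the factorisation $\I_{M_1^i} \cong F_\infty \times \Z \times F_\infty$ of Proposition \ref{H13} by the K\"unneth formula: since the homology of each factor is concentrated in degrees $0$ and $1$, the only contribution to degree $3$ comes from a cross product of one degree-$1$ generator from each factor, which, under the abelian-cycle interpretation, corresponds exactly to a class of the form $\A(T_{\theta_1}, T_{\gamma^+} T_{\gamma^-}^{-1}, T_{\theta_2})$. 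Summing the resulting bases over orbit representatives $M_1^i \in \M_1^i / \I$ then yields the basis of $E^1_{1,3}$ claimed in the corollary.

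The corollary is essentially a bookkeeping consequence of work already completed, so I do not anticipate any substantive obstacle; the only minor care needed is to respect the sign convention $\A_{U_1, U_2} = -\A_{U_2, U_1}$ (fixed by placing $\theta_1$ on the right of $\gamma^+$) so that unordered pairs, rather than ordered ones, index the basis.
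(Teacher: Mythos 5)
Your argument is correct and follows the paper's own route exactly: enumerate the six combinatorial types in $\M_1(x)$, eliminate $\N_1^3$, $\N_1^4$, $\M_1'$ via the bound (\ref{cd}), and obtain the basis for the remaining three types from Proposition \ref{H13} together with the K\"unneth decomposition of $\H_3(F_\infty \times \Z \times F_\infty, \Z)$, which is precisely what the paper means by the preceding proposition following ``immediately'' from Proposition \ref{H13}. Your explicit remark on the sign convention $\A_{U_1,U_2} = -\A_{U_2,U_1}$ for passing to unordered pairs is a welcome clarification but does not change the argument.
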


\subsection{The differential $d^1_{1, 3}$}

Our next goal is to compute the images under the differential $d^1_{1, 3}$ of the elements $P_{M^i_1} \otimes \A_{U^i_1, U^i_2} \in E^1_{1, 3}$ (see Corollary \ref{E13gen}). First we need to prove some general results, which will be used several times.

Let $\theta_1, \theta_2, \theta_3$ be pairwise disjoint separating curves on $\S$, see Fig. \ref{M12f}.
Consider the surface $\S_{2, 1} = \S \setminus \overline{X}_{\theta_1}$, where $\overline{X}_{\theta_1}$ is the closure of the once-punctured torus bounded by $\theta_1$. We have the exact sequences
\begin{equation*}\label{eq6}
\begin{CD}
1 @>>> \left\langle T_{\theta_1} \right\rangle @>>> \I_{\theta_1} @>{p}>> \I_{2, 1} @>>> 1.
\end{CD}
\end{equation*}
\begin{equation*}\label{BirmanT23}
\begin{CD}
1 @>>> \pi_1(\S_2, \pt) @>>> \I_{2, 1} @>{q}>> \I_2 @>>> 1.
\end{CD}
\end{equation*}
Consider the subgroups $Q = q^{-1}(\left\langle T_{\theta_3} \right\rangle ) \subset \I_{2, 1}$ and $G = p^{-1}(Q) \subset \I_{\theta_1}$. 
Let $\G \subseteq \I$ be a subgroup such that $G \subseteq \G$. Denote $Q' = p(\G)$. Suppose that $\BB(x) \subseteq \B(x)$ is a $\G$-invariant subcomplex. We will later prove Lemma \ref{dM2} concerning the action of $\G$ on $\BB(x)$. This result will be used several times for different $\G$ and $\BB(x)$. Namely, we will need the cases $\G = \I$, $\G = G$ and $\G = \I_\delta$, where $\delta$ is a separating or nonseparating curve on $\S$. The complex $\BB(x)$ will coincide either with the whole $\B(x)$, or with the \textit{auxiliary complex of cycles}, which will be defined in Section \ref{S7}.

\begin{figure}[h]
	\begin{minipage}[h]{0.49\linewidth}
		\begin{center}
			\scalebox{0.48}{
				\begin{tikzpicture}
					
					\draw[blue, very thick, dashed] (0,3) to[out = -110, in = 110] (0, 1);
					\draw[blue, very thick, dashed] (0,-1) to[out = -110, in = 110] (0, -3);
					%\draw[blue, very thick, dashed] (1,0) to[out = -20, in = 200] (3, 0);
					\draw[blue, very thick, dashed] (5,0) to[out = -20, in = 200] (7, 0);
					%\draw[blue, very thick, dashed] (-3,0) to[out = -20, in = 200] (-1, 0);
					\draw[blue, very thick, dashed] (-7,0) to[out = -20, in = 200] (-5, 0);
					\draw[red, very thick, dashed] (2,3) to[out = -100, in = 100] (2, -3);
					\draw[red, very thick, dashed] (-2,3) to[out = -100, in = 100] (-2, -3);

					\draw[very thick] (-4, 3) to (4, 3);
					\draw[very thick] (-4, -3) to (4, -3);
					\draw[very thick] (-4, 3) arc (90:270:3);
					\draw[very thick] (4, 3) arc (90:-90:3);
					\draw[very thick] (0, 0) circle (1);
					\draw[very thick] (-4, 0) circle (1);
					\draw[very thick] (4, 0) circle (1);
					
					\draw[blue, very thick] (0,3) to[out = -70, in = 70] (0, 1);
					\draw[blue, very thick] (0,-1) to[out = -70, in = 70] (0, -3);
					%\draw[blue, very thick] (1,0) to[out = 20, in = 160] (3, 0);
					\draw[blue, very thick] (5,0) to[out = 20, in = 160] (7, 0);
					%\draw[blue, very thick] (-3,0) to[out = 20, in = 160] (-1, 0);
					\draw[blue, very thick] (-7,0) to[out = 20, in = 160] (-5, 0);
					\draw[red, very thick] (2,3) to[out = -80, in = 80] (2, -3);
					\draw[red, very thick] (-2,3) to[out = -80, in = 80] (-2, -3);

					\node[blue, scale = 2] at (-0.8, 2) {$\alpha_2$};
					\node[blue, scale = 2] at (-0.8, -2) {$\alpha'_2$};
					
					\node[red, scale = 2] at (-2.7, 2) {$\theta_1$};
					\node[red, scale = 2] at (2.7, 2) {$\theta_3$};
					
					\node[blue, scale = 2] at (6, 0.5) {$\alpha_3$};
					\node[blue, scale = 2] at (-6, 0.5) {$\alpha_1$};
					
					\node[scale = 2] at (0.5, 0) {$\delta_2$};
					\node[scale = 2][green] at (1.4, -2.5) {$\delta_1$};
					
					\draw[green, very thick] (0, 1.2) arc (90:270:1.2);
					\draw[green, very thick] (0, 1.2) to[out = 0, in = -120] (0.8, 3);
					\draw[green, very thick] (0, -1.2) to[out = 0, in = 120] (0.8, -3);
					\draw[green, very thick, dashed] (0.8,3) to[out = -75, in = 75] (0.8, -3);

		\end{tikzpicture}} \end{center}
	\end{minipage}
	\hfill
	\begin{minipage}[h]{0.49\linewidth}
		\begin{center}
			\scalebox{0.72}{
				\begin{tikzpicture}
					\draw[blue, thick, dashed] (5.196 + 1 - 1.732 + 0.385, 2 + 1.732 + 0.58) to[out = 40, in = 260] (5.196 + 1 - 1.732 + 1, 2 + 1.732 + 1.732);
					\draw[blue, thick, dashed] (5.196 + 1 - 1.732 + 0.385, -2 - 1.732 - 0.58) to[out = -40, in = -260] (5.196 + 1 - 1.732 + 1, -2 - 1.732 - 1.732);
					
					\draw[blue, thick, dashed] (1, -2.26) to[out = 50, in = 250] (5.196 + 1 - 1.732 - 0.385, 2 + 1.732 - 0.58);
					
					\draw[blue, thick, dashed] (-4, 0) to[out = -20, in = 200] (-2-0.67, 0);
					\draw[red, thick, dashed] (0,2) to[out = -100, in = 100] (0, -2);
					\draw[red, thick, dashed] (1.732, 3) to[out = -40, in = 160] (5.196, 1);
					\draw[red, thick, dashed] (1.732, -3) to[out = 40, in = -160] (5.196, -1);
					
					\draw[orange, thick, dashed] (1, -2.26) to[out = 40, in = 200] (4.9, 0);
					
					\draw[orange, thick, dashed] (1, 2.26) to[out = 0, in = 220] (5.196 + 1 - 1.732 - 0.385, 2 + 1.732 - 0.58);

					\draw[thick] (0, 2) to (-2, 2);
					\draw[thick] (0, -2) to (-2, -2);
					\draw[thick] (-2, 2) arc (90:270:2);
					
					\draw[thick] (0, 2) to [out=0, in=240] (1.732, 3);
					\draw[thick] (0, -2) to [out=0, in=-240] (1.732, -3);
					
					\draw[thick] (5.196, 1) to [out=-120, in=120] (5.196, -1);
					
					\draw[thick] (5.196, 1) to (5.196 + 1, 1+1.732);
					\draw[thick] (1.732, 3) to (1.732 + 1, 3+1.732);
					
					\draw[thick] (5.196, -1) to (5.196 + 1, -1-1.732);
					\draw[thick] (1.732, -3) to (1.732 + 1, -3-1.732);
					
					\draw[thick] (5.196 + 1, 1+1.732) arc (-30:150:2);
					\draw[thick] (5.196 + 1, -1-1.732) arc (30:-150:2);
					
					\draw[thick] (-2, 0) circle (0.67);
					\draw[thick] (5.196 + 1 - 1.732, 2 + 1.732) circle (0.67);
					\draw[thick] (5.196 + 1 - 1.732, -2 - 1.732) circle (0.67);
					
					\draw[blue, thick] (5.196 + 1 - 1.732 + 0.385, 2 + 1.732 + 0.58) to[out = 80, in = 220] (5.196 + 1 - 1.732 + 1, 2 + 1.732 + 1.732);
					\draw[blue, thick] (5.196 + 1 - 1.732 + 0.385, -2 - 1.732 - 0.58) to[out = -80, in = -220] (5.196 + 1 - 1.732 + 1, -2 - 1.732 - 1.732);
					
					\draw[blue, thick] (1, -2.26) to[out = 70, in = 230] (5.196 + 1 - 1.732 - 0.385, 2 + 1.732 - 0.58);
					
					\draw[blue, thick] (-4, 0) to[out = 20, in = 160] (-2-0.67, 0);
					\draw[red][thick] (0,2) to[out = -80, in = 80] (0, -2);
					\draw[red, thick] (1.732, 3) to[out = -20, in = 140] (5.196, 1);
					\draw[red, thick] (1.732, -3) to[out = 20, in = -140] (5.196, -1);
					
					\draw[orange, thick] (1, -2.26) to[out = 80, in = -80] (1, 2.26);
					
					\draw[orange, thick] (4.9, 0) to[out = 120, in = 260] (5.196 + 1 - 1.732 - 0.385, 2 + 1.732 - 0.58);

					\node[red][scale = 1.5] at (-0.6, 0) {$\theta_1$};
					
					\node[orange][scale = 1.5] at (0.9, 0) {$\beta$};
					
					\node[blue][scale = 1.5] at (2, 1) {$\alpha'_2$};
					
					\node[][scale = 1.5] at (5.196 + 1 - 1.732+0.3, 2 + 1.732) {$\delta_2$};
					\node[red][scale = 1.5] at (2.5, -3.3) {$\theta_3$};
					\node[red][scale = 1.5] at (2.5, 3.3) {$\theta_2$};
					
					\node[blue][scale = 1.5] at (5.196 + 1 - 1.732+0.15, 2 + 1.732+1.2) {$\alpha_2$};
					
					\node[blue][scale = 1.5] at (5.196 + 1 - 1.732+0.15, -2 - 1.732-1.2) {$\alpha_3$};
					
					\node[blue][scale = 1.5] at (-3.2, 0.4) {$\alpha_1$};
					
		\end{tikzpicture}} \end{center}
	\end{minipage}
	\caption{The multicurves $M_1^2 = \alpha_2 \cup \alpha'_2$, $M_1^3 = \alpha_1 \cup \alpha_2 \cup \alpha'_2$ and $M_1^4 = \alpha_1 \cup \alpha_2 \cup \alpha'_2 \cup \alpha_3$.}
	\label{M12f}
\end{figure}
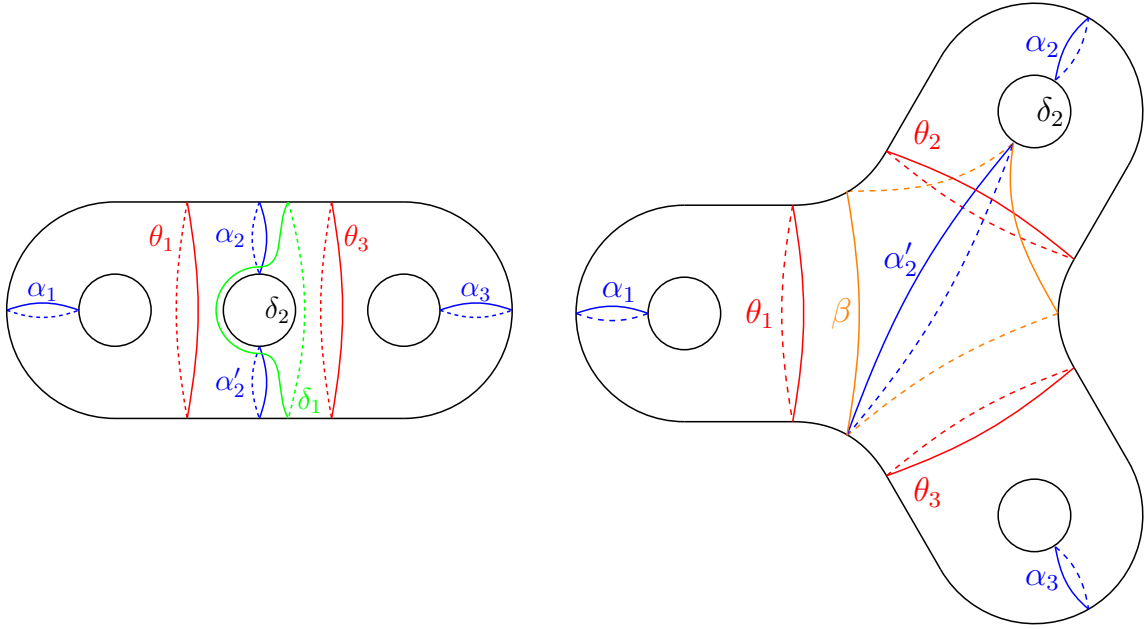

\begin{lemma}\label{dM2}
	Consider the multicurves $M_1^2 = \alpha_2 \cup \alpha'_2$, $M_1^3 = \alpha_1 \cup \alpha_2 \cup \alpha'_2$ and $M_1^4 = \alpha_1 \cup \alpha_2 \cup \alpha'_2 \cup \alpha_3$ shown in blue in Fig. \ref{M12f}. Consider the homology class $\A(T_{\theta_1}, T_{\theta_2}, T_{\theta_3}) \in \H_3(\G_{M^i_1 \setminus \alpha'_2}, \Z)$.
	
	Suppose that $i = 2, 3, 4$. Assume that the cells $P_{M^i_1} \subset \B(x)$ are well-defined and belong to $\BB(x)$. Denote by $(E'^{*}_{*, *}, d'^{*}_{*, *})$ the spectral sequence (\ref{spec_sec}) for the action of $\G$ on $\BB(x)$. 
	Then under the above assumptions we have $$d'^1_{1, 3} (P_{M^i_1} \otimes \A(T_{\theta_1}, T_{\alpha_2} T_{\alpha'_2}^{-1}, T_{\theta_3})) = P_{M^i_1 \setminus \alpha'_2} \otimes \A(T_{\theta_1}, T_{\theta_2}, T_{\theta_3}).$$
\end{lemma}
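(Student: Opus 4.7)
The plan is to use the general formula $d'^1_{1,3}(P \otimes h) = \partial P \otimes h$ in which each face $Q \subseteq \partial P$ carries the image of $h$ under the inclusion $\G_P \hookrightarrow \G_Q$. By \eqref{dim}, the cell $P_{M^i_1}$ is a $1$-cell, and its two vertex-endpoints are $v_1 = P_{M^i_1 \setminus \alpha'_2}$ (the basic $1$-cycle in which $x$ is realized in terms of $\alpha_2$) and $v_2 = P_{M^i_1 \setminus \alpha_2}$ (in terms of $\alpha'_2$). Fixing the orientation on $P_{M^i_1}$ so that $v_1$ appears with a positive coefficient in $\partial P_{M^i_1}$, the main task is to identify the image of $\A(T_{\theta_1}, T_{\alpha_2}T_{\alpha'_2}^{-1}, T_{\theta_3})$ under the inclusion $\G_{M^i_1} \hookrightarrow \G_{v_1}$ with $\A(T_{\theta_1}, T_{\theta_2}, T_{\theta_3})$, and to verify that the $v_2$-contribution does not spoil the stated identity.

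For the $v_1$-contribution, I would apply \eqref{LB2} to the multicurve $v_1$. Since $v_1 = M^i_1 \setminus \alpha'_2$ contains no bounding pair, the kernel in \eqref{LB2} is trivial, so $\G_{v_1}$ embeds into $\PMod(\S \setminus v_1)$. Under this embedding, $T_{\alpha_2}$ projects to the identity because its support lies in a collar of a component of $v_1$, so the bounding pair twist $T_{\alpha_2}T_{\alpha'_2}^{-1}$ is identified with $T_{\alpha'_2}^{-1}$. The crucial geometric step is then to compare $T_{\alpha'_2}^{-1}$ and $T_{\theta_2}$: inside the connected component $Y \subseteq \S \setminus v_1$ containing both $\alpha'_2$ and $\theta_2$ (identifiable from Figure \ref{M12f}), these two curves together with the boundary components of $Y$ arising from the $\alpha_2$-cut cobound a four-holed sphere. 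A lantern- or chain-type relation in this subsurface gives $T_{\theta_2}T_{\alpha'_2}$ as a product of peripheral Dehn twists about the $\alpha_2$-punctures, which become trivial (or central in a suitable sense) after projection to $\PMod(\S \setminus v_1)$ because they lie in the image of the kernel $G(v_1)$ of \eqref{LB}. The equality of abelian cycles $\A(T_{\theta_1}, T_{\alpha_2}T_{\alpha'_2}^{-1}, T_{\theta_3}) = \A(T_{\theta_1}, T_{\theta_2}, T_{\theta_3})$ then follows by applying Fact \ref{HS2} to the appropriate central extension of $\G_{v_1}$, or equivalently by multilinearity of abelian cycles combined with the vanishing of the error term.

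The main obstacle is to make the lantern/chain relation rigorous in $Y$, which requires a small case analysis for $i = 2, 3, 4$ because the topology of $Y$ differs slightly in each case and boundary-Dehn-twist contributions must be tracked carefully through \eqref{LB}. A secondary task is justifying that the $v_2$-contribution does not spoil the stated formula: either $v_1$ and $v_2$ lie in distinct $\G$-orbits on $\BB(x)$, so the $v_2$-term lands in a separate summand of $E'^1_{0,3}$ not captured by the stated equality, or a parallel symmetric argument combined with the orientation convention on $P_{M^i_1}$ shows that only the $v_1$-contribution survives with the claimed value.
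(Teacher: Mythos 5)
There is a genuine gap, and it is located exactly where your plan is vaguest: the treatment of the second endpoint. The two vertices $v_1 = P_{M^i_1 \setminus \alpha'_2}$ and $v_2 = P_{M^i_1 \setminus \alpha_2}$ do \emph{not} lie in distinct $\G$-orbits, nor does the $v_2$-term vanish. The paper exhibits an explicit element $T_{\delta_2}T_{\delta_1}^{-1}$ of the smallest group $G \subseteq \G$ under consideration with $T_{\delta_2}T_{\delta_1}^{-1}(\alpha'_2) = \alpha_2$ and $T_{\delta_2}T_{\delta_1}^{-1}(\alpha_2) = \beta$; hence $v_2$ is carried to $v_1$ and its contribution must be transported into the same summand of $E'^1_{0,3}$, becoming $P_{v_1} \otimes \A(T_{\theta_1}, T_{\beta}T_{\alpha_2}^{-1}, T_{\theta_3})$. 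The whole content of the lemma is that the \emph{difference} of the two contributions, combined via bilinearity of abelian cycles and the lantern relation $T_{\theta_1}^{-1}T_{\theta_2}T_{\theta_3}^{-1} = T_{\alpha'_2}^{-1}T_{\alpha_2}^{2}T_{\beta}^{-1}$, collapses to $\A(T_{\theta_1},T_{\theta_2},T_{\theta_3})$. Your plan instead asserts that the single $v_1$-contribution already equals $\A(T_{\theta_1},T_{\theta_2},T_{\theta_3})$ in $\H_3(\G_{v_1},\Z)$; that identity is false. Indeed, granting the paper's computation, it would force $\A(T_{\theta_1}, T_{\beta}T_{\alpha_2}^{-1}, T_{\theta_3}) = 0$, and since this class is conjugate (by $T_{\delta_2}T_{\delta_1}^{-1}$) to a class of the same form as the original one, it would force $\A(T_{\theta_1},T_{\theta_2},T_{\theta_3}) = 0$, contradicting Lemma \ref{ind} in the case $i=4$.

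The geometric step you propose to justify the false identity also does not go through. There is no four-holed sphere with boundary $\alpha'_2$, $\theta_2$ and the two $\alpha_2$-punctures for which $T_{\theta_2}T_{\alpha'_2}$ is a product of peripheral twists: the relevant lantern has boundary $\theta_1, \theta_3, \alpha_2^{+}, \alpha_2^{-}$ and interior curves $\theta_2, \alpha'_2, \beta$, so expressing $T_{\theta_2}$ through $T_{\alpha'_2}$ unavoidably introduces $T_{\beta}$, $T_{\theta_1}$, $T_{\theta_3}$, none of which is peripheral in $\S \setminus v_1$. Moreover, these individual Dehn twists about nonseparating curves do not lie in the Torelli stabiliser $\G_{v_1}$, so the "multilinearity of abelian cycles" manipulation you invoke cannot be performed inside $\H_3(\G_{v_1},\Z)$; it only becomes legitimate once the two endpoint contributions are subtracted, at which point the combination $T_{\alpha'_2}^{-1}T_{\alpha_2}^{2}T_{\beta}^{-1}$ appears as a single slot and is replaced by $T_{\theta_1}^{-1}T_{\theta_2}T_{\theta_3}^{-1}$ via the lantern relation. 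To repair the proof you need both missing ingredients: the identification of the two endpoints by an element of $G$, and the combination of the two transported abelian cycles through the lantern relation.
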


\begin{proof}
	Consider the curves $\delta_1$ and $\delta_2$ shown in Fig. \ref{M12f}. Note that $T_{\delta_2} T_{\delta_1}^{-1} \in G \subseteq \G$. A direct computation show that $T_{\delta_2} T_{\delta_1}^{-1} (\alpha'_2) = \alpha_2$ and $T_{\delta_2} T_{\delta_1}^{-1} (\alpha_2) = \beta$. Also by Lantern relation we have 
	$$T_{\theta_2} T_{\alpha'_2} T_{\beta} = T_{\theta_1} T_{\alpha_2}^2 T_{\theta_3},$$
	that is,
	\begin{equation} \label{lan}
	T_{\theta_1}^{-1} T_{\theta_2} T_{\theta_3}^{-1} = T_{\alpha'_2}^{-1} T_{\alpha_2}^2 T_{\beta}^{-1}.
	\end{equation}
Therefore we have $$d'^1_{1, 3} (P_{M^2_i} \otimes \A(T_{\theta_1}, T_{\alpha_2} T_{\alpha'_2}^{-1}, T_{\theta_3}) =$$
$$ =  P_{M^i_1 \setminus \alpha'_2} \otimes \A(T_{\theta_1}, T_{\alpha_2} T_{\alpha'_2}^{-1}, T_{\theta_3}) - P_{M^i_1 \setminus \alpha_2} \otimes \A(T_{\theta_1}, T_{\alpha_2} T_{\alpha'_2}^{-1}, T_{\theta_3})= $$
	$$= P_{M^i_1 \setminus \alpha'_2} \otimes \A(T_{\theta_1}, T_{\alpha_2} T_{\alpha'_2}^{-1}, T_{\theta_3}) - \left( (T_{\delta_2} T_{\delta_1}^{-1}) \cdot P_{M^i_1 \setminus \alpha_2} \right) \otimes \left( (T_{\delta_2} T_{\delta_1}^{-1}) \cdot \A(T_{\theta_1}, T_{\alpha_2} T_{\alpha'_2}^{-1}, T_{\theta_3}) \right) = $$
	$$= P_{M^i_1 \setminus \alpha'_2} \otimes \left(\A(T_{\theta_1}, T_{\alpha_2} T_{\alpha'_2}^{-1}, T_{\theta_3}) - \A(T_{\theta_1}, T_{\beta} T_{\alpha_2}^{-1}, T_{\theta_3}) \right) = $$
	$$= P_{M^i_1 \setminus \alpha'_2} \otimes \left(\A(T_{\theta_1}, T_{\alpha'_2}^{-1} T_{\alpha_2}, T_{\theta_3}) + \A(T_{\theta_1}, T_{\alpha_2} T_{\beta}^{-1}, T_{\theta_3}) \right) = $$
	$$= P_{M^i_1 \setminus \alpha'_2} \otimes \A(T_{\theta_1}, T_{\alpha'_2}^{-1} T_{\alpha_3}^2 T_{\beta}^{-1}, T_{\theta_3})  = P_{M^i_1 \setminus \alpha'_2} \otimes \A(T_{\theta_1}, T_{\theta_1}^{-1} T_{\theta_2} T_{\theta_3}^{-1}, T_{\theta_3}) =$$
	$$= P_{M^i_1 \setminus \alpha'_2} \otimes \A(T_{\theta_1},  T_{\theta_2}, T_{\theta_3}).$$
\end{proof}

\begin{lemma} \label{zero}
	Let $i = 2, 3$. Then the abelian cycle $\A(T_{\theta_1}, T_{\theta_2}, T_{\theta_3}) \in \H_3(\G_{M^i_1 \setminus \alpha'_2}, \Z)$ is zero.
\end{lemma}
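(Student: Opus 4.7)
My plan is to combine two nested Hochschild--Serre spectral sequences with a Mayer--Vietoris computation. Since every inclusion $G_{M^i_1 \setminus \alpha'_2} \hookrightarrow \G_{M^i_1 \setminus \alpha'_2}$ induces a map on $\H_3$ carrying the class in question to the class in question, it will suffice to prove vanishing in the case $\G = G$. Because $G \subseteq \I_{\theta_1}$ and the Torelli group of a once-punctured torus is trivial, every element of $G$ acts trivially on $\overline{X}_{\theta_1}$ and therefore automatically fixes $\alpha_1$, so that $G_{M^2_1 \setminus \alpha'_2} = G_{M^3_1 \setminus \alpha'_2} = G_{\alpha_2}$, and the two cases $i = 2, 3$ collapse into one. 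Set $h_2 = T_{\theta_2} T_{\theta_3}^{-1}$; since $\theta_2$ and $\theta_3$ are isotopic in the closed genus-$2$ surface $\S_2$ (both are separating curves cutting off a once-punctured torus of the same topological type), we have $h_2 \in \ker(q \circ p)$, and $p(h_2)$ is the point-push along a loop $\gamma$ freely homotopic to $\theta_2$ in $\S_2$. By multilinearity of abelian cycles, $\A(T_{\theta_1}, T_{\theta_2}, T_{\theta_3}) = \A(T_{\theta_1}, h_2, T_{\theta_3})$ in $\H_3(G_{\alpha_2})$.

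I would then apply the Hochschild--Serre spectral sequence to the split extension $1 \to K \to G_{\alpha_2} \to \langle T_{\theta_3}\rangle \to 1$ with $K = \ker(q \circ p) \cap G_{\alpha_2}$. Since $T_{\theta_3}$ commutes with both $T_{\theta_1}$ and $h_2$ (the relevant curves being disjoint from $\theta_3$), the class lives in the term $E^2_{1, 2}$ and is represented there by $\A(T_{\theta_1}, h_2) \otimes [T_{\theta_3}]$; hence it will suffice to show $\A(T_{\theta_1}, h_2) = 0$ in $\H_2(K_{\alpha_2})$. Next I would apply Fact \ref{HS2} to the central extension $1 \to \langle T_{\theta_1}\rangle \to K_{\alpha_2} \to \pi_1(\S_2, \pt)_{\alpha_2} \to 1$, whose Gysin isomorphism identifies $\A(T_{\theta_1}, h_2)$ with the class $[\gamma] \in \H_1(\pi_1(\S_2, \pt)_{\alpha_2})$. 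Identifying the stabiliser $\pi_1(\S_2, \pt)_{\alpha_2}$ with $\pi_1(\Sigma_{1, 2})$ for $\Sigma_{1, 2} = \S_2 \setminus \alpha_2$, the curve $\gamma$ separates $\Sigma_{1, 2}$ into a pair of pants $P$ (with boundaries $\gamma$ and two copies of $\alpha_2$) and a once-holed torus (with boundary $\gamma$). A Mayer--Vietoris computation yields $\H_1(\Sigma_{1, 2}) \cong \Z^3$ with the relation $[\alpha_2]_1 + [\alpha_2]_2 = 0$, which combined with the pair-of-pants relation $[\gamma] + [\alpha_2]_1 + [\alpha_2]_2 = 0$ in $\H_1(P)$ yields $[\gamma] = 0$ in $\H_1(\Sigma_{1, 2})$, completing the argument.

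The hardest part is the explicit identification of the point-push stabiliser $\pi_1(\S_2, \pt)_{\alpha_2}$ with $\pi_1(\Sigma_{1, 2})$ and the careful tracking of abelian cycles through both spectral sequences (in particular verifying that the $\langle T_{\theta_3} \rangle$-action is trivial on the relevant class in $\H_2(K_{\alpha_2})$). For $i = 4$ the corresponding cut subsurface would instead be the $4$-holed sphere $\S_2 \setminus (\alpha_2 \cup \alpha_3)$, in which the analogous class $[\gamma]$ is nonzero by a similar pair-of-pants computation, consistent with the generic nonvanishing of the cycle in that case.
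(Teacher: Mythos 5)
Your argument is correct and is in substance the paper's own proof: you restrict to the same subgroup $G = p^{-1}(q^{-1}(\left\langle T_{\theta_3}\right\rangle))$, you use the same two dévissage steps (the central extension by $\left\langle T_{\theta_1}\right\rangle$ and the point-pushing subgroup $\pi_1(\S_2\setminus\alpha_2,\pt)$), and the punchline is identical: the push loop, being freely homotopic to $\theta_2$ and hence separating in $\S_2\setminus\alpha_2$, is a product of commutators and dies in $\H_1$. The only real difference is the order of operations — you first split off the $\left\langle T_{\theta_3}\right\rangle$ direction with a generic Hochschild--Serre argument and then apply Fact \ref{HS2} to kill $T_{\theta_1}$, whereas the paper first applies Fact \ref{HS2} to $1\to\left\langle T_{\theta_1}\right\rangle\to G_{\theta_1\cup\alpha_1\cup\alpha_2}\to Q_{\alpha_2}\to 1$ and then uses Fact \ref{HS4} for the extension $1\to\pi_1(\S_2\setminus\alpha_2,\pt)\to Q_{\alpha_2}\to J\to 1$. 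One step of yours needs an extra line: from the vanishing of the image of your class in $E^2_{1,2}=\H_1(\Z,\H_2(K))$ you may only conclude that the class lies in $E^\infty_{0,3}$, i.e.\ in the image of $\H_3(K,\Z)\to\H_3(G_{\alpha_2},\Z)$; to finish you must observe that $K$ is a central extension of the free group $\pi_1(\S_2\setminus\alpha_2,\pt)_{\alpha_2}$ by $\left\langle T_{\theta_1}\right\rangle\cong\Z$, so $\cd(K)\le 2$ and $\H_3(K,\Z)=0$, making the edge map $\H_3(G_{\alpha_2},\Z)\to E^\infty_{1,2}$ injective. With that line added (and the routine verification that the $\left\langle T_{\theta_3}\right\rangle$-action is handled, which you already flag), the proof is complete; the paper sidesteps this point by performing the top-degree reduction of Fact \ref{HS2} first, where the isomorphism is automatic.
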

\begin{proof}
	Since $G \subseteq G'$ and $\theta_1 \cup \alpha_1 \cup \alpha_2 \subseteq M^i_1 \setminus \alpha'_2$, we have $G_{\theta_1 \cup \alpha_1 \cup \alpha_2} \subseteq G'_{M^i_1 \setminus \alpha'_2}$. Hence it suffices to prove that the abelian cycle $\A(T_{\theta_1}, T_{\theta_2}, T_{\theta_3}) \in \H_3(G_{\theta_1 \cup \alpha_1 \cup \alpha_2}, \Z)$ is zero. We have $\A(T_{\theta_1}, T_{\theta_2}, T_{\theta_3}) = \A(T_{\theta_2} T_{\theta_3}^{-1}, T_{\theta_3}, T_{\theta_1})$. 
	
	Let $\S_{2, 1} \subset \S$ be the genus $2$ subsurface bounded by $\theta_1$ and let $\I_{2, 1}$ be its Torelli group. We have the exact sequence
	$$1 \to \left\langle T_{\theta_1} \right\rangle \to G_{\theta_1 \cup \alpha_1 \cup \alpha_2} \to Q_{\alpha_2} \to 1,$$
	where $Q_{\alpha_2} = \Stab_{Q} (\alpha_2)$. 
	
	Fact \ref{HS2} implies that there is an isomorphism
	$$\H_3(G_{\theta_1 \cup \alpha_1 \cup \alpha_2}, \Z) \cong \H_2(Q_{\alpha_2}, \Z),$$
	that maps $\A(T_{\theta_3} T_{\theta_2}^{-1}, T_{\theta_2}, T_{\theta_1})$ to $\A(T_{\theta_3} T_{\theta_2}^{-1}, T_{\theta_2})$. Hence it suffices to show that the homology class $\A( T_{\theta_2}, T_{\theta_3} T_{\theta_2}^{-1}) \in \H_2(Q_{\alpha_2}, \Z)$ is zero.

		\begin{figure}[h]
		\begin{center}
			\scalebox{0.6}{
				\begin{tikzpicture}
					\draw[blue, thick, dashed] (5.196 + 1 - 1.732 + 0.385, 2 + 1.732 + 0.58) to[out = 40, in = 260] (5.196 + 1 - 1.732 + 1, 2 + 1.732 + 1.732);
					%\draw[blue, thick, dashed] (5.196 + 1 - 1.732 + 0.385, -2 - 1.732 - 0.58) to[out = -40, in = -260] (5.196 + 1 - 1.732 + 1, -2 - 1.732 - 1.732);
					
					%\draw[blue, thick, dashed] (1, -2.26) to[out = 50, in = 250] (5.196 + 1 - 1.732 - 0.385, 2 + 1.732 - 0.58);
					
					%\draw[blue, thick, dashed] (-4, 0) to[out = -20, in = 200] (-2-0.67, 0);
					%\draw[red, thick, dashed] (0,2) to[out = -100, in = 100] (0, -2);
					%\draw[red, thick, dashed] (1.732, 3) to[out = -40, in = 160] (5.196, 1);
					%\draw[red, thick, dashed] (1.732, -3) to[out = 40, in = -160] (5.196, -1);
					
					%\draw[orange, thick, dashed] (1, -2.26) to[out = 40, in = 200] (4.9, 0);
					
					%\draw[orange, thick, dashed] (1, 2.26) to[out = 0, in = 220] (5.196 + 1 - 1.732 - 0.385, 2 + 1.732 - 0.58);

					%\draw[thick] (0, 2) to (-2, 2);
					%\draw[thick] (0, -2) to (-2, -2);
					\draw[thick] (0, 2) arc (90:270:2);
					
					\draw[thick] (0, 2) to [out=0, in=240] (1.732, 3);
					\draw[thick] (0, -2) to [out=0, in=-240] (1.732, -3);
					
					\draw[thick] (5.196, 1) to [out=-120, in=120] (5.196, -1);
					
					\draw[thick] (5.196, 1) to (5.196 + 1, 1+1.732);
					\draw[thick] (1.732, 3) to (1.732 + 1, 3+1.732);
					
					\draw[thick] (5.196, -1) to (5.196 + 1, -1-1.732);
					\draw[thick] (1.732, -3) to (1.732 + 1, -3-1.732);
					
					\draw[thick] (5.196 + 1, 1+1.732) arc (-30:150:2);
					\draw[thick] (5.196 + 1, -1-1.732) arc (30:-150:2);
					
					%\draw[thick] (-2, 0) circle (0.67);
					\draw[thick] (5.196 + 1 - 1.732, 2 + 1.732) circle (0.67);
					\draw[thick] (5.196 + 1 - 1.732, -2 - 1.732) circle (0.67);
					
					\draw[blue, thick] (5.196 + 1 - 1.732 + 0.385, 2 + 1.732 + 0.58) to[out = 80, in = 220] (5.196 + 1 - 1.732 + 1, 2 + 1.732 + 1.732);
					%\draw[blue, thick] (5.196 + 1 - 1.732 + 0.385, -2 - 1.732 - 0.58) to[out = -80, in = -220] (5.196 + 1 - 1.732 + 1, -2 - 1.732 - 1.732);
					
					%\draw[blue, thick] (1, -2.26) to[out = 70, in = 230] (5.196 + 1 - 1.732 - 0.385, 2 + 1.732 - 0.58);
					
					%\draw[blue, thick] (-4, 0) to[out = 20, in = 160] (-2-0.67, 0);
					%\draw[red][thick] (0,2) to[out = -80, in = 80] (0, -2);
					%\draw[red, thick] (1.732, 3) to[out = -20, in = 140] (5.196, 1);
					%\draw[red, thick] (1.732, -3) to[out = 20, in = -140] (5.196, -1);
					
					%\draw[orange, thick] (1, -2.26) to[out = 80, in = -80] (1, 2.26);
					
					%\draw[orange, thick] (4.9, 0) to[out = 120, in = 260] (5.196 + 1 - 1.732 - 0.385, 2 + 1.732 - 0.58);

					\node[red][scale = 1.8] at (-2.5, 0) {$\theta_1$};
					
					%\node[orange][scale = 1.5] at (0.9, 0) {$\beta$};
					
					%\node[blue][scale = 1.5] at (2, 1) {$\alpha'_2$};
					
					%\node[][scale = 1.5] at (5.196 + 1 - 1.732+0.3, 2 + 1.732) {$\delta_2$};
					%\node[red][scale = 1.5] at (2.5, -3.3) {$\theta_3$};
					%\node[red][scale = 1.5] at (2.5, 3.3) {$\theta_2$};
					
					\node[blue][scale = 1.8] at (5.196 + 1 - 1.732+0.15, 2 + 1.732+1.4) {$\alpha_2$};
					
					%\node[blue][scale = 1.5] at (5.196 + 1 - 1.732+0.15, -2 - 1.732-1.2) {$\alpha_3$};
					
					%\node[blue][scale = 1.5] at (-3.2, 0.4) {$\alpha_1$};

					\draw[red][thick] (-2,0) to[out = 10, in = 170] (4.9, 0);
					
					\draw[dashed][red][thick] (-2,0) to[out = -10, in = -170] (4.9, 0);
					\draw[red][thick] (1, 0.35) to (1+ 0.2, 0.35 + 0.2);
					\draw[red][thick] (1, 0.35) to (1+ 0.2, 0.35 - 0.2);
					
					\node[red][scale = 1.8] at (3, 0.6) {$\omega$};
					
					\draw[green][thick] (-2,0) to[out = -40, in = 100] (5.196 + 1 - 1.732, -2 - 1.732+0.67);
					\draw[green][thick][dashed] (-2,0) to[out = -60, in = 120] (5.196 + 1 - 1.732, -2 - 1.732+0.67);
					
					\draw[orange][thick] (-2,0) to[out = -25, in = 90] (5.196 + 1 - 1.732+1.5, -2 - 1.732);
					
					\draw[thick][orange] (5.196 + 1 - 1.732+1.5, -2 - 1.732) arc (0:-180:1.5);
					
					\draw[orange][thick] (5.196 + 1 - 1.732+1.5, -2 - 1.7325) to (5.196 + 1 - 1.732+1.5 + 0.2, -2 - 1.732 + 0.2);
					\draw[orange][thick] (5.196 + 1 - 1.732+1.5, -2 - 1.732) to (5.196 + 1 - 1.732+1.5 - 0.2, -2 - 1.732 + 0.2);

					\draw[orange][thick] (-2,0) to[out = -75, in = 90] (5.196 + 1 - 1.732-1.5, -2 - 1.732);

					\fill[red]  (-2, 0) circle [radius=5pt];
					
					\node[green][scale = 1.8] at (5.196 + 1 - 1.732 - 0.7, -2 - 1.732+0.67 + 0.3) {$a$};
					
					\node[orange][scale = 1.8] at (4.2, -1) {$b$};

					\draw[green][thick] (3.97, -2) to (3.97+0.28, -2);
					\draw[green][thick] (3.97, -2) to (3.97, -2-0.28);

		\end{tikzpicture}} \end{center}
		\caption{The surface $\S_2$ given by capping the puncture on $\S_{2, 1}$ corresponding to $\theta_1$.}
		\label{M12ff}
	\end{figure}
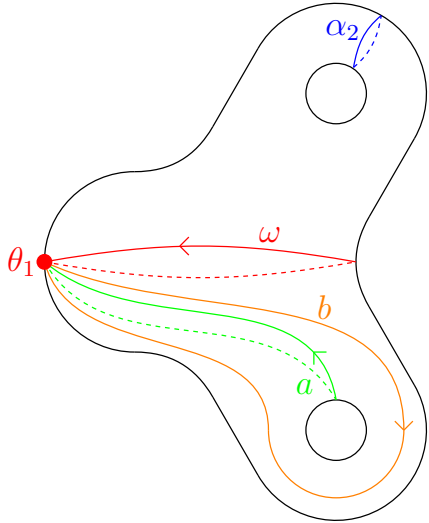

	Let $\S_2$ be the genus $2$ surface given by capping the puncture on $\S_{2, 1}$, see Fig. \ref{M12ff}.
	By our assumptions on $G$ we have the exact sequence
	$$1 \to \pi_1(\S_2 \setminus \alpha_2, \pt) \to Q_{\alpha_2} \to J \to 1,$$
	where $J \subseteq \Stab_{\I_2}(\alpha_2)$.
	The groups  $\pi_1(\S_2 \setminus \alpha_2, \pt)$ and $\Stab_{\I_2}(\alpha_2)$ are free, so $J$ is also free. The action of $\Stab_{\I_2}(\alpha_2)$ on $\H_*(\pi_1(\S_2 \setminus \alpha_2, \pt), \Z)$ is trivial. 
	Note that mapping class $T_{\theta_3} T_{\theta_2}^{-1}$ is ``pushing'' the puncture along the loop $\omega$ on $\S_2 \setminus \alpha_2$ shown in Fig. \ref{M12ff}. Hence we have $T_{\theta_3} T_{\theta_2}^{-1} \in \pi_1(\S_2 \setminus \alpha_2, \pt)$. 
	Therefore Fact \ref{HS4} implies that there is an isomorphism
	$$\H_2(Q_{\alpha_2}, \Z) \cong \H_1(J, \Z) \otimes \H_1(\pi_1(\S_2 \setminus \alpha_2, \pt), \Z),$$
	that maps the abelian cycle $\A(T_{\theta_2}, T_{\theta_3} T_{\theta_2}^{-1})$ to $[T_{\theta_2}] \otimes [T_{\theta_3} T_{\theta_2}^{-1}]$.

	The key point here is that $\omega = T_{\theta_3} T_{\theta_2}^{-1}$ belongs to the commutator subgroup of $\pi_1(\S_2 \setminus \alpha_2, \pt)$. Indeed, in the group $\pi_1(\S_2 \setminus \alpha_2, \pt)$ we have the relation $\omega = aba^{-1}b^{-1}$, where $a$ and $b$ are shown in Fig. \ref{M12ff}. 
	Consequently, the homology class $[T_{\theta_3} T_{\theta_2}^{-1}] \in \H_1(\pi_1(\S_2 \setminus \alpha_2, \pt), \Z)$ is trivial. Therefore $[T_{\theta_2}] \otimes [T_{\theta_3} T_{\theta_2}^{-1}] = 0$. This concludes the proof.
\end{proof}

It turns out that in the case $i = 4$ the abelian cycle $\A(T_{\theta_1}, T_{\theta_3}, T_{\theta_2}) \in \H_3(\I_{M^i_1}, \Z)$ is nonzero. Now let us consider this case in more detail.
Consider the multicurve $N = \alpha_1 \cup \alpha_2 \cup \alpha_3$ shown in Fig. \ref{M12f}. Fore each admissible symplectic splitting $\H = U_1 \oplus U_2 \oplus U_3$ we can take disjoint admissible separating curves $\theta_j$ with $\H_{\theta_j} = U_j$, $j = 1, 2, 3$. Let us denote
$$\A_{U_1, U_2, U_3} = \A(T_{\theta_1}, T_{\theta_2}, T_{\theta_3}) \in \H_3(\I_N, \Z).$$

The main goal of the next two subsections is to prove the following lemma.
\begin{lemma} \label{ind}
	The abelian cycles $\A_{U_1, U_2, U_3}$ is a basis of a free abelian subgroup in $\H_3(\I_N, \Z)$. Here $\{U_1, U_2, U_3\}$ runs over the set of all unordered admissible for $N$ symplectic splittings of $\H$.
\end{lemma}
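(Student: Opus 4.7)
The plan is to prove linear independence of the $\A_{U_1, U_2, U_3}$ by pairing them with triple cup products of Gaifullin's homomorphisms $\nu_{\alpha_i, \W}$ from Proposition~\ref{hom}. The key combinatorial observation is that for each admissible splitting $\H = U_1 \oplus U_2 \oplus U_3$ for $N$, realised by three pairwise disjoint separating curves $\theta_j$ with $\H_{\theta_j} = U_j$, each once-punctured torus $X_{\theta_j}$ contains exactly one of $\alpha_1, \alpha_2, \alpha_3$: the $[\alpha_i]$ are linearly independent, while non-separating curves inside a once-punctured torus are all homologous. This yields a canonical bijection used to order each unordered splitting so that $[\alpha_i] \in U_i$ for all $i$; then the remaining two summands $U_j \oplus U_k$ (with $\{i,j,k\} = \{1,2,3\}$) inject into $\H_{\alpha_i} = [\alpha_i]^{\perp}/[\alpha_i]$ and define an orthogonal splitting $\W_i = (U_j, U_k)$ of $\H_{\alpha_i}$.

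For each canonically-ordered admissible splitting $(U_1, U_2, U_3)$, set
\[
\phi = [\nu_{\alpha_1, \W_1}] \smile [\nu_{\alpha_2, \W_2}] \smile [\nu_{\alpha_3, \W_3}] \in \H^3(\I_N, \Z),
\]
viewing each factor as restricted from $\H^1(\I_{\alpha_i}, \Z)$ along $\I_N \hookrightarrow \I_{\alpha_i}$. For another canonically-ordered admissible splitting $(U'_1, U'_2, U'_3)$ realised by disjoint separating curves $\theta'_j$ with $\H_{\theta'_j} = U'_j$, one has $\langle \phi, \A_{U'_1, U'_2, U'_3} \rangle = \pm \det M$, where $M_{ij} = \nu_{\alpha_i, \W_i}(T_{\theta'_j})$. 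By Proposition~\ref{hom}(a), $M_{ij} = 1$ iff $\theta'_j$ induces the splitting $\W_i$ of $\H_{\alpha_i}$, and I claim this occurs precisely when $U'_j \in \{U_k : k \neq i\}$. The subtle case $U'_j = U_i$ fails because then $[\alpha_i] \in U'_j = \H_{\theta'_j}$, so $\theta'_j$ bounds a torus containing $\alpha_i$ and does not induce any rank-$2 \oplus$ rank-$2$ splitting of $\H_{\alpha_i}$ at all.

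Thus, if the two unordered splittings coincide then after canonical reordering
\[
M = \begin{pmatrix} 0 & 1 & 1 \\ 1 & 0 & 1 \\ 1 & 1 & 0 \end{pmatrix}
\]
with determinant $2$; if they differ, then some $U'_j \notin \{U_1, U_2, U_3\}$, so the $j$-th column of $M$ is zero and $\det M = 0$. Hence the pairing matrix indexed by unordered admissible splittings is diagonal with nonzero $\pm 2$ entries, and any integer relation $\sum \lambda_{(U_1, U_2, U_3)} \A_{U_1, U_2, U_3} = 0$ forces every coefficient to vanish, establishing both linear independence and the free-abelian property.

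The main technical obstacle is the case analysis determining $M_{ij}$, and in particular the case $U'_j = U_i$. Its resolution uses the observation that if a separating curve $\theta'_j$ satisfies $[\alpha_i] \in \H_{\theta'_j}$, then $\alpha_i$ is contained in the once-punctured torus bounded by $\theta'_j$, so the image of $\H_{\theta'_j}$ in $\H_{\alpha_i}$ has rank $1$ and cannot arise as one summand of an orthogonal rank-$2 \oplus$ rank-$2$ splitting; this must be combined with Proposition~\ref{hom}(a) to rule out a non-zero contribution from this case.
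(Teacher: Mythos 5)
Your strategy --- pairing the abelian cycles with triple cup products of the classes $[\nu_{\alpha_1,\W_1}]\smallsmile[\nu_{\alpha_2,\W_2}]\smallsmile[\nu_{\alpha_3,\W_3}]$ --- is genuinely different from the paper's proof, which instead embeds $\I_N$ into $\PMod(\S_{0,6})$ and applies the complex of relative cycles (Proposition \ref{prop2}) to the multiarcs $B_{\U}$. Your route could be made to work, but as written it has a real error in the key case analysis. The class $[\nu_{\alpha_i,\W_i}]$ only detects the splitting that a separating curve induces on $\H_{\alpha_i}=[\alpha_i]^{\perp}/[\alpha_i]$, and that induced splitting depends on $U'_j=\H_{\theta'_j}$ only modulo $\Z[\alpha_i]$. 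Hence your claim that $M_{ij}=1$ precisely when $U'_j\in\{U_k: k\neq i\}$ is false: one can have $M_{ij}=1$ with $U'_j\notin\{U_1,U_2,U_3\}$. Concretely, fix a symplectic basis $a_1,b_1,\dots,a_3,b_3$ with $U_l=\langle a_l,b_l\rangle$ and $[\alpha_l]=a_l$, and take the admissible splitting $U'_1=\langle a_1,b_1+a_2\rangle$, $U'_2=\langle a_2,b_2+a_1\rangle$, $U'_3=U_3$. Then $U'_1\neq U_1$, yet the images of $U'_1$ and of $U_1$ in $\H_{\alpha_2}$ coincide, so $M_{21}=1$; the first column of $M$ is $(0,1,0)^{T}$ and the second is $(1,0,0)^{T}$. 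The zero column you assert simply does not exist.

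In this example $\det M$ still vanishes, but only because the third \emph{row} is zero, and that vanishing is forced by a constraint your argument never uses: orthogonality of the splitting $\U'$. Writing $U'_l=\langle a_l,\,b_l+\sum_{k\neq l}x_{lk}a_k\rangle$, orthogonality forces $x_{lk}=x_{kl}$, and one computes that for $\{i,j,m\}=\{1,2,3\}$ the entry $M_{ij}$ equals $1$ if $x_{jm}=0$ and $0$ otherwise; the symmetry then shows that whenever some $x_{jm}\neq 0$ the whole row of $M$ indexed by the third element $i\notin\{j,m\}$ vanishes. This gives $\det M=0$ for $\U'\neq\U$ and $\det M=\pm 2$ for $\U'=\U$, so the diagonal structure you want is true --- but your justification of the off-diagonal vanishing is wrong, and the repair requires this additional symmetry argument rather than the column argument you give. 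Two smaller inaccuracies: when $[\alpha_i]\in\H_{\theta'_j}$ the image of $\H_{\theta'_j}\cap[\alpha_i]^{\perp}$ in $\H_{\alpha_i}$ has rank $0$, not $1$; and after your canonical reordering the case $U'_j=U_i$ with $j\neq i$ cannot occur at all, since $U'_j$ already contains $[\alpha_j]$ and a rank-$2$ unimodular symplectic subgroup cannot contain two independent classes with trivial intersection pairing.
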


In order to prove Lemma \ref{ind} we need to consider a new $CW$-complex called \textit{complex of relative cycles} introduced by the author in \cite{Spiridonov}. The idea is to introduce some analogue of $\B_g$ that makes sense for a sphere (i.e. $g=0$ case) with punctures.

\subsection{Complex of relative cycles}

Recall that by $\S_{0, 2g}$ we denote a sphere with $2g$ punctures. In order to construct the complex of relative cycles $\B_{0, 2g}$ we need some additional structure. Let us split the punctures into two disjoint sets: $P = \{p_1, \dots, p_g\}$ and $Q = \{q_1, \dots, q_g\}$.

By \textit{arc} on $\S_{0, 2g}$ we mean an embedded oriented curve with endpoints at punctures. By \textit{multiarc} we mean a disjoint union of arcs (common endpoints are allowed). We always consider arcs and multiarcs up to an isotopy.

Denote by $\D$ the set of isotopy classes of arcs starting at some point from $P$ and finishing at some point from $Q$. \textit{Relative basic 1-cycle} is a formal sum $\gamma = \gamma_{1}+\dots+\gamma_{g}$ where $\gamma_i \in \D$ such that

(1) in the group of singular 1-chains we have $\partial (\sum_{i=1}^{g}\gamma_{i}) = \sum_{i=1}^{g}(q_{i} - p_{i})$.

(2) we can choose pairwise disjoint representatives of the isotopy classes $\gamma_1, \dots, \gamma_g$,

The set $\gamma_{1} \cup \dots \cup \gamma_{g}$ is called the \textit{support} of $\gamma$. Denote by $\mathcal{L}$ the set multiarcs $L = \gamma_{1} \cup \dots \cup \gamma_s$ (for arbitrary $s$) satisfying the following property:

(i) fore each $1 \leq i \leq s$ there exists a relative basic 1-cycle supported in $L$ and contains $\gamma_i$.

For each $L \in \mathcal{L}$ let us denote by $P_L \subset \mathbb{R}_{+}^{\D}$ the convex hull of the relative basic 1-cycles supported in $L$. Obviously $P_L$ is a convex polytope. By definition complex of relative cycles is the regular $CW$-complex given by $\B_{0, 2g} = \cup_{L \in \mathcal{L}} P_L$. Denote by $\mathcal{L}_0 \subseteq \mathcal{L}$ the set of supports of all relative basic 1-cycles. Obviously $\{P_L \; | \; L \in \mathcal{L}_0\}$ is the set of 0-cells of $\B_{0, 2g}$.

\begin{remark} \label{rem}
	By construction $\B_{0, 2g}$ is the subset of $\mathbb{R}^\D$ consisting of the points (formal sums) $\sum_{i=1}^n k_i \gamma_i$ where $\gamma_i \in \D$ and $k_i \in \mathbb{R}_{\geq 0}$ satisfying the following conditions:
	
	(1) in the group of singular 1-chains we have $\partial (\sum_{i=1}^{n}k_i \gamma_{i}) = \sum_{i=1}^{g}(q_{i} - p_{i})$.
	
	(2) we can choose pairwise disjoint representatives of the isotopy classes $\gamma_1, \dots, \gamma_n$.
\end{remark}

\begin{theorem}\cite[Theorem 5.3]{Spiridonov} \label{contr}
	Let $g \geq 1$. Then $\B_{0, 2g}$ is contractible.
\end{theorem}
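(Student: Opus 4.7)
The plan is to prove contractibility of $\B_{0,2g}$ by adapting the Bestvina--Bux--Margalit strategy used for Theorem \ref{contrcyc}: construct an explicit deformation retraction of $\B_{0,2g}$ onto a single vertex via surgeries that monotonically decrease a geometric intersection number. First, fix a reference system of $g$ pairwise disjoint simple arcs $\eta_1,\ldots,\eta_g$ with $\eta_i$ joining $p_i$ to $q_i$; this is possible because $\S_{0,2g}$ is a sphere. The formal sum $\eta^{*}=\eta_1+\cdots+\eta_g$ is a relative basic $1$-cycle, and so $v_{*}:=P_{\eta_1\cup\cdots\cup\eta_g}$ is a vertex of $\B_{0,2g}$. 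The aim is to produce a strong deformation retraction of $\B_{0,2g}$ onto $\{v_{*}\}$.

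Next, introduce a complexity function $\chi\colon\B_{0,2g}\to\mathbb{R}_{\geq 0}$ defined at a point $\gamma=\sum k_\delta\,\delta$ (in the sense of Remark \ref{rem}) by the weighted intersection count
\[
\chi(\gamma)=\sum_{\delta\in\mathrm{supp}(\gamma)} k_\delta\cdot i(\delta,\eta^{*}),
\]
where $i(\delta,\eta^{*})$ is the minimal geometric intersection number. Since $\{\eta_1,\ldots,\eta_g\}$ is a maximal disjoint system in $\D$ (any arc in $\D$ disjoint from $\eta^{*}$ is isotopic to some $\eta_i$), one checks that $\chi(\gamma)=0$ if and only if $\gamma\in v_{*}$. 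The strategy is to drive $\chi$ to zero along a continuous flow.

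The flow is defined locally by bigon surgery. Given $\gamma$ with $\chi(\gamma)>0$, one chooses an arc $\delta\in\mathrm{supp}(\gamma)$ meeting some $\eta_k$ and fixes an innermost subarc of $\eta_k$ bounded by two intersection points with $\mathrm{supp}(\gamma)$. Because $\S_{0,2g}$ is a sphere, such a subarc cuts out an innermost disk whose boundary consists of a piece of $\eta_k$ together with pieces of arcs from $\mathrm{supp}(\gamma)$; performing the corresponding surgery replaces those arcs by new arcs crossing $\eta^{*}$ strictly fewer times, while preserving the boundary condition (1). Linear interpolation between the original and the surgered weighted combinations traces a straight segment inside $\B_{0,2g}$ (passing through the cell corresponding to the union of the old and new supports) along which $\chi$ strictly decreases. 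Iterating this procedure finitely many times carries any starting $\gamma$ into $v_{*}$.

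The main obstacle is global continuity: the choice of innermost intersection and of surgery must be made canonically across cells so that the resulting flow varies continuously in $\gamma\in\B_{0,2g}$. One also has to check that every intermediate weighted configuration satisfies the disjointness condition (i), i.e. that each arc appearing in an intermediate support belongs to some relative basic $1$-cycle supported there. Verifying continuity and the validity of condition (i) under the combined surgery-and-interpolation flow---exactly the analogue of the subtle argument in \cite{Bestvina}---is the technical heart of the proof and occupies most of the verification.
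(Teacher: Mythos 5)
This statement is not proved in the paper at all: it is imported from \cite[Theorem 5.3]{Spiridonov}, where the contractibility of $\B_{0,2g}$ is established by adapting the Bestvina--Bux--Margalit surgery argument. So your overall strategy (surgeries decreasing a geometric intersection number, joined by linear interpolation inside the polytopes $P_L$) is in the right spirit. However, as written your outline contains a step that is simply false, in addition to deferring the part you yourself identify as the technical heart.

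The false step is the claim that $\chi(\gamma)=0$ if and only if $\gamma=v_*$, justified by the assertion that any arc in $\D$ disjoint from $\eta^*$ is isotopic to some $\eta_i$. Arcs in $\D$ may join $p_i$ to $q_j$ with $i\neq j$, and condition (1) only constrains the total boundary. Already for $g=2$: with $\eta_1$ from $p_1$ to $q_1$ and $\eta_2$ from $p_2$ to $q_2$, one can choose disjoint arcs $\delta_1$ from $p_1$ to $q_2$ and $\delta_2$ from $p_2$ to $q_1$, both disjoint from $\eta_1\cup\eta_2$; then $\delta_1+\delta_2$ is a relative basic $1$-cycle (its boundary equals $(q_1-p_1)+(q_2-p_2)$), hence a vertex of $\B_{0,4}$ with $\chi=0$ and distinct from $v_*$. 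Moreover all four arcs can be realized disjointly, so $\chi^{-1}(0)$ even contains positive-dimensional cells; and for $g\geq 3$ there are arcs from $p_1$ to $q_1$ disjoint from $\eta^*$ but not isotopic to $\eta_1$ (those which together with $\eta_1$ separate $\{p_2,q_2\}$ from $\{p_3,q_3\}$). Consequently your flow can at best retract $\B_{0,2g}$ onto the subcomplex $\chi^{-1}(0)$, which is not a point, and you still owe a contraction of that subcomplex --- this is a missing idea, not a routine verification. Combined with the fact that the continuity and canonicity of the surgery-and-interpolation flow is announced rather than carried out (in \cite{Bestvina} this is handled by a careful inductive construction, not by a pointwise-defined flow toward a single vertex), the proposal is a plan for a proof rather than a proof.
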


The group $\PMod(\S_{0, 2g})$ acts on $\B_{0, 2g}$ without rotations \cite[Theorem 5.5]{Spiridonov}

\begin{theorem}\cite[Theorem 5.5]{Spiridonov} \label{stabdim}
	Let $\sigma$ be a cell of $\B_{0, 2g}$. Then
	\begin{equation}\label{ineqdim}
	\dim(\sigma) + \cd(\Stab_{\PMod(\S_{0, 2g})}(\sigma)) \leq 2g-3.
	\end{equation}
\end{theorem}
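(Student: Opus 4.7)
The plan is to follow the template of Bestvina--Bux--Margalit's proof of the analogous bound $\dim \sigma + \cd(\Stab_{\I_g}(\sigma)) \leq 3g-5$ on the complex of cycles (formula (\ref{ineq}) above). Two separate estimates are needed, a dimension formula for $\sigma = P_L$ and an upper bound for $\cd(\Stab_{\PMod(\S_{0,2g})}(P_L))$, which together must fit under $2g-3$.

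First I would obtain an explicit formula for $\dim P_L$. By construction (see Remark \ref{rem}), the polytope $P_L$ is cut out from the positive orthant $\mathbb{R}^L_{\geq 0}$ by the affine constraint that the boundary equals $\sum_j (q_j - p_j)$, so
\[
\dim P_L = |L| - \rk\bigl(\partial : \mathbb{Z}^L \to \mathbb{Z}^{2g}\bigr).
\]
The kernel of $\partial$ is spanned by the cycles formed by closed loops inside $L$; since $\S_{0,2g}$ is a sphere, each such loop bounds, so the rank of $\partial$ can be rewritten in terms of $|L|$ and the number $c$ of connected components of $\S_{0,2g} \setminus L$. A short Euler-characteristic computation for the sphere cut open along the multiarc gives a clean expression of the form $\dim P_L = |L| - 2g + c$.

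Second, for the stabiliser I would use the analog of the Birman--Lubotzky--McCarthy sequence (\ref{LB}) for a multiarc: cutting $\S_{0,2g}$ along $L$ produces complementary open disks $D_1, \dots, D_c$ carrying the original punctures, and there is an exact sequence
\[
1 \to F \to \Stab_{\PMod(\S_{0,2g})}(L) \to \prod_{j=1}^c \PMod(D_j),
\]
where $F$ is a finitely generated free abelian group of twists/half-twists naturally associated with $L$. Using $\cd(\PMod(\S_{0,n})) = n-3$ for $n \geq 3$ (and vanishing for $n \leq 3$), together with subadditivity of $\cd$ under extensions, yields an upper bound for $\cd(\Stab_{\PMod(\S_{0,2g})}(L))$ in terms of $\rk F$, $c$, and the numbers $n_j$ of marked points in each $D_j$.

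Finally, I would combine the two bounds via the basic identity $\sum_{j=1}^c n_j = 2g$ together with the Euler-characteristic relation produced in the first step. After the cancellations the inequality $\dim(\sigma) + \cd(\Stab(\sigma)) \leq 2g-3$ drops out. The main obstacle will be the combinatorial bookkeeping for the ``small'' complementary disks, namely those with $n_j \leq 2$ where $\PMod(D_j)$ is trivial and the naive formula $n_j-3$ overcounts (it would contribute negatively); such bigon or monogon regions must be treated separately. Ensuring the inequality remains sharp in the presence of these degenerate pieces, and that the bound is achieved exactly when $L$ is a maximal multiarc subdividing $\S_{0,2g}$ into triangular pieces (each with three marked points), is the main technical content.
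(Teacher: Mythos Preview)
The paper does not prove this statement: Theorem~\ref{stabdim} is quoted verbatim from \cite[Theorem~5.5]{Spiridonov} with no argument given here, so there is nothing in the present paper to compare your proposal against.

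That said, your overall strategy (dimension formula for $P_L$ plus a cohomological-dimension bound for the stabiliser, combined via an Euler-characteristic count) is exactly the template used for the analogous inequality~(\ref{ineq}) for $\B_g$, and is the natural approach. Two of your specific formulas need correction, however. First, the dimension formula: the correct analogue of~(\ref{dim}) is $\dim P_L = c-1$, not $|L|-2g+c$. (Sanity check: for a relative basic $1$-cycle with $g$ disjoint arcs one has $|L|=g$, $c=1$, $\dim P_L=0$; your formula would give $1-g$.) Second, the stabiliser sequence you wrote is modeled on the Birman--Lubotzky--McCarthy sequence for multicurves, but for a multiarc there is no free abelian kernel of twists: one cannot Dehn-twist about an arc. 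Instead, cutting $\S_{0,2g}$ along $L$ gives an isomorphism of $\Stab_{\PMod(\S_{0,2g})}(L)$ with the mapping class group of the cut surface, whose components are planar surfaces with boundary; their cohomological dimensions are what you must sum. Once these two points are fixed, the combinatorics you outline (including the care needed for small complementary pieces) goes through and yields~(\ref{ineqdim}).
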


Let $K \subseteq \PMod(\S_{0, 2g})$ be a subgroup. Denote by $(\widehat{E}_{*, *}^{*}, \widehat{d}^*_{*, *})$ the spectral sequence (\ref{spec_sec}) for the action of $K$ on $\B_{0, 2g}$. Theorem \ref{stabdim} implies that for any cell $\sigma$ of $\B_{0, 2g}$ we have 
\begin{equation} \label{cdineq}
\dim(\sigma) + \cd(\Stab_{K}(\sigma)) \leq 2g-3.
\end{equation}
This immediately implies $\widehat{E}^1_{p, q} = 0$ for $p+q > 2g-3$. Hence all differentials $\widehat{d}^1, \widehat{d}^2, \dots$ to the group $\widehat{E}^1_{0, 2g-3}$ are trivial, so $\widehat{E}^1_{0, 2g-3} = \widehat{E}^{\infty}_{0, 2g-3}$. Therefore we have the following result.
\begin{prop} \cite[Proposition 5.10]{Spiridonov} \label{prop2}
	Let $\mathfrak{L} \subseteq \mathcal{L}_{0}$ be a subset consisting of mutiarcs from pairwise disjoint $K$-orbits. Then the inclusions $\Stab_{K}(L) \subseteq K$, $L \in \mathfrak{L}$ induce the injective homomorhpism
	\begin{equation} \label{c-l3}
	\bigoplus_{L \in \mathfrak{L}} \H_{2g-3}(\Stab_{K}(L), \Z) \hookrightarrow \H_{2g-3}(K), \Z).
	\end{equation}
\end{prop}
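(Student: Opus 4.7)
The plan is to use the spectral sequence $(\widehat{E}^*_{*,*}, \widehat{d}^*_{*,*})$ of the form (\ref{spec_sec}) associated with the action of $K$ on the contractible $CW$-complex $\B_{0,2g}$. By Theorem \ref{contr} the complex is contractible, so the spectral sequence converges to $\H_{p+q}(K,\Z)$. The $0$-cells of $\B_{0,2g}$ correspond to the elements of $\mathcal{L}_0$, so after grouping orbits we have
\begin{equation*}
\widehat{E}^1_{0,2g-3} \;\cong\; \bigoplus_{L \in \mathcal{L}_0/K} \H_{2g-3}(\Stab_K(L),\Z),
\end{equation*}
and the natural homomorphism $\bigoplus_{L\in\mathfrak{L}} \H_{2g-3}(\Stab_K(L),\Z) \to \widehat{E}^1_{0,2g-3}$ is injective because the representatives in $\mathfrak{L}$ lie in pairwise disjoint $K$-orbits and therefore define distinct summands.

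The key observation is that the bound (\ref{cdineq}), which was derived from Theorem \ref{stabdim}, forces $\widehat{E}^1_{p,q} = 0$ for $p+q > 2g-3$. I will use this to show that $\widehat{E}^1_{0,2g-3} = \widehat{E}^\infty_{0,2g-3}$. On each page $r\geq 1$ the differential $\widehat{d}^r$ has bidegree $(-r,r-1)$. The outgoing differential from $\widehat{E}^r_{0,2g-3}$ lands in bidegree $(-r,2g-4+r)$, whose first index is negative, so it vanishes. The incoming differential comes from $\widehat{E}^r_{r,2g-2-r}$; the total degree of this source is $2g-2 > 2g-3$, so the source already vanishes at the $E^1$ page, and hence at every later page. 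Both obstructions are therefore trivial.

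Once all these differentials are killed, $\widehat{E}^\infty_{0,2g-3}$ sits inside $\H_{2g-3}(K,\Z)$ as the bottom piece $F_0$ of the filtration by columns (the filtration is exhaustive and bounded because the spectral sequence is concentrated in $p\geq 0$, and $F_0 = \widehat{E}^\infty_{0,2g-3}$ is a subgroup). Composing the three injections
\begin{equation*}
\bigoplus_{L\in\mathfrak{L}} \H_{2g-3}(\Stab_K(L),\Z) \hookrightarrow \widehat{E}^1_{0,2g-3} = \widehat{E}^\infty_{0,2g-3} \hookrightarrow \H_{2g-3}(K,\Z)
\end{equation*}
yields the required injective homomorphism.

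There is no real obstacle here; the entire content of the proposition is packaged in Theorems \ref{contr} and \ref{stabdim}. The only item requiring a little care is the bookkeeping of differentials at every page $r\geq 1$ — one must verify vanishing both of the outgoing differentials (easy: negative first index) and of the incoming differentials (coming from cells of total degree $>2g-3$, which are forced to be zero by Theorem \ref{stabdim}). After this the argument is formal.
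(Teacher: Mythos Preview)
Your proposal is correct and follows essentially the same route as the paper: both use the spectral sequence for the action of $K$ on $\B_{0,2g}$, invoke the vanishing $\widehat{E}^1_{p,q}=0$ for $p+q>2g-3$ coming from Theorem~\ref{stabdim} to conclude $\widehat{E}^1_{0,2g-3}=\widehat{E}^\infty_{0,2g-3}$, and then read off the injection into $\H_{2g-3}(K,\Z)$. Your write-up is in fact slightly more careful than the paper's one-line argument, as you check the outgoing and incoming differentials separately.
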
 

\begin{proof}[Proof of Lemma \ref{ind}.] Let $N = \alpha_1 \cup \alpha_2 \cup \alpha_3$ as in Fig. \ref{M12f}. Consider an admissible symplectic splitting $\U = \{U_1, U_2, U_3\}$ and let $\theta_j$ be separating curves such that $\H_{\theta_j} = U_j$, $j = 1, 2, 3$. Denote by $X_{\theta_j}$ the once-punctured torus bounded by $\theta_j$. Let $\beta'_j$ be a curve on $X_{\theta_j}$ such that the algebraic intersection number of $\alpha_j$ and $\beta'_j$ is one. Denote $B'_\U = \beta'_1 \cup \beta'_2 \cup \beta'_3$. The homology class $\A_{U_1, U_2, U_3} \in \H_3(\Stab_{\I_{N}}(B'_\U), \Z)$ is well defined. 

Consider the surface $\S_{0, 6} = \S \setminus N$; we have the inclusion $\I_{N} \hookrightarrow \PMod(\S_{0, 6})$. Denote $\beta_j = \beta'_j \cap \S_{0, 6}$ and consider the multiarc $B_\U = \beta_1 \cup \beta_2 \cup \beta_3$.
Obviously $\Stab_{I_{N}}(B'_\U) = \Stab_{I_N}(B_\U)$. 
One can easily check that $\Stab_{I_N}(B_\U) \cong \Z^3$ and the homology class $\A_{U_1, U_2, U_3} \in \H_3(\Stab_{I_{N}}(B_\U), \Z)$ is nonzero.

If $\U \neq \U'$ are two admissible symplectic splittings for $N$ it follows that $B_{\U}$ and $B_{\U'}$ belong to different $\I_N$-orbits. Let $\mathfrak{L} = \{\B_\U\}$, where $\U$ runs over the set of all admissible symplectic splittings for $N$, and $K = \I_N$. Proposition \ref{prop2} implies the result. \end{proof}

\subsection{The term $E^2_{1, 3}$} \label{6.5}

Consider an unordered symplectic splitting $\U = \{U_1, U_2, U_3\}$ of $\H$. There is the unique decomposition $x = x_1 + x_2 + x_3$, where $x_j \in U_j$. Let us introduce the following notation.
\begin{definition} \label{typex}
We say that the splitting $\U$

$\bullet$ is of type (a) w.r.t. $x$, if $x_1 \neq 0$ and $x_2 = x_3 = 0$;

$\bullet$ is of type (b) w.r.t. $x$, if $x_1, x_2 \neq 0$ and $x_3 = 0$;

$\bullet$ is of type (c) w.r.t. $x$, if $x_1, x_2, x_3 \neq 0$.
\end{definition}

Renumbering $U_1, U_2$ and $U_3$ we may achieve that one of the alternatives given in Definition \ref{typex} is true. Therefore, $U$ is of type (a), (b) or (c) w.r.t. $x$.

For $j=1, 2, 3$ let $x_j = k_j a_j$, where $a_j$ is a primitive homology class and $k_j \in \mathbb{N}$ (if $x_j = 0$ we put $a_j = 0$). Consider a bounding pair $\alpha_j, \alpha'_j$, with $[\alpha_j] = [\alpha'_j] = a_j$ and the splitting $\U$ is admissible for $\alpha_j \cup \alpha'_j$ (if $a_j = 0$ we define $\alpha_j = \alpha'_j = \varnothing$). All such bounding pairs are $\I$-equivalent.

If $\U$ is of type (a) w.r.t $x$, we consider the element 
\begin{equation} \label{type1}
P_{\alpha_1 \cup \alpha'_1} \otimes \A_{U_2, U_3} \in E_{1, 3}^1.
\end{equation}
If $\U$ is of type (b) w.r.t $x$, we consider the elements
\begin{equation} \label{type2}
P_{\alpha_1 \cup \alpha'_1 \cup \alpha_2} \otimes \A_{U_2, U_3}, P_{\alpha_1 \cup \alpha_2 \cup \alpha'_2} \otimes \A_{U_1, U_3}  \in E_{1, 3}^1.
\end{equation}
If $\U$ is of type (c) w.r.t $x$, we consider the elements
\begin{equation} \label{type3}
P_{\alpha_1 \cup \alpha'_1 \cup \alpha_2 \cup \alpha_3} \otimes \A_{U_2, U_3}, P_{\alpha_1 \cup \alpha_2 \cup \alpha'_2 \cup \alpha_3} \otimes \A_{U_3, U_1}, P_{\alpha_1 \cup \alpha_2 \cup \alpha_3 \cup \alpha'_3} \otimes \A_{U_1, U_2} \in E_{1, 3}^1.
\end{equation}
Note that the cells mentioned above are uniquely determined (up to $\I$-equivalence) by $\U$.
Corollary \ref{E13gen} implies that the elements (\ref{type1}), (\ref{type2}) and (\ref{type3}), where $\U$ runs over the set of all splittings of $\H$, form a basis of the free abelian group $E_{1, 3}^1$.

Lemmas \ref{dM2} and \ref{zero} (we take $\G = \I$ and $\BB(x) = \B(x)$ yield that the images of the elements (\ref{type1}) and (\ref{type2}) under the differential $d_{1, 3}^1$ are zero. By Lemma \ref{ind}, the image of each of three elements (\ref{type3}) is $$P_{\alpha_1 \cup \alpha_2 \cup \alpha_3} \otimes \A_{U_1, U_3, U_2} \in E_{0, 3}^1,$$
and these classes are linearly independent for different $\U$. Therefore we have the following result.

\begin{prop} \label{kernel}
	The free abelian group $E_{1, 3}^2 = \ker d_{1, 3}^1$ has a basis consisting of the following elements:
	$$P_{\alpha_1 \cup \alpha'_1} \otimes \A_{U_2, U_3},$$
	where $\U$ runs over the set of all splittings of $\H$ of type (a) w.r.t $x$,
	$$P_{\alpha_1 \cup \alpha'_1 \cup \alpha_2} \otimes \A_{U_2, U_3}, P_{\alpha_1 \cup \alpha_2 \cup \alpha'_2} \otimes \A_{U_1, U_3},$$
	where $\U$ runs over the set of all splittings of $\H$ of type (b) w.r.t $x$,
	$$P_{\alpha_1 \cup \alpha'_1 \cup \alpha_2 \cup \alpha_3} \otimes \A_{U_2, U_3} - P_{\alpha_1 \cup \alpha_2 \cup \alpha'_2 \cup \alpha_3} \otimes \A_{U_3, U_1}, P_{\alpha_1 \cup \alpha_2 \cup \alpha'_2 \cup \alpha_3} \otimes \A_{U_3, U_1} - P_{\alpha_1 \cup \alpha_2 \cup \alpha_3 \cup \alpha'_3} \otimes \A_{U_1, U_2},$$
	where $\U$ runs over the set of all splittings of $\H$ of type (c) w.r.t $x$. 
\end{prop}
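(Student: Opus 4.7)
The plan is to compute $d^1_{1,3}$ on the basis of $E^1_{1,3}$ from Corollary \ref{E13gen}, organized by the type of the splitting $\U = \{U_1, U_2, U_3\}$ with respect to $x$. The three families (\ref{type1}), (\ref{type2}), (\ref{type3}) correspond exactly to multicurves in $\M^2_1$, $\M^3_1$, $\M^4_1$, and so fit the hypotheses of Lemma \ref{dM2} with $i = 2, 3, 4$ after identifying the bounding pair in each basis element with the pair $(\alpha_2, \alpha'_2)$ of the lemma.

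For types (a) and (b), I will apply Lemma \ref{dM2} (with $\G = \I$ and $\BB(x) = \B(x)$) to each basis element, obtaining an image of the form $P_{M^i_1 \setminus \alpha'_2} \otimes \A(T_{\theta_1}, T_{\theta_2}, T_{\theta_3})$, where $\theta_1, \theta_2, \theta_3$ are separating curves realizing the splitting $\U$. Lemma \ref{zero} (for $i = 2, 3$) then gives that this abelian cycle already vanishes in $\H_3(\I_{M^i_1 \setminus \alpha'_2}, \Z)$. Hence every basis element of type (a) or (b) lies in $\ker d^1_{1,3}$, producing the first two families of generators in the proposition.

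For type (c), Lemma \ref{dM2} with $i = 4$ applied to each of the three basis elements in (\ref{type3}) gives an image of the form $P_N \otimes \A(T_{\theta^{U_a}}, T_{\theta^{U_b}}, T_{\theta^{U_c}})$, where $N = \alpha_1 \cup \alpha_2 \cup \alpha_3$ and $(a, b, c)$ is a permutation of $(1, 2, 3)$ whose middle entry is the index of the subgroup $U_j$ containing the bounding pair's homology class. A brief sign computation, using the antisymmetry of $\A(T_{\theta_1}, T_{\theta_2}, T_{\theta_3})$ in its arguments, shows that the three images coincide up to a common sign with $\pm P_N \otimes \A_{U_1, U_2, U_3}$. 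Consequently the two differences displayed in the proposition lie in $\ker d^1_{1,3}$, giving the third family of generators.

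To conclude that the listed elements actually span the whole kernel, I will invoke Lemma \ref{ind}: the classes $P_N \otimes \A_{U_1, U_2, U_3}$ in $E^1_{0,3}$ are linearly independent as $\U$ varies over unordered splittings of type (c). A per-splitting rank count then yields image dimensions $0, 0, 1$ and hence kernel dimensions $1, 2, 2$ for splittings of types (a), (b), (c), matching exactly the cardinality of the proposed basis; linear independence of the proposed basis in $E^1_{1,3}$ is immediate since distinct splittings involve distinct cells $P_M$. The main technical obstacle I foresee is the careful orientation and sign bookkeeping in the type (c) computation, in particular verifying that the three images $d^1_{1,3}(e_1), d^1_{1,3}(e_2), d^1_{1,3}(e_3)$ really emerge with a \emph{common} sign (so that $e_1 + e_2 + e_3$ is \emph{not} in the kernel), rather than with inconsistent signs that would spuriously enlarge the kernel.
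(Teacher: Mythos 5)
Your proposal is correct and follows essentially the same route as the paper: compute $d^1_{1,3}$ on the basis of Corollary \ref{E13gen} via Lemma \ref{dM2}, kill the images for types (a) and (b) using Lemma \ref{zero}, observe that the three type (c) elements share a common image $P_{\alpha_1 \cup \alpha_2 \cup \alpha_3} \otimes \A_{U_1, U_3, U_2}$, and use Lemma \ref{ind} to see these images are nonzero and linearly independent across splittings, so the kernel is exactly as listed. Your extra attention to the sign consistency in the type (c) case is a reasonable point the paper passes over quickly, but it does not change the argument.
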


\begin{proof}[Proof of Proposition \ref{surj1}.] 
Recall that for a separating curve $\gamma$ on $\S$ denote by $E^{(\gamma) *}_{*, *}$ the spectral sequence (\ref{spec_sec}) for the action of $\I_{\gamma}$ on $\B(x)$.
By $j^{(\gamma) *}_{*, *}: E^{(\gamma) *}_{*, *} \to E^*_{*, *}$ we denote the morphism of the spectral sequences induced by the inclusion $\iota_{\gamma}: \I_{\gamma} \hookrightarrow \I$.

We have the morphism of the spectral sequences
$$
J^*_{*, *}: \E^*_{*, *} \to E^*_{*, *}.
$$
Here $J_{*, *}^* = \bigoplus_\gamma j^{(\gamma) *}_{*, *}$ and $\E^*_{*, *} = \bigoplus_{\gamma} E^{(\gamma) *}_{*, *}$, where the sums are over all separating curves $\gamma$ on $\S$. Our goal is to prove the surjectivity of $J_{1, 3}^2$. In Proposition \ref{kernel} we constructed a basis of the free abelian group $E_{1, 3}^2$. Let us show that each of this elements belongs to the image of $J_{1, 3}^2$.
Let $\U = (U_1, U_2, U_3)$ be a symplectic splitting of $\H$. As before, let $\theta_1, \theta_2, \theta_3$ be separating curves disjoint from $\alpha_j$ and $\alpha'_j$ such that $\H_{\theta_j} = U_j$ ($j = 1, 2, 3$). There are three possible cases.

(a) \textit{$\U$ is of type (a) w.r.t $x$.} We need to check that $P_{\alpha_1 \cup \alpha'_1} \otimes \A_{U_2, U_3}$ belongs to the image of $J_{1, 3}^2$. Consider the group $\I_{\theta_2}$ and the homology class 
$$\A(T_{\theta_2}, T_{\alpha_1} T_{\alpha'_1}^{-1}, T_{\theta_3}) \in \H_3(\Stab_{\I_{\theta_2}}(\alpha_1 \cup \alpha'_1), \Z).$$
Let us consider the element 
$$P_{\alpha_1 \cup \alpha'_1} \otimes \A(T_{\theta_2}, T_{\alpha_1} T_{\alpha'_1}^{-1}, T_{\theta_3})\in E_{3, 1}^{(\theta_2) 1}.$$
Lemmas \ref{dM2} and \ref{zero} (we take $\G = \I_{\theta_2}$ and $\BB(x) = \B(x)$) yield that 
$$ d_{1, 3}^{(\theta_2) 1} \left( P_{\alpha_1 \cup \alpha'_1} \otimes \A(T_{\theta_2}, T_{\alpha_1} T_{\alpha'_1}^{-1}, T_{\theta_3}) \right) = 0.$$
Hence
$$P_{\alpha_1 \cup \alpha'_1} \otimes \A(T_{\theta_2}, T_{\alpha_1} T_{\alpha'_1}^{-1}, T_{\theta_3})\in E_{3, 1}^{(\theta_2) 2}.$$
Obviously we have
$$J_{1, 3}^2 \left( P_{\alpha_1 \cup \alpha'_1} \otimes \A(T_{\theta_2}, T_{\alpha_1} T_{\alpha'_1}^{-1}, T_{\theta_3}) \right) = P_{\alpha_1 \cup \alpha'_1} \otimes \A_{U_2, U_3}.$$

(b) \textit{$\U$ is of type (b) w.r.t $x$.} We need to check that $P_{\alpha_1 \cup \alpha'_1 \cup \alpha_2} \otimes \A_{U_2, U_3}$ belongs to the image of $J_{1, 3}^2$ (the case of $P_{\alpha_1 \cup \alpha_2 \cup \alpha'_2} \otimes \A_{U_1, U_3}$ is similar). Consider the group $\I_{\theta_2}$ and the homology class 
$$\A(T_{\theta_2}, T_{\alpha_1} T_{\alpha'_1}^{-1}, T_{\theta_3}) \in \H_3(\Stab_{\I_{\theta_2}}(\alpha_1 \cup \alpha'_1 \cup \alpha_2), \Z).$$
Let us consider the element 
$$P_{\alpha_1 \cup \alpha'_1 \cup \alpha_2} \otimes \A(T_{\theta_2}, T_{\alpha_1} T_{\alpha'_1}^{-1}, T_{\theta_3})\in E_{3, 1}^{(\theta_2) 1}.$$
Lemmas \ref{dM2} and \ref{zero} (we take $\G = \I_{\theta_2}$ and $\BB(x) = \B(x)$) yield that  
$$ d_{1, 3}^{(\theta_2) 1} \left( P_{\alpha_1 \cup \alpha'_1 \cup \alpha_2} \otimes \A(T_{\theta_2}, T_{\alpha_1} T_{\alpha'_1}^{-1}, T_{\theta_3}) \right) = 0.$$
Hence
$$P_{\alpha_1 \cup \alpha'_1 \cup \alpha_2} \otimes \A(T_{\theta_2}, T_{\alpha_1} T_{\alpha'_1}^{-1}, T_{\theta_3})\in E_{3, 1}^{(\theta_2) 2}.$$
Obviously we have
$$J_{1, 3}^2 \left( P_{\alpha_1 \cup \alpha'_1 \cup \alpha_2} \otimes \A(T_{\theta_2}, T_{\alpha_1} T_{\alpha'_1}^{-1}, T_{\theta_3}) \right) = P_{\alpha_1 \cup \alpha'_1 \cup \alpha_2} \otimes \A_{U_2, U_3}.$$

(c) \textit{$\U$ is of type (c) w.r.t $x$.} We need to check that 
$$P_{\alpha_1 \cup \alpha'_1 \cup \alpha_2 \cup \alpha_3} \otimes \A_{U_2, U_3} - P_{\alpha_1 \cup \alpha_2 \cup \alpha'_2 \cup \alpha_3} \otimes \A_{U_3, U_1}$$ 
belongs to the image of $J_{1, 3}^2$ (the case of 
$$P_{\alpha_1 \cup \alpha_2 \cup \alpha'_2 \cup \alpha_3} \otimes \A_{U_3, U_1} - P_{\alpha_1 \cup \alpha_2 \cup \alpha_3 \cup \alpha'_3} \otimes \A_{U_1, U_2}$$ 
is similar). Consider the group $\I_{\theta_3}$ 
and the homology classes 
$$\A(T_{\theta_2}, T_{\alpha_1} T_{\alpha'_1}^{-1}, T_{\theta_3}) \in \H_3(\Stab_{\I_{\theta_2}}(\alpha_1 \cup \alpha'_1 \cup \alpha_2 \cup \alpha_3), \Z)$$
and
$$\A(T_{\theta_3}, T_{\alpha_2} T_{\alpha'_2}^{-1}, T_{\theta_1}) \in \H_3(\Stab_{\I_{\theta_2}}(\alpha_1 \cup \alpha_2 \cup \alpha'_2 \cup \alpha_3), \Z).$$
Let us consider the elements
$$P_{\alpha_1 \cup \alpha'_1 \cup \alpha_2 \cup \alpha_3} \otimes \A(T_{\theta_2}, T_{\alpha_1} T_{\alpha'_1}^{-1}, T_{\theta_3}) \in E_{3, 1}^{(\theta_3) 1}$$
and
$$P_{\alpha_1 \cup \alpha_2 \cup \alpha'_2 \cup \alpha_3} \otimes \A(T_{\theta_3}, T_{\alpha_2} T_{\alpha'_2}^{-1}, T_{\theta_1}) \in E_{3, 1}^{(\theta_3) 1}.$$
Lemma \ref{dM2} (we take $\G = \I_{\theta_3}$ and $\BB(x) = \B(x)$) implies that 
$$ d_{1, 3}^{(\theta_3) 1} \left( P_{\alpha_1 \cup \alpha'_1 \cup \alpha_2 \cup \alpha_3} \otimes \A(T_{\theta_2}, T_{\alpha_1} T_{\alpha'_1}^{-1}, T_{\theta_3}) \right) = P_{\alpha_1 \cup \alpha_2 \cup \alpha_3} \otimes \A(T_{\theta_2},  T_{\theta_3}, T_{\theta_1})$$
and
$$ d_{1, 3}^{(\theta_3) 1} \left( P_{\alpha_1 \cup \alpha_2 \cup \alpha'_2 \cup \alpha_3} \otimes \A(T_{\theta_3}, T_{\alpha_2} T_{\alpha'_2}^{-1}, T_{\theta_1}) \right) = P_{\alpha_1 \cup \alpha_2 \cup \alpha_3} \otimes \A(T_{\theta_2},  T_{\theta_3}, T_{\theta_1}).$$
Hence
$$d_{1, 3}^{(\theta_3) 1} \left( P_{\alpha_1 \cup \alpha'_1 \cup \alpha_2 \cup \alpha_3} \otimes \A(T_{\theta_2}, T_{\alpha_1} T_{\alpha'_1}^{-1}, T_{\theta_3}) - P_{\alpha_1 \cup \alpha_2 \cup \alpha'_2 \cup \alpha_3} \otimes \A(T_{\theta_3}, T_{\alpha_2} T_{\alpha'_2}^{-1}, T_{\theta_1}) \right)  = 0.$$
Therefore
$$ \left( P_{\alpha_1 \cup \alpha'_1 \cup \alpha_2 \cup \alpha_3} \otimes \A(T_{\theta_2}, T_{\alpha_1} T_{\alpha'_1}^{-1}, T_{\theta_3}) - P_{\alpha_1 \cup \alpha_2 \cup \alpha'_2 \cup \alpha_3} \otimes \A(T_{\theta_3}, T_{\alpha_2} T_{\alpha'_2}^{-1}, T_{\theta_1}) \right) \in E_{3, 1}^{(\theta_3) 2}.$$
Obviously we have
$$J_{1, 3}^2 \left( P_{\alpha_1 \cup \alpha'_1 \cup \alpha_2 \cup \alpha_3} \otimes \A(T_{\theta_2}, T_{\alpha_1} T_{\alpha'_1}^{-1}, T_{\theta_3}) - P_{\alpha_1 \cup \alpha_2 \cup \alpha'_2 \cup \alpha_3} \otimes \A(T_{\theta_3}, T_{\alpha_2} T_{\alpha'_2}^{-1}, T_{\theta_1}) \right) = $$
$$= P_{\alpha_1 \cup \alpha'_1 \cup \alpha_2 \cup \alpha_3} \otimes \A_{U_2, U_3} - P_{\alpha_1 \cup \alpha_2 \cup \alpha'_2 \cup \alpha_3} \otimes \A_{U_3, U_1}.$$
This concludes the proof.
\end{proof}

\section{Proof of Proposition \ref{surj0}} \label{S7}

\subsection{Auxiliary complex of cycles}

Let us describe the structure of the group $E_{0, 4}^1$. There are three combinatorial types in $\M_0(x)$. Namely, we denote by $\M_0^i \subset \M_0(x)$ the subset consisting of all multicurves with precisely $i$ components. Let $M_0^1 = \alpha_1 \in \M_0^1$, $M_0^2 = \alpha_1 \cup \alpha_2 \in \M_0^1$ and $M_0^3 = \alpha_1 \cup \alpha_2 \cup \alpha_3 \in \M_0^1$ be any representatives, see Fig. \ref{M12f}. Formula (\ref{cd}) applied the multicurves $M_0^2$ and $M_0^3$ implies
$$\cd (\I_{M_0^2}) \leq 6 - 1 - 2 + 0 = 3,$$
$$\cd (\I_{M_0^3}) \leq 6 - 0 - 3 + 0 = 3,$$
Hence we have $\H_4(\I_{M_0^2}, \Z) = \H_4(\I_{M_0^3}, \Z) = 0$.

Let $\alpha = \alpha_1$ be a curve with $[\alpha] = x$.
Since any two homological curves on $\S$ are $\I$-equivalent, we have in isomorphism
$$\H_{4}(\I_{\alpha}, \Z) \cong E_{0, 4}^1,$$
given by $h \mapsto P_{\alpha} \otimes h$, where $h \in \H_{4}(\I_{\alpha}, \Z)$.

Consider any nonzero homology class $y \in \H_1(\S, \Z)$ with $y \neq x$ and $x \cdot y = 0$. Now we need to introduce auxiliary complex $\B_\alpha(y) \subset \B(y)$. By definition $\B_\alpha(y)$ consists of those cells $P_M \subset \B(y)$, for which we can choose a representative of $M$ disjoint from $\alpha$. Obviously $\B_\alpha(y) \subset \B(y)$ is a subcomplex and the group $\I_{\alpha}$ acts on $\B_\alpha(y)$ cellularly and without rotations. Moreover, for each cell $\sigma \in \B_\alpha(y)$ we have 
\begin{equation} \label{ineq2}
\dim(\sigma) + \cd(\Stab_{\I_{\alpha}}(\sigma)) \leq 4.
\end{equation}
\begin{prop} 
	$\B_\alpha(y)$ is contractible.
\end{prop}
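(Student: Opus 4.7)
The plan is to identify $\B_\alpha(y)$ with a complex of cycles on the cut surface and then prove contractibility by adapting the Bestvina-Bux-Margalit surgery argument. Cutting $\S$ along the nonseparating curve $\alpha$ (where $[\alpha]=x$) yields a connected surface $\S_\alpha := \S \setminus \alpha$ of genus $2$ with two boundary components $\alpha^\pm$. Since $x \cdot y = 0$, the class $y$ lifts to a class $\bar y \in \H_1(\S_\alpha, \Z)$, and the conditions $y \neq 0$ and $y \neq x$ ensure that $\bar y$ may be chosen so that $\bar y \neq 0$ and $\bar y$ is not a multiple of $[\alpha^\pm]$.

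First, I would construct an explicit cellular isomorphism between $\B_\alpha(y)$ and a natural complex of cycles $\B(\S_\alpha, \bar y)$ defined on the bordered surface $\S_\alpha$ in the obvious way. Multicurves $M \subset \S$ disjoint from $\alpha$ correspond bijectively to multicurves $\bar M \subset \S_\alpha$ whose components are essential and not boundary-parallel, and under this correspondence properties (1)-(3) defining basic $1$-cycles and (i)-(ii) defining $\M(y)$ translate directly: a basic $1$-cycle for $y$ disjoint from $\alpha$ becomes a basic $1$-cycle for $\bar y$ on $\S_\alpha$, and vice versa. Since the definition of $P_M$ as a convex hull involves only the coefficients in the basic $1$-cycles, the convex polytope structure of each cell is preserved, and the bijection is a homeomorphism of $CW$-complexes.

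Second, I would prove that $\B(\S_\alpha, \bar y)$ is contractible by following the proof of Theorem \ref{contrcyc}. The argument of Bestvina-Bux-Margalit proceeds via local surgery moves on $1$-cycles combined with a straight-line homotopy in the ambient Euclidean space; these moves make sense on any orientable surface, closed or with boundary, so long as the homology class is nontrivial. The main obstacle I anticipate is the technical verification that none of the surgery steps produces a component of the support which is trivial on $\S_\alpha$ or parallel to one of the boundary components $\alpha^\pm$ (a phenomenon that cannot occur in the closed-surface setting of \cite{Bestvina}). This is ruled out by our choice of $\bar y$ together with the fact that the BBM surgeries preserve the homology class: a boundary-parallel component in the support would force $\bar y$ to lie in $\Z [\alpha^\pm]$, contradicting our hypothesis. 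Carefully spelling out this verification at each surgery step is the main bookkeeping burden of the proof, but introduces no essentially new ideas beyond those of \cite{Bestvina}.
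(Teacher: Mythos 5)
The engine of your argument --- the Bestvina--Bux--Margalit surgery procedure --- is exactly what the paper invokes: its entire proof is the one-line remark that the surgical proof of contractibility of $\B(y)$ from \cite[Section 5]{Bestvina} works with no modification, the implicit point being that surgery takes place in a neighbourhood of the supports of the cycles involved and therefore never creates intersections with $\alpha$, so the whole BBM homotopy stays inside the subcomplex $\B_\alpha(y)$. Your additional step of cutting along $\alpha$ is a reasonable repackaging, but as written it contains a genuine error precisely at the point you identify as the main obstacle.

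Namely, you identify $\B_\alpha(y)$ with a complex of cycles on the cut surface whose components are required to be essential and \emph{not boundary-parallel}, and you then argue that boundary-parallel components cannot arise because they would force $\bar y \in \Z[\alpha^{\pm}]$. Both halves are wrong. A multicurve $M \in \M(y)$ may contain a component isotopic to $\alpha$ itself: the defining condition for $P_M \subseteq \B_\alpha(y)$ is only that $M$ admit a representative disjoint from $\alpha$, which such an $M$ does, and after cutting that component becomes boundary-parallel. The paper makes essential use of exactly such cells (e.g.\ $P_{\beta_1 \cup \beta_2 \cup \beta'_2}$ with $\beta_1 = \alpha$ in Proposition \ref{kernel2}), so your proposed cellular isomorphism omits cells that are actually present. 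Moreover, a boundary-parallel component can coexist with other components in the support --- for instance the basic $1$-cycle $\alpha + \gamma$ when $y = x + [\gamma]$ --- so its presence in no way forces $\bar y$ to be a multiple of $[\alpha^{\pm}]$; the verification you sketch is fallacious. The repair is either to allow boundary-parallel (but not null-homotopic) components in the cut-surface model and track the resulting identifications of isotopy and homology classes, or, more simply and as the paper does, to skip the cutting altogether and observe that every surgery and draining step of \cite{Bestvina}, applied to cycles supported away from $\alpha$, produces cycles supported away from $\alpha$.
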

\begin{proof}
	The surgical proof of contractibility of $\B(y)$ \cite[Section 5]{Bestvina} works with no modification.
\end{proof}

Denote by $\M_{\alpha, p}(y)$ the set of multicurves corresponding to the cells of $\B_\alpha(y)$ of dimension $p$.

Now let $(\widetilde{E}_{*,*}^*, \widetilde{d}_{*, *}^*)$ be the spectral sequence (\ref{spec_sec}) for the action on $\I_{\alpha}$ on $\B_\alpha(y)$.
This spectral sequence has the form 
\begin{equation} \label{spec_sec2}
\widetilde{E}_{p, q}^1 \cong \bigoplus_{M \in \M_{\alpha, p}(y) / \I_{\alpha}}\H_q(\Stab_{\I_{\alpha}} (M)) \Rightarrow \H_{p+q}(\I_{\alpha}, \Z),
\end{equation}
where by $\M_{\alpha, p}(y) / \I_{\alpha}$ we denote the set containing one representative from each $\I_{\alpha}$-orbit in the set $\M_{\alpha, p}(y)$.

\begin{corollary} \label{cor2}
	Let $\widetilde{E}_{*,*}^*$ be the spectral sequence (\ref{spec_sec2}).
	Then $\widetilde{E}^1_{p, q} = 0$ for $p+q > 4$.
\end{corollary}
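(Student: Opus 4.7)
The statement is an immediate consequence of the inequality (\ref{ineq2}) combined with the structure of the spectral sequence (\ref{spec_sec2}), in complete analogy with the proof of Corollary \ref{cor}. The plan is simply to check that each summand in the decomposition of $\widetilde{E}^1_{p, q}$ vanishes under the hypothesis $p+q > 4$.

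More concretely, I would proceed as follows. Fix $p, q$ with $p+q > 4$ and a representative $M \in \M_{\alpha, p}(y) / \I_{\alpha}$. The cell $P_M \subset \B_\alpha(y)$ has dimension $p$, so (\ref{ineq2}) yields
\begin{equation*}
\cd(\Stab_{\I_{\alpha}}(P_M)) \leq 4 - p < q.
\end{equation*}
Since $\H_q(G, \Z) = 0$ whenever $q > \cd(G)$ (because a projective resolution of $\Z$ over $\Z G$ of length $\cd(G)$ computes both homology and cohomology), we conclude $\H_q(\Stab_{\I_{\alpha}}(M), \Z) = 0$. Summing over the finitely many relevant orbit representatives gives $\widetilde{E}^1_{p, q} = 0$.

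There is essentially no obstacle here; the only thing worth flagging is that one must apply (\ref{ineq2}) to every cell $P_M$ with $M$ in the corresponding $\I_{\alpha}$-orbit (which is legitimate because the dimension of $P_M$ and the cohomological dimension of its stabiliser are invariant under the $\I_{\alpha}$-action).
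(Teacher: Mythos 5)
Your argument is correct and is exactly the reasoning the paper intends: Corollary \ref{cor2} is stated without proof because, just as for Corollary \ref{cor}, it follows immediately from the inequality (\ref{ineq2}) together with the description (\ref{spec_sec2}) of $\widetilde{E}^1_{p,q}$ as a direct sum of $\H_q(\Stab_{\I_\alpha}(M),\Z)$, each of which vanishes once $q > \cd(\Stab_{\I_\alpha}(M))$. The only cosmetic remark is that the set of orbit representatives need not be finite, but this is irrelevant since a direct sum of zero groups over any index set is zero.
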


Our next goal it to show that $\widetilde{E}_{0,4}^1 = 0$ (Lemma \ref{E04}) and to prove that the differentials $\widetilde{d}_{3, 1}^1$ and $\widetilde{d}_{2, 2}^1$ are injective (Lemmas \ref{di31} and \ref{di22}), see Fig. \ref{tildeE}. 

\begin{figure}[h]
	\begin{minipage}[h]{0.49\linewidth}
		\begin{center}
			\scalebox{1}{
				\begin{tikzpicture}
					\draw[->] (0, 0) to (0, 6);
					\draw[] (1, 0) to (1, 5);
					\draw[] (2, 0) to (2, 4);
					\draw[] (3, 0) to (3, 3);
					\draw[] (4, 0) to (4, 2);
					
					\draw[->] (0, 0) to (5, 0);
					\draw[] (0, 1) to (4, 1);
					\draw[] (0, 2) to (4, 2);
					\draw[] (0, 3) to (3, 3);
					\draw[] (0, 4) to (2, 4);
					\draw[] (0, 5) to (1, 5);
					
					\draw[->] (3.5, 1.5) to (2.5, 1.5);
					\draw[] (3.5, 1.5) arc (-90:90:0.1);
					
					\draw[->] (2.5, 2.5) to (1.5, 2.5);
					\draw[] (2.5, 2.5) arc (-90:90:0.1);
					
					\node[scale = 1][left] at (0, 0.5) {$0$};
					\node[scale = 1][left] at (0, 1.5) {$1$};
					\node[scale = 1][left] at (0, 2.5) {$2$};
					\node[scale = 1][left] at (0, 3.5) {$3$};
					\node[scale = 1][left] at (0, 4.5) {$4$};
					
					\node[scale = 1][below] at (0.5, 0) {$0$};
					\node[scale = 1][below] at (1.5, 0) {$1$};
					\node[scale = 1][below] at (2.5, 0) {$2$};
					\node[scale = 1][below] at (3.5, 0) {$3$};
					
					\node[scale = 1][left] at (0, 5.5) {$q$};
					\node[scale = 1][below] at (4.5, 0) {$p$};
					
					\node[scale = 2] at (0.5, 4.5) {$0$};
					
			\end{tikzpicture}} \\  $\widetilde{E}^1$ \end{center}
	\end{minipage}
	\hfill
	\begin{minipage}[h]{0.49\linewidth}
		\begin{center}
			\scalebox{1}{
				\begin{tikzpicture}
					\draw[->] (0, 0) to (0, 6);
					\draw[] (1, 0) to (1, 4);
					\draw[] (2, 0) to (2, 4);
					\draw[] (3, 0) to (3, 2);
					\draw[] (4, 0) to (4, 1);
					
					\draw[->] (0, 0) to (5, 0);
					\draw[] (0, 1) to (4, 1);
					\draw[] (0, 2) to (3, 2);
					\draw[] (0, 3) to (2, 3);
					\draw[] (0, 4) to (2, 4);
					
					\node[scale = 1][left] at (0, 0.5) {$0$};
					\node[scale = 1][left] at (0, 1.5) {$1$};
					\node[scale = 1][left] at (0, 2.5) {$2$};
					\node[scale = 1][left] at (0, 3.5) {$3$};
					\node[scale = 1][left] at (0, 4.5) {$4$};
					
					\node[scale = 1][below] at (0.5, 0) {$0$};
					\node[scale = 1][below] at (1.5, 0) {$1$};
					\node[scale = 1][below] at (2.5, 0) {$2$};
					\node[scale = 1][below] at (3.5, 0) {$3$};

					\node[scale = 1][left] at (0, 5.5) {$q$};
					\node[scale = 1][below] at (4.5, 0) {$p$};
					
			\end{tikzpicture}} \\  $\widetilde{E}^2$ \end{center}
	\end{minipage}
	\caption{The pages $\widetilde{E}^1$ and $\widetilde{E}^2$ and the differentials $\widetilde{d}^1_{3, 1}$ and $\widetilde{d}^1_{2, 2}$.}
	\label{tildeE}
\end{figure}
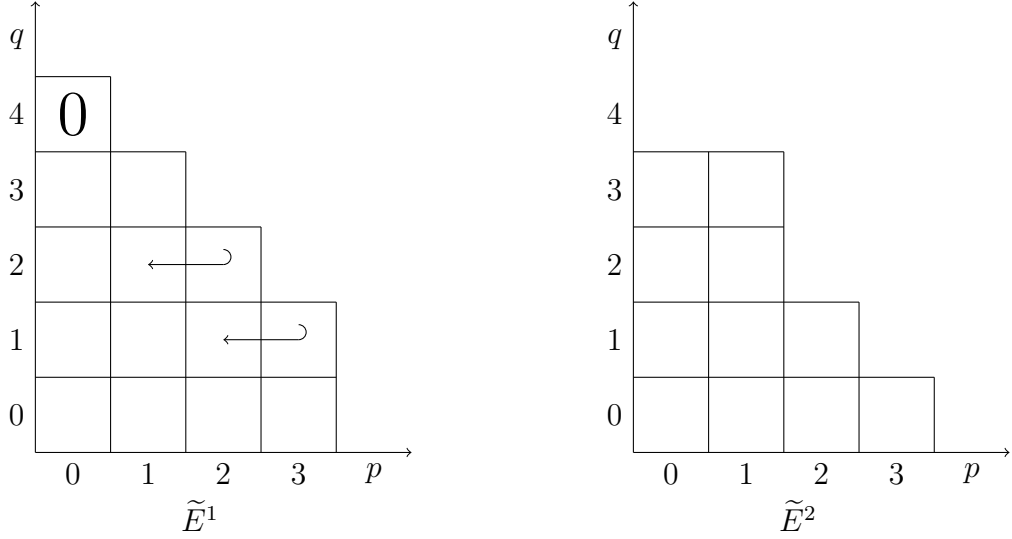

\begin{lemma} \label{E04}
	We have $\widetilde{E}_{0,4}^1 = 0$.
\end{lemma}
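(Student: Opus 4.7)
The plan is to prove the vanishing of every summand of
\[
\widetilde{E}^{1}_{0,4}=\bigoplus_{M\in \M_{\alpha,0}(y)/\I_\alpha} \H_{4}\bigl(\Stab_{\I_\alpha}(M),\Z\bigr)
\]
by showing $\cd(\Stab_{\I_\alpha}(M))\le 3$ for every vertex $M$ of $\B_\alpha(y)$. Such an $M$ is a basic $1$-cycle $M=\sum_{i=1}^{n}k_{i}\gamma_{i}$ for $y$ whose support is disjoint from $\alpha$, with $1\le n\le 3$. Setting $N=\alpha\cup M$, we have $\Stab_{\I_\alpha}(M)=\Stab_\I(N)$, and $N$ is a non-separating multicurve on $\S$ with $|N|=n+1$. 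The key tool is the Bestvina--Bux--Margalit inequality (\ref{cd}), $\cd(\Stab_\I(N))\le 6-P(N)-|N|+BP(N)$.

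Since the $[\gamma_i]$ are linearly independent in $\H$, at most one $\gamma_i$ can be homologous to $\pm[\alpha]$; hence $BP(N)\le 1$. Together with the identity $|\S\setminus N|=|N|-\rk N+1$ for non-separating multicurves and $\chi(\S\setminus N)=-4$, these constraints pin down $\rk N$ and the genus profile of $\S\setminus N$ in each case, and in each case force $P(N)+|N|-BP(N)\ge 3$, so that (\ref{cd}) yields $\cd\le 3$.

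I would carry out the case analysis as follows. For $n=1$: either $[\gamma_1]\ne\pm x$, in which case $\rk N=2$ and $\S\setminus N$ is a connected $\S_{1,4}$ (so $P=1,\,BP=0$), or $[\gamma_1]=-x$ (the sole remaining possibility, as $y\ne x$), in which case $\{\alpha,\gamma_1\}$ is a bounding pair whose complement is $\S_{1,2}\sqcup\S_{1,2}$, the split $(0,2)$ being excluded because distinct components of a multicurve cannot cobound a cylinder ($P=2,\,BP=1$). For $n=2$: an Euler-characteristic count forces $\rk N\in\{2,3\}$; when $\rk N=3$ the complement is a single $\S_{0,6}$ with $BP=0$, and when $\rk N=2$ the complement has a genus-$1$ component (so $P\ge 1$) with $BP\le 1$. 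For $n=3$: Euler characteristic forces $\rk N=3$, the complement consists of two planar pieces ($P=0$), and $BP\le 1$.

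The principal obstacle is the subcase $n=2$, $\rk N=2$, $BP=1$, where we must confirm that the complement really contains a positive-genus component in order to compensate for the $+BP$ term. This reduces to checking that inside the $\S_{1,2}$-half of the bounding pair's complement, the remaining curve $\gamma_2$ must be non-separating: the only non-degenerate separating option would split $\S_{1,2}$ as $\S_{1,1}\sqcup\S_{0,3}$, and regluing along $\alpha,\gamma_1$ would then exhibit $\S\setminus\gamma_2$ as $\S_{2,1}\sqcup\S_{1,1}$, contradicting that $\gamma_2$ is non-separating in $\S$.
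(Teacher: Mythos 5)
Your proof takes the same route as the paper: the paper's entire argument for this lemma is to apply inequality (\ref{cd}) to each vertex type of $\B_\alpha(y)$ and check $\cd(\Stab_{\I_{\alpha}}(M))\le 3$, which is precisely the computation you carry out, and your case analysis (including the delicate $n=2$, $\rk N=2$, $BP=1$ subcase) is correct. One point to patch: the identity $|N|=n+1$ silently assumes that no component of $M$ is isotopic to $\alpha$ (possibly with reversed orientation); this can fail when some $[\gamma_i]=\pm x$, in which case $N=M$ has only $n$ components and your counts shift by one. For $n=2,3$ the inequality still gives $\cd\le 3$ in that degenerate situation ($\S_{1,4}$ resp.\ $\S_{0,6}$ complements with $BP=0$), but for $n=1$ the vertex would be $k\alpha$ itself, whose stabiliser is all of $\I_{\alpha}$ with $\H_4(\I_{\alpha},\Z)\neq 0$ --- so the lemma, and hence any proof of it, tacitly requires $y$ to be linearly independent of $x$ rather than merely ``$y\neq x$'' as literally stated in the paper; under that reading your argument is complete.
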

\begin{proof}
	Using formula (\ref{cd}) one can easily check that for any multicurve $M$ from the set $\M_{\alpha, 0}(y) / \Stab_{\Mod(\S)}(\alpha)$, we have $\cd(\Stab_{\I_{\alpha}}(M)) < 4$. This implies the result.
\end{proof}

\begin{lemma} \label{di31}
	The differential $\widetilde{d}_{3, 1}^1: \widetilde{E}_{3,1}^1 \to \widetilde{E}_{2,1}^1$ is injective, hence $\widetilde{E}_{3,1}^2 = \widetilde{E}_{3,1}^\infty = 0$.
\end{lemma}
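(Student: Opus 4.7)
The plan is to mimic the proof of Proposition \ref{diff31}. The crucial geometric observation is that for every 3-cell $P_M$ of $\B_\alpha(y)$, the curve $\alpha$ is isotopic to a component of $M$. To establish this, I would apply (\ref{dim}) to get $|\S \setminus M| = 4$; since $\chi(\S \setminus M) = -4$ distributes over four components each of Euler characteristic at most $-1$, every component has $\chi = -1$. The non-separating condition on the components of $M$ rules out once-punctured tori, so every component of $\S \setminus M$ is a thrice-punctured sphere. Since every non-trivial simple closed curve on a thrice-punctured sphere is boundary-parallel, $\alpha$ must be isotopic to a component of $M$. In particular, $\Stab_{\I_\alpha}(M) = \Stab_{\I}(M)$.

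Once this is established, I would follow the classification of Section \ref{S4}: the set $\M_{\alpha, 3}(y) / \I_\alpha$ splits into two combinatorial types analogous to $\M'_3$ and $\M''_3$, and as in Proposition \ref{basis31} only the first one contributes nontrivially. This gives that $\widetilde{E}^1_{3, 1}$ is freely generated by the elements $P_M \otimes [T_{\gamma_1} T_{\gamma_2}^{-1}]$, where $\{\gamma_1, \gamma_2\}$ is the unique bounding pair in $M$.

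Finally, I would introduce $\widetilde{Q} \subset \widetilde{E}^1_{2, 1}$, the analogue of $Q$ from Corollary \ref{freeE21}: the subgroup generated by the elements $P_{M'_2} \otimes [T_\delta^k T_{\gamma_1} T_{\gamma_2}^{-1} T_\delta^{-k}]$ for the $2$-cells of $\B_\alpha(y)$ of the type analogous to $\M'_2$. Proposition \ref{propG} applies here because the relevant separating curve $\delta$ is disjoint from $\alpha$, so $T_\delta \in \I_\alpha$. The formula (\ref{d31}) then shows that $p_{\widetilde{Q}} \circ \widetilde{d}^1_{3, 1}$ sends each basis element to a difference of two basis elements of $\widetilde{Q}$, and the concluding orbit argument from the proof of Proposition \ref{diff31} — that for fixed $M'_2$ the extensions $M'_2 \cup T_\delta^k(\gamma_1)$, $k \in \Z$, lie in pairwise distinct $\I_{M'_2}$-orbits — shows that these difference pairs are pairwise disjoint, yielding injectivity. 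The main technical subtlety is ensuring that restricting to $\B_\alpha(y)$ and $\I_\alpha$ does not merge orbits or introduce new combinatorial types, but the fact that $\alpha$ is always a component of $M$ reduces the whole analysis to a direct repackaging of the content of Section \ref{S4}.
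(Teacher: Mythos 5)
Your first step is correct and is exactly what the paper uses: every $3$-cell $P_M$ of $\B_\alpha(y)$ has all complementary components equal to thrice-punctured spheres, so $\alpha$ is isotopic to a component of $M$, $\Stab_{\I_\alpha}(M)=\I_M$, and $\widetilde{E}^1_{3,1}$ is generated by the $\M'_3$-type cells exactly as in Proposition \ref{basis31}. The gap is in the last step. The observation that ``$\alpha$ is always a component of $M$'' holds for the $3$-cells but \emph{not} for their $2$-dimensional faces, and this is precisely where your appeal to Proposition \ref{propG} breaks down. If $\alpha$ is isotopic to one of the two bounding-pair curves $\gamma_1,\gamma_2$ of $M$ (this configuration genuinely occurs; it is the right-hand picture of Fig.~\ref{A3}), then the face $N=M\setminus\gamma_2$ contains no component isotopic to $\alpha$, and $\Stab_{\I_\alpha}(N)$ is the subgroup of $\I_N$ fixing $\gamma_2$, i.e. $\I_{N\cup\gamma_2}=\I_M\cong\Z$, rather than the infinitely generated free group of Proposition \ref{propG}: all but one of the generators of $\I_N$ move $\gamma_2$ and hence do not lie in $\I_\alpha$. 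So your $\widetilde{Q}$ is not ``the same as $Q$'', its claimed basis is not a basis, and the disjointness-of-pairs argument needs a separate case analysis for these faces. (Also, $\delta$ in Fig.~\ref{M22} is nonseparating and $T_\delta\notin\I$, so ``the relevant separating curve $\delta$ is disjoint from $\alpha$, hence $T_\delta\in\I_\alpha$'' is not the right justification in any case.) The missing case can in fact be handled --- when $\Stab_{\I_\alpha}(N)\cong\Z$ only one contributing $3$-cell is adjacent to $N$ through such a face --- but as written the argument is incomplete.

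The paper takes a cleaner route that avoids the $2$-cell stabilizers altogether: the inclusions $\I_\alpha\hookrightarrow\I$ and $\B_\alpha(y)\hookrightarrow\B(y)$ induce a morphism from $\widetilde{E}^*_{*,*}$ to the spectral sequence for the action of $\I$ on $\B(y)$; this morphism is injective on the $(3,1)$-term because the stabilizers agree on the contributing $3$-cells and $\I$-equivalent cells of $\M_{\alpha,3}(y)$ are already $\I_\alpha$-equivalent. Injectivity of $\widetilde{d}^1_{3,1}$ then follows from Proposition \ref{diff31} (applied to $y$) by commutativity of the corresponding square, with no need to recompute anything in $\widetilde{E}^1_{2,1}$.
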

\begin{proof}
	Let us consider the multicurves $M$ from the set $\M_{\alpha, 3}(y) / \Stab_{\Mod(\S)}(\alpha)$, such that $\cd(\Stab_{\I_{\alpha}}(M)) = 1$.
	Using formula (\ref{cd}) one can easily check that all the representatives are shown in Fig. \ref{A3}. The multicurve $M$ is shown in green, the curve $\alpha$ is shown in blue.
	
	Let a multicurve $M$ has one of the types shown in Fig. \ref{A3}. We have the isomorphism $\Stab_{\I_{\alpha}}(M) = \I_M$, so be obtain
	$$\H_{1}(\Stab_{\I_{\alpha}}(M), \Z) \cong \H_{1}(\I_M, \Z).$$ Moreover, if $M, M' \in \M_{\alpha, 3}(y)$ are $\I$-equivalent, it follows that $M$ and $M'$ are $\I_{\alpha}$-equivalent.
	This follows from the general fact that $\I$ acts on $\B(x)$ without rotations. Therefore, the injectivity of the differential $\widetilde{d}_{3, 1}^1$ immediately follows from the injectivity of the differential $d_{3, 1}^1$ (see Proposition \ref{diff31}).
	\begin{figure}[h]
		\begin{minipage}[h]{0.49\linewidth}
			\begin{center}
				\scalebox{0.35}{
					\begin{tikzpicture}
					\draw[green, very thick, dashed] (0,3) to[out = -110, in = 110] (0, 1);
					\draw[green, very thick, dashed] (0,-1) to[out = -110, in = 110] (0, -3);
					\draw[green, very thick, dashed] (1,0) to[out = -20, in = 200] (3, 0);
					\draw[green, very thick, dashed] (5,0) to[out = -20, in = 200] (7, 0);
					\draw[green, very thick, dashed] (-3,0) to[out = -20, in = 200] (-1, 0);
					\draw[green, very thick, dashed] (-7,0) to[out = -20, in = 200] (-5, 0);
					
					\draw[blue, very thick, dashed] (-7 + 0.1,0.5) to[out = -20, in = 200] (-5+0.1, 0.5);

					%\draw[very thick, dashed] (-7.5, 0) to (-9, 0);
					%\draw[very thick, dashed] (7.5, 0) to (9, 0);
					%\draw[->, very thick] (9, 1) to[out = 10, in = -10] (9, -1);
					%\node[scale = 3] at (9.9, 0) {$\iota$};

					\draw[very thick] (-4, 3) to (4, 3);
					\draw[very thick] (-4, -3) to (4, -3);
					\draw[very thick] (-4, 3) arc (90:270:3);
					\draw[very thick] (4, 3) arc (90:-90:3);
					\draw[very thick] (0, 0) circle (1);
					\draw[very thick] (-4, 0) circle (1);
					\draw[very thick] (4, 0) circle (1);
					
					\draw[green, very thick] (0,3) to[out = -70, in = 70] (0, 1);
					\draw[green, very thick] (0,-1) to[out = -70, in = 70] (0, -3);
					\draw[green, very thick] (1,0) to[out = 20, in = 160] (3, 0);
					\draw[green, very thick] (5,0) to[out = 20, in = 160] (7, 0);
					\draw[green, very thick] (-3,0) to[out = 20, in = 160] (-1, 0);
					\draw[green, very thick] (-7,0) to[out = 20, in = 160] (-5, 0);
					
					\draw[blue, very thick] (-7+0.1, 0.5) to[out = 20, in = 160] (-5+0.1, 0.5);

					\node[blue, scale = 2] at (-5.9, 1.1) {$\alpha$};

					\end{tikzpicture}}  \end{center}
		\end{minipage}
		\hfill
		\begin{minipage}[h]{0.49\linewidth}
			\begin{center}
				\scalebox{0.35}{
					\begin{tikzpicture}
					\draw[green, very thick, dashed] (0,3) to[out = -110, in = 110] (0, 1);
					\draw[green, very thick, dashed] (0,-1) to[out = -110, in = 110] (0, -3);
					\draw[green, very thick, dashed] (1,0) to[out = -20, in = 200] (3, 0);
					\draw[green, very thick, dashed] (5,0) to[out = -20, in = 200] (7, 0);
					\draw[green, very thick, dashed] (-3,0) to[out = -20, in = 200] (-1, 0);
					\draw[green, very thick, dashed] (-7,0) to[out = -20, in = 200] (-5, 0);
					
					\draw[blue, very thick, dashed] (0.5,-3) to[out = 110, in = -110] (0.5, -1+0.1);

					%\draw[very thick, dashed] (-7.5, 0) to (-9, 0);
					%\draw[very thick, dashed] (7.5, 0) to (9, 0);
					%\draw[->, very thick] (9, 1) to[out = 10, in = -10] (9, -1);
					%\node[scale = 3] at (9.9, 0) {$\iota$};

					\draw[very thick] (-4, 3) to (4, 3);
					\draw[very thick] (-4, -3) to (4, -3);
					\draw[very thick] (-4, 3) arc (90:270:3);
					\draw[very thick] (4, 3) arc (90:-90:3);
					\draw[very thick] (0, 0) circle (1);
					\draw[very thick] (-4, 0) circle (1);
					\draw[very thick] (4, 0) circle (1);
					
					\draw[green, very thick] (0,3) to[out = -70, in = 70] (0, 1);
					\draw[green, very thick] (0,-1) to[out = -70, in = 70] (0, -3);
					\draw[green, very thick] (1,0) to[out = 20, in = 160] (3, 0);
					\draw[green, very thick] (5,0) to[out = 20, in = 160] (7, 0);
					\draw[green, very thick] (-3,0) to[out = 20, in = 160] (-1, 0);
					\draw[green, very thick] (-7,0) to[out = 20, in = 160] (-5, 0);
					
					\draw[blue, very thick] (0.5,-3) to[out = 70, in = -70] (0.5, -1+0.1);

					%\node[red, scale = 2] at (0.7, 2) {$\delta_1$};
					%\node[blue, scale = 2] at (0.7, -2) {$\delta_2$};
					
					\node[blue, scale = 2] at (1.2, -1.9) {$\alpha$};

					\end{tikzpicture}} \end{center}
		\end{minipage}
		\caption{The multicurves $M$ from the set $\M_{\alpha, 3}(y) / \Stab_{\Mod(\S)}(\alpha)$ with $\cd(\I^{(\alpha)}_M) = 1$.}
		\label{A3}
	\end{figure}
\end{proof}

\begin{lemma} \label{di22}
	The differential $\widetilde{d}_{2, 2}^1: \widetilde{E}_{2,2}^1 \to \widetilde{E}_{1,2}^1$ is injective, hence $\widetilde{E}_{2, 2}^2 = \widetilde{E}_{2,2}^\infty = 0$.
\end{lemma}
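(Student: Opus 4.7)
The plan is to repeat the proof of Proposition \ref{diff22} in the auxiliary setting of $(\B_\alpha(y), \I_\alpha)$; its global structure and main geometric ingredients carry over with only routine modifications. First I would describe the terms $\widetilde{E}^1_{2,2}$ and $\widetilde{E}^1_{1,2}$ analogously to Subsection \ref{sub51}. By inequality (\ref{ineq2}) combined with (\ref{cd}), the 2-cells contributing to $\widetilde{E}^1_{2,2}$ correspond to multicurves of the combinatorial types $\M_2^4$ and $\M_2^5$ disjoint from $\alpha$, and the 1-cells contributing to $\widetilde{E}^1_{1,2}$ correspond to types $\M_1^2, \M_1^3, \M_1^4$ disjoint from $\alpha$. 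For each relevant $M$ one checks that $\Stab_{\I_\alpha}(M) = \I_M$; equivalently, that every element of $\I$ stabilising $M$ already fixes $\alpha$. This is a direct case-by-case verification from the topology of $\S \setminus M$: the isotopy class of $\alpha$ within the component of $\S \setminus M$ containing it is rigidly determined by its homology class $x$, modulo the action of $\I_M$.

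With these identifications the bases of Corollaries \ref{freeE22} and \ref{freeE12} restrict to bases of $\widetilde{E}^1_{2,2}$ and of the relevant free subgroup of $\widetilde{E}^1_{1,2}$, and the differential $\widetilde{d}_{2,2}^1$ acts on these bases by the same formulas as $d_{2,2}^1$. It therefore suffices to prove an analog of Proposition \ref{inj22}: for a fixed admissible symplectic subgroup $U \subset \H$ of rank $2$ and $i \in \{4, 5\}$, any linear combination $\sum_{M \in S} \lambda_M P_M \otimes \A_U$ over multicurves $M$ of type $\M_2^i$ disjoint from $\alpha$ whose image under $\widetilde{d}_{2,2}^1$ vanishes must be trivial.

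The proof of Proposition \ref{inj22} proceeds by constructing a subcomplex $\Gamma_y$ (Fig. \ref{geom}), distinguishing horizontal and vertical 2-cells, and running a maximum argument on the function $\Psi$ together with the forced-neighbour observation of Proposition \ref{edge}. In the auxiliary setting one works in the intersection of this subcomplex with $\B_\alpha(y)$ and must ensure that every adjacent cell invoked in the contradiction admits a representative disjoint from $\alpha$. Since $[\alpha] = x$ is linearly independent from $y$ and indeed $x \cdot y = 0$, there is enough topological room in $\S$ to realise all the required bounding pairs and separating configurations in the complement of $\alpha$.

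The main obstacle is precisely this disjointness check: for each step of the argument that produces a neighbouring cell of the maximal horizontal cell $P_{M'}$, one must verify that the new multicurve admits a representative disjoint from $\alpha$, so that the forced containment via Proposition \ref{edge} remains valid in the auxiliary setting. Once this is established, the $\Psi$-maximality argument yields the same contradiction as in Proposition \ref{inj22}, and Lemma \ref{di22} follows.
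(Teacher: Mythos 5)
Your plan of re-running the whole proof of Proposition \ref{diff22} inside $\B_\alpha(y)$ is far more work than is needed, and the one concrete claim you make to set it up is false. You assert that $\Stab_{\I_\alpha}(M) = \I_M$ for the relevant multicurves, i.e.\ that every element of $\I$ stabilising $M$ already fixes $\alpha$. This fails: by Propositions \ref{H124} and \ref{H125} the group $\I_M$ contains Dehn twists $T_\theta$ about separating curves $\theta$ disjoint from $M$, and such a $\theta$ may intersect $\alpha$, so $T_\theta \notin \I_\alpha$. (Your justification --- that the isotopy class of $\alpha$ is determined ``modulo the action of $\I_M$'' --- shows at best that the curves in the class $x$ disjoint from $M$ form a single $\I_M$-orbit, not that the stabiliser of $\alpha$ is all of $\I_M$.) Consequently the bases of Corollaries \ref{freeE22} and \ref{freeE12} do not simply ``restrict'': $\widetilde{E}^1_{2,2}$ is built from the smaller groups $\H_2(\Stab_{\I_\alpha}(M),\Z)$, spanned only by those $\A_U$ with $U$ admissible for $M$ \emph{and} realisable disjointly from $\alpha$. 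Moreover the ``main obstacle'' you name --- verifying that every neighbouring cell produced by the $\Psi$-maximality argument and by Proposition \ref{edge} lies in $\B_\alpha(y)$ --- is exactly the step you leave unproved; ``enough topological room'' is not an argument, and that is where all the content of your route would have to live.

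The paper avoids this entirely with a short functoriality reduction. One needs only: (i) the inclusion $\Stab_{\I_\alpha}(M) \hookrightarrow \I_M$ induces an injection $\H_2(\Stab_{\I_\alpha}(M),\Z) \hookrightarrow \H_2(\I_M,\Z)$, the stabilisers again being of the form $\Z \times (\text{free})$ as computed in Section \ref{S5}; and (ii) if two multicurves in $\M_{\alpha,2}(y)$ are $\I$-equivalent then they are already $\I_\alpha$-equivalent, so distinct $\I_\alpha$-orbits map to distinct $\I$-orbits. Together these give an injection $\widetilde{E}^1_{2,2} \hookrightarrow E^1_{2,2}$ (the latter term taken for the $\I$-action on $\B(y)$) which commutes with the differentials. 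The composite $\widetilde{E}^1_{2,2} \hookrightarrow E^1_{2,2} \xrightarrow{d^1_{2,2}} E^1_{1,2}$ is then injective by Proposition \ref{diff22} and factors through $\widetilde{d}^1_{2,2}$, which is therefore injective. No re-examination of the $\Psi$-argument, of Proposition \ref{edge}, or of the subcomplexes $\Gamma_y$ is required.
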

\begin{proof}
	Let us consider the multicurves $M$ from the set $\M_{\alpha, 2}(y) / \Stab_{\Mod(\S)}(\alpha)$, such that $\cd(\Stab_{\I_{\alpha}}(M)) = 2$.
	Using formula (\ref{cd}) one can easily check that all the representatives are shown in Fig. \ref{A2}. The multicurve $M$ is shown in green, the curve $\alpha$ is shown in blue.

	\begin{figure}[h]
		\begin{minipage}[h]{0.31\linewidth}
			\begin{center}
				\scalebox{0.3}{
					\begin{tikzpicture}
					\draw[green, very thick, dashed] (0,3) to[out = -110, in = 110] (0, 1);
					\draw[green, very thick, dashed] (0,-1) to[out = -110, in = 110] (0, -3);
					%\draw[green, very thick, dashed] (1,0) to[out = -20, in = 200] (3, 0);
					%\draw[green, very thick, dashed] (5,0) to[out = -20, in = 200] (7, 0);
					\draw[green, very thick, dashed] (-3,0) to[out = -20, in = 200] (-1, 0);
					\draw[green, very thick, dashed] (-7,0) to[out = -20, in = 200] (-5, 0);
					
					\draw[blue, very thick, dashed] (0.5,3) to[out = -110, in = 110] (0.5, 1-0.1);

					%\draw[very thick, dashed] (-7.5, 0) to (-9, 0);
					%\draw[very thick, dashed] (7.5, 0) to (9, 0);
					%\draw[->, very thick] (9, 1) to[out = 10, in = -10] (9, -1);
					%\node[scale = 3] at (9.9, 0) {$\iota$};

					\draw[very thick] (-4, 3) to (4, 3);
					\draw[very thick] (-4, -3) to (4, -3);
					\draw[very thick] (-4, 3) arc (90:270:3);
					\draw[very thick] (4, 3) arc (90:-90:3);
					\draw[very thick] (0, 0) circle (1);
					\draw[very thick] (-4, 0) circle (1);
					\draw[very thick] (4, 0) circle (1);
					
					\draw[green, very thick] (0,3) to[out = -70, in = 70] (0, 1);
					\draw[green, very thick] (0,-1) to[out = -70, in = 70] (0, -3);
					%\draw[green, very thick] (1,0) to[out = 20, in = 160] (3, 0);
					%\draw[green, very thick] (5,0) to[out = 20, in = 160] (7, 0);
					\draw[green, very thick] (-3,0) to[out = 20, in = 160] (-1, 0);
					\draw[green, very thick] (-7,0) to[out = 20, in = 160] (-5, 0);
					
					\draw[blue, very thick] (0.5,3) to[out = -70, in = 70] (0.5, 1-0.1);

					%\node[red, scale = 2] at (0.7, 2) {$\delta_1$};
					%\node[blue, scale = 2] at (0.7, -2) {$\delta_2$};

					\end{tikzpicture}} \\ $\;$ \end{center}
		\end{minipage}
		\hfill
		\begin{minipage}[h]{0.31\linewidth}
			\begin{center}
				\scalebox{0.3}{
					\begin{tikzpicture}
					\draw[green, very thick, dashed] (0,3) to[out = -110, in = 110] (0, 1);
					\draw[green, very thick, dashed] (0,-1) to[out = -110, in = 110] (0, -3);
					%\draw[green, very thick, dashed] (1,0) to[out = -20, in = 200] (3, 0);
					%\draw[green, very thick, dashed] (5,0) to[out = -20, in = 200] (7, 0);
					\draw[green, very thick, dashed] (-3,0) to[out = -20, in = 200] (-1, 0);
					\draw[green, very thick, dashed] (-7,0) to[out = -20, in = 200] (-5, 0);
					
					\draw[blue, very thick, dashed] (-7 + 0.1,0.5) to[out = -20, in = 200] (-5+0.1, 0.5);

					%\draw[very thick, dashed] (-7.5, 0) to (-9, 0);
					%\draw[very thick, dashed] (7.5, 0) to (9, 0);
					%\draw[->, very thick] (9, 1) to[out = 10, in = -10] (9, -1);
					%\node[scale = 3] at (9.9, 0) {$\iota$};

					\draw[very thick] (-4, 3) to (4, 3);
					\draw[very thick] (-4, -3) to (4, -3);
					\draw[very thick] (-4, 3) arc (90:270:3);
					\draw[very thick] (4, 3) arc (90:-90:3);
					\draw[very thick] (0, 0) circle (1);
					\draw[very thick] (-4, 0) circle (1);
					\draw[very thick] (4, 0) circle (1);
					
					\draw[green, very thick] (0,3) to[out = -70, in = 70] (0, 1);
					\draw[green, very thick] (0,-1) to[out = -70, in = 70] (0, -3);
					%\draw[green, very thick] (1,0) to[out = 20, in = 160] (3, 0);
					%\draw[green, very thick] (5,0) to[out = 20, in = 160] (7, 0);
					\draw[green, very thick] (-3,0) to[out = 20, in = 160] (-1, 0);
					\draw[green, very thick] (-7,0) to[out = 20, in = 160] (-5, 0);
					
					\draw[blue, very thick] (-7+0.1, 0.5) to[out = 20, in = 160] (-5+0.1, 0.5);

					%\node[red, scale = 2] at (0.7, 2) {$\delta_1$};
					%\node[blue, scale = 2] at (0.7, -2) {$\delta_2$};
					
					\end{tikzpicture}} \\ $\;$ \end{center}
		\end{minipage}
		\hfill
		\begin{minipage}[h]{0.31\linewidth}
			\begin{center}
				\scalebox{0.3}{
					\begin{tikzpicture}
					\draw[green, very thick, dashed] (0,3) to[out = -110, in = 110] (0, 1);
					\draw[green, very thick, dashed] (0,-1) to[out = -110, in = 110] (0, -3);
					%\draw[green, very thick, dashed] (1,0) to[out = -20, in = 200] (3, 0);
					\draw[blue, very thick, dashed] (5,0) to[out = -20, in = 200] (7, 0);
					\draw[green, very thick, dashed] (-3,0) to[out = -20, in = 200] (-1, 0);
					\draw[green, very thick, dashed] (-7,0) to[out = -20, in = 200] (-5, 0);
					
					%\draw[blue, very thick, dashed] (-7 + 0.1,0.5) to[out = -20, in = 200] (-5+0.1, 0.5);

					%\draw[very thick, dashed] (-7.5, 0) to (-9, 0);
					%\draw[very thick, dashed] (7.5, 0) to (9, 0);
					%\draw[->, very thick] (9, 1) to[out = 10, in = -10] (9, -1);
					%\node[scale = 3] at (9.9, 0) {$\iota$};

					\draw[very thick] (-4, 3) to (4, 3);
					\draw[very thick] (-4, -3) to (4, -3);
					\draw[very thick] (-4, 3) arc (90:270:3);
					\draw[very thick] (4, 3) arc (90:-90:3);
					\draw[very thick] (0, 0) circle (1);
					\draw[very thick] (-4, 0) circle (1);
					\draw[very thick] (4, 0) circle (1);
					
					\draw[green, very thick] (0,3) to[out = -70, in = 70] (0, 1);
					\draw[green, very thick] (0,-1) to[out = -70, in = 70] (0, -3);
					%\draw[green, very thick] (1,0) to[out = 20, in = 160] (3, 0);
					\draw[blue, very thick] (5,0) to[out = 20, in = 160] (7, 0);
					\draw[green, very thick] (-3,0) to[out = 20, in = 160] (-1, 0);
					\draw[green, very thick] (-7,0) to[out = 20, in = 160] (-5, 0);
					
					%\draw[blue, very thick] (-7+0.1, 0.5) to[out = 20, in = 160] (-5+0.1, 0.5);

					%\node[red, scale = 2] at (0.7, 2) {$\delta_1$};
					%\node[blue, scale = 2] at (0.7, -2) {$\delta_2$};
					
					\end{tikzpicture}} \\ $\;$ \end{center}
		\end{minipage}
		\hfill
		\begin{minipage}[h]{0.31\linewidth}
			\begin{center}
				\scalebox{0.3}{
					\begin{tikzpicture}
					\draw[green, very thick, dashed] (0,3) to[out = -110, in = 110] (0, 1);
					\draw[green, very thick, dashed] (0,-1) to[out = -110, in = 110] (0, -3);
					%\draw[green, very thick, dashed] (1,0) to[out = -20, in = 200] (3, 0);
					\draw[green, very thick, dashed] (5,0) to[out = -20, in = 200] (7, 0);
					\draw[green, very thick, dashed] (-3,0) to[out = -20, in = 200] (-1, 0);
					\draw[green, very thick, dashed] (-7,0) to[out = -20, in = 200] (-5, 0);
					
					\draw[blue, very thick, dashed] (0.5,3) to[out = -110, in = 110] (0.5, 1-0.1);

					%\draw[very thick, dashed] (-7.5, 0) to (-9, 0);
					%\draw[very thick, dashed] (7.5, 0) to (9, 0);
					%\draw[->, very thick] (9, 1) to[out = 10, in = -10] (9, -1);
					%\node[scale = 3] at (9.9, 0) {$\iota$};

					\draw[very thick] (-4, 3) to (4, 3);
					\draw[very thick] (-4, -3) to (4, -3);
					\draw[very thick] (-4, 3) arc (90:270:3);
					\draw[very thick] (4, 3) arc (90:-90:3);
					\draw[very thick] (0, 0) circle (1);
					\draw[very thick] (-4, 0) circle (1);
					\draw[very thick] (4, 0) circle (1);
					
					\draw[green, very thick] (0,3) to[out = -70, in = 70] (0, 1);
					\draw[green, very thick] (0,-1) to[out = -70, in = 70] (0, -3);
					%\draw[green, very thick] (1,0) to[out = 20, in = 160] (3, 0);
					\draw[green, very thick] (5,0) to[out = 20, in = 160] (7, 0);
					\draw[green, very thick] (-3,0) to[out = 20, in = 160] (-1, 0);
					\draw[green, very thick] (-7,0) to[out = 20, in = 160] (-5, 0);
					
					\draw[blue, very thick] (0.5,3) to[out = -70, in = 70] (0.5, 1-0.1);

					%\node[red, scale = 2] at (0.7, 2) {$\delta_1$};
					%\node[blue, scale = 2] at (0.7, -2) {$\delta_2$};

					\end{tikzpicture}} \\ $\;$ \end{center}
		\end{minipage}
		\hfill
		\begin{minipage}[h]{0.31\linewidth}
			\begin{center}
				\scalebox{0.3}{
					\begin{tikzpicture}
					\draw[green, very thick, dashed] (0,3) to[out = -110, in = 110] (0, 1);
					\draw[green, very thick, dashed] (0,-1) to[out = -110, in = 110] (0, -3);
					%\draw[green, very thick, dashed] (1,0) to[out = -20, in = 200] (3, 0);
					\draw[green, very thick, dashed] (5,0) to[out = -20, in = 200] (7, 0);
					\draw[green, very thick, dashed] (-3,0) to[out = -20, in = 200] (-1, 0);
					\draw[green, very thick, dashed] (-7,0) to[out = -20, in = 200] (-5, 0);
					
					\draw[blue, very thick, dashed] (-7 + 0.1,0.5) to[out = -20, in = 200] (-5+0.1, 0.5);

					%\draw[very thick, dashed] (-7.5, 0) to (-9, 0);
					%\draw[very thick, dashed] (7.5, 0) to (9, 0);
					%\draw[->, very thick] (9, 1) to[out = 10, in = -10] (9, -1);
					%\node[scale = 3] at (9.9, 0) {$\iota$};

					\draw[very thick] (-4, 3) to (4, 3);
					\draw[very thick] (-4, -3) to (4, -3);
					\draw[very thick] (-4, 3) arc (90:270:3);
					\draw[very thick] (4, 3) arc (90:-90:3);
					\draw[very thick] (0, 0) circle (1);
					\draw[very thick] (-4, 0) circle (1);
					\draw[very thick] (4, 0) circle (1);
					
					\draw[green, very thick] (0,3) to[out = -70, in = 70] (0, 1);
					\draw[green, very thick] (0,-1) to[out = -70, in = 70] (0, -3);
					%\draw[green, very thick] (1,0) to[out = 20, in = 160] (3, 0);
					\draw[green, very thick] (5,0) to[out = 20, in = 160] (7, 0);
					\draw[green, very thick] (-3,0) to[out = 20, in = 160] (-1, 0);
					\draw[green, very thick] (-7,0) to[out = 20, in = 160] (-5, 0);
					
					\draw[blue, very thick] (-7+0.1, 0.5) to[out = 20, in = 160] (-5+0.1, 0.5);

					%\node[red, scale = 2] at (0.7, 2) {$\delta_1$};
					%\node[blue, scale = 2] at (0.7, -2) {$\delta_2$};
					
					\end{tikzpicture}} \\ $\;$ \end{center}
		\end{minipage}
		\hfill
		\begin{minipage}[h]{0.31\linewidth}
	\begin{center}
		\scalebox{0.3}{
			\begin{tikzpicture}
			\draw[green, very thick, dashed] (0,3) to[out = -110, in = 110] (0, 1);
			\draw[green, very thick, dashed] (0,-1) to[out = -110, in = 110] (0, -3);
			%\draw[green, very thick, dashed] (1,0) to[out = -20, in = 200] (3, 0);
			\draw[green, very thick, dashed] (5,0) to[out = -20, in = 200] (7, 0);
			\draw[green, very thick, dashed] (-3,0) to[out = -20, in = 200] (-1, 0);
			\draw[green, very thick, dashed] (-7,0) to[out = -20, in = 200] (-5, 0);
			
			\draw[blue, very thick, dashed] (7 - 0.1,0.5) to[out = 200, in = -20] (5-0.1, 0.5);

			%\draw[very thick, dashed] (-7.5, 0) to (-9, 0);
			%\draw[very thick, dashed] (7.5, 0) to (9, 0);
			%\draw[->, very thick] (9, 1) to[out = 10, in = -10] (9, -1);
			%\node[scale = 3] at (9.9, 0) {$\iota$};

			\draw[very thick] (-4, 3) to (4, 3);
			\draw[very thick] (-4, -3) to (4, -3);
			\draw[very thick] (-4, 3) arc (90:270:3);
			\draw[very thick] (4, 3) arc (90:-90:3);
			\draw[very thick] (0, 0) circle (1);
			\draw[very thick] (-4, 0) circle (1);
			\draw[very thick] (4, 0) circle (1);
			
			\draw[green, very thick] (0,3) to[out = -70, in = 70] (0, 1);
			\draw[green, very thick] (0,-1) to[out = -70, in = 70] (0, -3);
			%\draw[green, very thick] (1,0) to[out = 20, in = 160] (3, 0);
			\draw[green, very thick] (5,0) to[out = 20, in = 160] (7, 0);
			\draw[green, very thick] (-3,0) to[out = 20, in = 160] (-1, 0);
			\draw[green, very thick] (-7,0) to[out = 20, in = 160] (-5, 0);
			
			\draw[blue, very thick] (7-0.1, 0.5) to[out = 160, in = 20] (5-0.1, 0.5);

			%\node[red, scale = 2] at (0.7, 2) {$\delta_1$};
			%\node[blue, scale = 2] at (0.7, -2) {$\delta_2$};
			
			\end{tikzpicture}} \\ $\;$ \end{center}
\end{minipage}
		
		\caption{The multicurves $M$ from the set $\M_{\alpha, 2}(y) / \Stab_{\Mod(\S)}(\alpha)$ with $\cd(\Stab_{\I_{\alpha}}(M)) = 2$.}
		\label{A2}
	\end{figure}
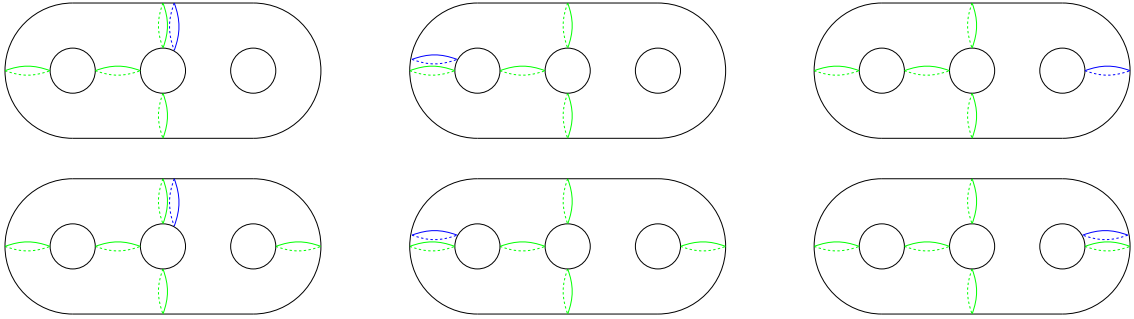

Let a multicurve $M$ has one of the types shown in Fig. \ref{A2}. We have the inclusion 
$$\H_{2}(\Stab_{\I_{\alpha}}(M), \Z) \hookrightarrow \H_{2}(\I_M, \Z)$$
(these homology groups are already computed in Section \ref{S5}). Moreover, if $M, M' \in \M_{\alpha, 2}(y)$ are $\I$-equivalent, it follows that $M$ and $M'$ are $\I_{\alpha}$-equivalent. Indeed, for the third type in Fig. \ref{A2} it is straightforward.
For the other types this follows from the general fact that $\I$ acts on $\B(x)$ without rotations. Therefore, the injectivity of the differential $\widetilde{d}_{2, 2}^1$ immediately follows from the injectivity of the differential $d_{2, 2}^1$ (see Proposition \ref{diff22}).
\end{proof}

Lemmas \ref{E04}, \ref{di31} and \ref{di22} imply that 
$$\widetilde{E}_{0, 4}^\infty = \widetilde{E}_{2, 2}^\infty = \widetilde{E}_{3, 1}^\infty = 0.$$
Therefore, all nonzero terms of the pages $\widetilde{E}^1$ and $\widetilde{E}^2$ are shown in Fig. \ref{tildeE}.

\begin{corollary} \label{corEinf2}
	We have an isomorphism
	$$\H_4(\I_{\alpha}, \Z) \cong \widetilde{E}^\infty_{1, 3} \cong \widetilde{E}^2_{1, 3}.$$
\end{corollary}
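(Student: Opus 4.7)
The plan is to read off the claim directly from the shape of $\widetilde{E}^\infty$, once everything stated just above has been assembled. The convergence $\widetilde{E}_{p,q}^1 \Rightarrow \H_{p+q}(\I_\alpha,\Z)$ says that $\H_4(\I_\alpha,\Z)$ admits a filtration whose associated graded is $\bigoplus_{p+q=4}\widetilde{E}^\infty_{p,q}$, so it suffices to show that exactly one of these five groups is nonzero, namely $\widetilde{E}^\infty_{1,3}$, and that it already stabilises at the $E^2$-page.

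First I will kill the other four terms on the anti-diagonal $p+q=4$. The entry $(4,0)$ vanishes already at $E^1$ because $\B_\alpha(y)\subseteq\B(y)$ has dimension at most $\dim\B(y)=3$, so there are no $4$-cells. The entry $(0,4)$ vanishes at $E^1$ by Lemma \ref{E04}. For $(3,1)$, Lemma \ref{di31} shows that $\widetilde{d}^1_{3,1}$ is injective, so $\widetilde{E}^2_{3,1}=0$, and then $\widetilde{E}^\infty_{3,1}=0$ trivially. For $(2,2)$, Lemma \ref{di22} shows that $\widetilde{d}^1_{2,2}$ is injective, giving $\widetilde{E}^2_{2,2}=0$ and hence $\widetilde{E}^\infty_{2,2}=0$.

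It remains to check that $\widetilde{E}^r_{1,3}$ does not change from $r=2$ onwards. The outgoing differential $\widetilde{d}^r_{1,3}\colon\widetilde{E}^r_{1,3}\to\widetilde{E}^r_{1-r,\,r+2}$ lands in negative bidegree for every $r\geq 2$, hence is zero. The incoming differential $\widetilde{d}^r_{1+r,\,4-r}\to\widetilde{E}^r_{1,3}$ has source $\widetilde{E}^r_{1+r,\,4-r}$, and for $r\geq 2$ the total degree of this source is $5$, so by Corollary \ref{cor2} we already have $\widetilde{E}^1_{1+r,\,4-r}=0$ (and hence $\widetilde{E}^r_{1+r,\,4-r}=0$). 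Therefore $\widetilde{E}^\infty_{1,3}=\widetilde{E}^2_{1,3}$.

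Combining the two paragraphs, the filtration on $\H_4(\I_\alpha,\Z)$ has only one nonzero associated graded piece, namely $\widetilde{E}^\infty_{1,3}=\widetilde{E}^2_{1,3}$, and the induced isomorphism $\H_4(\I_\alpha,\Z)\cong\widetilde{E}^2_{1,3}$ is the claim. There is no serious obstacle: all of the hard input — contractibility of $\B_\alpha(y)$, the dimension/cohomological-dimension estimate \eqref{ineq2}, and the injectivity of $\widetilde{d}^1_{3,1}$ and $\widetilde{d}^1_{2,2}$ — is already in place, and the corollary is just the bookkeeping that turns these facts into a statement about $\H_4$.
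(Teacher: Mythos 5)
Your proof is correct and follows essentially the same route as the paper: the paper deduces the corollary from the vanishing $\widetilde{E}^\infty_{0,4}=\widetilde{E}^\infty_{2,2}=\widetilde{E}^\infty_{3,1}=0$ (Lemmas \ref{E04}, \ref{di31}, \ref{di22}), the absence of $4$-cells, and the degree bound of Corollary \ref{cor2}, exactly as you do. Your write-up merely makes explicit the bookkeeping of the higher differentials in and out of position $(1,3)$, which the paper leaves to the reader via Fig.~\ref{tildeE}.
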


\subsection{The term $\widetilde{E}^2_{1, 3}$}

\begin{figure}[h]
	\begin{minipage}[h]{0.31\linewidth}
		\begin{center}
			\scalebox{0.3}{
				\begin{tikzpicture}
				\draw[green, very thick, dashed] (0,3) to[out = -110, in = 110] (0, 1);
				\draw[green, very thick, dashed] (0,-1) to[out = -110, in = 110] (0, -3);
				%\draw[green, very thick, dashed] (1,0) to[out = -20, in = 200] (3, 0);
				%\draw[green, very thick, dashed] (5,0) to[out = -20, in = 200] (7, 0);
				%\draw[green, very thick, dashed] (-3,0) to[out = -20, in = 200] (-1, 0);
				\draw[blue, very thick, dashed] (-7,0) to[out = -20, in = 200] (-5, 0);
				
				%\draw[blue, very thick, dashed] (0.5,3) to[out = -110, in = 110] (0.5, 1-0.1);

				%\draw[very thick, dashed] (-7.5, 0) to (-9, 0);
				%\draw[very thick, dashed] (7.5, 0) to (9, 0);
				%\draw[->, very thick] (9, 1) to[out = 10, in = -10] (9, -1);
				%\node[scale = 3] at (9.9, 0) {$\iota$};

				\draw[very thick] (-4, 3) to (4, 3);
				\draw[very thick] (-4, -3) to (4, -3);
				\draw[very thick] (-4, 3) arc (90:270:3);
				\draw[very thick] (4, 3) arc (90:-90:3);
				\draw[very thick] (0, 0) circle (1);
				\draw[very thick] (-4, 0) circle (1);
				\draw[very thick] (4, 0) circle (1);
				
				\draw[green, very thick] (0,3) to[out = -70, in = 70] (0, 1);
				\draw[green, very thick] (0,-1) to[out = -70, in = 70] (0, -3);
				%\draw[green, very thick] (1,0) to[out = 20, in = 160] (3, 0);
				%\draw[green, very thick] (5,0) to[out = 20, in = 160] (7, 0);
				%\draw[green, very thick] (-3,0) to[out = 20, in = 160] (-1, 0);
				\draw[blue, very thick] (-7,0) to[out = 20, in = 160] (-5, 0);
				
				%\draw[blue, very thick] (0.5,3) to[out = -70, in = 70] (0.5, 1-0.1);

				%\node[red, scale = 2] at (0.7, 2) {$\delta_1$};
				%\node[blue, scale = 2] at (0.7, -2) {$\delta_2$};

				\end{tikzpicture}} \\ type $(1)$ w.r.t. $y$ \\ $\;$ \end{center}
	\end{minipage}
	\hfill
	\begin{minipage}[h]{0.31\linewidth}
		\begin{center}
			\scalebox{0.3}{
				\begin{tikzpicture}
				\draw[green, very thick, dashed] (0,3) to[out = -110, in = 110] (0, 1);
				\draw[green, very thick, dashed] (0,-1) to[out = -110, in = 110] (0, -3);
				%\draw[green, very thick, dashed] (1,0) to[out = -20, in = 200] (3, 0);
				%\draw[green, very thick, dashed] (5,0) to[out = -20, in = 200] (7, 0);
				%\draw[green, very thick, dashed] (-3,0) to[out = -20, in = 200] (-1, 0);
				\draw[green, very thick, dashed] (-7,0) to[out = -20, in = 200] (-5, 0);
				
				\draw[blue, very thick, dashed] (-7 + 0.1,0.5) to[out = -20, in = 200] (-5+0.1, 0.5);

				%\draw[very thick, dashed] (-7.5, 0) to (-9, 0);
				%\draw[very thick, dashed] (7.5, 0) to (9, 0);
				%\draw[->, very thick] (9, 1) to[out = 10, in = -10] (9, -1);
				%\node[scale = 3] at (9.9, 0) {$\iota$};

				\draw[very thick] (-4, 3) to (4, 3);
				\draw[very thick] (-4, -3) to (4, -3);
				\draw[very thick] (-4, 3) arc (90:270:3);
				\draw[very thick] (4, 3) arc (90:-90:3);
				\draw[very thick] (0, 0) circle (1);
				\draw[very thick] (-4, 0) circle (1);
				\draw[very thick] (4, 0) circle (1);
				
				\draw[green, very thick] (0,3) to[out = -70, in = 70] (0, 1);
				\draw[green, very thick] (0,-1) to[out = -70, in = 70] (0, -3);
				%\draw[green, very thick] (1,0) to[out = 20, in = 160] (3, 0);
				%\draw[green, very thick] (5,0) to[out = 20, in = 160] (7, 0);
				%\draw[green, very thick] (-3,0) to[out = 20, in = 160] (-1, 0);
				\draw[green, very thick] (-7,0) to[out = 20, in = 160] (-5, 0);
				
				\draw[blue, very thick] (-7+0.1, 0.5) to[out = 20, in = 160] (-5+0.1, 0.5);

				%\node[red, scale = 2] at (0.7, 2) {$\delta_1$};
				%\node[blue, scale = 2] at (0.7, -2) {$\delta_2$};
				
				\end{tikzpicture}} \\ type $(2)$ w.r.t. $y$ \\ $\;$ \end{center}
	\end{minipage}
	\hfill
	\begin{minipage}[h]{0.31\linewidth}
		\begin{center}
			\scalebox{0.3}{
				\begin{tikzpicture}
				\draw[green, very thick, dashed] (0,3) to[out = -110, in = 110] (0, 1);
				\draw[green, very thick, dashed] (0,-1) to[out = -110, in = 110] (0, -3);
				%\draw[green, very thick, dashed] (1,0) to[out = -20, in = 200] (3, 0);
				\draw[blue, very thick, dashed] (5,0) to[out = -20, in = 200] (7, 0);
				%\draw[green, very thick, dashed] (-3,0) to[out = -20, in = 200] (-1, 0);
				\draw[green, very thick, dashed] (-7,0) to[out = -20, in = 200] (-5, 0);
				
				%\draw[blue, very thick, dashed] (-7 + 0.1,0.5) to[out = -20, in = 200] (-5+0.1, 0.5);

				%\draw[very thick, dashed] (-7.5, 0) to (-9, 0);
				%\draw[very thick, dashed] (7.5, 0) to (9, 0);
				%\draw[->, very thick] (9, 1) to[out = 10, in = -10] (9, -1);
				%\node[scale = 3] at (9.9, 0) {$\iota$};

				\draw[very thick] (-4, 3) to (4, 3);
				\draw[very thick] (-4, -3) to (4, -3);
				\draw[very thick] (-4, 3) arc (90:270:3);
				\draw[very thick] (4, 3) arc (90:-90:3);
				\draw[very thick] (0, 0) circle (1);
				\draw[very thick] (-4, 0) circle (1);
				\draw[very thick] (4, 0) circle (1);
				
				\draw[green, very thick] (0,3) to[out = -70, in = 70] (0, 1);
				\draw[green, very thick] (0,-1) to[out = -70, in = 70] (0, -3);
				%\draw[green, very thick] (1,0) to[out = 20, in = 160] (3, 0);
				\draw[blue, very thick] (5,0) to[out = 20, in = 160] (7, 0);
				%\draw[green, very thick] (-3,0) to[out = 20, in = 160] (-1, 0);
				\draw[green, very thick] (-7,0) to[out = 20, in = 160] (-5, 0);
				
				%\draw[blue, very thick] (-7+0.1, 0.5) to[out = 20, in = 160] (-5+0.1, 0.5);

				%\node[red, scale = 2] at (0.7, 2) {$\delta_1$};
				%\node[blue, scale = 2] at (0.7, -2) {$\delta_2$};
				
				\end{tikzpicture}} \\ type $(3)$ w.r.t. $y$ \\ $\;$ \end{center}
	\end{minipage}
	\hfill
	\begin{minipage}[h]{0.31\linewidth}
		\begin{center}
			\scalebox{0.3}{
				\begin{tikzpicture}
				\draw[green, very thick, dashed] (0,3) to[out = -110, in = 110] (0, 1);
				\draw[green, very thick, dashed] (0,-1) to[out = -110, in = 110] (0, -3);
				%\draw[green, very thick, dashed] (1,0) to[out = -20, in = 200] (3, 0);
				%\draw[green, very thick, dashed] (5,0) to[out = -20, in = 200] (7, 0);
				%\draw[green, very thick, dashed] (-3,0) to[out = -20, in = 200] (-1, 0);
				\draw[green, very thick, dashed] (-7,0) to[out = -20, in = 200] (-5, 0);
				
				\draw[blue, very thick, dashed] (0.5,3) to[out = -110, in = 110] (0.5, 1-0.1);

				%\draw[very thick, dashed] (-7.5, 0) to (-9, 0);
				%\draw[very thick, dashed] (7.5, 0) to (9, 0);
				%\draw[->, very thick] (9, 1) to[out = 10, in = -10] (9, -1);
				%\node[scale = 3] at (9.9, 0) {$\iota$};

				\draw[very thick] (-4, 3) to (4, 3);
				\draw[very thick] (-4, -3) to (4, -3);
				\draw[very thick] (-4, 3) arc (90:270:3);
				\draw[very thick] (4, 3) arc (90:-90:3);
				\draw[very thick] (0, 0) circle (1);
				\draw[very thick] (-4, 0) circle (1);
				\draw[very thick] (4, 0) circle (1);
				
				\draw[green, very thick] (0,3) to[out = -70, in = 70] (0, 1);
				\draw[green, very thick] (0,-1) to[out = -70, in = 70] (0, -3);
				%\draw[green, very thick] (1,0) to[out = 20, in = 160] (3, 0);
				%\draw[green, very thick] (5,0) to[out = 20, in = 160] (7, 0);
				%\draw[green, very thick] (-3,0) to[out = 20, in = 160] (-1, 0);
				\draw[green, very thick] (-7,0) to[out = 20, in = 160] (-5, 0);
				
				\draw[blue, very thick] (0.5,3) to[out = -70, in = 70] (0.5, 1-0.1);

				%\node[red, scale = 2] at (0.7, 2) {$\delta_1$};
				%\node[blue, scale = 2] at (0.7, -2) {$\delta_2$};

				\end{tikzpicture}} \\ type $(2)$ w.r.t. $y$ \\ $\;$ \end{center}
	\end{minipage}
	\hfill
	\begin{minipage}[h]{0.31\linewidth}
	\begin{center}
		\scalebox{0.3}{
			\begin{tikzpicture}
			\draw[green, very thick, dashed] (0,3) to[out = -110, in = 110] (0, 1);
			\draw[green, very thick, dashed] (0,-1) to[out = -110, in = 110] (0, -3);
			%\draw[green, very thick, dashed] (1,0) to[out = -20, in = 200] (3, 0);
			\draw[green, very thick, dashed] (5,0) to[out = -20, in = 200] (7, 0);
			%\draw[green, very thick, dashed] (-3,0) to[out = -20, in = 200] (-1, 0);
			\draw[green, very thick, dashed] (-7,0) to[out = -20, in = 200] (-5, 0);
			
			\draw[blue, very thick, dashed] (0.5,3) to[out = -110, in = 110] (0.5, 1-0.1);

			%\draw[very thick, dashed] (-7.5, 0) to (-9, 0);
			%\draw[very thick, dashed] (7.5, 0) to (9, 0);
			%\draw[->, very thick] (9, 1) to[out = 10, in = -10] (9, -1);
			%\node[scale = 3] at (9.9, 0) {$\iota$};

			\draw[very thick] (-4, 3) to (4, 3);
			\draw[very thick] (-4, -3) to (4, -3);
			\draw[very thick] (-4, 3) arc (90:270:3);
			\draw[very thick] (4, 3) arc (90:-90:3);
			\draw[very thick] (0, 0) circle (1);
			\draw[very thick] (-4, 0) circle (1);
			\draw[very thick] (4, 0) circle (1);
			
			\draw[green, very thick] (0,3) to[out = -70, in = 70] (0, 1);
			\draw[green, very thick] (0,-1) to[out = -70, in = 70] (0, -3);
			%\draw[green, very thick] (1,0) to[out = 20, in = 160] (3, 0);
			\draw[green, very thick] (5,0) to[out = 20, in = 160] (7, 0);
			%\draw[green, very thick] (-3,0) to[out = 20, in = 160] (-1, 0);
			\draw[green, very thick] (-7,0) to[out = 20, in = 160] (-5, 0);
			
			\draw[blue, very thick] (0.5,3) to[out = -70, in = 70] (0.5, 1-0.1);

			%\node[red, scale = 2] at (0.7, 2) {$\delta_1$};
			%\node[blue, scale = 2] at (0.7, -2) {$\delta_2$};

			\end{tikzpicture}} \\ type $(4)$ w.r.t. $y$ \\ $\;$ \end{center}
    \end{minipage}
	\hfill
	\begin{minipage}[h]{0.31\linewidth}
		\begin{center}
			\scalebox{0.3}{
				\begin{tikzpicture}
				\draw[green, very thick, dashed] (0,3) to[out = -110, in = 110] (0, 1);
				\draw[green, very thick, dashed] (0,-1) to[out = -110, in = 110] (0, -3);
				%\draw[green, very thick, dashed] (1,0) to[out = -20, in = 200] (3, 0);
				\draw[green, very thick, dashed] (5,0) to[out = -20, in = 200] (7, 0);
				%\draw[green, very thick, dashed] (-3,0) to[out = -20, in = 200] (-1, 0);
				\draw[green, very thick, dashed] (-7,0) to[out = -20, in = 200] (-5, 0);
				
				\draw[blue, very thick, dashed] (-7 + 0.1,0.5) to[out = -20, in = 200] (-5+0.1, 0.5);

				%\draw[very thick, dashed] (-7.5, 0) to (-9, 0);
				%\draw[very thick, dashed] (7.5, 0) to (9, 0);
				%\draw[->, very thick] (9, 1) to[out = 10, in = -10] (9, -1);
				%\node[scale = 3] at (9.9, 0) {$\iota$};

				\draw[very thick] (-4, 3) to (4, 3);
				\draw[very thick] (-4, -3) to (4, -3);
				\draw[very thick] (-4, 3) arc (90:270:3);
				\draw[very thick] (4, 3) arc (90:-90:3);
				\draw[very thick] (0, 0) circle (1);
				\draw[very thick] (-4, 0) circle (1);
				\draw[very thick] (4, 0) circle (1);
				
				\draw[green, very thick] (0,3) to[out = -70, in = 70] (0, 1);
				\draw[green, very thick] (0,-1) to[out = -70, in = 70] (0, -3);
				%\draw[green, very thick] (1,0) to[out = 20, in = 160] (3, 0);
				\draw[green, very thick] (5,0) to[out = 20, in = 160] (7, 0);
				%\draw[green, very thick] (-3,0) to[out = 20, in = 160] (-1, 0);
				\draw[green, very thick] (-7,0) to[out = 20, in = 160] (-5, 0);
				
				\draw[blue, very thick] (-7+0.1, 0.5) to[out = 20, in = 160] (-5+0.1, 0.5);

				%\node[red, scale = 2] at (0.7, 2) {$\delta_1$};
				%\node[blue, scale = 2] at (0.7, -2) {$\delta_2$};
				
				\end{tikzpicture}} \\ type $(4)$ w.r.t. $y$ \\ $\;$ \end{center}
	\end{minipage}
	\caption{The multicurves $M$ from the set $\M_{\alpha, 1}(y) / \Stab_{\Mod(\S)}(\alpha)$ with $\cd(\Stab_{\I_{\alpha}}(M)) = 3$.}
	\label{A1}
\end{figure}
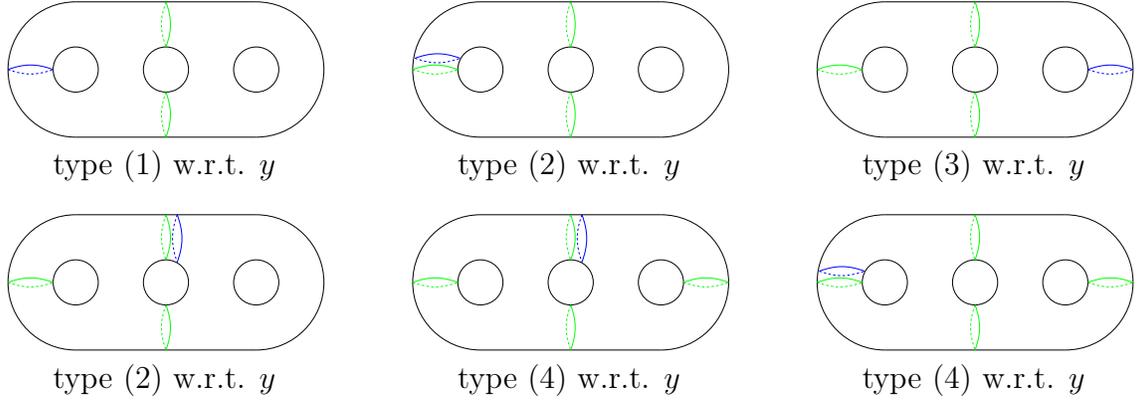

Let us compute the group $\widetilde{E}^2_{1, 3}$ explicitly. We forget about the orientation and consider the multicurves $M$ from the set $\M_{\alpha, 3}(y) / \Stab_{\Mod(\S)}(\alpha)$, such that $\cd(\Stab_{\I_{\alpha}}(M)) = 3$.
Using formula (\ref{cd}) one can easily check that all the representatives are shown in Fig. \ref{A1}. The multicurve $M$ is shown in green, the curve $\alpha$ is shown in blue.

Now let us describe a basis of the free abelian group $\widetilde{E}_{1, 3}^1$.
Consider an unordered symplectic splitting $\U = \{U_1, U_2, U_3\}$ of $\H$ such that $x \in U_1$ (that is, $\U$ is of type (a) w.r.t $x$). There is the unique decomposition $y = y_1 + y_2 + y_3$, where $y_j \in U_j$. Let us introduce the following notation.

\begin{definition} \label{typey}
	We say that the splitting $\U$
	
	$\bullet$ is of type (1) w.r.t. $y$, if $y_2 \neq 0$ and $y_1 = y_3 = 0$;
	
	$\bullet$ is of type (2) w.r.t. $y$, if $y_1, y_2 \neq 0$ and $y_3 = 0$;
	
	$\bullet$ is of type (3) w.r.t. $y$, if $y_2, y_3 \neq 0$ and $y_1 = 0$;
	
	$\bullet$ is of type (4) w.r.t. $y$, if $y_1, y_2, y_3 \neq 0$.
	
\end{definition}

Renumbering $U_2$ and $U_3$ we may achieve that one of the alternatives given in Definition \ref{typey} is true. Therefore, $U$ is of type (1), (2), (3) or (4) w.r.t. $y$.

Suppose that $j=1, 2, 3$. If $y_i \neq 0$, then put $y_j = k_j b_j$, where $b_j$ is a primitive homology class and $k_j \in \mathbb{N}$. Consider a bounding pair $\beta_j, \beta'_j$ disjoint from $\alpha$, with $[\beta_j] = [\beta'_j] = b_j$ and the splitting $\U$ is admissible for $\beta_j \cup \beta'_j$ (if $b_j = 0$ we define $\beta_j = \beta'_j = \varnothing$). 
We also always take $\beta_1 = \alpha$ (if $\beta_1$ exists).
All such bounding pairs are $\I_{\alpha}$-equivalent.

If $\U$ is of type (1) w.r.t $y$, we consider the element 
\begin{equation} \label{type12}
P_{\beta_2 \cup \beta'_2} \otimes \A_{U_1, U_3} \in \widetilde{E}_{1, 3}^1.
\end{equation}
If $\U$ is of type (2) w.r.t $y$, we consider the elements
\begin{equation} \label{type22}
P_{\beta_1 \cup \beta'_1 \cup \beta_2} \otimes \A_{U_2, U_3} \in \widetilde{E}_{1, 3}^1.
\end{equation}
\begin{equation} \label{type22'}
P_{\beta_1 \cup \beta_2 \cup \beta'_2} \otimes \A_{U_1, U_3} \in \widetilde{E}_{1, 3}^1,
\end{equation}
If $\U$ is of type (3) w.r.t $y$, we consider the elements
\begin{equation} \label{type32}
P_{\beta_2 \cup \beta'_2 \cup \beta_3} \otimes \A_{U_3, U_1}, \qquad P_{\beta_2 \cup \beta_3 \cup \beta'_3} \otimes \A_{U_1, U_2} \in \widetilde{E}_{1, 3}^1.
\end{equation}
If $\U$ is of type (4) w.r.t $y$, we consider the elements
\begin{equation} \label{type42}
P_{\beta_1 \cup \beta'_1 \cup \beta_2 \cup \beta_3} \otimes \A_{U_2, U_3} \in \widetilde{E}_{1, 3}^1,
\end{equation}
\begin{equation} \label{type52}
P_{\beta_1 \cup \beta_2 \cup \beta'_2 \cup \beta_3} \otimes \A_{U_3, U_1}, \qquad P_{\beta_1 \cup \beta_2 \cup \beta_3 \cup \beta'_3} \otimes \A_{U_1, U_2} \in \widetilde{E}_{1, 3}^1.
\end{equation}
Note that the cells mentioned above are uniquely determined (up to $\I_{\alpha}$-equivalence) by $\U$.

Corollary \ref{E13gen} implies that the elements (\ref{type12}), (\ref{type22}), (\ref{type22}), (\ref{type32}) and (\ref{type42}), where $\U$ runs over the set of all splittings of $\H$, form a basis of the free abelian group $\widetilde{E}_{1, 3}^1$.

Lemmas \ref{dM2} and \ref{zero} (we take $\G = \I_{\alpha}$ and $\BB(y) = \B_\alpha(y)$) imply that the images of the elements (\ref{type12}) and (\ref{type22'}) differential $\widetilde{d}_{1, 3}^1$ are zero.
By Lemma (\ref{ind}), the image of (\ref{type22}) is 
$$P_{\beta_1 \cup \beta_2} \otimes \A_{U_2, U_3} - P_{\beta'_1 \cup \beta_2} \otimes \A_{U_2, U_3} \in \widetilde{E}_{0, 3}^1.$$
The image of the elements (\ref{type32}) is
$$P_{\beta_2 \cup \beta_3} \otimes \A_{U_1, U_3, U_2} \in \widetilde{E}_{0, 3}^1;$$
note that by Lemma \ref{ind} this element is nonzero because $\Stab_{\I_{\alpha}}(\beta_2 \cup \beta_3) = \I_{\beta_1\cup \beta_2 \cup \beta_3}$.
The image of (\ref{type42}) is
$$P_{\beta_1 \cup \beta_2 \cup \beta_3} \otimes \A_{U_2, U_3} - P_{\beta'_1 \cup \beta_2 \cup \beta_3} \otimes \A_{U_2, U_3} \in \widetilde{E}_{0, 3}^1.$$
The image of the elements (\ref{type52}) is
$$P_{\beta_1 \cup \beta_2 \cup \beta_3} \otimes \A_{U_1, U_3, U_2} \in \widetilde{E}_{0, 3}^1.$$

By Lemma \ref{ind} these elements are linearly independent for different $\U$.
Therefore we have the following result.

\begin{prop} \label{kernel2}
	The free abelian group $\widetilde{E}_{1, 3}^2 = \ker \widetilde{d}_{1, 3}^1$ has a basis consisting of the following elements:
	$$P_{\beta_2 \cup \beta'_2} \otimes \A_{U_1, U_3},$$
	where $\U$ runs over the set of all splittings of $\H$ of type (1) w.r.t $y$,
	$$P_{\beta_1 \cup \beta_2 \cup \beta'_2} \otimes \A_{U_1, U_3},$$
	where $\U$ runs over the set of all splittings of $\H$ of type (2) w.r.t $y$,
	$$P_{\beta_2 \cup \beta'_2 \cup \beta_3} \otimes \A_{U_3, U_1} - P_{\beta_2 \cup \beta_3 \cup \beta'_3} \otimes \A_{U_1, U_2},$$
	where $\U$ runs over the set of all splittings of $\H$ of type (3) w.r.t $y$,
	$$P_{\beta_1 \cup \beta_2 \cup \beta'_2 \cup \beta_3} \otimes \A_{U_3, U_1} - P_{\beta_1 \cup \beta_2 \cup \beta_3 \cup \beta'_3} \otimes \A_{U_1, U_2},$$
	where $\U$ runs over the set of all splittings of $\H$ of type (4) w.r.t $y$.
\end{prop}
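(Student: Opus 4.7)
The plan is to deduce Proposition \ref{kernel2} from the explicit computations of images of basis elements carried out immediately before the statement, following the same pattern that produced the kernel description in Proposition \ref{kernel}.

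First, I would verify that each generator listed in the proposition lies in $\ker \widetilde{d}^1_{1, 3}$. For a splitting $\U$ of type (1) with respect to $y$, the element (\ref{type12}) is killed by $\widetilde{d}^1_{1, 3}$; this uses Lemmas \ref{dM2} and \ref{zero} applied with $\G = \I_\alpha$ and $\BB(y) = \B_\alpha(y)$. For type (2), the element (\ref{type22'}) is killed by the same lemmas. For types (3) and (4), the two generators in (\ref{type32}) and (\ref{type52}) respectively have the same image under $\widetilde{d}^1_{1, 3}$, so the displayed differences lie in the kernel.

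Next, I would prove that these elements generate the full kernel by organising the basis (\ref{type12})--(\ref{type52}) of $\widetilde{E}^1_{1, 3}$ into blocks indexed by the unordered splitting $\U$ of $\H$. For each such $\U$, the corresponding block has $1, 2, 2$ or $3$ generators according to the type of $\U$ with respect to $y$, and the restriction of $\widetilde{d}^1_{1, 3}$ to this block takes values in the subgroup of $\widetilde{E}^1_{0, 3}$ spanned by those basis elements $P_M \otimes h$ whose cell $P_M$ and abelian cycle $h$ arise from curves compatible with $\U$. The key point is that for distinct splittings $\U \neq \U'$ these two target subgroups intersect trivially; this is an immediate consequence of Lemma \ref{ind} together with the observation that the underlying admissible splitting is uniquely recovered from the combinatorial data of $M$ and $h$. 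Consequently a chain lies in $\ker \widetilde{d}^1_{1, 3}$ if and only if its projection onto each block lies in the kernel of the restricted map.

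Finally, it suffices to check block-by-block that within each block the kernel is spanned by the generator listed in the proposition for the appropriate type. Type (1) is trivial since the whole block is annihilated. In type (2), one generator is killed while the other maps to a nonzero element of a free abelian group, so only the first lies in the kernel. In type (3) the two generators have equal nonzero images, so their difference spans the block kernel. In type (4) one generator maps to the class $P_{\beta_1 \cup \beta_2 \cup \beta_3} \otimes \A_{U_2, U_3} - P_{\beta'_1 \cup \beta_2 \cup \beta_3} \otimes \A_{U_2, U_3}$, while the two remaining generators both map to $P_{\beta_1 \cup \beta_2 \cup \beta_3} \otimes \A_{U_1, U_3, U_2}$; the linear independence of these two target classes (guaranteed by Lemma \ref{ind}) shows that only the difference of the latter two generators lies in the block kernel. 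The only nontrivial input throughout is Lemma \ref{ind}, which has already been established; the rest is elementary bookkeeping.
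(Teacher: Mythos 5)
Your proposal is correct and follows essentially the same route as the paper: the paper establishes Proposition \ref{kernel2} by the computations immediately preceding it, namely killing (\ref{type12}) and (\ref{type22'}) via Lemmas \ref{dM2} and \ref{zero} with $\G = \I_\alpha$ and $\BB(y) = \B_\alpha(y)$, computing the (pairwise equal) images of (\ref{type32}) and (\ref{type52}) and the image of (\ref{type22}) and (\ref{type42}), and invoking Lemma \ref{ind} for the linear independence of the resulting classes in $\widetilde{E}^1_{0,3}$ across different splittings $\U$. Your block-by-block bookkeeping, including the observation that in type (4) the image of (\ref{type42}) is independent of the common image of the two elements of (\ref{type52}), is exactly the implicit argument in the paper, stated slightly more explicitly.
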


\subsection{The term $E^1_{0, 4}$}

\begin{proof}[Proof of Proposition \ref{surj0}.] We need to prove that the map $J_{0, 4}^1: \E_{0, 4}^1 \to E_{0, 4}^1$ is surjective. We have an isomorphism
$$E_{0, 4}^1 \cong \H_4(\I_{\alpha}, \Z).$$
We need to prove that the map
$$\bigoplus_{\gamma} \H_{4}(\Stab_{\I_{\gamma}}(\alpha), \Z) \to \H_4(\I_{\alpha}, \Z),$$
where the sum is over all separating curves $\gamma$ disjoint from $\alpha$, is surjective. For each such $\gamma$ let us denote by $\widetilde{E}^{(\gamma) *}_{*, *}$ the spectral sequence (\ref{spec_sec}) for the action of $\I_{\gamma}$ on $\B_{\alpha}(y)$. 

Denote by $\widetilde{j}^{(\gamma) *}_{*, *}: \widetilde{E}^{(\gamma) *}_{*, *} \to \widetilde{E}^*_{*, *}$ the morphism of the spectral sequences induced by the inclusion $\widetilde{\iota}_{\gamma}: \Stab_{\I_{\gamma}}(\alpha) \hookrightarrow \I_{\alpha}$. 
Consider the morphism
\begin{equation} \label{morspec2}
\bigoplus_\gamma \widetilde{j}^{(\gamma) *}_{*, *}: \bigoplus_{\gamma} \widetilde{E}^{(\gamma) *}_{*, *} \to \widetilde{E}^*_{*, *}
\end{equation}
and denote $\widetilde{J}_{*, *}^* = \bigoplus_\gamma \widetilde{j}^{(\gamma) *}_{*, *}$, where the sums are over all separating curves $\gamma$ on $\S$ disjoint from $\alpha$.
Corollary \ref{corEinf2} implies that it suffices to prove that the map $\widetilde{J}_{1, 3}^2$ is surjective. 

Proposition \ref{kernel2} we constructed a basis of the free abelian group $\widetilde{E}_{1, 3}^2$. Let us show that each of this elements belongs to the image of $\widetilde{J}_{1, 3}^2$.
Let $\U = (U_1, U_2, U_3)$ be a symplectic splitting of $\H$. Let $\theta_1, \theta_2, \theta_3$ be separating curves disjoint from $\alpha$ such that $\H_{\theta_j} = U_j$ ($j = 1, 2, 3$). Let $\beta_j, \beta'_j$ be as before such that all these curves are pairwise disjoint except the three pairs ($\theta_j$, $\beta'_j$).
There are four possible cases.

(1) \textit{$\U$ is of type (1) w.r.t $y$.} We need to check that $P_{\beta_2 \cup \beta'_2} \otimes \A_{U_1, U_3}$ belongs to the image of $\widetilde{J}_{1, 3}^2$. Consider the group $\Stab_{\I_{\theta_1}}(\alpha)$ and the homology class 
$$\A(T_{\theta_1}, T_{\beta_2} T_{\beta'_2}^{-1}, T_{\theta_3}) \in \H_3(\Stab_{\I_{\theta_1}}(\alpha \cup \beta_2 \cup \beta'_2), \Z).$$
Let us consider the element 
$$P_{\beta_2 \cup \beta'_2} \otimes \A(T_{\theta_1}, T_{\beta_2} T_{\beta'_2}^{-1}, T_{\theta_3}) \in \widetilde{E}_{3, 1}^{(\theta_2) 1}.$$
Lemmas \ref{dM2} and \ref{zero} (we take $\G = \I_{\theta_2}$ and $\BB(y) = \B_\alpha(y)$) yield that 
$$ \widetilde{d}_{1, 3}^{(\theta_2) 1} \left( P_{\beta_2 \cup \beta'_2} \otimes \A(T_{\theta_1}, T_{\beta_2} T_{\beta'_2}^{-1}, T_{\theta_3}) \right) = 0.$$
Hence
$$P_{\beta_2 \cup \beta'_2} \otimes \A(T_{\theta_1}, T_{\beta_2} T_{\beta'_2}^{-1}, T_{\theta_3}) \in \widetilde{E}_{3, 1}^{(\theta_2) 2}.$$
Obviously we have
$$\widetilde{J}_{1, 3}^2 \left( P_{\beta_2 \cup \beta'_2} \otimes \A(T_{\theta_1}, T_{\beta_2} T_{\beta'_2}^{-1}, T_{\theta_3}) \right) = P_{\beta_2 \cup \beta'_2} \otimes \A_{U_1, U_3}.$$

(2) \textit{$\U$ is of type (2) w.r.t $y$.} We need to check that $P_{\beta_1 \cup \beta_2 \cup \beta'_2} \otimes \A_{U_1, U_3}$ belongs to the image of $\widetilde{J}_{1, 3}^2$. Consider the group $\Stab_{\I_{\theta_1}}(\alpha)$ and the homology class 
$$\A(T_{\theta_1}, T_{\beta_2} T_{\beta'_2}^{-1}, T_{\theta_3}) \in \H_3(\Stab_{\I_{\theta_1}}(\alpha \cup \beta_1 \cup \beta_2 \cup \beta'_2), \Z).$$
Let us consider the element 
$$P_{\beta_1 \cup \beta_2 \cup \beta'_2} \otimes \A(T_{\theta_1}, T_{\beta_2} T_{\beta'_2}^{-1}, T_{\theta_3}) \in \widetilde{E}_{3, 1}^{(\theta_1) 1}.$$
Lemmas \ref{dM2} and \ref{zero} (we take $\G = \I_{\theta_2}$ and $\BB(y) = \B_\alpha(y)$) yield that 
$$ \widetilde{d}_{1, 3}^{(\theta_2) 1} \left( P_{\beta_1 \cup \beta_2 \cup \beta'_2} \otimes \A(T_{\theta_1}, T_{\beta_2} T_{\beta'_2}^{-1}, T_{\theta_3}) \right) = 0.$$
Hence
$$P_{\beta_1 \cup \beta_2 \cup \beta'_2} \otimes \A(T_{\theta_1}, T_{\beta_2} T_{\beta'_2}^{-1}, T_{\theta_3}) \in \widetilde{E}_{3, 1}^{(\theta_1) 2}.$$
Obviously we have
$$\widetilde{J}_{1, 3}^2 \left( P_{\beta_1 \cup \beta_2 \cup \beta'_2} \otimes \A(T_{\theta_1}, T_{\beta_2} T_{\beta'_2}^{-1}, T_{\theta_3}) \right) = P_{\beta_1 \cup \beta_2 \cup \beta'_2} \otimes \A_{U_1, U_3}.$$

(3) \textit{$\U$ is of type (3) w.r.t $y$.} This case is similar to the case (4) with removed $\beta_1$ everywhere.

(4) \textit{$\U$ is of type (4) w.r.t $y$.} We need to check that 
$$P_{\beta_1 \cup \beta_2 \cup \beta'_2 \cup \beta_3} \otimes \A_{U_3, U_1} - P_{\beta_1 \cup \beta_2 \cup \beta_3 \cup \beta'_3} \otimes \A_{U_1, U_2}$$ 
belongs to the image of $\widetilde{J}_{1, 3}^2$. Consider the group $\Stab_{\I_{\theta_1}}(\alpha)$ 
and the homology classes 
$$\A(T_{\theta_3}, T_{\beta_2} T_{\beta'_2}^{-1}, T_{\theta_1}), \in \H_3(\Stab_{\I_{\theta_1}}(\alpha \cup \beta_1 \cup \beta_2 \cup \beta'_2 \cup \beta'_3), \Z)$$
and
$$\A(T_{\theta_1}, T_{\beta_3} T_{\beta'_3}^{-1}, T_{\theta_2}) \in \H_3(\Stab_{\I_{\theta_1}}(\alpha \cup \beta_1 \cup \beta_2 \cup \beta_3 \cup \beta'_3), \Z).$$
Let us consider the elements
$$P_{\beta_1 \cup \beta_2 \cup \beta'_2 \cup \beta_3} \otimes \A(T_{\theta_3}, T_{\beta_2} T_{\beta'_2}^{-1}, T_{\theta_1}) \in \widetilde{E}_{3, 1}^{(\theta_1) 1}$$
and
$$P_{\beta_1 \cup \beta_2 \cup \beta_3 \cup \beta'_3} \otimes \A(T_{\theta_1}, T_{\beta_3} T_{\beta'_3}^{-1}, T_{\theta_2}) \in \widetilde{E}_{3, 1}^{(\theta_1) 1}.$$
The arguments similar to Lemmas \ref{dM2} and \ref{ind} (we take $\G = \I_{\theta_1}$ and $\BB(y) = \B_\alpha(y)$) yield that 
$$ \widetilde{d}_{1, 3}^{(\theta_1) 1} \left( P_{\beta_1 \cup \beta_2 \cup \beta'_2 \cup \beta_3} \otimes \A(T_{\theta_3}, T_{\beta_2} T_{\beta'_2}^{-1}, T_{\theta_1}) \right) = P_{\beta_1 \cup \beta_2 \cup \beta_3} \otimes \A(T_{\theta_3},  T_{\theta_2}, T_{\theta_1})$$
and
$$\widetilde{d}_{1, 3}^{(\theta_1) 1} \left( P_{\beta_1 \cup \beta_2 \cup \beta_3 \cup \beta'_3} \otimes \A(T_{\theta_1}, T_{\beta_3} T_{\beta'_3}^{-1}, T_{\theta_2}) \right) = P_{\beta_1 \cup \beta_2 \cup \beta_3} \otimes \A(T_{\theta_3},  T_{\theta_2}, T_{\theta_1}).$$
Hence
$$\widetilde{d}_{1, 3}^{(\theta_1) 1} \left( P_{\beta_1 \cup \beta_2 \cup \beta'_2 \cup \beta_3} \otimes \A(T_{\theta_3}, T_{\beta_2} T_{\beta'_2}^{-1}, T_{\theta_1}) - P_{\beta_1 \cup \beta_2 \cup \beta_3 \cup \beta'_3} \otimes \A(T_{\theta_1}, T_{\beta_3} T_{\beta'_3}^{-1}, T_{\theta_2}) \right)  = 0.$$
Therefore
$$ \left( P_{\beta_1 \cup \beta_2 \cup \beta'_2 \cup \beta_3} \otimes \A(T_{\theta_3}, T_{\beta_2} T_{\beta'_2}^{-1}, T_{\theta_1}) - P_{\beta_1 \cup \beta_2 \cup \beta_3 \cup \beta'_3} \otimes \A(T_{\theta_1}, T_{\beta_3} T_{\beta'_3}^{-1}, T_{\theta_2}) \right) \in \widetilde{E}_{3, 1}^{(\theta_1) 2}.$$
Obviously we have
$$\widetilde{J}_{1, 3}^2 \left( P_{\beta_1 \cup \beta_2 \cup \beta'_2 \cup \beta_3} \otimes \A(T_{\theta_3}, T_{\beta_2} T_{\beta'_2}^{-1}, T_{\theta_1}) - P_{\beta_1 \cup \beta_2 \cup \beta_3 \cup \beta'_3} \otimes \A(T_{\theta_1}, T_{\beta_3} T_{\beta'_3}^{-1}, T_{\theta_2}) \right) = $$
$$= P_{\beta_1 \cup \beta_2 \cup \beta'_2 \cup \beta_3} \otimes \A_{U_3, U_1} - P_{\beta_1 \cup \beta_2 \cup \beta_3 \cup \beta'_3} \otimes \A_{U_1, U_2}.$$
This concludes the proof.
\end{proof}

\section{Proof of Theorem \ref{th2}} \label{S8}

\subsection{Alternative construction of s-classes}

Let $\U = \{U_1, U_2, U_3\}$ be a symplectic splitting of $\H$ and let $\theta_j$ be a separating curve on $\S$ with $\H_{\theta_j} = U_j$, where $j = 1, 2, 3$.
In Section \ref{S3} we have defined the homology class $s(U_2, U_3) \in \H_4(\I_{\theta_1}, \Z)$. 
Let $0 \neq x \in \H$ be a homology class such that $\U$ is of type (c) w.r.t $x$ and let the curves $\alpha_j, \alpha'_j$ be as in Subsection \ref{6.5}.
Recall that we have the natural inclusions
\begin{equation} \label{inc0}
E^{(\theta_1) 1}_{0, 4} = E^{(\theta_1) \infty}_{0, 4} \hookrightarrow \H_4(\I_{\theta_1}, \Z).
\end{equation}
and
\begin{equation} \label{inc1}
E^{(\theta_1) 2}_{1, 3} = E^{(\theta_1) \infty}_{1, 3} \hookrightarrow \H_4(\I_{\theta_1}, \Z) / E^{(\theta_1) 1}_{0, 4}.
\end{equation}

First we need to prove the following simple result.

\begin{lemma} \label{homology}
	Let $F$ be a free group. Suppose that $K \subseteq \Z \times F$ is a subgroup. Then $\H_2(K, \Z)$ is a free abelian group.
\end{lemma}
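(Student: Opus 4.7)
The plan is to split on whether $K$ meets the central $\Z$ factor, and in the non-trivial case apply the Hochschild--Serre spectral sequence for a central extension by $\Z$.

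First I would consider the projection $\pi: \Z \times F \to F$ and restrict it to $K$. Set $K' = \pi(K) \subseteq F$. By the Nielsen--Schreier theorem, $K'$ is free, so $\cd(K') \leq 1$. The kernel of $\pi|_K$ is $K \cap \Z$, which is either trivial or infinite cyclic.

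If $K \cap \Z = 0$, then $\pi|_K$ is injective, so $K$ embeds into a free group and is itself free; hence $\H_2(K, \Z) = 0$, which is free abelian. If $K \cap \Z \cong \Z$, we obtain a central extension
\begin{equation*}
1 \to \Z \to K \to K' \to 1.
\end{equation*}
Since the extension is central, $K'$ acts trivially on $\H_*(\Z, \Z)$, and the Hochschild--Serre spectral sequence takes the form
\begin{equation*}
E^2_{p, q} = \H_p(K', \H_q(\Z, \Z)) \Rightarrow \H_{p+q}(K, \Z).
\end{equation*}
Because $K'$ is free, $E^2_{p, q} = 0$ for $p \geq 2$, and because $\H_q(\Z, \Z) = 0$ for $q \geq 2$, only the four terms with $p, q \in \{0, 1\}$ survive.

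Finally I would read off $\H_2(K, \Z)$ from the spectral sequence. The only potentially nonzero contribution on the anti-diagonal $p+q = 2$ is $E^\infty_{1, 1}$, since $E^2_{0, 2} = 0$ and $E^2_{2, 0} = \H_2(K', \Z) = 0$. Tracking bidegrees, every differential $d^r$ into or out of the position $(1, 1)$ lands in a zero group for $r \geq 2$, so $E^\infty_{1, 1} = E^2_{1, 1} = \H_1(K', \Z)$. Hence $\H_2(K, \Z) \cong \H_1(K', \Z)$, which is a free abelian group because $K'$ is free.

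There is no real obstacle here: everything reduces to the Nielsen--Schreier theorem plus a standard bidegree check in the Hochschild--Serre spectral sequence of a central $\Z$-extension of a free group.
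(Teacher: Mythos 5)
Your proof is correct and follows essentially the same route as the paper: project to $F$, invoke Nielsen--Schreier, and split into cases according to whether $K \cap \Z$ is trivial. The only cosmetic difference is in the second case, where the paper observes that the central extension splits (so $K \cong \Z \times \Im \pi|_K$) and reads off $\H_2$ via K\"unneth, while you run the Hochschild--Serre spectral sequence directly; both computations give $\H_2(K, \Z) \cong \H_1(K', \Z)$, which is free abelian.
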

\begin{proof}
	Consider the projection $p: \Z \times F \twoheadrightarrow F$. Then $\Im p|_K \subseteq F$ is a free group.
	We have $\ker p|_K = K \cap \Z$ that is either trivial or isomorphic to $\Z$. In the first case $K$ is a free group. In the second case we have that $K \cong (K \cap \Z) \times \Im p|_K \cong \Z \times \Im p|_K$. This immediately implies the result.
\end{proof}

\begin{lemma} \label{alt}

The element
$$\left( P_{\alpha_1 \cup \alpha_2 \cup \alpha'_2 \cup \alpha_3} \otimes \A(T_{\theta_1}, T_{\alpha_2} T_{\alpha'_2}^{-1}, T_{\theta_3}) - P_{\alpha_1 \cup \alpha_2 \cup \alpha_3 \cup \alpha'_3} \otimes \A(T_{\theta_2}, T_{\alpha_3} T_{\alpha'_3}^{-1}, T_{\theta_1}) \right) \in E^{(\theta_1) 2}_{1, 3}$$
maps to the coset containing $\pm s(U_2, U_3)$ under the map (\ref{inc1}).
\end{lemma}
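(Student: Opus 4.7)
The plan is to construct an explicit 4-cycle $z \in B_*(\I_{\theta_1})$ representing $s(U_2, U_3)$ and then identify its image in the spectral sequence $E^{(\theta_1) *}_{*, *}$ at the $E^\infty_{1, 3}$-slot with $\pm e$. First I would unfold $s(U_2, U_3) = \alpha_{\theta_1} \circ \beta_{\theta_1}([T_{\theta_3}])$ as in Section~\ref{S3}. Fact~\ref{HS2} identifies $\alpha_{\theta_1}$ with the Gysin map for~(\ref{eq4}), sending $\A(h_1, h_2, h_3)$ to $\A(\widetilde{h}_1, \widetilde{h}_2, \widetilde{h}_3, T_{\theta_1})$. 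For $\beta_{\theta_1}$, applied to~(\ref{BirmanT2}), I would pick simple closed curves $a_1, b_1 \subset X_{\theta_3}$ meeting once, and $a_2, b_2$ on the complementary once-punctured torus of $\S_2 \setminus X_{\theta_3}$ also meeting once, all disjoint from $\theta_3$. The fundamental class of $\S_2$ is then represented in the bar complex of $\pi_1(\S_2, \pt)$ by a 2-cycle $\mu$ built from the point-pushes along $a_i, b_i$; since $T_{\theta_3}$ commutes with these point-pushes and with $T_{\theta_1}$, pairing $T_{\theta_3}$ with $\mu$ and then with $T_{\theta_1}$ via the two Gysin maps produces an explicit 4-cycle $z$ in $B_*(\I_{\theta_1})$.

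Next, I would invoke the Lantern relation (Proposition~\ref{Lant}) in the two configurations producing $M_1 = \alpha_1 \cup \alpha_2 \cup \alpha'_2 \cup \alpha_3$ and $M_2 = \alpha_1 \cup \alpha_2 \cup \alpha_3 \cup \alpha'_3$, in the spirit of the proof of Lemma~\ref{dM2}: identities of the form $T_{\theta_1}^{-1} T_{\theta_2} T_{\theta_3}^{-1} = T_{\alpha'_j}^{-1} T_{\alpha_j}^2 T_{\beta_j}^{-1}$ allow me to replace the Dehn twists $T_{\theta_1}$ and $T_{\theta_3}$ inside $z$ by products of bounding pair maps supported near $M_1$ or $M_2$. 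After substitution, $z$ becomes an equivalent 4-cycle supported in the union of stabilisers $\I_{\theta_1, M_1} \cup \I_{\theta_1, M_2}$. Projecting onto the filtration by skeleta of $\B(x)$, its $B_{1, 3}$-component equals, up to an overall sign,
$$P_{M_1} \otimes \A(T_{\theta_1}, T_{\alpha_2} T_{\alpha'_2}^{-1}, T_{\theta_3}) - P_{M_2} \otimes \A(T_{\theta_2}, T_{\alpha_3} T_{\alpha'_3}^{-1}, T_{\theta_1}),$$
which is precisely $\pm e$; the remaining contributions have filtration zero, hence lie in $E^{(\theta_1) 1}_{0, 4}$ and vanish after passing to the quotient via~(\ref{inc1}).

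The main technical obstacle is the chain-level bookkeeping in the second step. Because $\pi_1(\S_2, \pt)$ is non-abelian, $\beta_{\theta_1}([T_{\theta_3}])$ is not itself an abelian cycle, and the Lantern substitution on the commutator-type bar cycle $\mu$ must be tracked carefully to verify that after substitution the top-filtration piece really matches $\pm e$ rather than picking up extra contributions that would disturb the sign determination. Lemma~\ref{homology} supplies the key algebraic input: several of the intermediate subgroups encountered in the calculation sit inside a product $\Z \times F$ (with the $\Z$-factor generated by $T_{\theta_1}$), so their $\H_2$ is free abelian, which permits a clean identification of classes via their images in free-abelian quotients and pins down the overall sign of the identification with $\pm e$.
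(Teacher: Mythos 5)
Your proposal takes a genuinely different route from the paper, but as written it has a gap at exactly the point where the lemma's content lies. The entire difficulty is to identify the image of $s(U_2,U_3)$ in the $E^{\infty}_{1,3}$ slot of the filtration coming from the action on $\B(x)$, and your plan defers this to ``chain-level bookkeeping in the second step'' without carrying it out. Concretely: (i) the $2$-cycle $\mu$ representing $[\S_2]$ in the bar complex of $\pi_1(\S_2,\pt)$ is built from the commutator relator, and the based point-pushes along $a_i,b_i$ do \emph{not} literally commute with $T_{\theta_3}$ (the based loops must traverse a path crossing $\theta_3$, so $T_{\theta_3}$ conjugates the pushes nontrivially); producing an honest $4$-cycle $z$ already requires homotopy corrections you do not supply. (ii) Passing from a bar-resolution cycle for $\I_{\theta_1}$ to its leading term in the double complex $C_p(\B(x))\otimes\R_q$ is not a matter of ``projecting onto the filtration by skeleta'': one must lift $z$ through the total complex, and the claim that after Lantern substitutions the result is supported in $\I_{\theta_1,M_1}\cup\I_{\theta_1,M_2}$ with leading term exactly $\pm e$ is precisely the assertion to be proved. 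Finally, your appeal to Lemma \ref{homology} to ``pin down the overall sign'' does not correspond to any concrete step; note also that the lemma only claims the identification up to sign, so no sign determination is needed.

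For comparison, the paper sidesteps all explicit cycles by a squeeze argument: it restricts to the subgroup $G=p^{-1}(q^{-1}(\langle T_{\theta_3}\rangle))\subseteq\I_{\theta_1}$, for which two applications of Hochschild--Serre give $\H_4(G,\Z)=\langle s(U_2,U_3)\rangle\cong\Z$; Lemma \ref{dM2} shows the displayed element is an infinite-order class in $\mathcal{E}^{(\theta_1)2}_{1,3}$ for the action of $G$ on $\B(x)$; then Lemma \ref{homology} (applied to stabilisers, which sit inside $\Z\times F_\infty$) forces $\mathcal{E}^{\infty}_{2,2}$ and $\mathcal{E}^{\infty}_{3,1}$ to be free abelian, hence zero, and $\mathcal{E}^{\infty}_{0,4}=0$, so the element must generate $\H_4(G,\Z)$ and therefore equals $\pm s(U_2,U_3)$; functoriality along $G\hookrightarrow\I_{\theta_1}$ finishes the proof. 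If you want to salvage your approach you would need to actually perform the lift of $z$ into the double complex and verify its leading term, which is substantially harder than the paper's indirect argument.
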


\begin{proof}
	Consider the surface $\S_{2, 1} = \S \setminus \overline{X}_{\theta_1}$. We have the exact sequences
	\begin{equation*}\label{eq5}
	\begin{CD}
	1 @>>> \left\langle T_{\theta_1} \right\rangle @>>> \I_{\theta_1} @>{p}>> \I_{2, 1} @>>> 1.
	\end{CD}
	\end{equation*}
	\begin{equation*}\label{BirmanT22}
	\begin{CD}
	1 @>>> \pi_1(\S_2, \pt) @>>> \I_{2, 1} @>{q}>> \I_2 @>>> 1.
	\end{CD}
	\end{equation*}
	Consider the subgroups $Q = q^{-1}(\left\langle T_{\theta_3} \right\rangle ) \subset \I_{2, 1}$ and $G = p^{-1}(Q) \subset \I_{\theta_1}$. Hochschild-Serre spectral sequence implies that we have 
	$$\H_4(G, \Z) = \left\langle s(U_2, U_3) \right\rangle \cong \Z.$$
	
	Let $(\mathcal{E}_{*, *}^{(\theta_1) *}, \partial_{*, *}^{(\theta_1) *})$ be the spectral sequence (\ref{spec_sec}) for the action of $G$ on $\B(x)$. By Lemma \ref{dM2} (we take $\G = G$ and $\BB(x) = \B(x)$) the element 
	\begin{equation}\label{el}
	\left( P_{\alpha_1 \cup \alpha_2 \cup \alpha'_2 \cup \alpha_3} \otimes \A(T_{\theta_1}, T_{\alpha_2} T_{\alpha'_2}^{-1}, T_{\theta_3}) - P_{\alpha_1 \cup \alpha_2 \cup \alpha_3 \cup \alpha'_3} \otimes \A(T_{\theta_2}, T_{\alpha_3} T_{\alpha'_3}^{-1}, T_{\theta_1}) \right)
	\end{equation}
	has an infinite order and lies in the kernel of $\partial_{1, 3}^{(\theta_1) 1}: \mathcal{E}_{1, 3}^{(\theta_1) 1} \to \mathcal{E}_{0, 3}^{(\theta_1) 1}$. So (\ref{el}) belongs to the group $\mathcal{E}_{1, 3}^{(\theta_1) 2}$.
	Hence $\mathcal{E}^{(\theta_1) 2}_{1, 3} \subseteq \H_4(G, \Z) / \mathcal{E}^{(\theta_1) 1}_{0, 4}$ contains a subgroup $\Z$ generated by the element (\ref{el}). Since $\H_4(G, \Z) \cong \Z$ it follows that the group $\mathcal{E}^{(\theta_1) 1}_{0, 4}$ is zero. 
	 
	Let us show that the groups $\mathcal{E}^{(\theta_1) \infty}_{2, 2}$ and $\mathcal{E}^{(\theta_1) \infty}_{3, 1}$ are free abelian. For a 3-cell $\sigma \in \B(x)$ we have $\Stab_{G}(\sigma) \subseteq \Stab_{\I}(\sigma)$. The group $\Stab_{\I}(\sigma)$ is either isomorphic to $\Z$, or trivial (see Section \ref{S4}). Therefore $\H_1(\Stab_{G}(\sigma), \Z)$ is a free abelian group. For a 2-cell $\sigma \in \B(x)$ we also have $\Stab_{G}(\sigma) \subseteq \Stab_{\I}(\sigma)$. If these groups are not trivial, Propositions \ref{H124} and \ref{H125} imply that $\Stab_{\I}(\sigma) \cong \Z \times F_\infty$.
	So we can apply Lemma \ref{homology}.

	Hence the groups $\mathcal{E}^{(\theta_1) \infty}_{2, 2}$ and $\mathcal{E}^{(\theta_1) \infty}_{3, 1}$ are free abelian. 
	Since $\H_4(G, \Z) \cong \Z$ it follows that they are zero. Consequently, we have an isomorphism
	$$
	\left\langle P_{\alpha_1 \cup \alpha_2 \cup \alpha'_2 \cup \alpha_3} \otimes \A(T_{\theta_1}, T_{\alpha_2} T_{\alpha'_2}^{-1}, T_{\theta_3}) - P_{\alpha_1 \cup \alpha_2 \cup \alpha_3 \cup \alpha'_3} \otimes \A(T_{\theta_2}, T_{\alpha_3} T_{\alpha'_3}^{-1}, T_{\theta_1}) \right\rangle \cong  
	$$
	$$
	\cong \left\langle s(U_2, U_3) \right\rangle = \H_4(G, \Z)
	$$
	Therefore this isomorphism maps 
	$$
	\left( P_{\alpha_1 \cup \alpha_2 \cup \alpha'_2 \cup \alpha_3} \otimes \A(T_{\theta_1}, T_{\alpha_2} T_{\alpha'_2}^{-1}, T_{\theta_3}) - P_{\alpha_1 \cup \alpha_2 \cup \alpha_3 \cup \alpha'_3} \otimes \A(T_{\theta_2}, T_{\alpha_3} T_{\alpha'_3}^{-1}, T_{\theta_1}) \right)
	$$
	to $\pm s(U_2, U_3)$.
	
	Consider the morphism $\mathcal{E}^{(\theta_1) *}_{*, *} \to E^{(\theta_1) *}_{*, *}$ induced by the inclusion $G \hookrightarrow \I^{\theta_1}$. The lemma follows by functoriality. 
\end{proof}

Without loss of generality can assume that in Lemma \ref{alt} we have the sign $'+'$.

\begin{lemma} \label{gen}
	The group $E_{0, 4}^1$ is generated by elements of the form $s(U_1, U_2, U_3)$, such that $\U = (U_1, U_2, U_3)$ is of type (a) w.r.t $x$.
\end{lemma}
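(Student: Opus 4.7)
The plan is to identify each basis element of $\widetilde E^2_{1,3}$, under the isomorphism $E^1_{0,4}\cong \widetilde E^2_{1,3}$ of Corollary \ref{corEinf2}, with the corresponding s-class of type (a) with respect to $x$. The four families of basis elements listed in Proposition \ref{kernel2} are indexed precisely by unordered symplectic splittings $\{U_1,U_2,U_3\}$ of $\H$ with $x\in U_1$ (with $U_1$ distinguished and the pair $\{U_2,U_3\}$ unordered), which matches the indexing of type (a) s-classes modulo the symmetry $s(U_1,U_2,U_3)=s(U_1,U_3,U_2)$ from Proposition \ref{relation1}.

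First, I would verify that each type (a) s-class $s(U_1,U_2,U_3)$ actually belongs to $E^1_{0,4}$. Pick a separating curve $\theta_1$ with $\H_{\theta_1}=U_1$ and, using $x\in U_1$, a curve $\alpha\subset X_{\theta_1}$ with $[\alpha]=x$. Let $G\subseteq \I_{\theta_1}$ be the subgroup from the proof of Lemma \ref{alt}, so that $\H_4(G,\Z)\cong\Z$ is generated by $s(U_2,U_3)$. Since the Torelli group $\I_{1,1}$ is trivial, every element of $G$ restricts on $X_{\theta_1}$ to a power of $T_{\theta_1}$ and hence fixes $\alpha$, giving $G\subseteq \Stab_\I(\alpha)\cap \Stab_\I(\theta_1)$. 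The composition $\H_4(G,\Z)\to \H_4(\I_\alpha,\Z)\cong E^1_{0,4}$ then produces a class whose image in $\H_4(\I,\Z)$ equals $s(U_1,U_2,U_3)$; in particular $s(U_1,U_2,U_3)\in E^1_{0,4}$.

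For generation, I would mimic the spectral-sequence argument of Lemma \ref{alt}, but now for the action of $G$ on the auxiliary complex $\B_\alpha(y)$ in place of $\B(x)$; let $\mathcal{F}^*_{*,*}$ denote the resulting spectral sequence. Arguments analogous to Lemmas \ref{E04}, \ref{di31}, \ref{di22}, applied to $G$ rather than $\I_\alpha$, should leave only the $(1,3)$-term on the $\infty$-page, so $\H_4(G,\Z)\cong \mathcal{F}^2_{1,3}$. Applying Lemma \ref{dM2} to $G$ acting on $\B_\alpha(y)$, with the curves $\beta_j,\beta'_j$ chosen as in the discussion preceding Proposition \ref{kernel2} according to the type of $\U$ with respect to $y$, the generator $s(U_2,U_3)\in \H_4(G,\Z)$ is represented up to sign by the corresponding basis element of Proposition \ref{kernel2}. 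Functoriality of $\mathcal{F}^*_{*,*}\to \widetilde E^*_{*,*}$ induced by $G\hookrightarrow \I_\alpha$ then identifies the class produced in the previous paragraph with that basis element. Since these basis elements span $\widetilde E^2_{1,3}$, the type (a) s-classes generate $E^1_{0,4}$.

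\textbf{Main obstacle.} The main obstacle is the case-by-case analysis by the four types of $\U$ with respect to $y$ from Definition \ref{typey}: in each case the multicurves $\beta_j\cup\beta'_j$ and the cells they span in $\B_\alpha(y)$ differ, and the Lemma \ref{dM2}-style computation, together with the Lantern-relation manipulation, must be carried out separately to obtain the correct basis element of Proposition \ref{kernel2}.
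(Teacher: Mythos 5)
Your proposal is correct and follows essentially the same route as the paper: the paper also identifies $E^1_{0,4}\cong\H_4(\I_\alpha,\Z)\cong\widetilde E^2_{1,3}$ via Corollary \ref{corEinf2}, and then shows by an argument "similar to the proof of Lemma \ref{alt}" (i.e.\ running the spectral sequence for the subgroup $G\subseteq\I_{\theta_1}\subseteq\I_\alpha$ on $\B_\alpha(y)$ and invoking Lemma \ref{dM2}) that $\pm\Phi(s(U_2,U_3))$ equals the corresponding basis element of Proposition \ref{kernel2} in each of the four cases of Definition \ref{typey}, concluding exactly as you do that these s-classes are all of type (a) w.r.t.\ $x$ and exhaust a generating set.
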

\begin{proof}
	Recall that 
	$$E_{0, 4}^1 \cong \H_4(\I_{\alpha}, \Z)$$
	and 
	$$\Phi: \H_4(\I_{\alpha}, \Z) \cong \widetilde{E}_{1, 3}^2.$$
	
	Consider the preimages under the mapping $\widetilde{J}_{1, 3}^2$ of the basis elements of $\widetilde{E}^2_{1, 3}$ (see Proposition \ref{kernel2}) constructed in the proof of Proposition \ref{surj1}. 
	The arguments similar to the proof of Lemma \ref{alt} show that $\pm\Phi(s(U_2, U_3))$ equals
	$$P_{\beta_2 \cup \beta'_2} \otimes \A(T_{\theta_1}, T_{\beta_2} T_{\beta'_2}^{-1}, T_{\theta_3}),$$
	if $\U$ is of type (1) w.r.t $y$;
	$$P_{\beta_1 \cup \beta_2 \cup \beta'_2} \otimes \A(T_{\theta_1}, T_{\beta_2} T_{\beta'_2}^{-1}, T_{\theta_3}),$$
	if $\U$ is of type (2) w.r.t $y$;
	$$ \left( P_{\beta_2 \cup \beta'_2 \cup \beta_3} \otimes \A(T_{\theta_3}, T_{\beta_2} T_{\beta'_2}^{-1}, T_{\theta_1}) - P_{\beta_2 \cup \beta_3 \cup \beta'_3} \otimes \A(T_{\theta_1}, T_{\beta_3} T_{\beta'_3}^{-1}, T_{\theta_2}) \right) ,$$
	if $\U$ is of type (3) w.r.t $y$;
	$$ \left( P_{\beta_1 \cup \beta_2 \cup \beta'_2 \cup \beta_3} \otimes \A(T_{\theta_3}, T_{\beta_2} T_{\beta'_2}^{-1}, T_{\theta_1}) - P_{\beta_1 \cup \beta_2 \cup \beta_3 \cup \beta'_3} \otimes \A(T_{\theta_1}, T_{\beta_3} T_{\beta'_3}^{-1}, T_{\theta_2}) \right)  ,$$
	if $\U$ is of type (4) w.r.t $y$. Since $x = [\alpha] \in U_1$, in each of these cases $\U$ is of type (a) w.r.t $x$.
\end{proof}

\subsection{Proof of linear independence}

Denote by $S_{\U} \subseteq \H_4(\I, \Z)$ the subgroup by three homology classes
$$s(U_1, U_2, U_3), s(U_2, U_3, U_1), s(U_3, U_1, U_2) \in \H_4(\I, \Z).$$

\begin{lemma} \label{inds}
	(a)The inclusions $S_\U \hookrightarrow \H_4(\I, \Z)$ induce an injective homomorphism
	$$\bigoplus_{\U} S_{\U} \hookrightarrow \H_4(\I, \Z),$$
	where the sum is over all unordered symplectic splittings $\U$ of $\H$.
	
	(b)The homology classes 
	$$s(U_1, U_2, U_3), s(U_2, U_3, U_1) \in \H_4(\I, \Z),$$
	where $\U$ rans over the set of all unordered symplectic splittings of $\H$, are linearly independent.
\end{lemma}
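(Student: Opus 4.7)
The plan is to use the projection $\phi \colon \H_4(\I, \Z) \twoheadrightarrow E^2_{1, 3}$ arising from the short exact sequence of Corollary \ref{corEinf}. Given any finite set $T$ of unordered symplectic splittings in which a hypothetical linear dependence is supported, I will first choose a primitive class $x \in \H$ such that every $\U = \{U_1, U_2, U_3\} \in T$ is of type (c) with respect to $x$ in the sense of Definition \ref{typex}. Such an $x$ exists: failure of type (c) for a fixed $\U$ confines $x$ to one of the three proper subgroups $U_i \oplus U_j$ of $\H \cong \Z^{6}$, and a finite union of proper subgroups of $\H$ is proper, so a primitive element avoiding all of them can be found.

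For this choice of $x$, applying Lemma \ref{alt} together with its cyclic reformulations (obtained by running the lemma on the splittings $(U_2, U_3, U_1)$ and $(U_3, U_1, U_2)$ in place of $(U_1, U_2, U_3)$) computes, up to a common global sign that we fix,
\begin{equation*}
\phi(s(U_1, U_2, U_3)) = B_2^\U, \quad \phi(s(U_3, U_1, U_2)) = B_1^\U, \quad \phi(s(U_2, U_3, U_1)) = -(B_1^\U + B_2^\U),
\end{equation*}
where $B_1^\U$ and $B_2^\U$ are the two type-(c) basis elements of $E^2_{1, 3}$ attached to $\U$ by Proposition \ref{kernel}. Since distinct splittings determine distinct supporting multicurves $\alpha_1, \alpha_2, \alpha_3$ (and hence distinct $\I$-orbits of cells), Proposition \ref{kernel} also guarantees linear independence of the collection $\{B_1^\U, B_2^\U\}_{\U \in T}$ in $E^2_{1, 3}$.

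Part (b) is then immediate: applying $\phi$ to $\sum_\U (a_\U s(U_1, U_2, U_3) + b_\U s(U_2, U_3, U_1)) = 0$ yields $\sum_\U (-b_\U B_1^\U + (a_\U - b_\U) B_2^\U) = 0$, whence $a_\U = b_\U = 0$ for every $\U$ by linear independence. For part (a), write $\sigma_\U = a_\U s(U_1, U_2, U_3) + b_\U s(U_2, U_3, U_1) + c_\U s(U_3, U_1, U_2)$; projecting $\sum_\U \sigma_\U = 0$ by $\phi$ gives $\sum_\U \bigl((c_\U - b_\U) B_1^\U + (a_\U - b_\U) B_2^\U\bigr) = 0$, hence $a_\U = b_\U = c_\U$ for each $\U$. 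Invoking relation (\ref{rel2}), namely $s(U_1, U_2, U_3) + s(U_2, U_3, U_1) + s(U_3, U_1, U_2) = 0$, we may then rewrite $\sigma_\U = (a_\U - c_\U) s(U_1, U_2, U_3) + (b_\U - c_\U) s(U_2, U_3, U_1)$; and since $\sum_\U \sigma_\U = 0$ remains in the span of the $s_1, s_2$ classes, part (b) forces $a_\U = b_\U = c_\U$ to coincide, so each $\sigma_\U = 0$.

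The main obstacle is that $\phi$ annihilates $t_\U := s(U_1, U_2, U_3) + s(U_2, U_3, U_1) + s(U_3, U_1, U_2)$: indeed $\phi(t_\U) = B_2^\U + B_1^\U - (B_1^\U + B_2^\U) = 0$. The projection argument therefore only pins each $\sigma_\U$ down modulo a multiple of $t_\U$, so part (a) is not accessible from the spectral-sequence projection alone and genuinely requires the companion relation (\ref{rel2}). This is why the two assertions of Theorem \ref{th2} must be assembled together: part (b) of the lemma is established purely via $\phi$, while part (a) combines the $\phi$-argument with (\ref{rel2}), which will be proved independently in the remainder of Section \ref{S8}.
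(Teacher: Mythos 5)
Your treatment of part (b) is correct and follows the same route as the paper: reduce to a finite family of splittings, choose $x$ so that every splitting in the family is of type (c) with respect to $x$ (your covering argument is an acceptable substitute for the paper's citation of Proposition \ref{prop_dec}), and then read off the images under $\phi$ from Lemma \ref{alt} and the type-(c) basis elements of Proposition \ref{kernel}. Your computation $\phi(s(U_1,U_2,U_3)) = B_2^{\U}$, $\phi(s(U_3,U_1,U_2)) = B_1^{\U}$, $\phi(s(U_2,U_3,U_1)) = -(B_1^{\U}+B_2^{\U})$ matches the images used in the paper's proof of Theorem \ref{th2}, and the linear independence of the $B_i^{\U}$ across distinct $\U$ then gives (b) exactly as in the paper.

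The difficulty is part (a). You correctly observe that $\phi$ annihilates $t_{\U} = s(U_1,U_2,U_3)+s(U_2,U_3,U_1)+s(U_3,U_1,U_2)$, so the projection to $E^2_{1,3}$ only pins each $\sigma_{\U}$ down to a multiple of $t_{\U}$, and you close the gap by invoking the relation (\ref{rel2}), i.e.\ $t_{\U}=0$. But (\ref{rel2}) is the first assertion of Theorem \ref{th2}, and the paper's proof of that assertion explicitly uses Lemma \ref{inds}(a): after showing $t_{\U}\in E^1_{0,4}$ and writing $t_{\U}$, via Lemma \ref{gen}, as a sum of elements of subgroups $S_{\U'}$ with $\U'$ of type (a) with respect to $x$, it is precisely Lemma \ref{inds}(a) that forces $t_{\U}=0$. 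So your argument for (a) is circular; the relation is not ``proved independently in the remainder of Section \ref{S8}'' as you assert. To be fair, you have put your finger on a genuine subtlety: the paper's own proof of (a) is terse at exactly this point, asserting that containment of the images $\phi(S_{\U})$ in independent rank-two subgroups ``implies the result'' without addressing $S_{\U}\cap\ker\phi$. But your resolution does not repair this as written; one would need either an argument for $t_{\U}=0$ that does not pass through \ref{inds}(a) (for instance by analyzing the image of $t_{\U}$ in $E^1_{0,4}\cong\H_4(\I_{\alpha},\Z)\cong\widetilde{E}^2_{1,3}$ directly), or a restructuring that establishes \ref{inds}(a) and (\ref{rel2}) simultaneously.
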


\begin{proof}
	It suffices to prove that for a finite set of splittings $\{\U^1, \dots, \U^k \}$ the map
	\begin{equation*}
	\bigoplus_{1}^r S_{\U^k} \to \H_4(\I, \Z)
	\end{equation*}
	is injective.
	
	The following straightforward result is proved in \cite{Gaifullin}.
	\begin{prop}\cite[Lemma 4.5]{Gaifullin} \label{prop_dec}
		There is a homology class $x \in \H$ such that the splittings $\U^1, \dots, \U^k$ are of type (c) w.r.t $x$.
	\end{prop}

Take any $x \in \H$ satisfying the conditions of Proposition \ref{prop_dec}.
It suffices to prove that the map
\begin{equation} \label{fi}
\bigoplus_{1}^r S_{\U^k} \to \H_4(\I, \Z) / E_{0, 4}^1 \cong E_{1, 3}^2
\end{equation}
is injective.

By Lemma \ref{alt} the image of $S_{\U}$ ($\U \in \{\U^1, \dots, \U^k\}$) in $E_{1, 3}^2$ under the mapping (\ref{fi}) is contained in the linear span of the elements
$$P_{\alpha_1 \cup \alpha'_1 \cup \alpha_2 \cup \alpha_3} \otimes \A_{U_2, U_3} - P_{\alpha_1 \cup \alpha_2 \cup \alpha'_2 \cup \alpha_3} \otimes \A_{U_3, U_1}, P_{\alpha_1 \cup \alpha_2 \cup \alpha'_2 \cup \alpha_3} \otimes \A_{U_3, U_1} - P_{\alpha_1 \cup \alpha_2 \cup \alpha_3 \cup \alpha'_3} \otimes \A_{U_1, U_2}.$$
By Proposition \ref{kernel} these elements are linearly independent for different $\U$. This implies the result.
\end{proof}

\subsection{Relations between s-classes}

\begin{proof}[Proof of Theorem \ref{th2}.] Lemma \ref{inds} and Proposition \ref{relation1} imply that it suffices to prove that for any splitting $(U_1, U_2, U_3)$ we have
\begin{equation}\label{rel22}
s(U_1, U_2, U_3) + s(U_2, U_3, U_1) + s(U_3, U_1, U_2) = 0.
\end{equation}
By Lemma \ref{alt} the images of $s(U_1, U_2, U_3), s(U_2, U_3, U_1), s(U_3, U_1, U_2)$ in the group \\ $\H_4(\I, \Z) / E^1_{0, 4}$ are
$$\left( P_{\alpha_1 \cup \alpha_2 \cup \alpha'_2 \cup \alpha_3} \otimes \A(T_{\theta_1}, T_{\alpha_2} T_{\alpha'_2}^{-1}, T_{\theta_3}) - P_{\alpha_1 \cup \alpha_2 \cup \alpha_3 \cup \alpha'_3} \otimes \A(T_{\theta_2}, T_{\alpha_3} T_{\alpha'_3}^{-1}, T_{\theta_1}) \right),$$

$$\left( P_{\alpha_1 \cup \alpha_2 \cup \alpha_3 \cup \alpha'_3} \otimes \A(T_{\theta_2}, T_{\alpha_3} T_{\alpha'_3}^{-1}, T_{\theta_1}) - P_{\alpha_1 \cup \alpha'_1 \cup \alpha_2 \cup \alpha_3} \otimes \A(T_{\theta_3}, T_{\alpha_1} T_{\alpha'_1}^{-1}, T_{\theta_2}) \right),$$

$$\left( P_{\alpha_1 \cup \alpha'_1 \cup \alpha_2 \cup \alpha_3} \otimes \A(T_{\theta_3}, T_{\alpha_1} T_{\alpha'_1}^{-1}, T_{\theta_2}) - P_{\alpha_1 \cup \alpha_2 \cup \alpha'_2 \cup \alpha_3} \otimes \A(T_{\theta_1}, T_{\alpha_2} T_{\alpha'_2}^{-1}, T_{\theta_3}) \right),$$
respectively.

Therefore the image of 
$$s(U_1, U_2, U_3) + s(U_2, U_3, U_1) + s(U_3, U_1, U_2)$$
in $\H_4(\I, \Z) / E^1_{0, 4}$ is zero. Hence we have
$$s(U_1, U_2, U_3) + s(U_2, U_3, U_1) + s(U_3, U_1, U_2) \in E^1_{0, 4}.$$

By Lemma \ref{gen} the group $E^1_{0, 4}$ is generated by elements of subgroups $S_{\U'}$, where $\U'$ is of type (a) w.r.t $x$. Since $\U$ is of type (c) w.r.t $x$, Lemma \ref{inds} (a) implies (\ref{rel22}).
\end{proof}

\end{document}